\numberwithin{equation}{section}
\DeclarePairedDelimiter\abs{\lvert}{\rvert}
\DeclarePairedDelimiter{\norm}\lVert\rVert
\def\Re{\mathop\mathrm{Re}\nolimits}			
\newcommand{\Wholes}{\mathbb{Z}}							
\newtheorem{thm}{Theorem}[section]
\newtheorem{cor}[thm]{Corollary}
\newtheorem{lem}[thm]{Lemma}
\newtheorem{prop}[thm]{Proposition}
\newtheorem{remark}{Remark}
\newtheorem*{defn*}{Definition}
\title{\sc Conditional Speed and Shape Corrections for Travelling 
Wave Solutions to
Stochastically Perturbed Reaction-Diffusion Systems}
\author[1]{Mark van den Bosch \thanks{\tt vandenboschm@math.leidenuniv.nl}}
\author[2,3]{Christian H.S. Hamster \thanks{\tt c.h.s.hamster@uva.nl}}
\author[1]{Hermen Jan Hupkes \thanks{\tt hhupkes@math.leidenuniv.nl}}
\affil[1]{\small Mathematisch Instituut, Universiteit Leiden, P.O. Box 9512, 2300 RA Leiden, The Netherlands}
\affil[2]{\small KdV Institute, Universiteit van Amsterdam, P.O. Box 94248, 1090 GE Amsterdam, The Netherlands}
\affil[3]{\small Dutch Institute for Emergent Phenomena, Universiteit van Amsterdam, P.O. Box 94248, 1090 GE Amsterdam, The Netherlands}
\begin{document}
\newpage
\maketitle

\begin{abstract}
    In this work we perform rigorous small noise expansions to
    study the impact of stochastic forcing on the behaviour of planar travelling wave solutions to reaction-diffusion equations on cylindrical domains. In particular, we use a stochastic freezing approach that allows effective limiting information
    to be extracted concerning the behaviour of the
    stochastic perturbations from the deterministic wave. As an application, this allows us to provide a rigorous definition for the stochastic corrections to the wave speed. In addition, our approach allows their size to be computed to any desired order in the noise strength, provided that sufficient smoothness is available.
\end{abstract}

\,

\smallskip
\noindent\textbf{Keywords:} stochastic reaction-diffusion equations;
travelling waves; stochastic phase; meta-stability; asymptotic expansions.

\smallskip
\noindent\textbf{MSC 2010:} 35K57, 35R60.

\,

\section{Introduction}

The main purpose of this paper is to provide a rigorous underpinning
to the estimates obtained in the series \cite{hamster2020transinv,bosch2024multidimensional} for the speed and shape corrections that arise
when stochastically perturbing travelling wave solutions to
reaction-diffusion systems.
In particular, we consider SPDEs such as
\begin{equation}
\label{eq:int:main:spde}
   \mathrm dU = [\Delta U + f(U)]  \mathrm dt + \sigma g(U)  \mathrm  d W^Q_t
\end{equation}
on cylindrical domains of the form $\mathcal{D} = \mathbb{R} \times \mathbb T^{d-1}$ for some integer $d \ge 1$, driven by a cylindrical $Q$-Wiener process $(W^Q_t)_{t \ge 0}$ that is white in time,  coloured in space and translationally invariant.
We assume that in the deterministic case $\sigma = 0$
\eqref{eq:int:main:spde} features a planar travelling wave
solution $u(x,x_\perp,t) = \Phi_0(x - c_0 t)$.

\paragraph{Stochastic freezing}
The main contribution in the earlier works \cite{hamster2020transinv,bosch2024multidimensional} is that we constructed a stochastic phase function $\Gamma: [0,T] \to \mathbb R$ so that the perturbation
\begin{equation}
\label{eq:int:def:V}
    V(x,x_\perp,t) = U( x + \Gamma(t), x_\perp, t) - \Phi_0(x)
\end{equation}
remains bounded with high probability over time intervals that are exponentially long with respect to $\sigma^{-1}$
and admits a natural Taylor expansion
\begin{equation}
\label{eq:int:tay:v}
    V = \sigma V^{(1)} + \sigma^2 V^{(2)} + \ldots
\end{equation}
with respect to the variable $\sigma$.
In addition, we explicitly characterized the processes $V^{(1)}$ and $V^{(2)}$
and showed that the resulting expressions converged in expectation as $t \to \infty$ to a well-defined limit.
This leads formally to a Taylor expansion of the form
\begin{equation}
    C_{\mathrm{obs}} = c_0 + \sigma^2 c^{(2)} + \sigma^3 c^{(3)} + \ldots
\end{equation}
for the expected observed wave speed $C_{\mathrm{obs}}$, which
intuitively should satisfy a relation of the form
\begin{equation}
\label{eq:int:intuitive:def:c:obs}
    C_{\mathrm{obs}} \sim \frac{2}{T} \mathbb E \big[ \Gamma(T) - \Gamma(\tfrac{1}{2}T) \big] \hbox{ for large } T.
\end{equation}
We provided an explicit expression for $c^{(2)}$ and a numerical procedure to compute $c^{(3)}$, and validated our findings for several example systems by analyzing large samples of numerical simulations.
In essence, this approach can be seen as a stochastic version of the freezing approach developed by
Beyn \cite{beyn2004freezing}, which allows us to adopt the spirit behind the modern machinery for deterministic stability
issues initiated by Howard and Zumbrun \cite{ZUMHOW1998}.
However, a key missing feature in these earlier works is the ability to provide a rigorous quantification for statements of the form \eqref{eq:int:intuitive:def:c:obs}.

\paragraph{Rigorous expansion}
In this paper we provide such a quantification, not only
for the wave speed but for all sufficiently smooth functionals acting on $U$. In particular, assuming throughout this introduction that $V(0) = 0$, we formalize the expansion procedure behind  \eqref{eq:int:tay:v} by writing
\begin{equation}
    V = \sigma V^{(1)} + \sigma^2 V^{(2)} + \ldots + \sigma^{r-1} V^{(r-1)} + Z
\end{equation}
and establish bounds for the expansion terms $V^{(i)}$ and the residual term $Z$. This allows us to quantify the timescale over which the residual remains small, which in our case will mean $Z = O(\sigma^{r-\frac{1}{2}})$.

Writing $(S(t))_{t \ge 0}$ for the deterministic semigroup that
governs the linearized behaviour of \eqref{eq:int:main:spde} with $\sigma = 0$ near the travelling wave $(\Phi_0,c_0)$,
we have the representation 
\begin{equation}
    V^{(1)}(t) = \int_0^t S(t-s) \varrho \, \mathrm d W^Q_s
\end{equation}
for some fixed Hilbert-Schmidt operator $\varrho$.
Using techniques from \cite{hamster2020expstability},
which have recently been extended to a Banach-space setting in \cite{cox2024sharp},
it is possible to obtain the scaling behaviour
\begin{equation}
\mathbb E \sup_{0 \le t \le T} \norm{ V^{(1)}(t) }^{2p}
\sim p^p +  [ \ln T ]^p
\end{equation}
with respect to the exponent $p \ge 1$ and the timescale $T \ge 1$.
These computations exploit the fact that the constant function $s \mapsto \varrho$ naturally admits a global pathwise bound. However, for $V^{(2)}$ and higher one needs to understand expressions of the form
\begin{equation}
    \int_0^t S(t-s) \Lambda[ V^{(1)}(s)] \, \mathrm d W^Q_s,
\end{equation}
which involve (multi)-linear maps $\Lambda$.
In particular, we no longer have global pathwise bounds on the integrand, requiring us to generalize the techniques from \cite{hamster2020expstability}.
In fact, we also need to consider similar stochastic convolutions
where the semigroup $S$ is replaced by a random evolution family.
The resulting expressions are referred to as `forward integrals' \cite{russo1991integrales} and generalize the concept of It{\^o} integrals to situations where the integrand is `anticipating' instead of predictable.
Our growth bounds for such integrals sharpen the results
from \cite{bosch2024multidimensional}, which also required uniform pathwise estimates on the integrand.

These novel bounds can be used to obtain the scaling behaviour
\begin{equation}
\label{eq:int:full:sc:beh:V}
\mathbb E \sup_{0 \le t \le T} \norm{ V^{(j)}(t) }^{2p}
\sim p^{jp} + [ \ln T ]^{jp}.
\end{equation}
Setting $p = 1$, we can observe that
the natural expansion parameter is $\sigma \sqrt{\ln T}$
rather than $\sigma$. In particular, the timescales over which our expansions can be maintained should satisfy $\ln T \ll  \sigma^{-2}$.
In fact, we will use $\ln T_* = \sigma^{-\alpha}$ for some appropriate $0 < \alpha < 2$. This provides us enough freedom to achieve the stated $\sigma^{r-\frac{1}{2}}$ behaviour of the residual, while retaining timescales that are exponentially long with respect to $\sigma^{-1}$.
In particular, we proceed by introducing the so-called `stability event'
\begin{equation}
    \mathcal{A}_{\rm stb}
    = \{ \norm{ V^{(j)} } \le \sigma^{-1/2} \hbox{ and } \norm{Z} \le \sigma^{r- 1/2} \hbox{ on } [0, T_*] \},
\end{equation}
where we are deliberately not specifying any norms.
The bounds \eqref{eq:int:full:sc:beh:V} for $V^{(j)}$ and
related estimates for $Z$ can be used
to show that $\mathcal{A}_{\rm stb}$
occurs with high probability.

\paragraph{Conditional expectations}
One of the main goals of the paper is
to establish conditional expectation results of the form
\begin{equation}
\label{eq:int:lim:exp:phi}
    \mathbb E \big[ \phi\big( V(T_*) \big) | \mathcal{A}_{\rm stb}  \big]
    = h_{\infty;0} + \sigma h_{\infty;1}
    + \ldots + \sigma^{r-1} h_{\infty;r-1} + O ( \sigma^{r-1/2} ).
\end{equation}
In particular, for smooth functionals $\phi$
we show that the expectation $\phi(V)$, evaluated at the (large) time $T_*$ and conditioned on retaining the stability of the wave, admits a natural Taylor expansion.
Naturally, this expectation will depend on the precise details concerning the definition of the stability event $\mathcal{A}_{\rm stb}$, most notably the parameters underlying the exit time. However,
the expansion coefficients $\big(h_{\infty;0}, \ldots h_{\infty;r-1} \big)$ do not and are hence in some sense `universal'.
In this sense our results are able to provide a fully rigorous meaning to statements of the form \eqref{eq:int:intuitive:def:c:obs}.

In addition to our existence results,
we provide an algorithmic technique to obtain explicit expressions for these coefficients, generalizing the computations in the early work \cite{hamster2019stability,hamster2020transinv}.
The key step is to understand expressions of the form
\begin{equation}
\label{eq:int:multi:linear:lims}
    \lim_{t \to \infty} \mathbb E \Lambda [ V^{(i_1)}, \ldots, V^{(i_{\ell})}](t)
\end{equation}
for $\ell$-linear maps $\Lambda$. In particular, we establish that these limits are well-defined and can be evaluated explicitly in terms of iterated integrals involving the semigroup $S(t)$.
The remaining step towards \eqref{eq:int:lim:exp:phi} is to
use our bounds for the residual $Z$ and
the high-probability nature of $\mathcal{A}_{\rm stb}$ to obtain the error estimate. In particular, conditioning on $\mathcal{A}_{\rm stb}$
only has a small effect on the behaviour of
\eqref{eq:int:multi:linear:lims},
which can be quantified and captured by the error term.

We emphasize that the existence of the limits \eqref{eq:int:multi:linear:lims} is a direct consequence of the `freezing' technique
underlying the Ansatz \eqref{eq:int:def:V}. An alternative approach towards stochastic meta-stability results
\cite{maclaurinnew} requires $V$ to `move' with the wave. This makes the stability analysis somewhat less complicated since delicate terms involving higher derivatives of $V$, coming from the It\^o calculus, are avoided. However, the effects generated by these terms are essential for the Taylor expansions we obtain here.

\paragraph{Broader context}

This work is part of a growing initiative spanning several research groups to study how stochastic forcing impacts the pattern-forming properties of various types of deterministic systems, motivated by applications in fields such as  neuroscience
\cite{Bressloff,bressloff2015nonlinear},
cardiology \cite{Zhang}, finance  \cite{NunnoAdvMathFinance2011} and meteorology \cite{Climate}.
This topic has attracted
significant interest both from the applied community
\cite{Armero1996,GarciaSpatiallyExtended,schimansky1991,vinals1991numerical,shardlow,birzu2018fluctuations}
as well as mathematicians developing rigorous proofs
\cite{KUEHN2013KPP,KUEHN2013FRAME,Kuske2017,gowda2015early,Bloemker,Inglis,maclaurinnew}.
An extensive discussion of these developments
can be found in the review paper \cite{KuehnReview} and the recent work \cite{bosch2024multidimensional}.
These results build on the voluminous work performed during the
the past decades to provide a mathematically rigorous framework to interpret SPDEs; see, e.g.,
\cite{Chow,prevot2007concise,Gawarecki,DaPratoZab,hairer2009introduction,agresti2023reaction,agresti2023b_reaction}.

The work in this paper can be seen in the context of `exit problems', which study
how and when
solutions leave an appropriate neighbourhood of a stable state. Related results have been obtained for finite dimensional systems \cite{day1990large,freidlin1998random}
or problems posed on compact spatial domains \cite{faris1982large,salins2019,adams2022isochronal,adams2024quasi}, where the set of tools is much
richer. For example, the compactness of the associated semigroups can be used to prove the existence of quasi-invariant measures. However, we are not aware of any work that reproduces our explicit $\sigma$-expansions.

\paragraph{Outline}
After stating our main results in {\S}\ref{sec:mr}, we obtain the required
supremum bounds for stochastic and
deterministic convolutions in {\S}\ref{sec:conv}. We study
the smoothness of our nonlinearities between various function spaces in {\S}\ref{sec:sm}, which allows us to
show that our Taylor expansions are well-defined in {\S}\ref{sec:tay},
where we also quantify their growth rates. We analyze the limiting behaviour of expectations in {\S}\ref{sec:lim}. Turning to the residual $Z$, we establish a mild representation in {\S}\ref{sec:res}
and provide bounds for the underlying nonlinearities. This allows
us in {\S}\ref{sec:stb} to obtain
bounds for the probability of $\mathcal{A}_{\rm stb}$ and complete our proofs.

\section{Main Results}
\label{sec:mr}
In this section we formulate our main results, which concern the
stochastic reaction-diffusion system
\begin{equation}
\label{eq:mr:MainEquation}
    \mathrm du=[\Delta u + f(u)]\mathrm dt+\sigma g(u)\mathrm d
W_t^Q.
\end{equation}
Here $u(x,x_\perp,t)\in \mathbb R^n$ evolves in time $t\geq 0$ on a cylindrical domain $\mathcal D=\mathbb R\times \mathbb T^{d-1}\ni (x,x_\perp) $ with dimension $d\geq 1$, and is driven by a translationally  invariant noise process $(W^Q_t)_{t\geq 0}$.
For $d = 1$, we simply have $\mathcal{D} = \mathbb R$.

\begin{remark}
  Our techniques also apply after the replacement $\Delta \mapsto D \Delta$
  where the diffusion matrix $D$ is diagonal with strictly positive diagonal elements. For simplicity in stating and proving our results we work with $D = I_n$, but the approach used in \cite{hamster2020diag,hamster2020transinv}
  can be used to consider the general case.
\end{remark}

We will assume that in the deterministic setting $\sigma = 0$,
our system \eqref{eq:mr:MainEquation} admits a planar travelling wave solution
$u(x,x_\perp,t) = \Phi_0(x - c_0 t)$; see (HTw) below. Our main goal in this paper
is to consider the setting $0 < \sigma \ll 1$ and study solutions to
\eqref{eq:mr:MainEquation}
of the form
\begin{equation}
\label{eq:mr:decomp:u}
    u( x + \Gamma(t), x_\perp , t) = \Phi_0( x  ) + V(x,x_\perp,t),
\end{equation}
with an initial condition
\begin{equation}
\label{eq:mr:def:init:cond}
    V(x,x_\perp, 0) = \delta V_*(x,x_\perp)
\end{equation}
for some $0 \le \delta \ll 1$
and a phase function $\Gamma$ that is chosen to ensure that the solution is `frozen' in such a way that it is possible to extract long-term behaviour concerning $V$.
In contrast to the earlier works \cite{hamster2020expstability,bosch2024multidimensional}, we refrain from applying $\sigma$-dependent corrections
to the waveprofile $\Phi_0$ and rather absorb them into the perturbation $V$.
The main novelty in the present paper
is that we make the further decomposition
\begin{equation}
\label{eq:mr:decomp:v}
    V = Y_1 + \ldots + Y_{r-1} + Z,
\end{equation}
for some integer $r \ge 3$,
where conceptually $Y_j$ captures contributions that are of order $j$ in the pair $(\sigma, \delta)$, while $Z$ represents the higher-order residual.  The restriction $r \ge 3$ is made for technical convenience, related to the fact that certain fundamental stochastic contributions start appearing at second order in $\sigma$.

Fixing an integer $k_* > d / 2$ and introducing the notation
\begin{equation}
    k_j  = k_* + r + 1 - j,
\end{equation}
our goal is to capture the spatial behaviour of the functions above in the spaces\footnote{From this point onwards, we will implicitly use $\mathcal{D}$ and $\mathbb R^n$ as the domain and co-domain of all our function spaces, unless explicitly stated otherwise. In particular, we also write $L^2=L^2(\mathcal D;\mathbb R^n).$}
\begin{equation}
\label{eq:mr:inc:Y:W:j}
    Z(t) \in H^{k_*} = H^{k_*}(\mathcal{D};\mathbb R^n), \qquad \qquad Y_j(t) \in H^{k_j} = H^{k_j}(\mathcal{D};\mathbb R^n).
\end{equation}
We remark that these choices guarantee pointwise control on the functions in \eqref{eq:mr:inc:Y:W:j}, which is highly convenient when estimating nonlinear terms. We refer to \cite{bosch2024multidimensional} for a detailed discussion of settings that require less regularity on solutions, but do not pursue this in the present paper.

In {\S}\ref{subsec:mr:ass} we formulate our assumptions for \eqref{eq:mr:MainEquation} and the deterministic wave $(\Phi_0,c_0)$. The expansion functions $Y_j$ are defined rigorously in {\S}\ref{subsec:mr:tay}, where we also state their main properties. Finally, in {\S}\ref{subsec:mr:res} we formulate our main results concerning the residual $Z$.

\subsection{Assumptions}
\label{subsec:mr:ass}

We first consider the nonlinearities $f$ and $g$ appearing in \eqref{eq:mr:MainEquation}
and impose several global Lipschitz
conditions. We do point out that the pointwise control on $u$ that we establish in our main results means that we can safely modify $f$ and $g$ outside the region of interest to enforce these conditions.
We also note that the constants $u_\pm$ will correspond to the limiting values of the wave profile $\Phi_0$.
The integer $m \ge 1$ corresponds to the number of components of the noise.

\begin{itemize}
    \item[(HNL)] We have $f\in C^{k_* + r +1}(\mathbb R^n;\mathbb R^n)$
    and $g \in C^{k_* + r + 2}(\mathbb R^n;\mathbb R^{n \times m})$
    with
    \begin{equation}
        f(u_-)=f(u_+)=0, \qquad \qquad g(u_-) = g(u_+) = 0
    \end{equation}
    for some pair $u_\pm\in\mathbb R^n.$ In addition, there are  constants $K_f>0$ and $K_g > 0$ such that
    \begin{equation}
        |f(u_A)-f(u_B)|+\ldots+|D^{k_*+r+1}f(u_A)-D^{k_*+r+1}f(u_B)|\leq K_f|u_A-u_B|\label{eq:mr:lipschitz:f}
    \end{equation}
    holds for all  $u_A,u_B\in\mathbb R^{n}$,
    together with
    \begin{equation}
        |g(u_A)-g(u_B)|+\ldots+|D^{k_*+r+2}g(u_A)-D^{k_*+r+2}g(u_B)|\leq K_g|u_A-u_B|.\label{eq:mr:lipschitz:g}
    \end{equation}
\end{itemize}

We now turn to the planar wave solutions $u = \Phi_0(x + c_0 t)$ for \eqref{eq:mr:MainEquation} with $\sigma = 0$, which
must satisfy the travelling wave ODE
\begin{equation}
    \label{eq:mr:trv:wave:ode}
    \Phi_0''+c_0\Phi_0'+f(\Phi_0)=0.
    \end{equation}
Linearising  \eqref{eq:mr:trv:wave:ode}  around the travelling wave $(\Phi_0,c_0)$ leads to the linear operator
\begin{equation}
\label{eq:mr:def:l:tw}
    \mathcal L_{\rm tw}:H^2(\mathbb R;\mathbb R^n)\to L^2(\mathbb R;\mathbb R^n)
\end{equation}
that acts as
\begin{equation}
    [\mathcal L_{\rm tw}u](x)=u''(x)+c_0u'(x)+Df\big(\Phi_0(x)\big)u(x)
\end{equation}
and admits the translational neutral eigenvalue $\mathcal L_{\rm tw}\Phi_0'=0$. We will write
\begin{equation}
    \mathcal L_{\rm tw}^{\rm adj}:H^2(\mathbb R;
    \mathbb R^n)\to L^2(\mathbb R;\mathbb R^n)
\end{equation}
for the associated adjoint operator, which acts as
\begin{equation}
    [\mathcal L_{\rm tw}^{\rm adj}w](x)=w''(x)-c_0w'(x)+Df(\Phi_0(x))^\top w(x).
\end{equation}
Indeed, it is easily verified that $\langle \mathcal L_{\rm tw}v,w\rangle_{L^2(\mathbb R;\mathbb R^n)}=\langle v,\mathcal L_{\rm tw}^{\rm adj}w\rangle_{L^2(\mathbb R;\mathbb R^n)}$ holds for $v,w\in H^2(\mathbb R;\mathbb R^n).$

We impose standard existence, hyperbolicity and spectral stability conditions
on the wave $(\Phi_0,c_0)$. In particular, we assume the presence of a spectral gap.

\begin{itemize}
    \item[(HTw)] There exists a waveprofile $\Phi_0 \in C^{2}(\mathbb R;\mathbb R^n)$  and a wavespeed $c_0\in\mathbb R$
    that satisfy the travelling wave ODE \eqref{eq:mr:trv:wave:ode}.
    In addition,
    there is a constant $K>0$ together with exponents $\nu_\pm>0$ so that the bound
    \begin{equation}
        |\Phi_0(x)-u_-|+|\Phi_0'(\xi)|\leq Ke^{-\nu_-|x|}
    \end{equation}
    holds for all $x\leq 0,$ whereas the bound
    \begin{equation}
        |\Phi_0(x)-u_+|+|\Phi_0'(\xi)|\leq Ke^{-\nu_+|x|}
    \end{equation}
    holds for all $x\geq 0.$  Finally,
the operator $\mathcal L_{\rm tw}$ defined in \eqref{eq:mr:def:l:tw} has a simple eigenvalue at $\lambda=0$ and there exists a constant $\beta_{\rm tw}>0$ so that  $\mathcal L_{\rm tw}-\lambda$ is invertible for all $\lambda \in \mathbb C$ satisfying $\Re \lambda\geq -2\beta_{\rm tw}$.
\end{itemize}

Together with (HNL), these conditions imply that
$\Phi_0\in C^{k_* + r +3}(\mathbb R;\mathbb R^n)$ and that
 $|\Phi_0^{(\ell)}(x)|\to 0$ exponentially fast as $|x|\to\infty$, for any $1\leq \ell\leq k_* + r+3$.
 In particular, this means $\Phi_0' \in H^{k_* + r + 2}$. In addition, these conditions imply the existence of an adjoint eigenfunction $\psi_{\rm tw}$
 that satisfies $\langle \psi_{\rm tw} , \Phi_0'\rangle_{L^2(\mathbb R;\mathbb R^n)} = 1$
 and $\mathcal{L}_{\rm tw}^{\rm adj} \psi_{\rm tw} = 0$.
 In particular, inner products against $\psi_{\rm tw}$ can be used to `project out' translations of the original wave $\Phi_0$. This freedom can be used to impose the following technical restriction on the initial condition $V_*$.

\begin{itemize}
    \item[(H$V_*$)]
    We have the normalization $\norm{V_*}_{H^{k_* + r}} = 1$, together with the orthogonality condition\footnote{Here and elsewhere throughout this paper $\psi_{\rm tw}$
    can be interpreted as a function $\mathcal{D} \to  \mathbb R^n$ that depends in a constant fashion on the $y$ coordinate.
    The same holds for $\Phi_0$.
    }
    \begin{equation}
        \langle V_*, \psi_{\rm tw} \rangle_{L^2} =\langle V_*, \psi_{\rm tw} \rangle_{L^2(\mathcal D;\mathbb R^n)} =  0.
    \end{equation}
\end{itemize}

We continue by considering the covariance function $q$
that governs the noise process by means of the convolution
\begin{equation}
    [Qv](x,x_\perp)=[q*v](x,x_\perp)=
    \int_{\mathbb R}\int_{\mathbb T^{d-1}}q(x-x',x_\perp-x_\perp')v(x',x_\perp')\mathrm dx'\mathrm dx_\perp'.
\end{equation}
Indeed, the following conditions
imply that $Q$
is a bounded, symmetric linear operator on $L^2(\mathcal{D}; \mathbb R^m)$
that satisfies $\langle Q v, v \rangle_{L^2(\mathcal{D};\mathbb R^m)} \ge 0$. This allows us to follow
\cite{da2014stochastic,Gawarecki,hairer2009introduction,hamster2020transinv,karczewska2005stochastic,prevot2007concise} and  construct a cylindrical $Q$-Wiener process $W^Q=(W^Q_t)_{t\geq 0}$ that is defined on a filtered probability space $(\Omega,\mathcal F,\mathbb F,\mathbb P)$
and takes values in (an extended space containing) the Hilbert space $L^2(\mathcal D;\mathbb R^m)$;
see \cite[{\S}3.2]{bosch2024multidimensional}.

\begin{itemize}
    \item[(Hq)] We have  $q\in H^\ell(\mathcal D;\mathbb R^{m\times m})\cap L^1(\mathcal D;\mathbb R^{m\times m})$ for some integer $\ell >2k_1+d/2,$ with $q(-x,-x_\perp)=q(x,x_\perp)$ and $q^\top(x,x_\perp)=q(x,x_\perp)$ for all $(x,x_\perp)\in \mathcal D$. Further, for
    any $(\omega,\xi) \in  \mathbb R\times \mathbb Z^{d-1}$ the
    $m\times m$ matrix
    $\hat q(\omega, \xi)$ is non-negative definite,
    where $\hat q$ denotes the Fourier transform of $q$.
\end{itemize}

Upon introducing the Hilbert space
\begin{equation}
L_Q^2=Q^{1/2}\big(L^2(\mathcal D;\mathbb R^m)\big)
\end{equation}
and picking $0 \le k \le k_1 = k_* + r$, we note that \cite[Lem. 4.5]{bosch2024multidimensional}
allows
any $z \in H^k(\mathcal{D}; \mathbb R^{n \times m})$ to be interpreted as a Hilbert-Schmidt operator from $L^2_Q$ into $H^k$ via pointwise multiplication, with the bound
\begin{equation}
\label{eq:mr:hs:bnd:z}
    \norm{z}_{HS(L^2_Q;H^k)} \le K \norm{z}_{H^k(\mathcal{D}; \mathbb R^{n \times m})}.
\end{equation}
This allows
the noise term in
\eqref{eq:mr:MainEquation}
to be interpreted in a rigorous fashion, writing
\begin{equation}
    (g(u)[\xi])(x,x_\perp)=g\big(u(x,x_\perp)\big)\xi(x,x_\perp)\label{eq:pointwise}
\end{equation}
for any $\xi \in  L^2_Q$.

\subsection{Taylor expansion}
\label{subsec:mr:tay}

We follow the approach in \cite{bosch2024multidimensional}
to define the phase $\Gamma$ in the decomposition \eqref{eq:mr:decomp:u}.
In particular, we couple the evolution
\begin{equation}
\label{eq:mr:def:Gamma}
    \mathrm d \Gamma = \big( c_0 + a_{\sigma}(U, \Gamma)  \big) \mathrm dt
     + \sigma b(U,\Gamma) \mathrm d W^Q_t
\end{equation}
to our main system \eqref{eq:mr:MainEquation}, where the definitions
of $a_\sigma$ and $b$ are provided in Appendix \ref{app:def}.
Recalling \eqref{eq:mr:decomp:u}-\eqref{eq:mr:def:init:cond}
and applying \cite[Prop 6.1]{bosch2024multidimensional},
we see that
for every $T > 0$ there exists a stopping time $\tau_\infty \in (0, T]$
so that the $H^{k_*}(\mathcal{D};\mathbb R^n)$-valued identity
\begin{equation}
\label{eq:mr:weak:form:evol:v}
V(t) = \delta V_*  + \int_0^t \mathcal{R}_\sigma\big(V(s)\big) \, \mathrm ds
+ \sigma \int_0^t \mathcal{S}\big(V(s)\big) \, \mathrm d W^Q_s
\end{equation}
holds $\mathbb{P}$-a.s.\ for all $0 \le t < \tau_\infty$.
To isolate the $\sigma$ dependencies, we make the decomposition
\begin{equation}
\label{eq:mr:decomp:r:sigma}
    \mathcal{R}_\sigma(v; c_0,\Phi_0)
    = \Delta_{x_\perp} v + \mathcal{L}_{\mathrm{tw}} v
    + \mathcal{R}_{I}(v)
+ \sigma^2 \mathcal{R}_{II}(v)
+ \sigma^2 \Upsilon(v)
\end{equation}
and refer to Appendix \ref{app:def} for the full definitions of
$\mathcal{R}_\sigma$,
$\mathcal{R}_I$, $\mathcal{R}_{II}$, $\Upsilon$ and $\mathcal{S}$.
For our purposes here, we note that the inclusions
\begin{equation}
\label{eq:mr:inc}
 \mathcal{R}_I, \mathcal{R}_{II} \in C^j( H^{k+1}, H^k),
 \qquad
 \Upsilon \in C^j( H^{k+2}, H^k),
 \qquad
 \mathcal{S} \in C^j\big( H^{k+1}; HS(L^2_Q; H^k) \big)
\end{equation}
hold for $k_* \le k \le k_j$, together with
\begin{equation}
    \mathcal{R}_I(0) = D \mathcal{R}_I(0) = 0;
\end{equation}
see {\S}\ref{sec:sm} for the full details.

Our main task here is to
utilize the representation
\eqref{eq:mr:weak:form:evol:v}
to provide a Taylor expansion for $V$ with respect to the two (small) parameters $\sigma$ and $\delta$.
For example, the leading order expansion term $Y_1 = Y_1[\sigma, \delta]$ should be a solution to
\begin{equation}
Y_1(t) = \delta V_* + \int_0^t [\mathcal{L}_{\rm tw} + \Delta_{x_\perp}] Y_1(s) \, \mathrm ds
+ \sigma \int_0^t \mathcal{S}(0) \, \mathrm d W^Q_s.
\end{equation}

This motivates the definition
\begin{equation}
\label{eq:mr:def:w:only:one}
    Y_1[\sigma, \delta](t) = \delta \mathrm{exp}[ (\mathcal{L}_{\rm tw} + \Delta_{x_\perp}) t ] V_*  +
    \sigma \int_0^t \mathrm{exp}[ (\mathcal{L}_{\rm tw} + \Delta_{x_\perp}) (t-s) ] \mathcal{S}(0) \, \mathrm d W^Q_s.
\end{equation}
More generally, for any $1 \le j \le r-1$ we introduce
the expressions
\begin{equation}
\label{eq:mr:def:n:j}
\begin{array}{lcl}
    \widetilde{N}_{j;I}[Y_1, \ldots, Y_{j-1}] & = & \sum_{\ell=2}^j \sum_{i_1 + \ldots + i_{\ell} = j} \frac{1}{\ell !}D^{\ell} \mathcal{R}_I(0) [ Y_{i_1}, \ldots , Y_{i_\ell}] ,
\\[0.2cm]
  \widetilde{N}_{j;II}[Y_1, \ldots, Y_{j-2}] & = & \sigma^2  \sum_{\ell = 0}^{j-2} \sum_{i_1 + \ldots + i_{\ell} = j-2} \frac{1}{\ell !} D^{\ell} \mathcal{R}_{II}(0) [ Y_{i_1}, \ldots , Y_{i_\ell}] ,
\\[0.2cm]
\widetilde{N}_{j;III}[Y_1, \ldots, Y_{j-2}] & = & \sigma^2  \sum_{\ell = 0}^{j-2} \sum_{i_1 + \ldots + i_{\ell} = j-2} \frac{1}{\ell !} D^{\ell} \Upsilon(0) [ Y_{i_1}, \ldots , Y_{i_\ell}] ,
\end{array}
\end{equation}
together with
\begin{equation}
\label{eq:mr:def:b:j}
\begin{array}{lcl}
    \widetilde{B}_{j}[Y_1, \ldots, Y_{j-1}] & = & \sigma  \sum_{\ell = 0}^{j-1} \sum_{i_1 + \ldots + i_{\ell} = j-1} \frac{1}{\ell !}D^{\ell} \mathcal{S}(0) [ Y_{i_1}, \ldots , Y_{i_\ell}],
\end{array}
\end{equation}
where in each term we have $i_{\ell'} \ge 1$ for $\ell' \in \{ 1,\ldots,\ell\}$.
In addition, we write
\begin{equation}
    \widetilde{N}_j[Y_1, \ldots, Y_{j-1}] = \widetilde{N}_{j;I}[Y_1, \ldots, Y_{j-1}]
    + \widetilde{N}_{j;II}[Y_1, \ldots, Y_{j-2}]
    + \widetilde{N}_{j;III}[Y_1, \ldots, Y_{j-2}].
\end{equation}
Observe that the inclusions \eqref{eq:mr:inc} guarantee that
$\widetilde{N}_j$ and $\widetilde{B}_j$ map into $H^{k_j}$ provided that $Y_{j'} \in H^{k_{j'}}$ for $1 \le j' \le j-1$.
This allows us to recursively define
\begin{equation}
\label{eq:mr:def:w:j:all}
\begin{array}{lcl}
    Y_j[\sigma, \delta](t)
    & = & \delta \mathrm{exp}[ (\mathcal{L}_{\rm tw} + \Delta_{x_\perp}) t ] V_*\mathbf{1}_{j=1}
    \\[0.2cm]
    & & \qquad
    + \int_0^t  \mathrm{exp}[ (\mathcal{L}_{\rm tw} + \Delta_{x_\perp}) (t-s) ] \widetilde{N}_j\big[Y_1[\sigma, \delta](s), \ldots, Y_{j-1}[\sigma, \delta](s) \big] \, \mathrm ds
    \\[0.2cm]
        & & \qquad
    + \int_0^t  \mathrm{exp}[ (\mathcal{L}_{\rm tw} + \Delta_{x_\perp}) (t-s) ] \widetilde{B}_j\big[Y_1[\sigma, \delta](s), \ldots, Y_{j-1}[\sigma, \delta](s) \big] \, \mathrm dW^Q_s
\end{array}
\end{equation}
for all $1 \le j \le r-1$, extending \eqref{eq:mr:def:w:only:one}.

Our first main result shows that the expansion functions $Y_j$ are well-defined for all $t \ge 0$ and establishes crucial growth rates.
In order to match
the expressions
\eqref{eq:mr:def:w:j:all}
with the original
formulation \eqref{eq:mr:weak:form:evol:v},
it is convenient to introduce the shorthands
\begin{equation}
\label{eq:mr:def:n:b:j:no:tilde}
\begin{array}{lcl}
    N_j[\sigma, \delta](t) & = &
    \widetilde{N}_j\big[Y_1[\sigma, \delta](t), \ldots, Y_{j-1}[\sigma, \delta](t) \big],
\\[0.2cm]
B_j[\sigma, \delta](t) & = &
    \widetilde{B}_j\big[Y_1[\sigma, \delta](t), \ldots, Y_{j-1}[\sigma, \delta](t) \big].
\end{array}
\end{equation}
In addition, we introduce the notation
\begin{equation}
H_j = H^{k_j}, \qquad
    \norm{ \,\cdot\,}_{j} = \norm{\,\cdot\,}_{H^{k_j}}.
\end{equation}

\begin{prop}[{see {\S}\ref{sec:tay}}]
\label{prop:mr:tay}
    Suppose that \textnormal{(HNL)}, \textnormal{(HTw)}, \textnormal{(H$V_*$)} and \textnormal{(Hq)} all hold and pick a sufficiently large constant $K > 0$. Then for all $\sigma \ge 0$, all $\delta \ge 0$
    and all $1 \le j \le r-1$, the map
    \begin{equation}
        Y_{j}[\sigma,\delta] :[0,\infty) \times \Omega \to H_j 
    \end{equation}
    defined in \eqref{eq:mr:def:w:j:all}
    is progressively measurable and satisfies the following properties:
    \begin{itemize}
          \item[(i)] For every $p \ge 1$ and $T \ge 0$, we have the inclusion
          $Y_{j}[\sigma,\delta]\in L^p\big( \Omega,\mathbb{P}; C([0,T]; H_j) \big)$;

        \item[(ii)] The $H_j$-valued identity
        \begin{equation}
        \begin{aligned}
            Y_{j}[\sigma,\delta](t) &= Y_j[\sigma,\delta](0)+\int_0^t [\mathcal{L}_{\rm tw} + \Delta_{x_\perp}] Y_j[\sigma,\delta](s) \, \mathrm ds
            \\
             &  \qquad \qquad
             + \int_0^t N_j[\sigma,\delta](s) \, \mathrm ds
              + \int_0^t B_j[\sigma,\delta](s) \, \mathrm dW_s^Q
        \end{aligned}
        \end{equation}
            holds $\mathbb P$-a.s.\ for all $t \ge 0$;
            
        \item[(iii)]{
          For any $p \ge 1$ and any $T \ge 2$ we have the moment bound
      \begin{equation}
      \label{eq:mr:mo:bnds:w:j}
          \mathbb E \sup_{0 \le t \le T} \norm{Y_{j}[\sigma,\delta](t)}_{j}^{2p}
          \le K^{2p} \Big[ (\sigma^2 p)^{j p} + [\sigma^2 \ln T + \delta^2 ]^{j p} \Big].
      \end{equation}
   }
\end{itemize}

\end{prop}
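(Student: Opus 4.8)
The plan is to argue by strong induction on $j \in \{1,\dots,r-1\}$, proving (i)--(iii) at each stage. Write $S(t)=\exp[(\mathcal{L}_{\rm tw}+\Delta_{x_\perp})t]$ for the analytic semigroup in \eqref{eq:mr:def:w:j:all}; by (HTw) and the non-positivity of the spectrum of $\Delta_{x_\perp}$, its generator has a simple eigenvalue at $0$ with eigenfunction $\Phi_0'$ (constant in $x_\perp$) and all remaining spectrum strictly left of $-\min(2\beta_{\rm tw},1)$, while the freezing construction of $a_\sigma$ and $b$ in Appendix \ref{app:def} places $\mathcal{R}_I,\mathcal{R}_{II},\Upsilon,\mathcal{S}$ and the data $V_*$ in the stable spectral subspace complementary to $\mathrm{span}\,\Phi_0'$, on which $S(t)$ decays exponentially. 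Hence every integral in \eqref{eq:mr:def:w:j:all} falls under the deterministic and stochastic convolution estimates of {\S}\ref{sec:conv}, and it is this exponential stability that converts the supremum-in-time bounds into logarithmic rather than polynomial factors of $T$. The base case $j=1$ is essentially explicit: there $\widetilde{N}_1\equiv 0$ and $\widetilde{B}_1=\sigma\mathcal{S}(0)$, so $Y_1[\sigma,\delta]$ is the deterministic orbit $\delta S(t)V_*$ plus a stochastic convolution with the constant Hilbert--Schmidt integrand $\sigma\mathcal{S}(0)$; progressive measurability and path-continuity (i) are standard (factorisation method), (ii) is the classical mild-versus-weak equivalence, and (iii) follows from $\|\delta S(t)V_*\|_1\lesssim\delta\|V_*\|_{H^{k_1}}=\delta$ together with the maximal inequality of {\S}\ref{sec:conv}, which for a constant integrand reproduces precisely the $(\sigma^2 p)^p+(\sigma^2\ln T)^p$ behaviour from \cite{hamster2020expstability}.

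For the induction step, assume (i)--(iii) for $Y_1,\dots,Y_{j-1}$. One first verifies that $N_j[\sigma,\delta](s)=\widetilde{N}_j[Y_1(s),\dots,Y_{j-1}(s)]$ and $B_j[\sigma,\delta](s)$ are well-defined, progressively measurable, and take values in $H_j$ respectively $HS(L^2_Q;H_j)$: this combines the $C^j$-smoothness and multilinear boundedness in \eqref{eq:mr:inc} with the arithmetic fact that $k_{i_m}\ge k_j+1$ whenever $i_m\le j-1$ and $k_{i_m}\ge k_j+2$ whenever $i_m\le j-2$, which is exactly the index bookkeeping guaranteed by $k_j=k_*+r+1-j$ and the constraints $\sum_m i_m\in\{j,j-1,j-2\}$, $i_m\ge1$ in \eqref{eq:mr:def:n:j}--\eqref{eq:mr:def:b:j}. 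For the moments, distribute the $2p$-th moment over the $\ell\le r-1$ factors of each multilinear term by H\"older's inequality with exponent $\ell$, insert the inductive bound (iii) at the enlarged exponent $p\ell$, use $(a+b)^{1/\ell}\le a^{1/\ell}+b^{1/\ell}$ and $p\ell\le (r-1)p$, and expand the resulting product of $\ell$ binomials to obtain
\begin{equation}
  \mathbb{E}\sup_{0\le s\le T}\|N_j[\sigma,\delta](s)\|_j^{2p}
  +\mathbb{E}\sup_{0\le s\le T}\|B_j[\sigma,\delta](s)\|_{HS(L^2_Q;H_j)}^{2p}
  \le \widetilde{K}^{2p}\Big[(\sigma^2 p)^{jp}+[\sigma^2\ln T+\delta^2]^{jp}\Big].
\end{equation}
Here the prefactors $\sigma^2$ in $\widetilde{N}_{j;II},\widetilde{N}_{j;III}$, the prefactor $\sigma$ in $\widetilde{B}_j$, and the restriction $\ell\ge2$ in $\widetilde{N}_{j;I}$ are exactly what make every term genuinely of order $j$ in $(\sigma,\delta)$, so that no order-$(<j)$ contribution survives. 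Feeding this bound into the deterministic convolution estimate of {\S}\ref{sec:conv} (which, by exponential stability, contributes only a bounded multiplicative constant) and into the stochastic maximal estimate for adapted integrands (which contributes an extra $(p+\ln T)^p$ factor), and adding the $\|\delta S(t)V_*\|_1\lesssim\delta\mathbf{1}_{j=1}$ term, yields (iii) for $Y_j$ after enlarging $K$; (i), (ii) and progressive measurability for $Y_j$ then follow exactly as in the base case, using the regularity of $N_j$ and $B_j$ just established.

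The main obstacle is the moment estimate (iii). Two points need care. First, to propagate the sharp dependence on $p,\sigma,\ln T,\delta$ through the recursion one must control the H\"older inflation of $p$ to $p\ell$ inside the inductive hypothesis; this is harmless only because $\ell\le r-1$ is a fixed constant, so the spurious factors $\ell^{jp}\le (r-1)^{jp}$ can be absorbed into $K$, which is the main reason $r$ is held fixed throughout. Second, the stochastic convolution in \eqref{eq:mr:def:w:j:all} now carries a genuinely time-dependent (though still adapted) integrand $B_j(s)$, so the argument of \cite{hamster2020expstability}, which relied on a \emph{global pathwise} bound on a constant integrand, must be replaced by its generalisation in {\S}\ref{sec:conv}; the delicate step is verifying that this generalisation still costs only a logarithmic-in-$T$ factor and not a power of $T$ --- a naive Burkholder--Davis--Gundy estimate would give $T^p$ --- which once more hinges on the exponential decay of $S$.
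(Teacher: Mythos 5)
Your proposal is correct and follows the same overall structure as the paper: induction on $j$, an explicit base case $j=1$ handled through the convolution results of {\S}\ref{sec:conv}, a verification that the recursive index bookkeeping $k_{i_m}\ge k_j+1$ (resp.\ $\ge k_j+2$) gives the required regularity for the multilinear substitution, and a step that applies the deterministic and stochastic convolution estimates (Propositions~\ref{prp:cnv:hn} and \ref{prp:cnv:hb}) to the recursively-bounded integrands. You also correctly identify $\widetilde N_1\equiv 0$, $\widetilde B_1=\sigma\mathcal S(0)$, the role of the projection onto the stable subspace, and the reason the $\ln T$ factor (rather than a power of $T$) appears.

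The one genuine deviation from the paper is the combinatorial tool used to control the multilinear terms $D^\ell\mathcal R_I(0)[Y_{i_1},\ldots,Y_{i_\ell}]$ and their companions. You propose H\"older's inequality with exponent $\ell$ across the factors, followed by $(a+b)^{1/\ell}\le a^{1/\ell}+b^{1/\ell}$, expansion of the resulting product of $\ell$ binomials, and Young's inequality on the cross terms. The paper instead applies the weighted arithmetic--geometric mean inequality \eqref{eq:tay:prod:bnd:w} \emph{pathwise}, converting $\prod_m\|Y_{i_m}\|_{i_m}$ into $\sum_m c_m\|Y_{i_m}\|_{i_m}^{i_{\rm tot}/i_m}$ before any expectation is taken; see Lemma~\ref{lem:tay:bnd:n:j:i} through Corollary~\ref{cor:tay:bnd:sup:b}, with Lemma~\ref{lem:tay:first:est:w:ind:hyp} supplying the inductive bound at the required exponent $p\, i_{\rm tot}/i_m$. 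Both routes are valid: yours inflates the exponent uniformly to $p\ell$ and incurs a $2^\ell$ expansion cost and an $\ell^{jp}$ factor, both harmless since $\ell\le r-1$ is fixed (as you observe); the paper's route treats one random variable at a time, so it never needs H\"older across factors and no binomial expansion is required, which keeps the exponent bookkeeping aligned directly with the form of the working hypothesis (WH). A small imprecision in your sketch: the stochastic convolution estimate does not contribute a bare ``$(p+\ln T)^p$'' factor but rather $(p^{np}+[\ln T+\Theta_2]^{np})$ with $n=j_*$ and $\Theta_2=\ln T+\sigma^{-2}\delta^2$, and the $\delta^2$-dependence threading through $\Theta_2$ is what eventually produces the $[\sigma^2\ln T+\delta^2]^{jp}$ term in \eqref{eq:mr:mo:bnds:w:j}; this is the content of the closing computation in the proof of Proposition~\ref{prop:tay:wh:holds}. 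Finally, items (i) and (ii) are deferred in the paper to \cite[Cor.~3.8]{bosch2024multidimensional} and \cite[Prop.~6.4]{bosch2024multidimensional}; your description of their standard nature is accurate but, like the paper, leaves the measure-theoretic and mild-vs-weak details to the reference.
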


We now turn to the limiting behaviour
of the expansion functions $Y_j$. To this end, we introduce the constant
\begin{equation}
\label{eq:mr:def:beta}
    \beta = \min\{ \beta_{\rm tw} , \frac{1}{2} \lambda_1 \}.
\end{equation}
Here $\lambda_1 = 4 \pi^2 / |\mathbb T|^2$ denotes the first non-zero eigenvalue of the Laplacian $\Delta_{x_\perp}$ on the torus $\mathbb{T}^{d-1}$, where we take $\lambda_1 = \infty$ when $d = 1$. This constant captures the decay rates of the semiflows used throughout this paper after projecting out the neutral mode associated to the translational eigenvalue.

Our first result shows that multi-linear expressions involving the expansion functions converge in expectation to a well-defined limit at an exponential rate. We remark that our proof provides an algorithmic procedure to obtain an explicit integral expression for the limit $h_\infty$.
In addition, we can take $\Lambda = I$ to show for all $1 \le j \le r-1$
there exists
a function $Y_{j;\infty} \in H_j$
for which we have the convergence
   \begin{equation}
      \norm{ \mathbb E \big[Y_j[\sigma,\delta] (t) \big] - \sigma^{j} Y_{j;\infty }}_{j}
      \le K\big[ \sigma^{j} + \delta^{j}] e^{- \frac{\beta}{2} t}.
    \end{equation}

\begin{prop}[{see {\S}\ref{sec:lim}}]
\label{prop:mr:multiliner:lim}
Suppose that \textnormal{(HNL)}, \textnormal{(HTw)}, \textnormal{(H$V_*$)} and \textnormal{(Hq)} all hold.
Pick a Hilbert space $\mathcal{H}$ together with an integer $\ell \ge 1$ and a tuple $(i_1, \ldots, i_{\ell}) \in \{1, \ldots , r-1\}^{\ell}$ and write $i_{\rm tot} = i_1 + \ldots + i_{\ell}$. Then for any bounded multi-linear map $\Lambda: H_{i_1} \times \cdots \times H_{i_{\ell}} \to \mathcal{H} $ there exists a constant $K_\Lambda > 0$ and a limit $h_\infty \in \mathcal{H}$
   so that the bound
   \begin{equation}
      \norm{ \mathbb E \Lambda\big[Y_{i_1}[\sigma,\delta] (t), \ldots, Y_{i_{\ell}}[\sigma,\delta](t)\big]  - \sigma^{i_{\rm tot}} h_\infty}_{\mathcal{H}}
      \le  \big(\sigma^{i_{\rm tot}} + \delta^{i_{\rm tot}} \big) K_\Lambda e^{- \frac{\beta}{2} t}
    \end{equation}
      holds for all $t \ge 0$, all $\sigma \ge 0$ and all $\delta \ge 0$.
\end{prop}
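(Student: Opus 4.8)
The plan is to prove Proposition~\ref{prop:mr:multiliner:lim} by induction on $i_{\rm tot}=i_1+\ldots+i_{\ell}$, exploiting the recursive structure of the definitions \eqref{eq:mr:def:w:j:all} together with the variation-of-constants representation. First I would reduce to the scalar-weight case by setting $\Lambda=I$ and then note that a general multi-linear $\Lambda$ is handled by the same argument once we understand how products of the $Y_{i}$ propagate; the key observation is that by It\^o's formula applied to $t\mapsto\Lambda[Y_{i_1}(t),\ldots,Y_{i_\ell}(t)]$, the expectation $t\mapsto\mathbb E\,\Lambda[Y_{i_1}(t),\ldots,Y_{i_\ell}(t)]$ satisfies a closed, deterministic, linear, inhomogeneous evolution equation in $\mathcal H$ whose generator is the sum over slots of $(\mathcal L_{\rm tw}+\Delta_{x_\perp})$ acting in that slot (plus, from the It\^o correction, a trace term pairing the diffusion parts $\widetilde B_{i_a},\widetilde B_{i_b}$ of two slots), and whose forcing involves lower-order expectations of the form $\mathbb E\,\Lambda'[Y_{j_1},\ldots]$ already controlled by the induction hypothesis. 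All stochastic integrals drop out under expectation, so the analysis is purely deterministic.

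Next I would isolate the decay mechanism. The semigroup $\mathrm{exp}[(\mathcal L_{\rm tw}+\Delta_{x_\perp})t]$ does \emph{not} decay because of the neutral translational eigenvalue $\mathcal L_{\rm tw}\Phi_0'=0$, but the orthogonality condition in (H$V_*$), namely $\langle V_*,\psi_{\rm tw}\rangle_{L^2}=0$, together with the fact (to be checked) that all the forcing terms $\widetilde N_j$, $\widetilde B_j$ appearing through $\mathcal R_I,\mathcal R_{II},\Upsilon,\mathcal S$ are likewise orthogonal to $\psi_{\rm tw}$ in the relevant sense — this is exactly the point of the freezing/phase condition, and should follow from the construction of $a_\sigma,b$ in Appendix~\ref{app:def} — means that every $Y_j$ lives in the spectral complement of the neutral mode. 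On that complement $\mathcal L_{\rm tw}+\Delta_{x_\perp}$ has spectrum in $\{\Re\lambda\le-\beta\}$ with $\beta$ as in \eqref{eq:mr:def:beta}, so the restricted semigroup satisfies $\norm{\mathrm{exp}[(\mathcal L_{\rm tw}+\Delta_{x_\perp})t]P^\perp}_{H_j\to H_j}\le Ke^{-\beta t}$ (here I would invoke (HTw) for the $\Re\lambda\ge-2\beta_{\rm tw}$ gap and the explicit torus eigenvalue $\lambda_1$ for the $\Delta_{x_\perp}$ part). On the $\ell$-fold product the tensorized semigroup decays like $e^{-\ell\beta t}$, which comfortably dominates the $e^{-\beta t/2}$ claimed in the statement.

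Then I would run the induction. Base case $i_{\rm tot}=1$: only $\Lambda$ linear applied to $Y_1$, and $\mathbb E\,Y_1(t)=\delta\,\mathrm{exp}[(\mathcal L_{\rm tw}+\Delta_{x_\perp})t]V_*$ since the stochastic integral has zero mean; by the decay estimate this converges to $0$ exponentially, so $h_\infty=0$ (note $\sigma^{i_{\rm tot}}h_\infty$ absorbs this cleanly). Inductive step: write the Duhamel formula for $\mathbb E\,\Lambda[Y_{i_1}(t),\ldots,Y_{i_\ell}(t)]$ using the closed evolution equation from the first paragraph,
\begin{equation}
\mathbb E\,\Lambda[\mathbf Y(t)] = e^{\mathbf A t}\,\mathbb E\,\Lambda[\mathbf Y(0)] + \int_0^t e^{\mathbf A(t-s)}\,\mathbf F(s)\,\mathrm ds,
\end{equation}
where $\mathbf A$ is the tensorized (restricted) generator and $\mathbf F(s)$ is the lower-order forcing. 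By the induction hypothesis each summand of $\mathbf F(s)$ is of the form $\sigma^{m}g_\infty + O((\sigma^m+\delta^m)e^{-\beta s/2})$ with $m<i_{\rm tot}$ in the worst appearing case but in fact the factors of $\sigma$ from $\widetilde B,\widetilde N$ and the subscript bookkeeping arrange so that $\sigma^{i_{\rm tot}}$ is the overall homogeneity; I would verify the $\sigma$-power bookkeeping carefully since the $\widetilde N_{j;II},\widetilde N_{j;III},\widetilde B_j$ each carry explicit powers of $\sigma$. Splitting $\mathbf F(s)=\mathbf F_\infty + (\mathbf F(s)-\mathbf F_\infty)$, the constant part contributes $\int_0^\infty e^{\mathbf A s}\mathbf F_\infty\,\mathrm ds =: \sigma^{i_{\rm tot}} h_\infty \in \mathcal H$ (convergent because $e^{\mathbf A s}$ decays), with the tail $\int_t^\infty$ exponentially small; the decaying remainder $\mathbf F(s)-\mathbf F_\infty$ convolved against the decaying semigroup also yields an $O(e^{-\beta t/2})$ term, using that $\frac12\beta < \ell\beta$ and $\frac12\beta \le \beta - \frac12\beta$ (and absorbing possible polynomial prefactors from repeated eigenvalues into a slightly smaller rate, which is why the statement uses $\tfrac{\beta}{2}$ rather than $\beta$). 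The constant $K_\Lambda$ is assembled from $\norm\Lambda$, the semigroup constants, and the induction constants.

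The main obstacle I anticipate is not the convergence argument itself but establishing the closed deterministic equation for $\mathbb E\,\Lambda[\mathbf Y]$ rigorously, i.e.\ justifying the It\^o/product-rule computation in the infinite-dimensional, unbounded-generator setting and checking that the trace term $\mathrm{tr}$ pairing $\widetilde B_{i_a}$ with $\widetilde B_{i_b}$ (coming from $\mathrm d\langle W^Q\rangle$) genuinely lands in $\mathcal H$ and is again expressible through lower-order quantities — this requires the Hilbert–Schmidt bound \eqref{eq:mr:hs:bnd:z} and the regularity gain encoded in the index shift $k_j=k_*+r+1-j$, so that products of two $Y$'s at level $j$ sit in a space on which the next semigroup still smooths. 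A secondary subtlety is the $\psi_{\rm tw}$-orthogonality propagation: I must confirm that the phase equation \eqref{eq:mr:def:Gamma} is precisely calibrated so that the forcing terms in \eqref{eq:mr:def:w:j:all} remain in $\ker\langle\,\cdot\,,\psi_{\rm tw}\rangle$, as otherwise the neutral mode would produce a non-decaying (merely bounded) contribution and the limit would fail to exist; this is the structural heart of the freezing method and I would cite the construction in Appendix~\ref{app:def} for it.
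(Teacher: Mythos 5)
Your strategy is in the right spirit — apply It\^o, pass to expectations, use Duhamel and the spectral gap on $\operatorname{ran} P^\perp$ — but it diverges from the paper at exactly the two points you flag, and at both the resolution is non-trivial enough that the proposal cannot be called complete.

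First, the generator issue. You propose to apply It\^o directly to $t\mapsto\Lambda[Y_{i_1}(t),\ldots,Y_{i_\ell}(t)]$ and then take expectations, obtaining an evolution equation driven by the tensorized unbounded operator. The paper never forms that strong equation: it instead applies the \emph{mild} It\^o formula of Da Prato--Jentzen--R\"ockner to the pre-convolved quantity
\begin{equation}
\mathcal{I}_\Lambda(t,s;\theta)=\Lambda\big[\theta,\,S(t-s)Y_{i_1}(s),\ldots,S(t-s)Y_{i_\ell}(s)\big],
\end{equation}
differentiating in $s$ for fixed $t$. Because the $-S(t-s)(\mathcal{L}_{\rm tw}+\Delta_{x_\perp})$ from differentiating $S(t-s)$ cancels against the $+( \mathcal{L}_{\rm tw}+\Delta_{x_\perp})$ in the drift of $Y_{i_a}$, the unbounded operator disappears entirely and only the bounded inhomogeneities $N_j$, $B_j$ and the It\^o trace pairing survive (Lemma~\ref{lem:tay:red:single:step}). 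This is not a cosmetic point: without it your ``closed, deterministic, linear evolution equation in $\mathcal H$'' is not available in strong form because $Y_j(t)\in H_j$ does not lie in the domain of the generator, and passing from the formal strong form to the mild form is precisely the work you would be left with. You identify the obstacle correctly but do not supply the device that removes it.

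Second, the induction scheme. Induction on $i_{\rm tot}$ alone is not well-founded. Replacing $Y_{i_a}$ by the forcing $N_{i_a;I}=\sum_{\ell'\ge 2}\sum_{j_1+\ldots+j_{\ell'}=i_a}\tfrac{1}{\ell'!}D^{\ell'}\mathcal{R}_I(0)[Y_{j_1},\ldots,Y_{j_{\ell'}}]$ leaves the total index $i_1+\ldots+i_\ell$ unchanged (the paper records exactly this in the conservation law \eqref{eq:tay:sum:i:1:through:i:ell} with $i_0^*=0$ for the $\mathcal{R}_I$ branch), so your induction hypothesis does not cover the resulting forcing term. What does decrease strictly is the \emph{lexicographic} order of the sorted index tuple, because each $j_p<i_a$; the paper's Lemma~\ref{lem:tay:red:single:step} and Corollary~\ref{cor:tay:repr:e:gamma} iterate on that ordering until only constant data $\varrho_N$, $\varrho_B$, $Y_1(0)$ remain. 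To salvage your scheme you would need a two-level induction (on $i_{\rm tot}$, and within fixed $i_{\rm tot}$ downward on the number of slots $\ell$), using that $N_1=0$ to close the $\ell=i_{\rm tot}$ base case; equivalently, switch to the lexicographic ordering the paper uses.

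The remaining parts of your sketch — expanding $\mathbf F(s)=\mathbf F_\infty+(\mathbf F(s)-\mathbf F_\infty)$, splitting the Duhamel integral, absorbing the $e^{-\beta t}$ decay against the $e^{-\beta s/2}$ convergence of the forcing, and the $\sigma$--$\delta$ bookkeeping (with the initial-condition term $\delta^\ell\,\Lambda[S(t)V_*,\ldots]$ surviving only when all $i_a=1$) — are consistent with Lemmas~\ref{lem:tay:lim:e:star:decay} and~\ref{lem:tay:lim:exists}. The paper's route is more constructive (it yields explicit iterated-integral formulas for $h_\infty$, which the authors advertise as an algorithm for computing the expansion coefficients), whereas your Duhamel-plus-induction argument, once repaired, would establish existence of $h_\infty$ but give a less directly computable expression.
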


Naturally, it is of interest to replace the multi-linear maps by more general functionals. To this end, we introduce the notation
\begin{equation}
    Y_{\rm tay}[\sigma,\delta]
     = Y_1[\sigma,\delta] + \ldots + Y_{r-1}[\sigma,\delta]
\end{equation}
and consider the behaviour of
$\phi(Y_{\rm tay})$ for a class of smooth functionals $\phi$. Our following result provides a natural Taylor expansion for this expression,
with coefficients \eqref{eq:mr:phi:w:tay:coeffs} that can be explicitly computed using the
procedure developed in {\S}\ref{sec:lim}. In order
to control the remainder, we impose the following condition on $\phi$.
\begin{itemize}
    \item[(H$\phi$)]{
      There exist $K > 0$ and $N > 0$ so that the map $\phi \in C^{r}(H^{k_*}; \mathcal{H})$ satisfies
      the bound\footnote{Here $\mathscr{L}^{(r)}( H^{k_*}; \mathcal{H})$ denotes the space of
      bounded $r$-linear maps from $(H^{k_*})^r$ into $\mathcal{H}$.}
      \begin{equation}
      \label{eq:mr:h:phi:bnd:deriv:phi}
          \norm{D^{r}\phi(w) }_{\mathscr{L}^{(r)}( H^{k_*}; \mathcal{H})}  \le K [1 + ||w||_{H^{k_*}}^N]
       \end{equation}
       for all $w \in H^{k_*}$.
    }
\end{itemize}

We remark that such smooth functionals can be used as a `core' on which so-called Ornstein-Uhlenbeck transition semigroups can be defined, which govern the evolution of probability measures \cite{goldys2003transition}. We do not pursue this issue in the current paper, but believe that it is of high interest for future work.

\begin{prop}[{see {\S}\ref{sec:lim}}]
\label{prop:mr:tay:lim:phi}
    Suppose that \textnormal{(HNL)}, \textnormal{(HTw)}, \textnormal{(H$V_*$)} and \textnormal{(Hq)} all hold and pick a Hilbert space $\mathcal{H}$ together with a functional $\phi$ that satisfies \textnormal{(H$\phi$)}.
    Then there exist quantities
    \begin{equation}
    \label{eq:mr:phi:w:tay:coeffs}
        (h_{\infty;0}, \ldots , h_{\infty; r-1} ) \in \mathcal{H}^r
    \end{equation}
    and a constant $K > 0$
    so that the expectation of $\phi(Y_{\rm tay})$ can be approximated by
    the limiting polynomial
    \begin{equation}
        h_\infty(\sigma) = h_{\infty;0} + \sigma h_{\infty; 1} + \ldots + \sigma^{r-1} h_{\infty;r-1}
    \end{equation}
    with an error bounded by
    \begin{equation}
        \norm{\mathbb E \big[ \phi\big(Y_{\rm tay}[\sigma,\delta](t) \big) \big] - h_\infty(\sigma)
        }_{\mathcal{H}}
        \le
        K \big[ \delta + \sigma] e^{- \frac{1}{4}\beta t}
        + K \sigma^r
    \end{equation}
    for all $0 \le \sigma \le 1$ and $0 \le \delta \le 1$.
\end{prop}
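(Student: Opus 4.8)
# Proof Proposal for Proposition 3.5

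The plan is to combine a Taylor expansion of the functional $\phi$ around $0$ with the multi-linear limit result of Proposition~\ref{prop:mr:multiliner:lim}, controlling the Taylor remainder via the moment bounds of Proposition~\ref{prop:mr:tay}. The starting point is the finite Taylor expansion
\begin{equation}
\label{eq:pf:taylor}
    \phi(Y_{\rm tay}) = \sum_{\ell=0}^{r-1} \frac{1}{\ell!} D^\ell \phi(0)[Y_{\rm tay}, \ldots, Y_{\rm tay}]
    + \mathcal{E}_r(Y_{\rm tay}),
\end{equation}
where $\mathcal{E}_r$ is the integral form of the remainder, $\mathcal{E}_r(w) = \frac{1}{(r-1)!}\int_0^1 (1-\theta)^{r-1} D^r\phi(\theta w)[w,\ldots,w]\,\mathrm d\theta$. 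For each $\ell$-th order term, expanding $Y_{\rm tay} = Y_1 + \ldots + Y_{r-1}$ multi-linearly produces a finite sum of terms of the form $D^\ell\phi(0)[Y_{i_1}, \ldots, Y_{i_\ell}]$ with each $i_m \in \{1,\ldots,r-1\}$; since each $D^\ell\phi(0)$ is a bounded multi-linear map from $(H^{k_*})^\ell$ into $\mathcal{H}$ (and $H_{i_m} \hookrightarrow H^{k_*}$ continuously because $k_{i_m} \ge k_*$), Proposition~\ref{prop:mr:multiliner:lim} applies to each such term, giving $\mathbb E\, D^\ell\phi(0)[Y_{i_1},\ldots,Y_{i_\ell}](t) = \sigma^{i_{\rm tot}} h_\infty^{(i_1,\ldots,i_\ell)} + O\big((\sigma^{i_{\rm tot}} + \delta^{i_{\rm tot}})e^{-\frac{\beta}{2}t}\big)$ with $i_{\rm tot} = i_1 + \ldots + i_\ell \ge \ell$.

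Next I would reorganize this double sum by powers of $\sigma$. Collecting all contributions with $i_{\rm tot} = m$ for $m = 0, 1, \ldots$ defines the candidate coefficients
\begin{equation}
\label{eq:pf:coeffs}
    h_{\infty; m} = \sum_{\ell=0}^{m} \frac{1}{\ell!} \sum_{\substack{i_1 + \ldots + i_\ell = m \\ i_1, \ldots, i_\ell \ge 1}} h_\infty^{(i_1, \ldots, i_\ell)},
\end{equation}
with $h_{\infty;0} = \phi(0)$. All terms with $i_{\rm tot} \ge r$ get absorbed into the $O(\sigma^r)$ error: here one uses that the number of such terms is finite (bounded by a constant depending only on $r$) and that each carries a factor $\sigma^{i_{\rm tot}} \le \sigma^r$ when $\sigma \le 1$, after taking $t$ large enough that the exponentially decaying correction is also $O(\sigma^r)$ — but in fact we can keep the $e^{-\frac{\beta}{2}t}$ factor explicitly and bound its coefficient by $K[\sigma + \delta]$, matching the stated error. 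The exponential factors from Proposition~\ref{prop:mr:multiliner:lim} all decay at rate $\frac{\beta}{2} \ge \frac{\beta}{4}$, so they are consistent with the claimed $e^{-\frac{1}{4}\beta t}$; we retain the weaker rate for uniformity with the remainder estimate.

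The remaining, and I expect most delicate, step is bounding $\mathbb E\,\mathcal{E}_r(Y_{\rm tay}(t))$. Using (H$\phi$) we have $\norm{D^r\phi(\theta w)}_{\mathscr{L}^{(r)}} \le K[1 + \norm{w}_{H^{k_*}}^N]$, so
\begin{equation}
\label{eq:pf:rem:bnd}
    \norm{\mathcal{E}_r(Y_{\rm tay}(t))}_{\mathcal{H}} \le \frac{K}{r!}\big[1 + \norm{Y_{\rm tay}(t)}_{H^{k_*}}^N\big]\norm{Y_{\rm tay}(t)}_{H^{k_*}}^r.
\end{equation}
Taking expectations and applying the Cauchy–Schwarz inequality reduces matters to bounding $\mathbb E\,\norm{Y_{\rm tay}(t)}_{H^{k_*}}^{2r}$ and $\mathbb E\,\norm{Y_{\rm tay}(t)}_{H^{k_*}}^{2(N+r)}$. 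Since $\norm{Y_{\rm tay}}_{H^{k_*}} \le \sum_{j=1}^{r-1}\norm{Y_j}_{j}$ (using $H_j \hookrightarrow H^{k_*}$) and each $\norm{Y_j}_j$ obeys the moment bound \eqref{eq:mr:mo:bnds:w:j} of Proposition~\ref{prop:mr:tay}, a Minkowski/Hölder argument in $L^{2p}(\Omega)$ gives, for any fixed $p \ge 1$ and $t \ge 2$,
\begin{equation}
    \big(\mathbb E\,\norm{Y_{\rm tay}(t)}_{H^{k_*}}^{2p}\big)^{1/2p} \le K_p\big[\sigma + (\sigma^2\ln t + \delta^2)^{1/2}\big] \le K_p\big[\sigma\sqrt{\ln t} + \delta\big],
\end{equation}
where for $t \le 2$ one uses $C([0,2])$-continuity directly. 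Hence $\mathbb E\,\norm{Y_{\rm tay}(t)}_{H^{k_*}}^{r} \lesssim (\sigma\sqrt{\ln t} + \delta)^r$ and the $(N+r)$-power term is similarly controlled. The main obstacle is that the naive bound $(\sigma\sqrt{\ln t} + \delta)^r$ carries a $(\ln t)^{r/2}$ factor, which is not uniformly $O(\sigma^r)$ as $t \to \infty$. This is exactly the tension flagged in the introduction (the true expansion parameter is $\sigma\sqrt{\ln T}$). I would resolve it by noting that the statement only claims an error of the form $K[\delta+\sigma]e^{-\frac{1}{4}\beta t} + K\sigma^r$: for the regime where $\ln t \le \sigma^{-1}$ (equivalently $t$ not astronomically large relative to $\sigma$), $(\sigma\sqrt{\ln t})^r \le \sigma^{r/2} \cdot \sigma^{r/2} \le \ldots$ — more carefully, one splits at the threshold $e^{\frac{1}{4}\beta t} = \sigma^{-(N+r)}$; for $t$ below it, $(\sigma\sqrt{\ln t}+\delta)^{N+r}$ is bounded by $K\sigma^r$ after absorbing the logarithm (since $\ln t \lesssim \log(1/\sigma)$ there and $\sigma^r \log^{(N+r)/2}(1/\sigma) \le K\sigma^{r-\epsilon}$ — or, to get exactly $\sigma^r$, one uses that the polynomial part already captures terms up to order $r-1$ so only the genuinely $r$-th order piece remains and a slightly more careful bookkeeping using $\sigma^r(\ln t)^{r/2} \le \sigma^r e^{\frac{1}{8}\beta t}$ for large $t$ is dominated by the $e^{-\frac{1}{4}\beta t}$ prefactor elsewhere), and for $t$ above the threshold the $e^{-\frac14\beta t}$ term dominates. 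Combining \eqref{eq:pf:taylor}, the reorganization \eqref{eq:pf:coeffs}, the multi-linear limits, and the remainder bound then yields the stated estimate, completing the proof.
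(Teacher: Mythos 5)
Your overall structure matches the paper's: Taylor-expand $\phi$ around $0$ to order $r-1$ with an $r$-th order integral remainder, expand $Y_{\rm tay}$ multi-linearly, regroup by total degree $i_{\rm tot}$, apply Proposition~\ref{prop:mr:multiliner:lim} to each multi-linear piece (both the $i_{\rm tot}<r$ terms defining $h_{\infty;m}$ and the $i_{\rm tot}\ge r$ terms absorbed into $O(\sigma^r)$), and bound the Taylor remainder using (H$\phi$) and moment bounds on $\norm{Y_{\rm tay}(t)}_{H^{k_*}}$. This is exactly the decomposition of {\S}\ref{subsec:lim:phi}, and your handling of the multi-linear pieces (Lemmas~\ref{lem:tay:bnd:z:j} and~\ref{lem:tay:bnd:z:rem:a}) is correct.

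However, there is a genuine gap in the remainder estimate, and your proposed fix does not work. You control $\mathbb E\,\norm{Y_{\rm tay}(t)}_{H^{k_*}}^{2p}$ by invoking the supremum moment bound \eqref{eq:mr:mo:bnds:w:j} with $T=t$, which necessarily carries the $[\sigma^2\ln t+\delta^2]$ factor, so your bound for $\mathbb E\,\mathcal{E}_r(Y_{\rm tay}(t))$ grows like $(\sigma\sqrt{\ln t})^{r}$ as $t\to\infty$. This cannot be reconciled with the claimed $K[\delta+\sigma]e^{-\frac14\beta t}+K\sigma^r$: for large $t$ the right-hand side is essentially the $t$-independent constant $K\sigma^r$, whereas your bound is unbounded in $t$. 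The time-splitting sketch does not repair this, because the remainder term has no exponential prefactor of its own to borrow; the phrase ``dominated by the $e^{-\frac14\beta t}$ prefactor elsewhere'' conflates separate error contributions, and the admission that you only obtain $\sigma^{r-\epsilon}$ confirms the gap.

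The missing ingredient is a \emph{pointwise-in-time} moment bound with no logarithm. In the paper this is Lemma~\ref{lem:tay:bnd:on:w:j:norm}: because $\norm{Y_j(t)}_{k_j}^{2p}$ can be written as the $2p$-linear form $\langle Y_j,Y_j\rangle_{H^{k_j}}\cdots\langle Y_j,Y_j\rangle_{H^{k_j}}$, Proposition~\ref{prop:mr:multiliner:lim} itself yields
\begin{equation}
\mathbb E\,\norm{Y_j(t)}_{k_j}^{2p}\le K_p\big[\sigma^{2pj}+\delta^{2pj}e^{-\frac{\beta}{4}t}\big],
\end{equation}
uniformly in $t$. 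Feeding this into the remainder estimate (Lemma~\ref{lem:tay:bnd:z:rem:b}) gives $\mathbb E\,\norm{h_{\rm rem;b}(t)}_{\mathcal H}\le K[\sigma^r+\delta^r e^{-\frac{\beta}{4}t}]$, which fits the stated error. So the fix is not a more careful supremum-bound bookkeeping, but rather exploiting the multi-linear limit machinery once more to get single-time moments that saturate rather than grow logarithmically.
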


\subsection{Residual estimates}
\label{subsec:mr:res}

We now turn our attention to the full perturbation \eqref{eq:mr:decomp:v}.
In order to control the size of the residual $Z$, we recall the exponent $\beta > 0$ introduced in \eqref{eq:mr:def:beta} and
introduce the notation
\begin{equation}
\label{eq:mr:def:n:res}
    \mathcal{N}_{\rm res}(t) = \norm{Z(t)}_{H^{k_*}}^2 + \int_0^t e^{-2 \beta(t-s)} \norm{Z(s)}_{H^{k_* + 1}}^2 \, \mathrm ds.
\end{equation}
In addition, upon introducing the shorthand
\begin{equation}
\label{eq:mr:def:alpha}
    \alpha = \sqrt{ \delta^2 + \sigma^2 \ln T },
\end{equation}
we introduce the expression
\begin{equation}
    \mathcal{N}_{\rm full}(t;\sigma,\delta,T)
     = \alpha^{2(r-1)} \norm{Y_1(t)}_{1}^2 + \ldots + \alpha^2 \norm{Y_{r-1}(t)}_{r-1}^2     + \mathcal{N}_{\rm res}(t)
\end{equation}
together with the associated stopping time
\begin{equation}
    t_{\rm st}(\eta ; \sigma, \delta, T)
    = \inf \{ 0 \le t < \tau_\infty(T): \mathcal{N}_{\rm full}(t;\sigma,\delta,T) > \eta \alpha^{2(r-1)} \},
\end{equation}
writing $t_{\rm st} = \tau_\infty(T)$ whenever the infimum is taken over an empty set.
In particular, we point out that for $0 \le t < t_{\rm st}(\eta;\sigma, \delta, T)$ we have
\begin{equation}
    \norm{Y_j(t)}_{j}^2 \le \eta \alpha^{2(j-1)} =
     \eta \big( \sigma^2 \ln T + \delta^2 \big)^{j-1}
\end{equation}
for $1 \le j \le r-1$, together with
\begin{equation}
   \norm{Z(t)}_{H^{k_*}}^2 \le \eta \alpha^{2(r-1)} =  \eta   \big( \sigma^2 \ln T + \delta^2 \big)^{r-1}.
\end{equation}
In addition, we have integrated control over the higher-order norm $\norm{Z(t)}_{H^{k_* + 1}}$.

Intuitively, the event $t_{\rm st} < T$ represents the scenario that one of the expansion functions $Y_j$ or the residual $Z$ becomes `an order too large' in terms of the natural expansion parameter $\alpha$. The presence of the logarithmic term in this expansion parameter is directly related to the growth rate of the supremum of stochastic convolutions; see, e.g., \eqref{eq:mr:mo:bnds:w:j}.
Our main result in this paper shows that this scenario can be prevented with high probability over timescales that are exponentially long with respect to $\sigma$. We note that (formal)\footnote{We reiterate that $r \ge 3$ throughout this paper.} substitution of $r = 1$ recovers the bound obtained in \cite[Thm. 2.6]{bosch2024multidimensional}.

\begin{thm}[{see {\S}\ref{sec:stb}}]
\label{thm:main}
Suppose that \textnormal{(HNL)}, \textnormal{(HTw)}, \textnormal{(H$V_*$)} and \textnormal{(Hq)} hold.
Then   there exist  constants $0<\mu<1$, $\delta_\eta>0$, and $\delta_\sigma>0$ such that, for any integer $T\geq 3,$ any $0<\eta\leq \delta_\eta$, any $0 < \sigma\leq \delta_\sigma$ 
and any $\delta^2 < \mu \eta$, we have
    \begin{equation}
        \mathbb P(t_{\rm st}(\eta;\sigma,\delta, T)<T)\leq 2T\exp\left(-\mu \frac{\eta^{1/r}}{\sigma^{2/r}}\right)\label{eq:mr:bnd:stop:time}.
    \end{equation}
\end{thm}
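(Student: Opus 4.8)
The plan is to obtain a closed differential inequality for the process $\mathcal{N}_{\rm full}$ up to the stopping time $t_{\rm st}$, and then use an exponential-martingale (Gr\"onwall / stochastic bootstrap) argument to show that the event $\{t_{\rm st} < T\}$ is exponentially unlikely. First I would invoke the mild representation for $Z$ established in {\S}\ref{sec:res} (the analogue of \eqref{eq:mr:weak:form:evol:v} but with the first $r-1$ Taylor terms subtracted off), which expresses $Z$ as a sum of a deterministic convolution of nonlinear "leftover" terms and a stochastic convolution. The crucial structural point is that, because the $Y_j$ solve the recursion \eqref{eq:mr:def:w:j:all}, every term in the nonlinearity driving $Z$ is either (a) at least $r$-linear in the small quantities $(Y_1,\dots,Y_{r-1},Z)$, (b) genuinely of order $\alpha^r$ by power counting, or (c) linear in $Z$ itself (handled by the semigroup decay encoded in $\beta$). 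On the event $t < t_{\rm st}(\eta;\sigma,\delta,T)$ we have the a priori bounds $\|Y_j(t)\|_j^2 \le \eta \alpha^{2(j-1)}$ and $\|Z(t)\|_{H^{k_*}}^2 \le \eta\alpha^{2(r-1)}$, so each such term can be bounded by a constant times $\eta^{\text{(something)}}\,\alpha^{2(r-1)}$ plus a small multiple of $\mathcal{N}_{\rm res}$, where choosing $\eta$ and $\sigma$ small makes the prefactors small. The moment bounds in Proposition~\ref{prop:mr:tay}(iii) and the convolution estimates of {\S}\ref{sec:conv} supply the required control on the remaining "source" terms at scale $\alpha^{2(r-1)}$.

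Concretely, I would apply the It\^o formula to $\mathcal{N}_{\rm res}(t) = \|Z(t)\|_{H^{k_*}}^2 + \int_0^t e^{-2\beta(t-s)}\|Z(s)\|_{H^{k_*+1}}^2\,\mathrm ds$. The higher-norm integral term is precisely what lets the dissipativity $\|\cdot\|_{H^{k_*+1}}^2 \gtrsim \beta\|\cdot\|_{H^{k_*}}^2$ absorb the positive contributions from the semigroup generator $\mathcal{L}_{\rm tw}+\Delta_{x_\perp}$ after projecting off the neutral mode (this is where $\beta = \min\{\beta_{\rm tw},\tfrac12\lambda_1\}$ from \eqref{eq:mr:def:beta} enters, and where condition (H$V_*$) guaranteeing the orthogonality to $\psi_{\rm tw}$ is used). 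The outcome is an inequality of the schematic form
\begin{equation}
\mathrm d\,\mathcal{N}_{\rm res} \le \big[ -\beta\,\mathcal{N}_{\rm res} + C(\eta^{1/2}+\sigma)\,\mathcal{N}_{\rm full} + C\,\Theta(t) \big]\,\mathrm dt + \mathrm d\mathcal{M}(t),
\end{equation}
where $\Theta(t)$ collects the genuinely $O(\alpha^{2r})$ source terms (built from the $Y_j$'s and $\mathcal{S}$-type integrands) and $\mathcal{M}$ is a local martingale whose quadratic variation is itself controlled by $\mathcal{N}_{\rm full}^2$ plus lower-order terms. Adding the analogous (exact) identities from Proposition~\ref{prop:mr:tay}(ii) for the rescaled pieces $\alpha^{2(r-1-j)}\|Y_j\|_j^2$ — whose drift terms are again dissipative modulo lower-order contributions — I get a single closed inequality for $\mathcal{N}_{\rm full}$. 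The stopping-time definition ensures that on $[0,t_{\rm st})$ the "feedback" coefficient multiplying $\mathcal{N}_{\rm full}$ is at most $C(\eta^{1/2}+\sigma) \ll \beta$, so the drift is effectively $-\tfrac{\beta}{2}\mathcal{N}_{\rm full} + C\Theta$.

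From here the argument is a standard exponential supermartingale / exit estimate: one shows that $\mathbb{P}(t_{\rm st}<T)$ forces either a large deviation of one of the stochastic convolutions building $Y_j$ (controlled by Proposition~\ref{prop:mr:tay}(iii) with the exponent $p$ chosen large, using the Gaussian-type tail $\mathbb E\sup\|\cdot\|^{2p}\lesssim (\sigma^2 p)^{jp}+(\sigma^2\ln T+\delta^2)^{jp}$ and Chebyshev), or a large deviation of the forward-integral martingale $\mathcal{M}$ (controlled by the Burkholder--Davis--Gundy / exponential-martingale bounds from {\S}\ref{sec:conv} and {\S}\ref{sec:res}). Optimizing the free exponent $p$ against the threshold $\eta\alpha^{2(r-1)}$ — one needs $\sigma^{2p}p^{p(r-1)} \ll (\eta\,\sigma^{2}\ln T/\dots)^{p(r-1)}$-type comparisons, which is exactly what produces $\eta^{1/r}/\sigma^{2/r}$ in the exponent — and paying the union bound over the $T$-long time interval (the factor $2T$) yields \eqref{eq:mr:bnd:stop:time}. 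The condition $\delta^2 < \mu\eta$ is used to make the $\delta$-contribution to $\alpha$ negligible against the threshold, and $\delta_\eta,\delta_\sigma$ are fixed so that all the smallness requirements ($C(\eta^{1/2}+\sigma)<\beta/2$, convergence of the power-counting, positivity of the exponent) hold simultaneously.

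The main obstacle I anticipate is the treatment of the forward (anticipating) stochastic integrals: the integrand $\widetilde{B}_j[Y_1,\dots,Y_{j-1}]$ and the residual's stochastic term are not adapted in the way classical It\^o theory wants, so the It\^o formula for $\|Z\|^2$ and the martingale tail bounds must be justified via the forward-integral calculus of {\S}\ref{sec:conv}/{\S}\ref{sec:res} rather than textbook SPDE tools, and one must carefully verify that the quadratic-variation term does not destroy the sign of the dissipative drift. The second delicate point is purely bookkeeping but error-prone: getting the powers of $\alpha$, $\eta$, and $\sigma$ to line up across the coupled system of inequalities so that the final exponent comes out as the clean $\mu\,\eta^{1/r}\sigma^{-2/r}$ rather than something weaker — this is where the specific weighting $\alpha^{2(r-1-j)}$ in $\mathcal{N}_{\rm full}$ and the choice $r\ge 3$ are doing essential work.
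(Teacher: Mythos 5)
Your proposal takes a genuinely different route from the paper. Where you propose applying It\^o's formula to $\mathcal{N}_{\rm res}$ to obtain a closed differential inequality and then closing via an exponential supermartingale/exit estimate, the paper never forms such a pathwise inequality: it works entirely in the mild (Duhamel) form for the residual. Concretely, the paper first performs a random time transformation $\tau(t)=\int_0^t \kappa_\sigma(Z;Y_{\rm tay})\,\mathrm ds$ to normalize the coefficient $\kappa_\sigma = 1+\sigma^2\tilde\nu(\Phi_0+V,0)$ multiplying $\mathcal{L}_{\rm tw}$ (Proposition~\ref{prop:res:mild}), then writes $\bar Z$ as a sum of four convolutions $\mathcal E^d_{\rm rem},\mathcal E^d_{\mathcal N},\mathcal E^s_{\rm rem},\mathcal E^s_{\mathcal M}$ against the random evolution family, and bounds $\mathbb E\sup\bar{\mathcal N}_{\rm res}^p$ by estimating each piece with the supremum/maximal-regularity convolution bounds of {\S}\ref{sec:conv} (Propositions~\ref{prp:cnv:hn} and~\ref{prp:cnv:hb}). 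The only ``feedback'' is the linear absorption $\mathbb E\sup\bar{\mathcal N}_{\rm res}^p \le \text{sources} + K^p(\sigma^2+\sqrt\eta)^{2p}\mathbb E\sup\bar{\mathcal N}_{\rm res}^p$; there is no exponential supermartingale anywhere. The final tail bound is a Chernoff-type optimization over $p$ (Corollary~\ref{cor:prlm:gen:bnd:for:tail:distr}) applied to the closed moment bound, so that part of your plan is aligned with the paper, though your ``union bound over the $T$-long interval'' attribution for the factor $2T$ is not quite right: it comes from the $\exp[\Theta_2/(2e)]\sim T^{1/(2e)}$ factor in the Chernoff inequality and from doubling $T$ when undoing the time transform, not from summing probabilities over $T$ intervals.

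Two genuine gaps in your plan. First, you never mention the time transformation. The residual dynamics has a random scalar in front of $\mathcal L_{\rm tw}\bar Z$ coming from the second-order It\^o corrections of the freezing Ansatz, and the paper's whole {\S}\ref{sec:res} is devoted to reducing to a generator covered by the convolution estimates of {\S}\ref{sec:conv}; an energy-estimate approach would face the same issue through a randomly modulated coercivity constant. Second, and more structurally, the nonlinearities $\mathcal N$ and $\mathcal M$ lose a derivative (they need $\|\bar Z\|_{H^{k_*+1}}$; see \eqref{eq:res:bnd:n:m:after:stop}), and the role of the integral term $\int_0^t e^{-2\beta(t-s)}\|Z\|_{H^{k_*+1}}^2\,\mathrm ds$ in $\mathcal N_{\rm res}$ is precisely to supply that extra regularity, via the maximal-regularity bound \eqref{eq:conv:max:reg:bnd} on the mild form. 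Your reading of this term as ``what lets the dissipativity absorb the positive contributions of the generator'' conflates two mechanisms: the neutral-mode issue is handled by the $P^\perp$ projection and the spectral gap $\beta_{\rm tw}$, while the higher-norm integral is a separate maximal-regularity object, not something one gets for free from an It\^o identity for $\|Z\|_{H^{k_*}}^2$. If you insisted on the energy-estimate route you would need an $H^{k_*}$-level G\aa{}rding inequality producing $-c\|Z\|_{H^{k_*+1}}^2$ in the drift and a careful check that the quadratic-variation term does not eat this good sign; neither of these is automatic, and you correctly flag the forward-integral issue but underestimate how much work the paper sidesteps by staying in mild form.
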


In order to extract explicit bounds from our main result that can be interpreted in the spirit of our desired Taylor expansions, it is convenient to control the size-parameter $\eta$ and the timescale $T$ directly in terms of $\sigma$.
To this end, we pick a scale-parameter $0 < \theta < 1 $
and write
\begin{equation}
    \eta(\sigma;\theta) = 2^{1-r} \sigma^{2 (1-\theta)},
    \qquad \qquad
    T(\sigma;\theta) =
     \lfloor e^{\frac{1}{2} \mu \sigma^{-2 \theta/r}} \rfloor,
\end{equation}
where $\lfloor \cdot \rfloor$ rounds down to the nearest integer.
For convenience, we define the `stability event'
\begin{equation}
    \mathcal{A}_{\rm stb}(\sigma, \delta;\theta) =
    \{ t_{\rm st}\big(\eta(\sigma;\theta);\sigma,\delta, T(\sigma;\theta)\big)= T(\sigma;\theta) \}
\end{equation}
with the associated probability
\begin{equation}
    p_{\rm stb}(\sigma, \delta;\theta) =
    \mathbb P\big( \mathcal{A}_{\rm stb}(\sigma, \delta;\theta) \big)
\end{equation}
that we will aim to keep close to one.

Our next result represents
a convenient reformulation of the bound \eqref{eq:mr:bnd:stop:time} in a more explicit form. We point out
that the special choice
\begin{equation}
    \theta_* = 1/\big(2 (2 - 1/r)\big) = r/\big(2(2r - 1)\big)
\end{equation}
for $\theta$ allows
\eqref{eq:mr:bnd:theta:y:w:j} to be simplified to
\begin{equation}
\label{eq:mr:pathwise:cond:a:star}
    \norm{Z(t)}_{H^{k_*}}^2 \le
        \sigma^{2r - 1},
        \qquad \qquad
    \norm{Y_j(t)}_{j}^2 \le
     \sigma^{2j  - (r +j-1)/(2r - 1)}
    \le  \sigma^{2j  - 1} .
\end{equation}

\begin{cor}
    Suppose that \textnormal{(HNL)}, \textnormal{(HTw)}, \textnormal{(H$V_*$)} and \textnormal{(Hq)} hold
and pick
$\theta \in [0, \frac{1}{2})$. Recall the parameter $0 < \mu < 1$ defined
in Theorem \ref{thm:main}. Then there exists a constant $\delta_\sigma>0$ such that for any any $0<\sigma\leq \delta_\sigma$ 
and any $0 \le \delta \le \sigma^{1 - \theta/r}$, we have
    \begin{equation}
    \label{eq:mr:bnd:with:sigma}
        p_{\rm stb}(\sigma, \delta;\theta) \geq
        1 - 2\exp\left(- \frac{1}{2} \mu \sigma^{-2 \theta
 / r}\right).
    \end{equation}
In addition, whenever $\mathcal{A}_{\rm stb}$ holds we have
\begin{equation}
\label{eq:mr:bnd:theta:y:w:j}
    \norm{Z(t)}_{H^{k_*}}^2
    \le \sigma^{2r - 2 \theta(2 - 1/r)},
    \qquad
    \norm{Y_j(t)}_{j}^2 \le
  \sigma^{2j  - 2 \theta(1 + (j-1)/r)}
\end{equation}
for all $0 \le t \le T(\sigma;\theta)$ and $1 \le j \le r-1$.
\end{cor}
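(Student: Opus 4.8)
The plan is to derive the Corollary directly from Theorem \ref{thm:main} by making the two substitutions $\eta = \eta(\sigma;\theta) = 2^{1-r}\sigma^{2(1-\theta)}$ and $T = T(\sigma;\theta) = \lfloor e^{\frac12\mu\sigma^{-2\theta/r}}\rfloor$, and then unwinding the definitions of $\mathcal{A}_{\rm stb}$, $t_{\rm st}$ and $\mathcal{N}_{\rm full}$. First I would check that the hypotheses of Theorem \ref{thm:main} are met for $\sigma$ small: I need $T \ge 3$, which holds once $\sigma \le \delta_\sigma$ is small enough that $\frac12\mu\sigma^{-2\theta/r} \ge \ln 3$ (here $\theta < \tfrac12$ guarantees the exponent blows up as $\sigma \downarrow 0$; note that if $\theta = 0$ the statement is essentially vacuous/trivial, so really $\theta \in (0,\tfrac12)$ is the case of interest, but the boundary is harmless); I need $0 < \eta \le \delta_\eta$, which holds once $\sigma$ is small since $2^{1-r}\sigma^{2(1-\theta)} \to 0$; and I need $\delta^2 < \mu\eta$, i.e.\ $\delta^2 < \mu 2^{1-r}\sigma^{2(1-\theta)}$. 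The hypothesis $\delta \le \sigma^{1-\theta/r}$ gives $\delta^2 \le \sigma^{2-2\theta/r}$, and since $2 - 2\theta/r > 2 - 2\theta = 2(1-\theta)$ (because $r \ge 1$, strict because $\theta>0$), we have $\delta^2 \le \sigma^{2-2\theta/r} \le \sigma^{2(1-\theta)}$ for $\sigma \le 1$, so $\delta^2 < \mu 2^{1-r}\sigma^{2(1-\theta)}$ holds provided $\sigma$ is small enough that $\sigma^{2(1-\theta) - (2-2\theta/r)} \le \mu 2^{1-r}$; absorbing all of these smallness requirements into a single (possibly smaller) $\delta_\sigma$ completes the verification.

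Next I would plug into \eqref{eq:mr:bnd:stop:time}: with these choices, $\eta^{1/r}/\sigma^{2/r} = (2^{1-r})^{1/r}\sigma^{2(1-\theta)/r - 2/r} = 2^{(1-r)/r}\sigma^{-2\theta/r}$, so the exponent $-\mu\eta^{1/r}/\sigma^{2/r} = -\mu 2^{(1-r)/r}\sigma^{-2\theta/r}$. To land exactly on the clean bound $2\exp(-\tfrac12\mu\sigma^{-2\theta/r})$ in \eqref{eq:mr:bnd:with:sigma} I observe $2^{(1-r)/r} \ge 2^{-1} \cdot$ something — actually $2^{(1-r)/r} = 2^{1/r - 1} \le 1$, so one needs to be slightly careful: the constant $2^{1/r-1}$ can be smaller than $\tfrac12$ for $r \ge 2$. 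The way around this is to note that $T\exp(-\mu 2^{1/r-1}\sigma^{-2\theta/r})$, with $T \le e^{\frac12\mu\sigma^{-2\theta/r}}$, is bounded by $\exp\big((\tfrac12\mu - \mu 2^{1/r-1})\sigma^{-2\theta/r}\big)$; for this to beat $\exp(-\tfrac12\mu\sigma^{-2\theta/r})$ one needs $\tfrac12\mu - \mu 2^{1/r-1} \le -\tfrac12\mu$, i.e.\ $2^{1/r-1} \ge \mu$ — wait, i.e.\ $2^{1/r} \ge 2$, false. So in fact the honest route is to absorb the $T$ factor: $2T\exp(-\mu\eta^{1/r}\sigma^{-2/r}) \le 2\exp\big((\tfrac12\mu - \mu 2^{1/r-1})\sigma^{-2\theta/r}\big)$, and since $\tfrac12\mu - \mu 2^{1/r - 1} < 0$ for all $r\ge 2$ actually is \emph{true} only when $2^{1/r-1} > \tfrac12$ i.e.\ $2^{1/r} > 1$, which holds for all finite $r$ — so the bracket is negative; then one either reads \eqref{eq:mr:bnd:with:sigma} as a consequence with the constant $\tfrac12\mu$ replaced by the smaller positive constant $\mu(2^{1/r-1} - \tfrac12)$, or (cleaner) one simply redefines $\mu$ at the outset of the Corollary to be small enough that $\mu 2^{1/r-1} - \tfrac12\mu \ge \tfrac12\mu$, which is impossible — so the correct statement is that \eqref{eq:mr:bnd:with:sigma} holds with $\tfrac12\mu$ understood as \emph{some} positive multiple of $\mu$; I would phrase the proof so that $\mu$ in the Corollary is interpreted consistently with the paper's own convention (the paper writes $\tfrac12\mu$ in both $T(\sigma;\theta)$ and the bound, so the internal bookkeeping must already account for this, and I would just cite it). The cleanest presentation: take $\mu$ small, absorb $T \le e^{\frac12\mu\sigma^{-2\theta/r}}$ into the exponential, and note $1 - 2^{1/r} \le -c < 0$ uniformly in $r \ge 1$ over the relevant range, giving a net negative exponent proportional to $\mu\sigma^{-2\theta/r}$; then shrink $\mu$ (or enlarge $\delta_\sigma$) so the constant is exactly $\tfrac12\mu$.

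For the pathwise bounds \eqref{eq:mr:bnd:theta:y:w:j}: on $\mathcal{A}_{\rm stb}$ we have $t_{\rm st} = T$, hence for all $0 \le t \le T$ (or $< \tau_\infty(T)$, which equals $T$ on this event) the inequality $\mathcal{N}_{\rm full}(t;\sigma,\delta,T) \le \eta\alpha^{2(r-1)}$ holds, where $\alpha^2 = \delta^2 + \sigma^2\ln T$. From the displayed consequences right before the Corollary's statement, $\norm{Z(t)}_{H^{k_*}}^2 \le \eta\alpha^{2(r-1)}$ and $\norm{Y_j(t)}_j^2 \le \eta\alpha^{2(j-1)}$. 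Now I estimate $\alpha^2$: since $\ln T \le \frac12\mu\sigma^{-2\theta/r} \le \frac12\sigma^{-2\theta/r}$ we get $\sigma^2\ln T \le \frac12\sigma^{2 - 2\theta/r}$, and $\delta^2 \le \sigma^{2-2\theta/r}$ by hypothesis, so $\alpha^2 \le 2\sigma^{2-2\theta/r}$ — actually I should be careful about whether $\alpha^2 \le C\sigma^{2-2\theta/r}$ with a clean constant; the factor $2$ in $\eta(\sigma;\theta) = 2^{1-r}\sigma^{2(1-\theta)}$ is exactly there to absorb the resulting $2^{r-1}$ from $\alpha^{2(r-1)} \le 2^{r-1}\sigma^{(2-2\theta/r)(r-1)}$. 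Indeed $\eta\alpha^{2(r-1)} \le 2^{1-r}\sigma^{2(1-\theta)} \cdot 2^{r-1}\sigma^{(2-2\theta/r)(r-1)} = \sigma^{2(1-\theta) + (r-1)(2 - 2\theta/r)}$, and I compute the exponent: $2 - 2\theta + 2(r-1) - 2\theta(r-1)/r = 2r - 2\theta - 2\theta(r-1)/r = 2r - 2\theta(1 + (r-1)/r) = 2r - 2\theta(2 - 1/r)$, matching \eqref{eq:mr:bnd:theta:y:w:j} for $Z$ exactly. Similarly $\eta\alpha^{2(j-1)} \le 2^{1-r}\sigma^{2(1-\theta)}\cdot 2^{j-1}\sigma^{(2-2\theta/r)(j-1)} = 2^{j-r}\sigma^{2(1-\theta)+(j-1)(2-2\theta/r)} \le \sigma^{2j - 2\theta(1 + (j-1)/r)}$ (using $2^{j-r} \le 1$ since $j \le r-1 < r$), giving the $Y_j$ bound. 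The main obstacle here is purely bookkeeping: getting the constants to line up so that the factors of $2$ cancel cleanly, and making sure the single $\delta_\sigma$ simultaneously handles (a) $T \ge 3$, (b) $\eta \le \delta_\eta$, (c) $\delta^2 < \mu\eta$, and (d) the exponent arithmetic in the probability bound; none of this is deep, but it must be done carefully. I would also remark that the final simplification \eqref{eq:mr:pathwise:cond:a:star} for $\theta = \theta_*$ follows by direct substitution of $\theta_* = r/(2(2r-1))$ into the exponents $2r - 2\theta(2-1/r)$ and $2j - 2\theta(1+(j-1)/r)$, using $2\theta_*(2 - 1/r) = 1$.
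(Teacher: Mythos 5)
Your proof follows essentially the same route as the paper's: verify the hypotheses of Theorem~\ref{thm:main}, plug in $\eta(\sigma;\theta)$ and $T(\sigma;\theta)$, and unwind the definition of $\mathcal{N}_{\rm full}$. The pathwise-bound computation in the final part is correct and reproduces the paper's exponent arithmetic exactly. You have also correctly flagged a genuine constant wrinkle that the paper's one-line justification glosses over: with $\eta^{1/r}/\sigma^{2/r} = 2^{(1-r)/r}\sigma^{-2\theta/r}$ and $T \le e^{\frac12\mu\sigma^{-2\theta/r}}$, the exponent produced by \eqref{eq:mr:bnd:stop:time} is $\big(\tfrac12 - 2^{(1-r)/r}\big)\mu\sigma^{-2\theta/r}$, whose magnitude $\big(2^{(1-r)/r} - \tfrac12\big)\mu$ is strictly less than $\tfrac12\mu$ for every finite $r\ge 2$, so the clean constant $\tfrac12\mu$ in \eqref{eq:mr:bnd:with:sigma} does not fall out without a further adjustment.

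However, your discussion around this has slips worth fixing. First, the claim that $2^{1/r-1}$ ``can be smaller than $\tfrac12$ for $r\ge 2$'' is false: since $-1 < (1-r)/r < 0$ for all $r>1$, we have $\tfrac12 < 2^{1/r-1} < 1$, with $\tfrac12$ only in the limit $r\to\infty$; you do observe a few lines later that the bracket is negative, but the earlier sentence contradicts this. Second, ``shrink $\mu$'' does not repair the constant because $\mu$ multiplies both the exponent inside $T(\sigma;\theta)$ and the Theorem bound, so the sign and size of $\tfrac12 - 2^{(1-r)/r}$ are $\mu$-independent; what works is to decouple the two occurrences (e.g.\ replace $\tfrac12\mu$ in $T(\sigma;\theta)$ by $c\mu$ with $c \le 2^{(1-r)/r}-\tfrac12$) or simply accept the constant $(2^{(1-r)/r}-\tfrac12)\mu$ in the bound. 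Finally, your verification of $\delta^2 < \mu\eta$ has an arithmetic slip: the step $\delta^2 \le \sigma^{2(1-\theta)} \Rightarrow \delta^2 < \mu 2^{1-r}\sigma^{2(1-\theta)}$ is a non-sequitur since $\mu 2^{1-r}<1$, and the exponent you write, $2(1-\theta)-(2-2\theta/r) = -2\theta(1-1/r)$, is negative (so $\sigma$ to that power blows up as $\sigma\downarrow 0$); the correct requirement is $\sigma^{2\theta(1-1/r)} < \mu 2^{1-r}$, which does hold for $\sigma$ small since the exponent is positive. None of this changes the conclusion or the overall strategy, which is sound.
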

\begin{proof}
We note first that the restriction
on $\delta$ ensures that
\begin{equation}
    \delta^2 \le \sigma^2 \sigma^{-2 \theta /r} < \mu  \sigma^2 \sigma^{-2 \theta } = \mu \eta(\sigma;\theta)
\end{equation}
whenever $\sigma > 0$ is sufficiently small.
In particular, the estimate \eqref{eq:mr:bnd:with:sigma}
follows from \eqref{eq:mr:bnd:stop:time}.
In addition, we notice that
\begin{equation}
    \sigma^2 \ln T(\sigma;\theta) + \delta^2 \le \frac{1}{2}\mu \sigma^{2(1-\theta/r)} + \sigma^{2(1 - \theta/r)}
    \le 2 \sigma^{2(1 - \theta/r)}.
\end{equation}
The first bound in
\eqref{eq:mr:bnd:theta:y:w:j} can hence be obtained by computing
\begin{equation}
    \norm{Z(t)}_{H^{k_*}}^2 \le
        \eta (\sigma^2 \ln T + \delta^2)^{r-1}
    \le  2^{1-r}\sigma^{2 (1-\theta)}\big[ 2 \sigma^{2(1 - \theta/r)}    ]^{r-1}
    = \sigma^{2r - 2 \theta(2 - 1/r)},
\end{equation}
while the second bound follows from the estimate
\begin{equation}
    \norm{Y_j(t)}_{j}^2 \le
    \eta (\sigma^2 \ln T + \delta^2)^{j-1}
    \le 2^{1-r} \sigma^{2 (1-\theta)}\big[ 2 \sigma^{2(1 - \theta/r)}    ]^{j-1}
    = 2^{j-r}  \sigma^{2j  - 2 \theta(1 + (j-1)/r)}.
\end{equation}

\end{proof}

\subsection{Wave properties}

We are now ready to consider expressions of the form $\phi(V)$ and provide Taylor
expansions
for their expectation, conditioned on the
stability properties encoded in $\mathcal{A}_{\rm stb}$.
In particular, we make the decomposition
    \begin{equation}
    \label{eq:mr:decomp:cnd:exp}
        \mathbb E \big[ \phi(V) | \mathcal{A}_{\rm stb} \big]
         = \mathbb E \big[ \phi(Y_{\rm tay}) | \mathcal{A}_{\rm stb} \big]
         + \mathbb E \big[ \big(\phi(Y_{\rm tay} +Z) - \phi(Y_{\rm tay}) \big) | \mathcal{A}_{\rm stb} \big]
    \end{equation}
and use the pathwise properties \eqref{eq:mr:pathwise:cond:a:star}
to bound the second term as $O( \sigma^{r-\frac{1}{2}})$.
In order to use the expansion in Proposition \ref{prop:mr:tay:lim:phi},
we hence need to control the change to the expectation of $\phi(Y_{\rm tay})$
upon conditioning on the (high-probability) event $\mathcal{A}_{\rm stb}$.
To this end, we use the representation
\begin{equation}
    \mathbb E [\phi(Y_{\rm tay}) | \mathcal{A}_{\rm stb}] - \mathbb E[\phi(Y_{\rm tay})]
    = p_{\rm stb}^{-1} \int_{\Omega} \phi\big(Y_{\rm tay}(\omega)\big) (\mathbf{1}_{\mathcal{A}_{\rm stb}}(\omega) - p_{\rm stb}) \, \mathrm d \mu
\end{equation}
to obtain the estimate
\begin{equation}
\label{eq:mr:delta:exp:cond:phi:w}
\begin{array}{lcl}
    \norm{ \mathbb E \big[\phi(Y_{\rm tay}) | \mathcal{A}_{\rm stb}\big] - \mathbb E[\phi(Y_{\rm tay})] }_{\mathcal{H}}
    &\le& p_{\rm stb}^{-1} \int_{\Omega} \norm{ \phi\big(Y_{\rm tay}(\omega)\big) (\mathbf{1}_{\mathcal{A}_{\rm stb}}(\omega) - p_{\rm stb}) }_{\mathcal{H}} \, \mathrm d \mu
\\[0.2cm]
    & \le & p_{\rm stb}^{-1} \big[ \int_{\Omega} \norm{\phi\big(Y_{\rm tay}(\omega)\big)}_{\mathcal{H}}^2 \, \mathrm d \mu \big]^{1/2} \big[ \int_{\Omega} (\mathbf{1}_{\mathcal{A}_{\rm stb}}(\omega) - p_{\rm stb})^2 \, \mathrm d \mu]^{1/2}
\\[0.2cm]
& = & p_{\rm stb}^{-1} \big[\mathbb E \norm{\phi(Y_{\rm tay})}_{\mathcal{H}}^2 \big]^{1/2}
\big[ p_{\rm stb} ( 1 - p_{\rm stb}) \big]^{1/2}
\end{array}
\end{equation}
and subsequently use the fact that $1 - p_{\rm stb}$ is small.

\begin{prop}
\label{prp:mr:taylor:residual:full}
Suppose that \textnormal{(HNL)}, \textnormal{(HTw)}, \textnormal{(H$V_*$)} and \textnormal{(Hq)} all hold. 
Pick a Hilbert space
$\mathcal{H}$ together with a functional $\phi$ that satisfies \textnormal{(H$\phi$)}.
Then there exist quantities
\begin{equation}
    \big( h_{\infty;0}, \ldots , h_{\infty;r-1} \big) \in \mathcal{H}^r
\end{equation}
together with constants $K > 0$ and $\delta_\sigma > 0$
so that
for all $0 < \sigma \le \delta_\sigma$ 
and
any $0 \le \delta \le \sigma^{1 - \theta_*/r}$,
the conditional expectation of $\phi(V)$ can be approximated
by the limiting polynomial
\begin{equation}
\label{eq:mr:res:lim:poly}
        h_\infty(\sigma) = h_{\infty;0} + \sigma h_{\infty; 1} + \ldots + \sigma^{r-1} h_{\infty;r-1}
    \end{equation}
with an error bounded by
    \begin{equation}
    \label{eq:mr:cond:exp:bnd:error}
       \norm{ \mathbb E \big[ \phi\big(V(t)\big) | \mathcal{A}_{\rm stb}(\sigma,\delta;\theta_*) \big]  - h_\infty(\sigma) }_{\mathcal{H}}
       \le K[ \delta + \sigma]  e^{- \frac{1}{2} \beta t}  +
       K  \sigma^{r - \frac{1}{2}}
        \end{equation}
for any $0 \le t \le T(\sigma; \theta_*)$.
\end{prop}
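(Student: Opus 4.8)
The plan is to start from the decomposition~\eqref{eq:mr:decomp:cnd:exp}, working throughout with the scale-parameter $\theta = \theta_*$ so that on $\mathcal{A}_{\rm stb} = \mathcal{A}_{\rm stb}(\sigma,\delta;\theta_*)$ the pathwise bounds~\eqref{eq:mr:pathwise:cond:a:star} are in force for all $0 \le t \le T(\sigma;\theta_*)$, and taking the coefficients $(h_{\infty;0}, \ldots, h_{\infty;r-1})$ to be exactly those produced by Proposition~\ref{prop:mr:tay:lim:phi} for the functional $\phi$. The proof then reduces to two estimates: controlling the residual correction $\mathbb{E}\big[\big(\phi(Y_{\rm tay}+Z) - \phi(Y_{\rm tay})\big) \,\big|\, \mathcal{A}_{\rm stb}\big]$, and comparing $\mathbb{E}\big[\phi(Y_{\rm tay}) \,\big|\, \mathcal{A}_{\rm stb}\big]$ with the unconditioned $\mathbb{E}\big[\phi(Y_{\rm tay})\big]$, to which Proposition~\ref{prop:mr:tay:lim:phi} applies directly.

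For the first estimate I would begin by upgrading the single hypothesis~\eqref{eq:mr:h:phi:bnd:deriv:phi} on $D^{r}\phi$ to polynomial growth bounds
\[
    \norm{D^{j}\phi(w)}_{\mathscr{L}^{(j)}(H^{k_*};\mathcal{H})} \le K_j \big( 1 + \norm{w}_{H^{k_*}}^{N_j} \big), \qquad 0 \le j \le r,
\]
obtained by integrating successively from $w = 0$; this step is needed because continuity of $D\phi$ alone does not yield boundedness on bounded subsets of the infinite-dimensional space $H^{k_*}$. Combining the embeddings $H_j \hookrightarrow H^{k_*}$ with~\eqref{eq:mr:pathwise:cond:a:star}, one checks that on $\mathcal{A}_{\rm stb}$, for $\sigma$ small, all the points $Y_{\rm tay}(t) + sZ(t)$ with $s \in [0,1]$ and $t \le T(\sigma;\theta_*)$ lie in the unit ball of $H^{k_*}$. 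The fundamental theorem of calculus, together with the bound on $D\phi$ and $\norm{Z(t)}_{H^{k_*}} \le \sigma^{r - \frac12}$, then gives $\norm{\phi(Y_{\rm tay}(t)+Z(t)) - \phi(Y_{\rm tay}(t))}_{\mathcal{H}} \le K \sigma^{r-\frac12}$ pointwise on $\mathcal{A}_{\rm stb}$, hence the same bound after conditioning.

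For the second estimate I would write $\mathbb{E}[\phi(Y_{\rm tay}) \,|\, \mathcal{A}_{\rm stb}] = \mathbb{E}[\phi(Y_{\rm tay})] + \big(\mathbb{E}[\phi(Y_{\rm tay}) \,|\, \mathcal{A}_{\rm stb}] - \mathbb{E}[\phi(Y_{\rm tay})]\big)$. Proposition~\ref{prop:mr:tay:lim:phi} shows that the first term agrees with $h_\infty(\sigma)$ up to $K\sigma^r \le K\sigma^{r-\frac12}$ together with a contribution that decays exponentially in $t$, of the form displayed in~\eqref{eq:mr:cond:exp:bnd:error}. For the conditioning correction I would use the Cauchy--Schwarz bound~\eqref{eq:mr:delta:exp:cond:phi:w}, which reduces matters to estimating $\big[\mathbb{E}\norm{\phi(Y_{\rm tay}(t))}_{\mathcal{H}}^2\big]^{1/2}$ and $1 - p_{\rm stb}$. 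The polynomial growth of $\phi$ and the moment bounds~\eqref{eq:mr:mo:bnds:w:j} of Proposition~\ref{prop:mr:tay}(iii), evaluated on the range $t \le T(\sigma;\theta_*)$ where $\sigma^2 \ln t + \delta^2 \le C\sigma^{2(1-\theta_*/r)}$, keep the former bounded uniformly in $\sigma$; meanwhile~\eqref{eq:mr:bnd:with:sigma} bounds $1 - p_{\rm stb}$ by $2\exp(-\tfrac12 \mu \sigma^{-2\theta_*/r})$, which is smaller than any power of $\sigma$, so the conditioning correction is $o(\sigma^{r-\frac12})$. Adding the three contributions and recalling that $\mathcal{A}_{\rm stb}$ together with~\eqref{eq:mr:pathwise:cond:a:star} are precisely what is available on $[0,T(\sigma;\theta_*)]$ yields~\eqref{eq:mr:cond:exp:bnd:error}. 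I expect the only genuinely non-routine point to be the upgrade of~\eqref{eq:mr:h:phi:bnd:deriv:phi} to pointwise control of $D\phi$ on the relevant bounded set; the remaining inputs --- the super-exponential smallness of $1 - p_{\rm stb}$ from Theorem~\ref{thm:main} and the expansion of $\mathbb{E}[\phi(Y_{\rm tay})]$ from Proposition~\ref{prop:mr:tay:lim:phi} --- are already in hand, so the rest is bookkeeping of error terms.
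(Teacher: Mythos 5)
Your proposal follows the paper's proof exactly: the same decomposition~\eqref{eq:mr:decomp:cnd:exp}, the same pathwise bound on the residual term via~\eqref{eq:mr:pathwise:cond:a:star}, the same Cauchy--Schwarz estimate~\eqref{eq:mr:delta:exp:cond:phi:w} combined with the super-exponential smallness of $1-p_{\rm stb}$ from~\eqref{eq:mr:bnd:with:sigma}, and the same appeal to Proposition~\ref{prop:mr:tay:lim:phi}. Your explicit note that the growth bound~\eqref{eq:mr:h:phi:bnd:deriv:phi} on $D^{r}\phi$ must first be integrated down to obtain pointwise control of $D\phi$ on the bounded set is a legitimate subtlety that the paper leaves implicit, but it does not change the route.
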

\begin{proof}
Pick $0 \le t \le T(\sigma;\theta_*)$.
    The smoothness of $\phi$ together with the a-priori bounds \eqref{eq:mr:pathwise:cond:a:star} on $Y_{\rm tay}$ and $Z$ that are available when  $\mathcal{A}_{\rm stb}$ holds imply that
    \begin{equation}
        \norm{ \mathbb E \big[ \big( \phi(Y_{\rm tay}(t) +Z(t)) - \phi(Y_{\rm tay}(t)) \big) | \mathcal{A}_{\rm stb} \big]}_{\mathcal{H}}
        \le C_1 \mathbb E  \big[ \norm{Z(t)}_{H^{k_*}} | \mathcal{A}_{\rm stb} \big]  \le C_1 \sigma^{r - \frac{1}{2}}
    \end{equation}
    for some $C_1 > 0$. In addition, the assumption \eqref{eq:mr:h:phi:bnd:deriv:phi}
    together with the moment bound \eqref{eq:mr:mo:bnds:w:j}
    implies that
    \begin{equation}
        \mathbb E \norm{\phi\big(Y_{\rm tay}(t)\big)}_{\mathcal{H}}^2
        \le C_2 \mathbb E \big[ 1 + \norm{Y_{\rm tay}(t)}_{H^{k_*}}^{N + r}\big]
        \le C_3
    \end{equation}
    for some $C_2 > 0$ and $C_3 > 0$, possibly after restricting the size of $\delta_\sigma$ to ensure that $\sigma^2 \ln T(\sigma;\theta_*) + \delta^2 \le 1$. After a further restriction of $\delta_\sigma$, the exponential bound
    \eqref{eq:mr:bnd:with:sigma} can now be used together with
    \eqref{eq:mr:delta:exp:cond:phi:w} to estimate
    \begin{equation}
     \norm{ \mathbb E \big[\phi(Y_{\rm tay}) | \mathcal{A}_{\rm stb}\big] - \mathbb E[\phi(Y_{\rm tay})] }_{\mathcal{H}}
    \le \sigma^{r - \frac{1}{2}}.
    \end{equation}
    In view of the decomposition \eqref{eq:mr:decomp:cnd:exp}, the desired
    error bound \eqref{eq:mr:cond:exp:bnd:error}
    follows from Proposition \ref{prop:mr:tay:lim:phi}.
\end{proof}

Note that the limiting polynomial $h_\infty$ in \eqref{eq:mr:res:lim:poly} is explicitly computable with the procedure developed in {\S}\ref{sec:lim} and depends only on the functional $\phi$ and the expansion functions $Y_j$. In particular, it does not depend on the precise details of picking the time $T(\sigma;\theta_*)$,
which only affects the remainder bounds.
This allows us to extract expansion coefficients for wave properties that are in some sense canonical.

For example, let us write
\begin{equation}
    C_{\rm obs}(\sigma, \delta) = \frac{2}{T(\sigma;\theta_*)}
    \big[ \Gamma(T(\sigma;\theta_*)) - \Gamma(\tfrac{1}{2} T(\sigma;\theta_*)) \big],
\end{equation}
which can be interpreted as the observed average speed over the
interval $[\frac{1}{2}T(\sigma;\theta_*), T(\sigma;\theta_*)]$,
the second half of the full interval where stability can be expected with high probability. Note that the first half is exluded to avoid any transients. The
translational invariance properties
stated in
\eqref{eq:app:list:comm:rels:a:b}
together with the evolution \eqref{eq:mr:def:Gamma}
allow us to write
\begin{equation}
C_{\rm obs}(\sigma,\delta) =  c_0 + \frac{2}{ T(\sigma;\theta_*)} \int_{\frac{1}{2} T(\sigma;\theta_*)}^{T(\sigma;\theta_*)} a_\sigma(\Phi_0 +V(t), 0) \, \mathrm dt
+ \sigma \mathcal{B}(\sigma, \delta),
\end{equation}
where we have introduced the notation
\begin{equation}
\mathcal{B}(\sigma,\delta) = \frac{2}{ T(\sigma;\theta_*)} \int_{\frac{1}{2} T(\sigma;\theta_*)}^{T(\sigma;\theta_*)} b\big(\Phi_0 + V(t), 0\big) \, \mathrm d W^Q_t  .
\end{equation}

Our results enable us to obtain a rigorous expansion for the conditional expectation of this speed $C_{\rm obs}(\sigma,\delta)$. The computations in \cite{hamster2020transinv}
provide an explicit expression for the coefficient $c_2$ together with numerical evidence to support these predictions.

\begin{cor}
Suppose that \textnormal{(HNL)}, \textnormal{(HTw)}, \textnormal{(H$V_*$)}
and \textnormal{(Hq)} all hold.
Then there exist scalars
$(c_2, \ldots, c_{r-1})$
together with constants $K > 0$ and $\delta_\sigma > 0$
so that
for all $0 < \sigma \le \delta_\sigma$
and
any $0 \le \delta \le \sigma^{1 - \theta_*/r}$,
we have the bound
    \begin{equation}
    | \mathbb E \big[ C_{\rm obs}(\sigma,\delta) | \mathcal{A}_{\rm stb}(\sigma,\delta;\theta_* )\big]
    - c_0 - c_2 \sigma^2 - \ldots - c_{r-1} \sigma^{r-1} |
       \le K[ \delta + \sigma]  e^{- \frac{1}{4} \beta T(\sigma;\theta_*)}  +
       K  \sigma^{r - \frac{1}{2}}
       \le 2 K \sigma^{r - \frac{1}{2}}
       .
        \end{equation}
\end{cor}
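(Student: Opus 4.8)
The plan is to take the decomposition
\[
  C_{\rm obs}(\sigma,\delta) = c_0 + \frac{2}{T}\int_{T/2}^{T} a_\sigma\big(\Phi_0 + V(t), 0\big)\,\mathrm dt + \sigma\, \mathcal{B}(\sigma,\delta), \qquad T := T(\sigma;\theta_*),
\]
derived above from the phase evolution \eqref{eq:mr:def:Gamma} and the translational invariance relations \eqref{eq:app:list:comm:rels:a:b}, to condition it on $\mathcal{A}_{\rm stb} := \mathcal{A}_{\rm stb}(\sigma,\delta;\theta_*)$, and to treat the drift integral and the martingale term $\sigma\mathcal{B}$ separately; throughout the codomain is the scalar space $\mathcal H = \mathbb{R}$.

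For the drift integral, I would first read off from Appendix \ref{app:def} and the split \eqref{eq:mr:decomp:r:sigma} that $a_\sigma(\Phi_0 + v, 0) = \phi_{\rm a}(v) + \sigma^2 \phi_{\rm b}(v)$ with $\sigma$-independent functionals $\phi_{\rm a},\phi_{\rm b}\colon H^{k_*}\to\mathbb{R}$, and verify via the smoothness estimates of {\S}\ref{sec:sm} underlying \eqref{eq:mr:inc} (together with the exponential decay of $\psi_{\rm tw}$) that $\phi_{\rm a}$ and $\phi_{\rm b}$ satisfy \textnormal{(H$\phi$)}; moreover the structural identities $\mathcal{R}_I(0)=D\mathcal{R}_I(0)=0$ translate into $\phi_{\rm a}(0)=D\phi_{\rm a}(0)=0$ (the deterministic part of the phase drift vanishes to second order at the bare wave). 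Applying Proposition \ref{prp:mr:taylor:residual:full} to $\phi_{\rm a}$, and to $\phi_{\rm b}$ after noting that the $\sigma^2$ prefactor turns the top two terms of its degree-$(r-1)$ expansion into $O(\sigma^r)$ (absorbed into the $O(\sigma^{r-1/2})$ remainder), produces scalars $\tilde c_0,\ldots,\tilde c_{r-1}$ with
\[
  \Big| \mathbb{E}\big[ a_\sigma(\Phi_0 + V(t),0) \,\big|\, \mathcal{A}_{\rm stb} \big] - \sum_{i=0}^{r-1} \tilde c_i\, \sigma^i \Big| \le K[\delta + \sigma]\, e^{-\frac{1}{2} \beta t} + K \sigma^{r-1/2}
\]
uniformly in $t\in[0,T]$; averaging over $t\in[T/2,T]$ leaves the ($t$-independent) polynomial and the $O(\sigma^{r-1/2})$ term unchanged and replaces the exponential by its value $K[\delta+\sigma]e^{-\frac{1}{2}\beta(T/2)} = K[\delta+\sigma]e^{-\frac{1}{4}\beta T}$ at $t=T/2$. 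Finally, since conditioning on $\mathcal{A}_{\rm stb}$ affects only the remainder and not the polynomial (which is the same as in Proposition \ref{prop:mr:tay:lim:phi} and depends only on the functional and the $Y_j$), the $\sigma^0$ and $\sigma^1$ coefficients are governed entirely by $\phi_{\rm a}$; here $\phi_{\rm a}(0)=0$ gives $\tilde c_0=0$ and $D\phi_{\rm a}(0)=0$ gives $\tilde c_1=0$ (alternatively, $\tilde c_1 = D\phi_{\rm a}(0)$ applied to $\lim_{t\to\infty}\mathbb{E}$ of the order-$\sigma$ part of $Y_{\rm tay}$, which is zero because that part is a zero-mean stochastic convolution). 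Setting $c_i:=\tilde c_i$ for $2\le i\le r-1$ then recovers the asserted polynomial $c_0+c_2\sigma^2+\ldots+c_{r-1}\sigma^{r-1}$, with $c_2$ agreeing with the closed-form value computed in \cite{hamster2020transinv}.

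For the martingale term I would show $\sigma\,\mathbb{E}[\mathcal{B}\mid\mathcal{A}_{\rm stb}]=O(\sigma^{r-1/2})$ by localisation. Since $\mathcal{A}_{\rm stb}$ forces $\tau_\infty(T)=T$ and supplies the pathwise bounds \eqref{eq:mr:pathwise:cond:a:star}, replacing $V$ by its version $\bar V$ stopped at $t_{\rm st}\big(\eta(\sigma;\theta_*);\sigma,\delta,T\big)$ makes $t\mapsto b(\Phi_0+\bar V(t),0)$ adapted and pathwise bounded as an element of $HS(L^2_Q;\mathbb{R})$ (using \eqref{eq:mr:hs:bnd:z} and the global Lipschitz control on $g$), so that $\bar{\mathcal{B}}:=\frac{2}{T}\int_{T/2}^{T}b(\Phi_0+\bar V(t),0)\,\mathrm dW^Q_t$ is a genuine It\^o integral with $\mathbb{E}\bar{\mathcal{B}}=0$ and $\bar{\mathcal{B}}=\mathcal{B}$ on $\mathcal{A}_{\rm stb}$. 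Hence $\big|\mathbb{E}[\mathcal{B}\,\mathbf 1_{\mathcal{A}_{\rm stb}}]\big|=\big|\mathbb{E}[\bar{\mathcal{B}}\,\mathbf 1_{\mathcal{A}_{\rm stb}^c}]\big|\le \big(\mathbb{E}\bar{\mathcal{B}}^2\big)^{1/2}\big(1-p_{\rm stb}(\sigma,\delta;\theta_*)\big)^{1/2}$; the It\^o isometry controls $\mathbb{E}\bar{\mathcal{B}}^2$ by at worst a fixed negative power of $\sigma$, whereas $1-p_{\rm stb}$ is super-exponentially small by \eqref{eq:mr:bnd:with:sigma}, and dividing by $p_{\rm stb}\ge\tfrac{1}{2}$ bounds $\sigma\,|\mathbb{E}[\mathcal{B}\mid\mathcal{A}_{\rm stb}]|$ by a quantity smaller than any power of $\sigma$.

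Collecting the two contributions gives the error $K[\delta+\sigma]e^{-\frac{1}{4}\beta T(\sigma;\theta_*)}+K\sigma^{r-1/2}$, and since $T(\sigma;\theta_*)$ is exponentially large in $\sigma^{-2\theta_*/r}$ while $\delta+\sigma\le 2$, the first term is $\le\sigma^{r-1/2}$ for $\sigma$ small (shrinking $\delta_\sigma$ if needed), yielding the final bound $\le 2K\sigma^{r-1/2}$. I expect the main difficulty to be not any individual estimate but the bookkeeping around the explicit $\sigma$-dependence of $a_\sigma$: extracting the clean split $a_\sigma=\phi_{\rm a}+\sigma^2\phi_{\rm b}$ from Appendix \ref{app:def}, checking \textnormal{(H$\phi$)} for the pieces, and confirming from the structure (via $\mathcal{R}_I(0)=D\mathcal{R}_I(0)=0$ or the zero-mean property of the leading stochastic convolution) that the $\sigma^0$-correction and the $\sigma^1$-coefficient both vanish, so that no linear term $c_1\sigma$ appears.
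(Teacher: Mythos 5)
Your proof is correct and follows essentially the same two-part route as the paper: bound the conditional expectation of the martingale term $\sigma\mathcal B$ via $\mathbb E \mathcal B=0$, It\^o isometry with the uniform bound $\|b\|_{HS(L^2_Q;\mathbb R)}\le K_b$ from Lemma \ref{lem:sm:b:nu:kct}, and the Cauchy--Schwarz computation \eqref{eq:mr:delta:exp:cond:phi:w}; then apply Proposition \ref{prp:mr:taylor:residual:full} to the drift $v\mapsto a_\sigma(\Phi_0+v,0)$ and observe that the vanishing of $c_1$ comes from $a_\sigma$ being second order in the pair $(\sigma,V)$. Your version actually fills in two steps that the paper states loosely: Proposition \ref{prp:mr:taylor:residual:full} is formulated for a $\sigma$-independent functional, so the clean decomposition $a_\sigma(\Phi_0+\cdot,0)=\phi_{\rm a}+\sigma^2\phi_{\rm b}$ read off from \eqref{eq:list:expr:a:smooth}, with $\phi_{\rm a}(0)=D\phi_{\rm a}(0)=0$ from $\mathcal N_f(0)=D\mathcal N_f(0)=0$, is the right way to justify its application and to extract $c_0=c_1=0$; and the stopping-time localisation of $V$ for the martingale term is a more careful substitute for the paper's tacit assumption that $\mathcal B$ is a genuine It\^o integral over $[T/2,T]$. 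Both refinements are consistent with and sharpen the paper's sketch rather than changing it.
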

\begin{proof}
Note first that $\mathbb E \mathcal{B}(\sigma, \delta) = 0$. In addition, the uniform bound $\norm{b(\Phi_0 +V,0)}_{HS(L^2_Q;\mathbb{R})} \le K_b$ from Lemma \ref{lem:sm:b:nu:kct}
implies that
\begin{equation}
\mathbb E  | \mathcal{B}(\sigma,\delta)|^2 \le \frac{4}{ T(\sigma;\theta_*)^2}
\int_{\frac{1}{2} T(\sigma;\theta_*)}^{T(\sigma;\theta_*)} K_b^2 \, \mathrm dt
= \frac{2K_b^2 }{T(\sigma;\theta_*)} .
\end{equation}
In particular, following the computation \eqref{eq:mr:delta:exp:cond:phi:w}
we obtain the bound
\begin{equation}
    \big| \mathbb E[ \mathcal{B}(\sigma,\delta) | \mathcal{A}_{\rm stb}] \big|
    \le \frac{\sqrt{2} K_b}{ T(\sigma;\theta_*)^{1/2}}
    p_{\rm stb}(\sigma,\delta;\theta_*)^{-1} [p_{\rm stb}(\sigma,\delta;\theta_*)(1 - p_{\rm stb}(\sigma,\delta;\theta_*))]^{1/2},
\end{equation}
which is exponentially small allowing it to be readily absorbed in the desired $O( \sigma^{r-1/2})$ remainder.

In view of the explicit
representation \eqref{eq:list:expr:a:smooth}
for $a_\sigma$ and the smoothness
results in {\S}\ref{sec:sm},
we may apply Proposition \ref{prp:mr:taylor:residual:full}
to the function  $v \mapsto a_\sigma(\Phi_0 + v)$
to obtain the desired expansion and error bound. The fact that $c_1 = 0$
follows by observing that
$a_{\sigma}$ is of second order with respect to the pair $(\sigma,V)$.
\end{proof}

\section{Convolution bounds}
\label{sec:conv}
In this preparatory section, we
consider deterministic and stochastic convolutions against
random evolution families
generated by time-dependent linear operators $
    \mathcal L_{\nu}(t):\Omega\to\mathscr L(H^{2}, L^2)$
that act as
\begin{equation}
    [\mathcal L_\nu(t)(\omega)u](x,x_\perp)= [\mathcal L_{\rm tw}u(\cdot,x_\perp)](x)+\nu(t,\omega)[\Delta_{x_\perp} u(x,\cdot)](x_\perp),\label{eq:lin_gen}
\end{equation}
with $(x,x_\perp) \in \mathbb R \times \mathbb T^{d-1}$ and $\omega \in \Omega$. We impose the following conditions on the coefficient function $\nu$ and the general setting that we consider in this section.

\begin{itemize}
    \item[(hE)] We have $T \ge 1$ and $k_* \le k \le k_* + r$. The function $\nu:[0,T]\times\Omega\mapsto \mathbb R$ is progressively measurable and continuous with respect to the time variable $\mathbb P$-almost surely, with
    \begin{equation}
        \frac{1}{2} \le \nu(t) \le 2
    \end{equation}
    for all $t \in [0,T]$.
\end{itemize}

As discussed at length in \cite[{\S}3]{bosch2024multidimensional},
the conditions (HNL), (HTw) and (hE) ensure that
the linear operators $\mathcal L_{\nu}$
generate a random evolution family $E(t,s)$
on $H^{k}$,
in the sense that $v(t) = E(t,s) v_s$ is a solution to the initial value problem
\begin{equation}
\label{eq:conv:def:evol:probl}
    \partial_t v = \mathcal{L}_\nu(t) v, \qquad v(s) = v_s \in H^k.
\end{equation}
In order to project out the neutral solution $v(x,x_\perp,t) = \Phi_0'(x)$,
we introduce the operators
\begin{equation}
    P = \frac{1}{|\mathbb T|^{d-1}}\langle \, \cdot\,, \psi_{\rm tw} \rangle_{L^2} \Phi_0', \qquad
P^\perp = I - P.
\end{equation}
Recalling the constant $\beta$ defined in \eqref{eq:mr:def:beta},
we note that
\cite[Lem. 3.2]{bosch2024multidimensional} guarantees
the existence of a constant $M \ge 1$ for which the bounds
\begin{equation}
\label{eq:conv:bnds:e}
    \norm{ E(t, s) P^\perp }_{\mathscr{L}(H^k ;H^k)} \le M e^{-\beta(t-s)},
    \qquad
    \norm{ E(t, s) P^\perp }_{\mathscr{L}(H^k ;H^{k+1})} \le M \max \{ 1 , (t-s)^{-1/2} \} e^{-\beta(t-s)}
\end{equation}
hold for all $0 \le s \le t$.

In this section we are interested in convolutions of the form
\begin{equation}
\label{eq:conv:def:e:n:b}
\begin{array}{lcl}
\mathcal{E}_N^d(t) &=& \int_0^t E(t, s)P^\perp N(s) \, \mathrm ds ,
\\[0.2cm]
\mathcal{E}_B^s(t) &=& \int_0^t E(t, s)P^\perp B(s) \, \mathrm d W^{Q;-}_s,
\end{array}
\end{equation}
together with the integrated expressions
\begin{equation}
\label{eq:conv:def:i:n:b}
    \begin{array}{lcl}
\mathcal{I}_N^d(t) &=& \int_0^t e^{- \beta (t-s)} \norm{ \mathcal{E}_N^d(s)}_{H^{k+1}}^2\, \mathrm ds ,
\\[0.2cm]
\mathcal{I}_B^s(t) &=& \int_0^t e^{- \beta (t-s)} \norm{ \mathcal{E}_B^s(s)}_{H^{k+1}}^2 \, \mathrm ds.
\end{array}
\end{equation}
We note that the second integral in \eqref{eq:conv:def:e:n:b} is referred to as a forward-integral,
which is well-defined for all $0 \le t \le T$ when $B$ is a member
of the family
\begin{equation}
    \mathcal{N}^p(T )  = \{ B \in L^p\big(\Omega; L^2\big([0, T];HS(L^2_Q;H^k) \big) \big) :
    B \hbox{ has a progressively measurable version} \}
\end{equation}
for some $p \ge 2$; see \cite[{\S}3.2]{bosch2024multidimensional}.
When $\nu$ does not depend on $\omega$, this forward integral coincides with the regular It{\^o} integral.

We impose the following assumptions on the integrands $N$ and $B$. These are weaker than the corresponding assumptions considered in our prior work \cite{hamster2020expstability,  bosch2024multidimensional}, where we only considered the edge cases $n=0$ respectively $n = 1$ (albeit in a more general setting where only integrated control on $B$ is needed).

\begin{itemize}
    \item[(hN)]{
      There exists an integer $n \ge 0$ and constants $\Theta_1 > 0$ and $\Theta_2 \ge 0$ so that for every integer $p \ge 1$ we have
      $N \in L^{2p}\big(\Omega; C([0, T]; H^k) \big)$
      together with the bound
      \begin{equation}
      \label{eq:conv:bnd:in:hN}
        \mathbb E  \sup_{0 \le s \le T}  \norm{N(s)}_{H^k}^{2p}
        \le \big( p^{np} + \Theta_2^{np} \big) \Theta_1^{2p}.
      \end{equation}
      }
    \item[(hB)]{
      There exists an integer $n \ge 1$ and constants $\Theta_1 > 0$
      and $\Theta_2 \ge 0$ so that for every integer $p \ge 1$
      we have $B \in \mathcal{N}^{2p}(T)$ with the bound
      \begin{equation}
      \label{eq:conv:bnd:in:hB}
      \mathbb E  \sup_{0 \le s \le T}  \norm{B(s)}_{HS(L^2_Q;H^k)}^{2p}
    \le \big[ p^{(n-1)p} + \Theta_2^{(n-1)p} \big] \Theta_1^{2p}   .
\end{equation}
     }
\end{itemize}

Our two main results in this section provide supremum bounds for the expressions \eqref{eq:conv:def:e:n:b} and \eqref{eq:conv:def:i:n:b} defined above. The main feature
is the logarithmic growth term
in \eqref{eq:prp:itgs:eb:hb:full},
which directly contributes to the corresponding term in the growth bound
\eqref{eq:mr:mo:bnds:w:j}
for our expansion functions.

\begin{prop}[{see {\S}\ref{subsec:conv:det}}]
\label{prp:cnv:hn}
Assume that \textnormal{(HTw)}, \textnormal{(HNL)} and \textnormal{(hE)} hold.
Then there exists a constant $K_{\rm gr} > 0$ that does not depend on $T$ so that
for any process $N$ that satisfies \textnormal{(hN)} and any (real) $p \ge 1$
we have the bound
\begin{equation}
\label{eq:itg:en:hn:sup:bnd:full}
\mathbb E \sup_{0 \le t \le T} \norm{\mathcal{E}^d_{N}(t)}_{H^k}^{2p}
 + \mathbb E \sup_{0 \le t \le T} \mathcal{I}^d_N(t)^p
\le K_{\rm gr}^{2p} (p^{np} + \Theta_2^{np}) \Theta_1^{2p} .
\end{equation}
\end{prop}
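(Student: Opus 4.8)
The plan is to bound the deterministic convolution $\mathcal{E}^d_N(t) = \int_0^t E(t,s)P^\perp N(s)\,\mathrm ds$ and the integrated quantity $\mathcal{I}^d_N(t)$ directly via the pathwise exponential decay estimates \eqref{eq:conv:bnds:e} and then take expectations using the a priori moment bound (hN). First I would establish a pointwise-in-$t$ pathwise bound: for fixed $\omega$, the first estimate in \eqref{eq:conv:bnds:e} gives
\begin{equation}
    \norm{\mathcal{E}^d_N(t)}_{H^k} \le \int_0^t M e^{-\beta(t-s)} \norm{N(s)}_{H^k}\,\mathrm ds \le \frac{M}{\beta}\, \sup_{0 \le s \le T} \norm{N(s)}_{H^k},
\end{equation}
so that $\sup_{0\le t\le T}\norm{\mathcal{E}^d_N(t)}_{H^k} \le (M/\beta)\sup_{0\le s\le T}\norm{N(s)}_{H^k}$ pathwise. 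Raising to the power $2p$ and taking expectations, (hN) immediately yields the first half of \eqref{eq:itg:en:hn:sup:bnd:full} with $K_{\rm gr}$ absorbing $M/\beta$ and any combinatorial constants. Note that no stochastic machinery is needed here — the integrand is not required to be predictable and the bound is entirely deterministic given the sample path of $N$.

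For the $\mathcal{I}^d_N$ term I would use the second, regularising estimate in \eqref{eq:conv:bnds:e}. Pathwise,
\begin{equation}
    \norm{\mathcal{E}^d_N(s)}_{H^{k+1}} \le \int_0^s M \max\{1,(s-r)^{-1/2}\} e^{-\beta(s-r)} \norm{N(r)}_{H^k}\,\mathrm dr \le C_\beta \sup_{0 \le r \le T}\norm{N(r)}_{H^k},
\end{equation}
where $C_\beta = M\int_0^\infty \max\{1,u^{-1/2}\}e^{-\beta u}\,\mathrm du < \infty$ because the singularity $u^{-1/2}$ at $u=0$ is integrable and the exponential controls the tail. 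Hence $\mathcal{I}^d_N(t) = \int_0^t e^{-\beta(t-s)}\norm{\mathcal{E}^d_N(s)}_{H^{k+1}}^2\,\mathrm ds \le (C_\beta^2/\beta)\sup_{0\le r\le T}\norm{N(r)}_{H^k}^2$ pathwise, and raising to the power $p$ and taking expectations again invokes (hN) with $2p$ in place of $p$. Combining the two contributions and choosing $K_{\rm gr}$ large enough (depending only on $M$, $\beta$, hence not on $T$) completes the proof.

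The reason this proposition is the ``easy'' one — in contrast to the forthcoming stochastic analogue in {\S}\ref{subsec:conv:sto} where the logarithmic growth term genuinely appears — is precisely that the deterministic convolution against an exponentially decaying evolution family contracts the supremum norm of the integrand uniformly in $T$, with no accumulation over long time horizons. Consequently there is no real obstacle; the only mild care needed is the integrability of the $(t-s)^{-1/2}$ singularity when bounding the $H^{k+1}$-norm, which is handled by splitting the time integral at $s-r=1$ (or simply by the Gamma-function computation above). One should also check that $\mathcal{E}^d_N$ genuinely lies in $C([0,T];H^k)$ so that the supremum is measurable; this follows from strong continuity of the evolution family $E(t,s)$ on $H^k$ together with $N \in C([0,T];H^k)$ $\mathbb{P}$-a.s., which is part of (hN). I would record this continuity remark briefly and then present the two pathwise estimates as the body of the proof.
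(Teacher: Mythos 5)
Your approach is essentially the paper's: bound $\mathcal{E}^d_N$ and $\mathcal{I}^d_N$ pathwise using the exponential decay estimates \eqref{eq:conv:bnds:e}, then take expectations and invoke (hN). (The paper actually derives both the $H^k$ and the $H^{k+1}$ bounds from a single lemma proved with the smoothing estimate, while you use the two estimates in \eqref{eq:conv:bnds:e} separately, but that is an immaterial difference.)

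There is one genuine gap, though. The moment bound in (hN) is stated only for \emph{integer} $p \ge 1$, whereas the proposition is asserted for all \emph{real} $p \ge 1$. When you write ``raising to the power $2p$ and taking expectations, (hN) immediately yields\dots'' you are implicitly applying (hN) with a non-integer exponent, which is not given. The paper closes this by reducing to the integer case: pick $q>1$ so that $pq$ is an integer (one can ensure $1<q\le 2$), apply H\"older's inequality in the form
\begin{equation}
\mathbb E \sup_{0\le t\le T}\norm{\mathcal{E}^d_N(t)}_{H^k}^{2p}
\le \Big[\mathbb E \sup_{0\le t\le T}\norm{\mathcal{E}^d_N(t)}_{H^k}^{2pq}\Big]^{1/q},
\end{equation}
use the integer-$p$ result for exponent $pq$, and then the concavity estimate $\sqrt[q]{a+b}\le\sqrt[q]{a}+\sqrt[q]{b}$ to push the $1/q$ power inside the sum; the resulting $q^{np}$ factor is uniformly bounded by $2^{np}$ and gets absorbed into $K_{\rm gr}^{2p}$. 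You should append this interpolation step (and the analogous one for $\mathcal{I}^d_N$) to make the argument valid for all real $p \ge 1$. As a minor side note, your phrase ``invokes (hN) with $2p$ in place of $p$'' for the $\mathcal{I}^d_N$ term is misleading: after raising $\mathcal{I}^d_N(t)$ to the $p$-th power you need $\mathbb E\sup\norm{N}^{2p}_{H^k}$, which is exactly the bound in (hN) at exponent $p$, not $2p$.
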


\begin{prop}[{see {\S}\ref{subsec:conv:st}}]
\label{prp:cnv:hb}
Assume that \textnormal{(HTw)}, \textnormal{(HNL)}, \textnormal{(Hq)} and \textnormal{(hE)} hold
and assume $T \ge 2$ is an integer.
Then there exists a constant $K_{\rm gr}$ that does not depend on $T$ so that
for any process $B$ that satisfies \textnormal{(hB)} and any (real) $p \ge 1$
we have the bound
\begin{equation}
\label{eq:prp:itgs:eb:hb:full}
\mathbb E \sup_{0 \le t \le T} \norm{\mathcal{E}^s_{B}(t)}_{H^k}^{2p}
 + \mathbb E \sup_{0 \le t \le T} \mathcal{I}^s_B(t)^p
\le K_{\rm gr}^{2p} (32 en)^{2np} (p^{np} + [\ln T + \Theta_2]^{np}) \Theta_1^{2p} .
\end{equation}
\end{prop}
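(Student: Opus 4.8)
The plan is to adapt the factorization/stochastic-convolution technique from \cite{hamster2020expstability} (as extended in \cite{cox2024sharp,bosch2024multidimensional}) to the anticipating setting, carefully tracking how the pathwise bound \eqref{eq:conv:bnd:in:hB} on $B$ propagates through the forward integral. The key new feature compared to the edge case $n=1$ already treated in \cite{bosch2024multidimensional} is that the integrand $B$ is no longer controlled by a deterministic constant, so every moment estimate acquires a factor from \eqref{eq:conv:bnd:in:hB} and these factors must be combined with the moments of the Gaussian stochastic integral. First I would split the time interval $[0,T]$ into the unit subintervals $[m, m+1]$ for $0 \le m \le T-1$ and use the exponential decay in \eqref{eq:conv:bnds:e} to reduce a supremum over $[0,T]$ to a sum (geometrically weighted) of suprema over unit intervals; this is exactly where the $\ln T$ enters, since there are $T$ such intervals and we must pay a union-bound-type price when raising to the $2p$-th power and optimizing over $p$. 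On each unit interval, I would apply the standard factorization identity, writing $\mathcal{E}^s_B(t)$ as a deterministic convolution (of an explicit negative-power kernel) applied to an auxiliary process $\mathcal{Y}$ defined itself as a forward stochastic integral, and then use the Sobolev embedding / Young-type inequality to pass from $\mathcal{Y}$ to the supremum of $\mathcal{E}^s_B$ with a bounded constant depending only on $M$, $\beta$, and the chosen factorization exponent.

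The heart of the matter is the moment bound for $\mathcal{Y}$ on a single unit interval. Here I would invoke the forward-integral analogue of the Burkholder–Davis–Gundy inequality established in \cite[{\S}3.2]{bosch2024multidimensional} (or re-derived by approximating the anticipating integrand by predictable ones and passing to the limit), which gives, for real $q \ge 1$,
\begin{equation}
\mathbb E \sup_{m \le t \le m+1} \norm{\mathcal{Y}(t)}_{H^{k}}^{2q} \le C^{q} q^{q} \, \mathbb E \Big[ \int_m^{m+1} \norm{B(s)}_{HS(L^2_Q;H^{k+1})}^{2} \, \mathrm ds \Big]^{q},
\end{equation}
where the $q^q$ is the familiar Gaussian moment growth. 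The extra smoothing from $E(t,s)P^\perp$ landing in $H^{k+1}$ at the cost of $(t-s)^{-1/2}$ — the second bound in \eqref{eq:conv:bnds:e} — is integrable against the factorization kernel and produces the $H^{k+1}$ norm on $B$, which by the embedding $H^{k+1} \hookrightarrow H^k$ (and the fact that the Hilbert–Schmidt bound \eqref{eq:mr:hs:bnd:z} is against $H^{k+1}$) is again controlled by \eqref{eq:conv:bnd:in:hB}, perhaps after shifting $k \mapsto k+1$ in the hypothesis; I would instead keep $k$ fixed and note that \textnormal{(hB)} is assumed with the stated regularity so that the integrated expression $\mathcal{I}^s_B$ involving the $H^{k+1}$-norm makes sense. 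Plugging \eqref{eq:conv:bnd:in:hB} into the right-hand side and using $\mathbb E[X]^q \le (\mathbb E X^q)$ isn't available, so I would instead apply Hölder to move the $2q$-th power inside, giving a bound of the shape $C^q q^q \big[ (pq)^{(n-1)q}\! + \Theta_2^{(n-1)q} \big]\Theta_1^{2q}$ after matching the inner moment exponent; care is needed so that the product $q \cdot q^{(n-1)} = q^n$ reproduces exactly the claimed $p^{np}$ growth once we set $q \sim p$.

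Finally I would reassemble: summing the unit-interval bounds with the geometric weights $e^{-\beta m \cdot 2p}$ costs a constant, but the number of intervals $T$ enters as $T$ raised to a power that, after the standard optimization trick (choosing the free Hölder exponent proportional to $\ln T$, or equivalently bounding $\sum_m 1 \le T$ and then using $T^{1/p} \le e^{(\ln T)/p}$ inside a $p$-th power to turn a multiplicative $T$ into an additive $\ln T$ in the bracket), yields the $[\ln T + \Theta_2]^{np}$ term alongside the $p^{np}$ term. The combinatorial constant $(32en)^{2np}$ is exactly what one accumulates from iterating the BDG constant $n$ times (or from the $n$-fold bookkeeping in the inductive reduction) and from the factor-of-$2$ losses in the factorization and the interval splitting; I would not optimize it. The integrated term $\mathcal{I}^s_B$ is handled in parallel: it is itself a deterministic convolution of $\norm{\mathcal{E}^s_B(s)}_{H^{k+1}}^2$, so once the supremum bound for $\mathcal{E}^s_B$ in $H^{k+1}$ is in hand (obtained by the same argument with $k$ replaced by $k+1$, legitimate since \textnormal{(hE)} allows $k$ up to $k_*+r$ and \textnormal{(hB)} is posed with enough regularity), the extra $\int_0^t e^{-\beta(t-s)}\,\mathrm ds \le \beta^{-1}$ closes the estimate. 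The main obstacle I anticipate is precisely the $\ln T$ bookkeeping: one must run the $p$-optimization and the interval-splitting in the right order so that the logarithm appears additively inside the bracket (as $[\ln T + \Theta_2]^{np}$) rather than multiplicatively as a power of $T$, and simultaneously ensure the anticipating nature of $B$ does not obstruct the factorization identity — this is where the forward-integral machinery of \cite{bosch2024multidimensional} must be invoked rather than classical Itô calculus.
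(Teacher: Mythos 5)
Your high-level outline is recognisable — split $[0,T]$ into unit intervals, use the forward-integral machinery of \cite{bosch2024multidimensional} on each interval, and extract the $\ln T$ as a union-bound-type price over the $T$ intervals — and this matches the paper in spirit. However, there are two genuine gaps.

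The first, and most serious, concerns $\mathcal{I}^s_B$. You propose to ``handle it in parallel'' by first obtaining a supremum bound for $\|\mathcal{E}^s_B(t)\|_{H^{k+1}}$ and then integrating against $e^{-\beta(t-s)}$. But such a supremum bound is not available under (hB), and cannot be produced by the argument you describe. Hypothesis (hB) only controls $B$ in $HS(L^2_Q;H^k)$; the one extra derivative in $\mathcal{E}^s_B(s)\in H^{k+1}$ has to come from the parabolic smoothing bound $\|E(t,s)P^\perp\|_{\mathscr{L}(H^k;H^{k+1})} \lesssim (t-s)^{-1/2}e^{-\beta(t-s)}$ in \eqref{eq:conv:bnds:e}. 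When you push the $(t-s)^{-1/2}$ through the stochastic (BDG/It\^o-isometry) estimate, its \emph{square} appears, and $(t-s)^{-1}$ is not integrable near $t=s$; equivalently, in your factorisation you would need $\alpha>1/2$ to absorb the smoothing singularity but $\alpha<1/2$ for $\mathcal{Y}$ to be a well-defined stochastic integral. There is no admissible $\alpha$, so a pathwise supremum bound in $H^{k+1}$ simply does not hold in this generality. The paper circumvents exactly this obstacle with the maximal-regularity bound \eqref{eq:conv:max:reg:bnd} from \cite[Prop.~3.11]{bosch2024multidimensional}, which controls the \emph{integrated} quantity $\mathcal{I}^s_B$ without ever asserting a pointwise $H^{k+1}$ bound, and then uses the elementary observation $\sup_{i\le t\le i+1}\mathcal{I}^s_B(t)\le e^{\beta}\mathcal{I}^s_B(i+1)$ to reduce the supremum to a discrete maximum. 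Your proposal is missing both of these ingredients, and without them the second half of \eqref{eq:prp:itgs:eb:hb:full} does not close.

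The second gap is the $\ln T$ bookkeeping. You describe the reduction to unit intervals both as summing ``geometrically weighted'' contributions and as a ``union-bound-type price.'' These are incompatible: a geometric series over $m$ would converge to an $O(1)$ constant and produce \emph{no} $\ln T$ at all, while a raw union bound of moments over $T$ intervals produces a \emph{multiplicative} $T$, not an additive $\ln T$ inside the bracket. The mechanism that actually delivers $[\ln T + \Theta_2]^{np}$ is the moment $\to$ exponential-tail $\to$ union-bound $\to$ moment loop implemented in Corollary \ref{cor:prlm:exp:bnd:to:exp:bnd:with:n} (via Lemma \ref{lem:prob:moment:to:tail} and Corollary \ref{cor:prlm:tail:bnd:to:exp:bnd:gen}), where the factor $N=T$ sitting outside the exponential is converted to $\ln T$ by the Chernoff-type optimisation. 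You would need to spell this out; the shortcut $T^{1/p}\le e^{(\ln T)/p}$ you mention does not by itself turn a multiplicative $T$ into the additive form $p^{np}+[\ln T+\Theta_2]^{np}$.

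A smaller imprecision: for $t\in[i,i+1]$ you need to split $\mathcal{E}^s_B(t)$ into the contributions from $[0,i]$ and $[i,t]$ (as the paper does with $\mathcal{E}^{(i)}_{B;I}$ and $\mathcal{E}^{(i)}_{B;II}$), bounding the first via the weighted-decay estimate and the second via the maximal inequality; you should make this decomposition explicit rather than factorising the full convolution from $0$ to $t$ on each unit interval.
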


\subsection{Deterministic convolutions}
\label{subsec:conv:det}

Our goal here is to establish Proposition \ref{prp:cnv:hn}, which concerns the
deterministic convolution $\mathcal{E}^d_N$ and the associated integral $\mathcal{I}^d_N$.
We proceed in a pathwise fashion, using relatively direct estimates.

\begin{lem}
Assume that \textnormal{(HTw)}, \textnormal{(HNL)} and \textnormal{(hE)} hold
and consider any process $N$ that satisfies \textnormal{(hN)}.
Then for all $0 \le t  \le T$
we have the pathwise bound
\begin{equation}
\label{eq:conv:bnd:e:n:prlm}
    \norm{\mathcal{E}^d_N(t)}_{H^{k+1}}
    \le  2M \big(1 + \beta^{-1}\big)
     \sup_{0 \le s \le t} \norm{N(s)}_{H^k}.
\end{equation}
\end{lem}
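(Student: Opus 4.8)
The plan is to bound the convolution directly and pathwise, using the smoothing estimate for the evolution family restricted to the stable subspace. Fix $\omega$ (outside a null set) and $t \in [0,T]$. Since $N \in L^{2p}(\Omega; C([0,T];H^k))$ the integrand $s \mapsto E(t,s)P^\perp N(s)$ is a well-defined $H^{k+1}$-valued Bochner integral, so by the triangle inequality for Bochner integrals together with the second bound in \eqref{eq:conv:bnds:e},
\begin{equation}
\norm{\mathcal{E}^d_N(t)}_{H^{k+1}}
\le \int_0^t \norm{E(t,s)P^\perp}_{\mathscr{L}(H^k;H^{k+1})} \norm{N(s)}_{H^k} \, \mathrm ds
\le M \Big( \sup_{0 \le s \le t} \norm{N(s)}_{H^k} \Big) \int_0^t \max\{1,(t-s)^{-1/2}\} e^{-\beta(t-s)} \, \mathrm ds .
\end{equation}

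It remains to bound the scalar integral uniformly in $t$. Substituting $\tau = t-s$ gives $\int_0^t \max\{1,\tau^{-1/2}\} e^{-\beta \tau} \, \mathrm d\tau$, which we split at $\tau = 1$. On $(0,1]$ we have $\max\{1,\tau^{-1/2}\} = \tau^{-1/2}$ and $e^{-\beta\tau}\le 1$, so that part is at most $\int_0^1 \tau^{-1/2}\,\mathrm d\tau = 2$; the singularity is integrable, which is precisely what allows the gain of one derivative. On $[1,\infty)$ we have $\max\{1,\tau^{-1/2}\}=1$, so that part is at most $\int_1^\infty e^{-\beta\tau}\,\mathrm d\tau = \beta^{-1}e^{-\beta} \le \beta^{-1}$. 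Hence the integral is bounded by $2 + \beta^{-1} \le 2(1+\beta^{-1})$, and combining with the display above yields \eqref{eq:conv:bnd:e:n:prlm}.

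There is essentially no obstacle here: the estimate is routine, and the only thing to be careful about is invoking the correct (regularity-gaining) member of the pair \eqref{eq:conv:bnds:e} and checking that its $(t-s)^{-1/2}$ blow-up near the diagonal is integrable. One could also note that the bound holds for every $t$ with the same constant, so no separate argument for the supremum in $t$ is needed at this stage.
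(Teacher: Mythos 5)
Your proof is correct and follows essentially the same route as the paper: apply the Bochner triangle inequality, invoke the regularity-gaining bound in \eqref{eq:conv:bnds:e}, and reduce to the scalar integral $\int_0^t \max\{1,\tau^{-1/2}\}e^{-\beta\tau}\,\mathrm d\tau$. The only difference is cosmetic: you split the integral at $\tau=1$ according to which term the $\max$ picks out, whereas the paper first replaces $\max\{1,\tau^{-1/2}\}$ by $1+\tau^{-1/2}$ before splitting; both yield $2M(1+\beta^{-1})$.
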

\begin{proof}
This follows directly from
the bounds \eqref{eq:conv:bnds:e} and the computation
\begin{equation}
\label{eq:itg:en:prlm:bnd}
\begin{array}{lcl}
    \norm{\mathcal{E}^d_N(t)}_{H^{k+1}}
    & \le &   M \int_0^{t} e^{-\beta(t-s )} \big( 1 + (t-s)^{-1/2} \big) \norm{N(s)}_{L^2}\,\mathrm d s
\\[0.2cm]
    & \le  &  M \big[ \int_0^{t} e^{-\beta(t-s )} \big( 1 + (t-s)^{-1/2} \big)\,\mathrm ds\big]
    \sup_{0 \le s \le t} \norm{N(s)}_{H^k}
    \\[0.2cm]
& \le &
\big[ \big(\frac{ 2 M}{\beta }\big) + M \int_{t-1}^t (t-s)^{-1/2} \, \mathrm ds \big]
  \sup_{0 \le s \le t} \norm{N(s)}_{H^k}
\\[0.2cm]
& \le &
\big[ \big(\frac{ 2 M}{\beta }\big) + 2 M  \big]
  \sup_{0 \le s \le t} \norm{N(s)}_{H^k}.
\end{array}
\end{equation}
\end{proof}

\begin{proof}[Proof of Proposition \ref{prp:cnv:hn}]
First consider the case that $p \ge 1$ is an integer.
The estimate for  $\mathcal{E}^d_N$ then follows
directly from \eqref{eq:conv:bnd:in:hN}
and \eqref{eq:conv:bnd:e:n:prlm}.
The bound for $\mathcal{I}^d_N$ also follows
in a similar fashion
using the pathwise estimate
\begin{equation}
\mathcal{I}^d_N(t)  \le \frac{1}{\beta} \sup_{0 \le s \le t} \norm{\mathcal{E}^d_N(s)}_{H^{k+1}}^2
\le \frac{1}{\beta} (4 M^2) (1 + \beta^{-1})^2 \sup_{0 \le s \le t} \big[ \norm{N(s)}_{H^k}]^2.
\end{equation}
When $p \ge 1$ is not an integer, we
pick $q > 1$ in such a way that $p q$  is an integer
and use the estimate
$\sqrt[q]{a + b} \le \sqrt[q]{a} + \sqrt[q]{b}$ for $a \ge 0$ and $b \ge 0$
to compute
\begin{equation}
    \mathbb E \sup_{0 \le t \le T} \norm{\mathcal{E}^d_N(t)}^{2p}_{H^k}
    \le \left[  \mathbb E \sup_{0 \le t \le T} \norm{\mathcal{E}^d_N(t)}^{2p q}_{H^k} \right]^{1/q}
    \le \big[ K^{2pq}( p^{np q} + \Theta_2^{np q} ) \Theta_1^{2pq} \big]^{1/q}
    \le  K^{2p}( p^{np } + \Theta_2^{np } ) \Theta_1^{2p} ,
\end{equation}
with a similar computation for $\mathcal{I}^d_N$.
\end{proof}

\subsection{Stochastic convolutions}
\label{subsec:conv:st}

We here set out to establish Proposition \ref{prp:cnv:hb}, using a strategy that is significantly more streamlined than in \cite{bosch2024multidimensional,hamster2020expstability}.
As a preparation,
we note that there exist $T$-independent constants $K_{\rm cnv} > 0$,
$K_{\rm dc} > 0$ and $K_{\rm mr} > 0$ so that
for any integer $p \ge 1$
and any $B \in \mathcal{N}^{2p}(T)$
have the maximal inequality \cite[Thm. 3.7]{bosch2024multidimensional}
\begin{equation}
\label{eq:conv:max:ineq}
\mathbb E \sup_{0 \le t \le T} \norm{ \mathcal{E}^s_B(t) }_{H^k}^{2p}
\le p^p K_{\rm cnv}^{2p} \mathbb E \left[ \int_0^T \norm{B(s)}_{HS(L^2_Q;H^k)}^2 \, \mathrm  ds  \right]^{p} ,
\end{equation}
together with the weighted decay estimate \cite[Prop 3.10]{bosch2024multidimensional}
\begin{equation}
\label{eq:conv:wt:decay}
\mathbb E \norm{ \mathcal{E}^s_B(t) }_{H^k}^{2p}
\le p^p K_{\rm dc}^{2p} \mathbb E \left[ \int_0^t e^{-\beta(t-s)} \norm{B(s)}_{HS(L^2_Q;H^k)}^2  \, \mathrm  ds \right]^{p}
\end{equation}
and the maximal regularity bound \cite[Prop. 3.11]{bosch2024multidimensional}
\begin{equation}
\label{eq:conv:max:reg:bnd}
\mathbb E [\mathcal{I}^s_B(t)]^p
\le K_{\rm mr}^p \mathbb E \sup_{0 \le s \le t} \norm{\mathcal{E}^s_B(s)}_{H^k}^{2p}
+ p^p K_{\rm mr}^p \mathbb E  \left[ \int_0^t e^{-\beta(t-s)} \norm{B(s)}_{HS(L^2_Q;H^k)}^2  \, \mathrm  ds \right]^{p} ,
\end{equation}
in which the latter two hold for all $0 \le t \le T$.
Following \cite{bosch2024multidimensional}, it is convenient to impose the following
temporary assumption.
\begin{itemize}
    \item[(hB*)]{
      Assumption (hB) holds and, in addition, $B$ is a finite-rank process that takes values in $HS(L^2_Q;H^{k+2})$.
     }
\end{itemize}
Under this assumption, we note that the integration range of
forward integrals can be split in a customary fashion
(see \cite[Eq. (3.49)]{bosch2024multidimensional}).
In particular, assuming that $T$ is an integer and picking $i \in \{0, \ldots, T-1\}$, we make the decomposition
\begin{equation}
\begin{array}{lcl}
\mathcal{E}^s_B(t) & = &
\mathcal{E}_{B;I}^{(i)}(t) + \mathcal{E}_{B;II}^{(i)}(t)
\end{array}
\end{equation}
for $i \le t \le i+1$.
Here we have defined 
\begin{equation}
\mathcal{E}_{B;I}^{(i)}(t)=\int_0^{i} E(t,s) P^\perp B(s)\, \mathrm d W_s^{Q;-}\quad\text{and} \quad \mathcal{E}_{B;II}^{(i)}(t)=\int_{i}^t E(t,s)P^\perp B(s) \,\mathrm d W_s^{Q;-}.
\end{equation}

\begin{lem}
\label{lem:conv:bnd:e:b:i}
Assume that \textnormal{(HTw)}, \textnormal{(HNL)}, \textnormal{(Hq)} and \textnormal{(hE)} hold, that $T$ is an integer and $i \in \{0, \ldots , T-1\}$. Then for any process $B$ that satisfies \textnormal{(hB*)}
and any integer $p \ge 1$
we have the bound
\begin{equation}
\mathbb E \sup_{i \le t \le i+1} \norm{\mathcal{E}_{B;I}^{(i)}(t)}_{H^k}^{2p}
\le M^{2p} K_{\rm dc}^{2p} p^p \frac{1}{\beta^p}
\big[ p^{(n-1)p} + \Theta_2^{(n-1)p} \big] \Theta_1^{2p}.
\end{equation}
\end{lem}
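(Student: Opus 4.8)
The plan is to split the estimate into two regimes in the time variable and exploit the fact that $\mathcal{E}_{B;I}^{(i)}$ is a stochastic convolution whose integration range is bounded away from the current time $t \in [i,i+1]$, so the semigroup factor $E(t,s)P^\perp$ provides a genuine exponential weight $e^{-\beta(t-s)}$ with $t - s \ge t - i \ge 0$. First I would observe that, by the evolution property $E(t,s) = E(t,i)E(i,s)$ for $s \le i \le t$ together with the factorization $P^\perp E = E P^\perp$ and the uniform bound $\norm{E(t,i)P^\perp}_{\mathscr{L}(H^k;H^k)} \le M e^{-\beta(t-i)}$ from \eqref{eq:conv:bnds:e}, one can pull the "recent" propagator out of the integral:
\begin{equation}
\mathcal{E}_{B;I}^{(i)}(t) = E(t,i) P^\perp \int_0^i E(i,s) P^\perp B(s)\, \mathrm d W_s^{Q;-} = E(t,i)P^\perp \, \mathcal{E}_B^s(i).
\end{equation}
This is the crucial structural step: it reduces the supremum over $t \in [i,i+1]$ of $\norm{\mathcal{E}_{B;I}^{(i)}(t)}_{H^k}$ to $M \norm{\mathcal{E}_B^s(i)}_{H^k}$, a \emph{single} evaluation of the full convolution at the integer time $i$, with no remaining supremum to control.

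Next I would apply the weighted decay estimate \eqref{eq:conv:wt:decay} at time $t = i$ to bound $\mathbb E \norm{\mathcal{E}_B^s(i)}_{H^k}^{2p}$ by $p^p K_{\rm dc}^{2p}$ times the $p$-th moment of $\int_0^i e^{-\beta(i-s)} \norm{B(s)}_{HS(L^2_Q;H^k)}^2\, \mathrm ds$. Pulling the supremum in $s$ out of the integral, the remaining time integral is $\int_0^i e^{-\beta(i-s)}\,\mathrm ds \le \beta^{-1}$, so this moment is at most $\beta^{-p}\, \mathbb E \sup_{0 \le s \le T}\norm{B(s)}_{HS(L^2_Q;H^k)}^{2p}$. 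Invoking the hypothesis \eqref{eq:conv:bnd:in:hB} from (hB) — which is available since (hB*) implies (hB) — this last expectation is bounded by $[p^{(n-1)p} + \Theta_2^{(n-1)p}]\Theta_1^{2p}$. Combining, $\mathbb E \norm{\mathcal{E}_B^s(i)}_{H^k}^{2p} \le K_{\rm dc}^{2p} p^p \beta^{-p}[p^{(n-1)p} + \Theta_2^{(n-1)p}]\Theta_1^{2p}$, and multiplying by $M^{2p}$ from the propagator step yields exactly the claimed bound.

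The only mild subtlety — and the step I would be most careful about — is the justification that $\sup_{i \le t \le i+1}\norm{E(t,i)P^\perp \zeta}_{H^k} \le M \norm{\zeta}_{H^k}$ uniformly, i.e. that the $T$-independent constant $M$ from \eqref{eq:conv:bnds:e} genuinely controls the propagator over the whole unit interval regardless of $i$ and of the path of $\nu$; this is precisely the content of \cite[Lem. 3.2]{bosch2024multidimensional} under (hE), so no new work is needed. A second point worth a sentence is that the pull-out identity $\mathcal{E}_{B;I}^{(i)}(t) = E(t,i)P^\perp \mathcal{E}_B^s(i)$ holds for the forward integral $\mathrm d W^{Q;-}$: since the integrand on $[0,i]$ is integrated against increments up to time $i$ and then acted on by the deterministic-given-$\omega$ operator $E(t,i)$, this is just linearity of the forward integral together with the fact that $E(t,i)$ (for fixed $t$) may be taken outside, which is exactly the splitting convention recorded around \cite[Eq. (3.49)]{bosch2024multidimensional}. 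Everything else is a routine chain of the three cited a-priori inequalities, so I expect no genuine obstacle beyond bookkeeping.
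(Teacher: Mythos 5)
Your proposal is correct and follows essentially the same route as the paper's proof: factor the propagator as $\mathcal{E}_{B;I}^{(i)}(t) = E(t,i)\int_0^i E(i,s)P^\perp B(s)\,\mathrm dW^{Q;-}_s$, bound the prefactor pathwise by $M$ uniformly over $t\in[i,i+1]$, apply the weighted decay estimate \eqref{eq:conv:wt:decay} at time $i$, and then absorb $\int_0^i e^{-\beta(i-s)}\,\mathrm ds\le\beta^{-1}$ and invoke \eqref{eq:conv:bnd:in:hB}. Your additional remarks — that the factorization commutes with $P^\perp$ so the $M$-bound from \eqref{eq:conv:bnds:e} applies, and that (hB*) implies (hB) — simply spell out steps the paper leaves implicit.
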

\begin{proof}
In view of the identity
\begin{equation}
\mathcal{E}_{B;I}^{(i)}(t)
=
E(t, i) \int_0^{i} E(i,s) P^\perp B(s) \,\mathrm d W_s^{Q;-},
\end{equation}
we obtain the pathwise bound
\begin{equation}
\textstyle \norm{ \mathcal{E}_{B;I}^{(i)}(t)}_{H^k} \le M  \norm{ \int_0^i E(i,s) P^\perp B(s)\, \mathrm d W_s^{Q;-} }_{H^k}.
\end{equation}
Applying the weighted decay estimate
\eqref{eq:conv:wt:decay}
subsequently yields
\begin{equation}
\begin{array}{lcl}
\mathbb E \sup_{i \le t \le i+1} \norm{\mathcal{E}_{B;I}^{(i)}(t)}_{H^k}^{2p}
& \le & M^{2p}  \mathbb E  \norm{ \int_0^i E(i,s) P^\perp B(s) \,\mathrm d W_s^{Q;-} }_{H^k}^{2p}
\\[0.2cm]
& \le & M^{2p} K_{\rm dc}^{2p} p^p
\mathbb E  \big[\int_0^i e^{-\beta (i -s)} \norm{B(s)}_{HS(L^2_Q; H^k)}^2 \, \mathrm ds \big]^p .
\end{array}
\end{equation}
The desired estimate can now be obtained by utilizing the bound \eqref{eq:conv:bnd:in:hB}.
\end{proof}

\begin{lem}
\label{lem:conv:bnd:e:b:ii}
Assume that \textnormal{(HTw)}, \textnormal{(HNL)}, \textnormal{(Hq)} and \textnormal{(hE)} hold, that $T$ is an integer and $i \in \{0, \ldots , T-1\}$. Then for any process $B$ that satisfies \textnormal{(hB*)}
and any integer $p \ge 1$
we have the bound
\begin{equation}
\mathbb E \sup_{i \le t \le i+1} \norm{\mathcal{E}_{B;II}^{(i)}(t)}_{H^k}^{2p}
\le  K_{\rm cnv}^{2p} p^p \big[ p^{(n-1)p} + \Theta_2^{(n-1)p} \big] \Theta_1^{2p} .
\end{equation}
\end{lem}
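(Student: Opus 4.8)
The plan is to treat $\mathcal{E}_{B;II}^{(i)}$ as a stochastic convolution in its own right, defined over the \emph{unit} interval $[i,i+1]$, and to apply the maximal inequality \eqref{eq:conv:max:ineq} to it. Indeed, for $i \le t \le i+1$ the process $t \mapsto \mathcal{E}_{B;II}^{(i)}(t) = \int_{i}^t E(t,s) P^\perp B(s)\,\mathrm dW_s^{Q;-}$ has precisely the structure covered by \eqref{eq:conv:max:ineq} once the time origin is translated to $i$, and the constant $K_{\rm cnv}$ there does not depend on the length of the time horizon. This yields
\begin{equation}
\mathbb E \sup_{i \le t \le i+1} \norm{\mathcal{E}_{B;II}^{(i)}(t)}_{H^k}^{2p}
\le p^p K_{\rm cnv}^{2p}\, \mathbb E\Big[ \int_{i}^{i+1} \norm{B(s)}_{HS(L^2_Q;H^k)}^2 \,\mathrm ds \Big]^{p}.
\end{equation}

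The only point requiring a little care is that \eqref{eq:conv:max:ineq} is stated over $[0,T]$: here one either invokes its proof from \cite[Thm. 3.7]{bosch2024multidimensional} directly on the sub-interval $[i,i+1]$, or applies it to the time-translated data $\widetilde B(\cdot) = B(i+\cdot)$ together with the correspondingly translated evolution family and noise. This is legitimate precisely because $B$ satisfies (hB*), so the forward integral and all the associated estimates are available. Since the interval has unit length, I would then bound the time integral crudely by its supremum,
\begin{equation}
\mathbb E\Big[ \int_{i}^{i+1} \norm{B(s)}_{HS(L^2_Q;H^k)}^2 \,\mathrm ds \Big]^{p}
\le \mathbb E \sup_{i \le s \le i+1} \norm{B(s)}_{HS(L^2_Q;H^k)}^{2p}
\le \mathbb E \sup_{0 \le s \le T} \norm{B(s)}_{HS(L^2_Q;H^k)}^{2p},
\end{equation}
and feed in the assumed moment bound \eqref{eq:conv:bnd:in:hB}. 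Combining the three displays produces exactly $K_{\rm cnv}^{2p} p^p \big[ p^{(n-1)p} + \Theta_2^{(n-1)p}\big]\Theta_1^{2p}$, as claimed.

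I do not anticipate a genuine obstacle here: this lemma merely isolates the ``recent past'' portion $[i,t]$ of the convolution, for which no exponential weighting is needed and the plain maximal inequality suffices, while the ``distant past'' $[0,i]$ is handled separately in Lemma \ref{lem:conv:bnd:e:b:i} via the weighted decay estimate \eqref{eq:conv:wt:decay}. The one genuinely useful observation is that on a bounded (here, unit-length) interval the $L^2$-in-time norm of $B$ is dominated by its supremum, so the $n-1$ power of $p$ coming from (hB) is preserved and only a single extra factor $p^p$, from the maximal inequality, is introduced.
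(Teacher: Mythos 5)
Your proof is correct and matches the paper's argument essentially verbatim: apply the maximal inequality \eqref{eq:conv:max:ineq} to the forward integral over the unit interval $[i,i+1]$ and then feed in the moment bound \eqref{eq:conv:bnd:in:hB}. The only thing you spell out that the paper leaves implicit is the step bounding $\mathbb E\big[\int_i^{i+1}\norm{B(s)}^2_{HS(L^2_Q;H^k)}\,\mathrm ds\big]^p$ by $\mathbb E\sup_{0\le s\le T}\norm{B(s)}^{2p}_{HS(L^2_Q;H^k)}$, which is a correct and helpful clarification.
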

\begin{proof}
Applying the maximal inequality
\eqref{eq:conv:max:ineq}
we find
\begin{equation}
\begin{array}{lcl}
\mathbb E \sup_{i \le t \le i+1} \norm{\mathcal{E}_{B;II}^{(i)}(t)}_{H^k}^{2p}
& \le & K_{rm cnv}^{2p} p^p \mathbb E \big[\int_i^{i+1} \norm{B(s)}_{HS(L^2_Q; H^k)}^2 \, \mathrm ds \big]^p,
\end{array}
\end{equation}
which leads directly to the desired bound by utilizing \eqref{eq:conv:bnd:in:hB}.
\end{proof}

For convenience, we now introduce the constant
\begin{equation}
    K_{\mathcal{E}} =
    2  \big( M^2 K_{\rm dc}^2
    /{\beta} +  K_{\rm cnv}^2 \big)^{1/2}.
\end{equation}
This allows us to combine the
two previous results to obtain
estimates for $\mathcal{E}^s_B$ over short intervals.

\begin{cor}
    Assume that \textnormal{(HTw)}, \textnormal{(HNL)}, \textnormal{(Hq)} and \textnormal{(hE)} hold, that $T$ is an integer and $i \in \{0, \ldots , T-1\}$. Then for any process $B$ that satisfies \textnormal{(hB)} and any integer $p \ge 1$
we have the bound
\begin{equation}
\label{eq:conv:est:sup:i:ip1:esb}
    \mathbb E \sup_{i \le t \le i+1} \norm{\mathcal{E}^s_B(t)}_{H^k}^{2p}
    \le K_\mathcal{E}^{2p}
    \big[ p^{np} + \Theta_2^{np} \big] \Theta_1^{2p}.
\end{equation}
\end{cor}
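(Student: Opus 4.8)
The plan is to combine the two preceding lemmas via the splitting $\mathcal{E}^s_B(t) = \mathcal{E}_{B;I}^{(i)}(t) + \mathcal{E}_{B;II}^{(i)}(t)$ valid for $i \le t \le i+1$, first under the temporary assumption \textnormal{(hB*)} and then removing it by a density/approximation argument. First I would take suprema over $[i,i+1]$ on both sides, use the elementary inequality $\sup_{i \le t \le i+1}\norm{\mathcal{E}^s_B(t)}_{H^k}^{2p} \le 2^{2p-1}\big(\sup \norm{\mathcal{E}_{B;I}^{(i)}}_{H^k}^{2p} + \sup\norm{\mathcal{E}_{B;II}^{(i)}}_{H^k}^{2p}\big)$, take expectations, and insert the bounds from Lemma \ref{lem:conv:bnd:e:b:i} and Lemma \ref{lem:conv:bnd:e:b:ii}. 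The first contributes $M^{2p} K_{\rm dc}^{2p} p^p \beta^{-p}\big[p^{(n-1)p} + \Theta_2^{(n-1)p}\big]\Theta_1^{2p}$ and the second $K_{\rm cnv}^{2p} p^p \big[p^{(n-1)p} + \Theta_2^{(n-1)p}\big]\Theta_1^{2p}$; adding them and using $p^p \cdot p^{(n-1)p} = p^{np}$ together with $p^p \Theta_2^{(n-1)p} \le p^{np} + \Theta_2^{np}$ (or more crudely $\le \max\{p,\Theta_2\}^{np} \le p^{np}+\Theta_2^{np}$) collapses everything into $p^{np} + \Theta_2^{np}$ times a constant. The factor $2^{2p-1}$ together with $2(M^2K_{\rm dc}^2/\beta + K_{\rm cnv}^2)$ is exactly absorbed into $K_{\mathcal{E}}^{2p}$ as defined just above the statement, so the constant bookkeeping is designed to close precisely.

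Then I would remove \textnormal{(hB*)}: given a general $B$ satisfying \textnormal{(hB)}, approximate it by finite-rank processes $B_N$ taking values in $HS(L^2_Q; H^{k+2})$ that converge to $B$ in $\mathcal{N}^{2p}(T)$ and (after passing to a subsequence) pathwise, in the manner of \cite[\S3.2]{bosch2024multidimensional}; the forward integrals $\mathcal{E}^s_{B_N}$ then converge to $\mathcal{E}^s_B$ in the relevant sense, and one can arrange that $B_N$ inherits a version of \eqref{eq:conv:bnd:in:hB} with the same $\Theta_1,\Theta_2$ (up to an arbitrarily small inflation), so \eqref{eq:conv:est:sup:i:ip1:esb} for $B_N$ passes to the limit by Fatou's lemma. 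This is the standard route already used in the cited prior work, so I would keep it brief and cite the corresponding approximation results.

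The main obstacle I anticipate is not the arithmetic — that is pre-arranged by the definition of $K_{\mathcal{E}}$ — but rather making the approximation step clean: one must ensure that the finite-rank truncations $B_N$ simultaneously (i) converge to $B$ strongly enough that $\mathcal{E}^s_{B_N}(t) \to \mathcal{E}^s_B(t)$ for $i \le t \le i+1$, at least along a subsequence and $\mathbb{P}$-a.s., and (ii) satisfy a uniform-in-$N$ moment bound of the form \eqref{eq:conv:bnd:in:hB}. Since \textnormal{(hB)} only gives control on $B$ itself, one needs the truncation to be a contraction-type operation on $HS(L^2_Q;H^k)$ (e.g. spectral truncation of $Q$ composed with mollification), which does not increase the Hilbert–Schmidt norm; granting that, (ii) is immediate and (i) follows from the maximal inequality \eqref{eq:conv:max:ineq} applied to $B - B_N$. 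A minor secondary point is that \eqref{eq:conv:bnd:in:hB} and the lemmas are stated for integer $p$, but \eqref{eq:conv:est:sup:i:ip1:esb} is likewise only claimed for integer $p$, so no Jensen/Hölder interpolation step is needed here (that is deferred to the passage from this corollary to Proposition \ref{prp:cnv:hb}).
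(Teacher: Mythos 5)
Your proposal matches the paper's proof step for step: the same splitting $\mathcal{E}^s_B = \mathcal{E}_{B;I}^{(i)} + \mathcal{E}_{B;II}^{(i)}$ under (hB*), the elementary bound $(a+b)^{2p}\le 2^{2p-1}(a^{2p}+b^{2p})$ combined with Lemmas \ref{lem:conv:bnd:e:b:i}–\ref{lem:conv:bnd:e:b:ii}, Young's inequality to absorb the cross term $p^p\Theta_2^{(n-1)p}$ into $p^{np}+\Theta_2^{np}$, and the final removal of (hB*) by a finite-rank approximation argument that the paper dispatches by citing \cite[Cor. 3.8]{bosch2024multidimensional}. The only difference is that you elaborate on the approximation step more than the paper does (the paper treats it as standard), but the substance is identical.
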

\begin{proof}
Assume first that (hB*) is satisfied.
Combining Lemmas \ref{lem:conv:bnd:e:b:i} and \ref{lem:conv:bnd:e:b:ii},
the estimate
 $(a+b)^{2p} \le 2^{2p-1} (a^{2p} + b^{2p})$
implies
\begin{equation}
    \mathbb E \sup_{i \le t \le i+1} \norm{\mathcal{E}^s_B(t)}_{H^k}^{2p}
    \le 2^{2p-1} p^p \big( M^2 K_{\rm dc}^2\frac{1}{\beta} +  K_{\rm cnv}^2 \big)^p
    \big[ p^{(n-1)p} + \Theta_2^{(n-1)p} \big] \Theta_1^{2p}.
\end{equation}
Applying Young's inequality we note that
\begin{equation}
    p^p \Theta_2^{(n-1)p} \le  \frac{1}{n} p^{np} + \frac{n-1}{n} \Theta_2^{n p}
    \le  p^{np} +  \Theta_2^{n p},
\end{equation}
which leads directly to the desired bound. A standard limiting argument
involving \cite[Cor. 3.8]{bosch2024multidimensional} allows us
to remove the restriction that $B$ is a finite-rank process.
\end{proof}

\begin{cor}
\label{cor:conv:bnd:sup:e:s:b}
    Assume that \textnormal{(HTw)}, \textnormal{(HNL)}, \textnormal{(Hq)} and \textnormal{(hE)} hold and that $T \ge 2$ is an integer. Then for any process $B$ that satisfies \textnormal{(hB)} and any (real) $p \ge 1$
we have the bound
   \begin{equation}
   \label{eq:conv:bnd:fin:e:s:b}
\mathbb    E \sup_{0 \le t \le T}
    \norm{\mathcal{E}^s_B(t)}^{2p}
    \le
      K_{\mathcal{E}}^{2p} (16e n )^{np} \big(p^{n p} +  [\ln T +   \Theta_2]^{n p}\big)  \Theta_1^{2p}.
\end{equation}
\end{cor}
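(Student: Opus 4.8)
The plan is to glue together the short-interval estimates \eqref{eq:conv:est:sup:i:ip1:esb} over the integer subintervals $[i,i+1]$ for $i = 0, \ldots, T-1$, exploiting the factor $M e^{-\beta(t-s)}$ in the evolution-family bounds \eqref{eq:conv:bnds:e} to control the contribution of the "far past'' on each subinterval. First I would decompose, for $t \in [i, i+1]$,
\begin{equation}
\mathcal{E}^s_B(t) = E(t, j) P^\perp \int_0^{j} E(j, s) P^\perp B(s) \, \mathrm d W^{Q;-}_s + \int_{j}^{t} E(t,s) P^\perp B(s) \, \mathrm d W^{Q;-}_s
\end{equation}
for each $0 \le j \le i$; applying \eqref{eq:conv:bnds:e} to the first factor gives a pathwise bound of the form $M e^{-\beta(i - j)} \norm{\mathcal{E}^s_B(j)}_{H^k}$-type contribution plus the local piece $\mathcal{E}_{B;II}^{(i)}$ already estimated. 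More directly, I would write $\sup_{i \le t \le i+1}\norm{\mathcal{E}^s_B(t)}_{H^k}$ as bounded, via the contraction and semigroup property of $E$, by a sum $\sum_{j=0}^{i} M e^{-\beta(i-j)} X_j$, where $X_j := \sup_{j \le t \le j+1}\norm{\mathcal{E}^{s,(j)}_B(t)}_{H^k}$ denotes the supremum of the "freshly started at time $j$'' convolution over $[j, j+1]$; each $X_j$ satisfies the moment bound \eqref{eq:conv:est:sup:i:ip1:esb}.

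The second step is a moment estimate for a geometrically weighted sum of nonnegative random variables with uniform $2p$-th moment bounds. Taking $L^{2p}$-norms and using Minkowski's inequality,
\begin{equation}
\Big\| \sup_{i \le t \le i+1}\norm{\mathcal{E}^s_B(t)}_{H^k} \Big\|_{L^{2p}} \le M \sum_{j=0}^{i} e^{-\beta(i-j)} \| X_j \|_{L^{2p}} \le M \Big( \sum_{m \ge 0} e^{-\beta m} \Big) \, K_{\mathcal{E}} \big[p^{np} + \Theta_2^{np}\big]^{1/(2p)} \Theta_1,
\end{equation}
so that $\mathbb E \sup_{i \le t \le i+1}\norm{\mathcal{E}^s_B(t)}_{H^k}^{2p}$ is bounded by a $T$-independent constant times $(p^{np} + \Theta_2^{np})\Theta_1^{2p}$. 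Then I would pass from the per-interval bound to the bound on $[0,T]$ by the crude union estimate $\mathbb E \sup_{0 \le t \le T}(\cdots)^{2p} \le \sum_{i=0}^{T-1} \mathbb E \sup_{i \le t \le i+1}(\cdots)^{2p} \le T \cdot C^{2p}(p^{np}+\Theta_2^{np})\Theta_1^{2p}$, and absorb the factor $T$ into the moments. This last absorption is the standard trick: write $T = e^{\ln T}$ and note that for the target exponent we may use $T \le (e\,n\,\ln T)^{np} \cdot (\text{const})^{np}$ when $\ln T \ge 1$ (which holds since $T \ge 2$; more carefully one splits according to whether $\ln T \le p$ or $\ln T > p$ and uses $T^{1/(2p)} \le \max\{e, e^{\ln T / (2p)}\}$, then bounds $e^{\ln T/(2p)}$ against $[\ln T]^{n/2}$ up to the combinatorial constant $(16 e n)^{n/2}$ via $x \le (2nx/(en))^{n/2}$-type inequalities). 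This produces exactly the stated $(16 e n)^{np}(p^{np} + [\ln T + \Theta_2]^{np})$ shape after combining $p^{np}$ with the $T$-factor and recombining $\Theta_2$ with $\ln T$ through $a^{np} + b^{np} \le (a+b)^{np}$ reasoning in reverse, i.e. $(\ln T)^{np} + \Theta_2^{np} \le 2^{np}(\ln T + \Theta_2)^{np}$.

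The main obstacle I anticipate is bookkeeping the constants so that the final combinatorial prefactor is precisely $(16 e n)^{np}$ rather than something larger, and in particular handling the interplay between the three regimes — $p$ large, $\ln T$ large, $\Theta_2$ large — in a single clean estimate. The cleanest route is probably to prove the non-integer $p$ case last, exactly as in the proof of Proposition \ref{prp:cnv:hn}: establish the bound for integer $p$ first (where Minkowski and the union bound are immediate), then for general real $p \ge 1$ pick $q > 1$ with $pq \in \mathbb Z$ and apply Jensen/Lyapunov, $\big(\mathbb E \sup(\cdots)^{2p}\big)^{1/(2p)} \le \big(\mathbb E \sup(\cdots)^{2pq}\big)^{1/(2pq)}$, noting that all the factors $p^{np}$, $(16en)^{np}$, $[\ln T + \Theta_2]^{np}$, $\Theta_1^{2p}$ scale correctly under $p \mapsto pq$ followed by taking the $q$-th root. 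A minor subtlety is that the decomposition into freshly-restarted convolutions $\mathcal{E}^{s,(j)}_B$ should be justified via the same finite-rank approximation argument (hB*) used in the preceding corollary, with \cite[Cor. 3.8]{bosch2024multidimensional} removing the restriction at the end; alternatively one keeps the original $\mathcal{E}^{(i)}_{B;I}, \mathcal{E}^{(i)}_{B;II}$ split and iterates the bound on $\mathcal{E}^{(i)}_{B;I}$ down to $i=0$, which avoids introducing new processes at the cost of a slightly more intricate recursion.
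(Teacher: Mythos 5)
Your decomposition into freshly-restarted convolutions and the Minkowski estimate for the geometrically weighted sum are not wrong, but they are also not needed: the per-interval bound \eqref{eq:conv:est:sup:i:ip1:esb} was already established in the preceding corollary, so the first half of your argument just rederives it. The genuine gap is in the step that passes from the per-interval bound to the bound on $[0,T]$. You propose the crude union estimate
\begin{equation*}
\mathbb E \sup_{0 \le t \le T}\norm{\mathcal{E}^s_B(t)}_{H^k}^{2p} \le \sum_{i=0}^{T-1} \mathbb E \sup_{i \le t \le i+1}\norm{\mathcal{E}^s_B(t)}_{H^k}^{2p} \le T\, C^{2p}\big(p^{np}+\Theta_2^{np}\big)\Theta_1^{2p}
\end{equation*}
and then claim to ``absorb'' the factor $T$ into $[\ln T + \Theta_2]^{np}$. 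That absorption is impossible at a fixed $p$: for $\Theta_2 = 0$, $n = p = 1$ the claim would read $T \le 16e(1 + \ln T)$, which fails for all large $T$, and the more careful splittings you sketch ($e^{\ln T/(2p)} \le (16en)^{n/2}[\ln T]^{n/2}$, etc.) fail by the same order of magnitude whenever $p \lesssim \ln T / \ln\ln T$. In that regime $T = e^{\ln T}$ is exponentially larger than $[\ln T]^{np}$, so no combinatorial constant can bridge the gap.

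The paper instead invokes Corollary~\ref{cor:prlm:exp:bnd:to:exp:bnd:with:n}, whose proof is essential and cannot be replaced by a fixed-exponent union bound. That corollary uses the per-interval moment bound \emph{uniformly over all integers $p' \ge 1$}: one first converts moments to a sub-Gaussian-type tail bound (Corollary~\ref{cor:prlm:gen:bnd:for:tail:distr}), then takes the union bound at the level of tail probabilities — which only costs a prefactor $N$ inside the exponential — and then converts back to moments (Corollary~\ref{cor:prlm:tail:bnd:to:exp:bnd:gen}), at which point the prefactor $N$ appears as $\ln N$. The mechanism is morally a Jensen/optimization argument where the effective exponent is taken to be of order $\max\{p, \ln T\}$; that is what turns the factor $T$ into $\ln T$. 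Applying the union bound directly to the $2p$-th moments, as you do, uses only the single exponent $p$ and hence loses this logarithmic improvement irretrievably. Your Jensen trick for non-integer $p$ is fine, but it only handles the non-integrality and does nothing to repair the $T$-versus-$\ln T$ discrepancy.
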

\begin{proof}
We first note that
\begin{equation}
\begin{array}{lcl}
   \mathbb  E \sup_{0 \le t \le T}
    \norm{\mathcal{E}^s_B(t)}^{2p}
& \le &
\mathbb   E \max_{i \in \{0, \ldots, T-1\} }
      \sup_{i \le t \le i +1 } \norm{\mathcal{E}^s_B(t)}^{2p}.
\end{array}
\end{equation}
In view of \eqref{eq:conv:est:sup:i:ip1:esb},
we may apply
Corollary \ref{cor:prlm:exp:bnd:to:exp:bnd:with:n}
to obtain
the desired bound.
\end{proof}

We now turn to the integrated expression $\mathcal{I}^s_B$. The key observation is that
\begin{equation}
\label{eq:conv:bnd:sup:i:b:vs:i:b:p1}
\sup_{i\leq t\leq i+1}\mathcal I^s_B(t)\leq e^\beta \mathcal I^s_B(i+1),
\end{equation}
which will allow us to apply
Corollary \ref{cor:prlm:exp:bnd:to:exp:bnd:with:n}
once more once the expectation
of $\mathcal{I}^s_B$ at individual times $t$ is understood.

\begin{lem}
    Assume that \textnormal{(HTw)}, \textnormal{(HNL)}, \textnormal{(Hq)} and \textnormal{(hE)} hold and that $T \ge 2$ is an integer. Then for any process $B$ that satisfies \textnormal{(hB)} and
    any integer $p \ge 1$,
we have the bound
\begin{equation}
\label{eq:conv:bnd:i:s:b:p:t}
\begin{aligned}
  \mathbb E \, \mathcal{I}^s_B(t)^{p}
 &\le
    K^{p}_{\rm mr} (16 en)^{np}\big( K_{\mathcal{E}}^{2}  + 2 \beta^{-1} \big)^p  \big[p^{np} + [\ln T + \Theta_2 \big]^{np} \big] \Theta_1^{2p}
\end{aligned}
\end{equation}
for any $0 \le t \le T$.
\end{lem}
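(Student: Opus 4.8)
The plan is to run the maximal regularity estimate \eqref{eq:conv:max:reg:bnd} and then control its two right-hand contributions separately, using the supremum bound from Corollary \ref{cor:conv:bnd:sup:e:s:b} for the first and the pointwise assumption \textnormal{(hB)} for the second. Fix an integer $p \ge 1$ and a time $0 \le t \le T$. Then \eqref{eq:conv:max:reg:bnd} already splits $\mathbb E [\mathcal{I}^s_B(t)]^p$ into a term controlled by $\mathbb E \sup_{0 \le s \le t} \norm{\mathcal{E}^s_B(s)}_{H^k}^{2p}$ and a term controlled by $p^p K_{\rm mr}^p\,\mathbb E\big[\int_0^t e^{-\beta(t-s)}\norm{B(s)}_{HS(L^2_Q;H^k)}^2\,\mathrm ds\big]^p$, so the remaining work is just two routine estimates glued back together.

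For the first term I would bound $\mathbb E\sup_{0\le s\le t}\norm{\mathcal{E}^s_B(s)}_{H^k}^{2p} \le \mathbb E\sup_{0\le s\le T}\norm{\mathcal{E}^s_B(s)}_{H^k}^{2p}$ and invoke \eqref{eq:conv:bnd:fin:e:s:b} directly, which already has the shape $K_{\mathcal{E}}^{2p}(16en)^{np}\big(p^{np}+[\ln T+\Theta_2]^{np}\big)\Theta_1^{2p}$. For the second term I would first discard the exponential weight via the crude pathwise bound $\int_0^t e^{-\beta(t-s)}\norm{B(s)}_{HS(L^2_Q;H^k)}^2\,\mathrm ds \le \beta^{-1}\sup_{0\le s\le T}\norm{B(s)}_{HS(L^2_Q;H^k)}^2$, and then apply \eqref{eq:conv:bnd:in:hB} to get the estimate $p^p K_{\rm mr}^p\beta^{-p}\big[p^{(n-1)p}+\Theta_2^{(n-1)p}\big]\Theta_1^{2p}$. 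One application of Young's inequality in the form $p^p\Theta_2^{(n-1)p}\le p^{np}+\Theta_2^{np}$ — legitimate precisely because $n\ge 1$, so the exponent $(n-1)p$ is non-negative — then absorbs the stray factor $p^p$ into the desired $p^{np}+[\ln T+\Theta_2]^{np}$ term, at the cost of a harmless factor $2$, yielding a bound of the form $2K_{\rm mr}^p\beta^{-p}\big(p^{np}+[\ln T+\Theta_2]^{np}\big)\Theta_1^{2p}$.

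Finally I would add the two bounds. Since $(16en)^{np}\ge 1$, it can be inserted in front of the second contribution for free; since $2\beta^{-p}\le(2\beta^{-1})^p$ for $p\ge 1$, the second prefactor becomes $(2\beta^{-1})^p$; and since $a^p+b^p\le(a+b)^p$ for $a,b\ge 0$ and $p\ge 1$, the combined prefactor $K_{\mathcal{E}}^{2p}+(2\beta^{-1})^p$ collapses to $\big(K_{\mathcal{E}}^2+2\beta^{-1}\big)^p$. What remains is exactly \eqref{eq:conv:bnd:i:s:b:p:t}. I do not expect any genuine obstacle here: all the analytic difficulty has already been absorbed into \eqref{eq:conv:max:reg:bnd} and Corollary \ref{cor:conv:bnd:sup:e:s:b}, and the only thing demanding care is the bookkeeping of the combinatorial constants (keeping $(16en)^{np}$ rather than, say, $(32en)^{np}$ as in \eqref{eq:prp:itgs:eb:hb:full}) together with making sure the Young-type step is phrased so that it also covers the edge case $n=1$, where $\Theta_2^{(n-1)p}=1$ and the inequality is trivial.
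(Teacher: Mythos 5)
Your proof is correct and follows essentially the same route as the paper: maximal regularity \eqref{eq:conv:max:reg:bnd} to split the integral, Corollary \ref{cor:conv:bnd:sup:e:s:b} for the supremum term, the crude $\beta^{-1}$-weighted bound plus (hB) for the weighted integral term, Young's inequality to recombine the powers, and the elementary superadditivity $a^p+b^p\le(a+b)^p$ to merge the prefactors. The paper simply leaves the final absorption implicit where you spell it out, so there is no genuine difference in the argument.
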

\begin{proof}
The maximal regularity bound  \eqref{eq:conv:max:reg:bnd}
together with
 \eqref{eq:conv:bnd:in:hB} imply that
\begin{equation}
\begin{aligned}
  \mathbb E \, \mathcal{I}_B(t) ^{p}
 &\le
    K^{p}_{\rm mr} \, \mathbb E    \sup_{0 \le t \le T}
     \|\mathcal{E}_B(t)\|_{H^k}^{2p}
    + p^{p} K^{p}_{\rm mr} \beta^{-p} \big[p^{(n-1)p} + \Theta_2^{(n-1)p} \big] \Theta_1^{2p} .
\end{aligned}
\end{equation}
Using the prior bound \eqref{eq:conv:bnd:fin:e:s:b}, this leads to
\begin{equation}
\begin{aligned}
  \mathbb E \, \mathcal{I}^s_B(t) ^{p}
 &\le
    K^{p}_{\rm mr} K_{\mathcal{E}}^{2p} (16 en)^{np} \big[p^{np} + [\ln T + \Theta_2 \big]^{np} \big] \Theta_1^{2p}
     \\
     &   \qquad
    + p^{p} K^{p}_{\rm mr} \beta^{-p} \big[p^{(n-1)p} + \Theta_2^{(n-1)p} \big] \Theta_1^{2p}.
\end{aligned}
\end{equation}
Applying Young's inequality, this yields
\begin{equation}
\begin{aligned}
  \mathbb E \, \mathcal{I}^s_B(t) ^{p}
 &\le
    K^{p}_{\rm mr} K_{\mathcal{E}}^{2p} (16 en)^{np} \big[p^{np} + [\ln T + \Theta_2 \big]^{np} \big] \Theta_1^{2p}
     \\
     &   \qquad
    + 2 K^{p}_{\rm mr} \beta^{-p} \big[p^{np} + \Theta_2^{np} \big] \Theta_1^{2p},
\end{aligned}
\end{equation}
which can be absorbed by the stated bound.
\end{proof}

\begin{cor}
\label{cor:conv:sup:i:s:0:T}
    Assume that \textnormal{(HTw)}, \textnormal{(HNL)}, \textnormal{(Hq)} and \textnormal{(hE)} hold and that $T \ge 2$ is an integer. Then for any process $B$ that satisfies \textnormal{(hB)} and any (real) $p \ge 1$
we have the bound
\begin{equation}
   \mathbb E \sup_{0 \le t \le T}
    \mathcal{I}^s_B(t)^{p}
    \le
      e^{\beta p} K^{p}_{\rm mr} \big( K_{\mathcal{E}}^{2}  + 2 \beta^{-1} \big)^p \big(p^{n p} +  [ \ln T +   \Theta_2]^{n p}\big)\big( (32e n )^{2n} \Theta_1^2\big)^{p}.
\end{equation}
\end{cor}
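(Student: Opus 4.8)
The plan is to reduce the supremum over $[0,T]$ of $\mathcal I^s_B$ to the values $\mathcal I^s_B(i+1)$ at integer times, exactly as was done for $\mathcal E^s_B$ in Corollary \ref{cor:conv:bnd:sup:e:s:b}. First I would invoke the pointwise bound \eqref{eq:conv:bnd:sup:i:b:vs:i:b:p1}, which yields
\begin{equation}
\mathbb E \sup_{0 \le t \le T} \mathcal{I}^s_B(t)^p
\le e^{\beta p} \, \mathbb E \max_{i \in \{0, \ldots, T-1\}} \mathcal{I}^s_B(i+1)^p.
\end{equation}
The per-time bound \eqref{eq:conv:bnd:i:s:b:p:t} controls each individual $\mathbb E \, \mathcal{I}^s_B(i+1)^p$ for integer $p$ by a quantity of the shape $\widetilde{K}^{2p}(p^{np} + [\ln T + \Theta_2]^{np})$, with $\widetilde{K}^2 = K_{\rm mr}(16en)^{n}(K_{\mathcal E}^2 + 2\beta^{-1})\Theta_1^2$ after absorbing the two summands in \eqref{eq:conv:bnd:i:s:b:p:t}. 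This is precisely the hypothesis format required by Corollary \ref{cor:prlm:exp:bnd:to:exp:bnd:with:n}, so applying that corollary to the finite family $\{\mathcal I^s_B(i+1)\}_{i=0}^{T-1}$ (which has $T$ members) converts the pointwise bounds into a bound on the expectation of the maximum, at the cost of the extra combinatorial factor — the same $(16en)^{n}$-to-$(32en)^{2n}$ type inflation seen in \eqref{eq:conv:bnd:fin:e:s:b}.

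Next I would handle the non-integer case. For real $p \ge 1$ one picks $q > 1$ with $pq$ an integer, applies the integer-$p$ bound at exponent $pq$, and uses Jensen's inequality $\mathbb E[X]^{1/q} \ge \mathbb E[X^{1/q}]$ together with the subadditivity $\sqrt[q]{a+b} \le \sqrt[q]{a} + \sqrt[q]{b}$ to descend back to exponent $p$, exactly as in the proof of Proposition \ref{prp:cnv:hn}; the constants $(32en)^{2n}$, $e^{\beta}$, $K_{\rm mr}$, $K_{\mathcal E}^2 + 2\beta^{-1}$ all behave multiplicatively in $p$ so this passage is harmless. Finally I would collect the factors: $e^{\beta p}$ from \eqref{eq:conv:bnd:sup:i:b:vs:i:b:p1}, $K_{\rm mr}^p(K_{\mathcal E}^2 + 2\beta^{-1})^p$ and the $\Theta_1$-dependence from \eqref{eq:conv:bnd:i:s:b:p:t}, and the $(16en)^{np}$-to-$(32en)^{2np}$ inflation from the maximum over $T$ integer times, arriving at the stated
\begin{equation}
\mathbb E \sup_{0 \le t \le T} \mathcal{I}^s_B(t)^{p}
\le e^{\beta p} K^{p}_{\rm mr} \big( K_{\mathcal{E}}^{2} + 2 \beta^{-1} \big)^p \big(p^{n p} + [ \ln T + \Theta_2]^{n p}\big)\big( (32e n )^{2n} \Theta_1^2\big)^{p}.
\end{equation}

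I expect no genuine obstacle here — this corollary is purely a packaging step. The only point requiring care is bookkeeping: verifying that the constant emerging from Corollary \ref{cor:prlm:exp:bnd:to:exp:bnd:with:n} applied to $T$ random variables really does fit inside the claimed $(32en)^{2n}$ (rather than, say, $(64en)^{2n}$), and that the two terms in \eqref{eq:conv:bnd:i:s:b:p:t} — the $K_{\mathcal E}^2$-term and the $2\beta^{-1}$-term — can indeed be merged under the single prefactor $(K_{\mathcal E}^2 + 2\beta^{-1})$ without disturbing the $(16en)^{np}$ already present. Both are routine once one tracks the exponents, so the proof is short.
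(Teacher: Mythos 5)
Your proposal follows the paper's proof exactly: apply \eqref{eq:conv:bnd:sup:i:b:vs:i:b:p1} to reduce the supremum to a maximum over the integer times $\{1,\ldots,T\}$ at the cost of $e^{\beta p}$, then feed the per-time bound \eqref{eq:conv:bnd:i:s:b:p:t} (with $Z_i = \sqrt{\mathcal I^s_B(i)}$, $\tilde\Theta_2 = \ln T + \Theta_2$, and $\tilde\Theta_1^2 = K_{\rm mr}(16en)^n(K_{\mathcal E}^2 + 2\beta^{-1})\Theta_1^2$) into Corollary \ref{cor:prlm:exp:bnd:to:exp:bnd:with:n}, and finally absorb the resulting $(16en)^{2n}$ together with the factor $2^{np}$ from $2\ln T + \Theta_2 \le 2(\ln T + \Theta_2)$ into $(32en)^{2n}$. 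The one small redundancy is your separate interpolation argument for non-integer $p$: Corollary \ref{cor:prlm:exp:bnd:to:exp:bnd:with:n} already accepts integer-$p$ hypotheses and returns a bound valid for all real $p \ge 1$, so that step is built into the abstract tool and needs no repetition here.
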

\begin{proof}
Using \eqref{eq:conv:bnd:sup:i:b:vs:i:b:p1},
we observe that
\begin{equation}
\begin{array}{lcl}
    \mathbb E \sup_{0 \le t \le T}
    \mathcal{I}^s_B(t)^{p}
& \le &
    \mathbb E \max_{i \in \{0, \ldots, T-1\} }
      \sup_{i \le t \le i +1 } \mathcal{I}^s_B(t)^{p}
\\[0.2cm]
& \le &
 e^{\beta p} \mathbb E \max_{i \in \{1, \ldots, T\} } \mathcal{I}^s_B(i)^{p}.
\end{array}
\end{equation}
    Appealing to Corollary \ref{cor:prlm:exp:bnd:to:exp:bnd:with:n},
    the estimate \eqref{eq:conv:bnd:i:s:b:p:t}
hence leads to the bound
\begin{equation}
\mathbb    E \sup_{0 \le t \le T}
    \mathcal{I}^s_B(t)^{p}
    \le
      e^{\beta p} K^{p}_{\rm mr} (16 en)^{np}\big( K_{\mathcal{E}}^{2}  + 2 \beta^{-1} \big)^p \big(p^{n p} +  [2 \ln T +   \Theta_2]^{n p}\big) ( (16e n )^n \Theta_1^2)^{p},
\end{equation}
which can be absorbed by the stated estimate.
\end{proof}

\begin{proof}[Proof of Proposition \ref{prp:cnv:hb}]
The desired bound follows directly from Corollaries \ref{cor:conv:bnd:sup:e:s:b}
and \ref{cor:conv:sup:i:s:0:T}.
\end{proof}

\section{Smoothness}
\label{sec:sm}

In this section we study the smoothness of the nonlinear functions in \eqref{eq:mr:decomp:r:sigma} and obtain estimates for these terms together with their derivatives. These results can be seen as an extension of their counterparts in \cite[{\S}4]{bosch2024multidimensional}, where it sufficed to obtain Lipschitz bounds instead of full derivative estimates.

\subsection{Preliminaries}
We start by considering the smoothness of Nemytskii operators
between various $H^k$-spaces.
In particular, we first consider general smooth functions $\Theta: \mathbb R^n \to \mathbb R^N$,
noting that $\Theta$ and its derivatives can be interpreted as Nemytskii
operators that act on functions $h: \mathcal{D} \to \mathbb R^n$
via the standard pointwise substitution
\begin{equation}
\label{eq:sm:def:d:j:theta}
    \big[ D^j \Theta ( h ) \big] (x,x_\perp) = \big[D^j \Theta\big]\big(h(x,x_\perp) \big),
    \qquad \qquad (x,x_\perp) \in \mathbb R \times \mathbb T^{d-1} = \mathcal{D}.
\end{equation}
For ease of notation we introduce the product shorthand
\begin{equation}
    \mathcal{P}^{(j)}_k [v] = ||v_1||_{H^k} \cdots ||v_j||_{H^k}.
\end{equation}
As a preparation, we recall that for any $k > d/2$
we can find a constant $K > 0$ so that
for any bounded $j$-linear map  $\Lambda: (\mathbb R^n)^{j} \to \mathbb R^N$, the bound
\begin{equation}
\label{eq:nl:bnd:multilinear:general}
    \| \Lambda[ \partial^{\alpha_1} v_1, \ldots, \partial^{\alpha_j} v_{j}] \|_{L^2(\mathcal D;\mathbb R^N)}
    \le  K |\Lambda|  \mathcal{P}^{(j)}_j[v] 
\end{equation}
holds for any tuple $(v_1, \ldots, v_j) \in (H^k)^{j}$, provided that $|\alpha_1| + \ldots + |\alpha_j| \le k$. This is related to the fact that $H^k$ is an algebra under multiplication for $k > d/2$ in the sense that $\|vw\|_{H^k} \le K \|v\|_{H^k}\|w\|_{H^k}$; see \cite[Thm. 4.39]{adams2003sobolev}. The function $\Phi$ below should be seen as a reference function, which in the sequel we will take to be $\Phi_0$.

\begin{lem}
\label{lem:sm:deriv:theta:bnds}
Pick $k > d/2$ together with $j \ge 0$ and
consider a $C^{k+j}$-smooth function $\Theta: \mathbb R^n \to \mathbb R^N$ for which $D^{\ell} \Theta$ is globally Lipschitz for all $0 \le \ell \le k+j$. Assume furthermore that $\Phi$ is bounded with $\Theta(\Phi) \in L^2$
and $\partial^\beta \Phi\in H^{k}$ for very multi-index $\beta \in \mathbb{Z}^d_{\ge 0}$ with $|\beta|=1$.
Then there exists a constant $K > 0$ so that for each tuple $(v_1, \ldots v_j)\in (H^k)^{j}$
and any $1 \le \tilde{k} \le k$ we have
\begin{equation}
\label{eq:sm:bnd:d:j:theta}
    || D^j\Theta(\Phi + w)[v_1, \ldots, v_j]||_{H^{\tilde{k}}} \le K (1 + ||w||_{H^k}^{\tilde{k}} ) \mathcal{P}^{(j)}_{k}[v]
\end{equation}
for any $w \in H^k$, while for any pair $w_A,w_B \in H^k$ we have
\begin{equation}
\label{eq:sm:bnd:d:j:theta:lip}
    || \big( D^j\Theta(\Phi + w_A) -D^j\Theta(\Phi + w_B) \big)  [v_1, \ldots, v_j] ||_{H^{\tilde{k}}} \le K \big(1 + ||w_A||_{H^k}^{\tilde{k}-1}
    + ||w_B||_{H^k}^{\tilde{k}}\big) \|w_A - w_B\|_{H^k} \mathcal{P}^{(j)}_{k}[v].
\end{equation}
\end{lem}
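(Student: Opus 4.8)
The plan is to reduce everything to the algebra property of $H^k$ (for $k>d/2$) together with the Faà di Bruno formula for derivatives of compositions. First I would recall that $D^j\Theta(\Phi+w)[v_1,\dots,v_j]$, interpreted as a Nemytskii operator, equals the pointwise product $\big[D^j\Theta\big](\Phi(x,x_\perp)+w(x,x_\perp))\cdot v_1(x,x_\perp)\cdots v_j(x,x_\perp)$ (where the $j$-linear slot is filled componentwise). So the $H^{\tilde k}$ norm is controlled, via the algebra property $\|ab\|_{H^{\tilde k}}\le K\|a\|_{H^{\tilde k}}\|b\|_{H^{\tilde k}}\le K\|a\|_{H^k}\|b\|_{H^k}$ applied repeatedly, by $\|D^j\Theta(\Phi+w)\|_{H^{\tilde k}}\prod_{i}\|v_i\|_{H^k}=\|D^j\Theta(\Phi+w)\|_{H^{\tilde k}}\mathcal P^{(j)}_k[v]$. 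The bulk of the work is therefore the scalar-valued estimate $\|D^j\Theta(\Phi+w)\|_{H^{\tilde k}}\le K(1+\|w\|_{H^k}^{\tilde k})$.

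For that estimate I would differentiate $x\mapsto [D^j\Theta](\Phi(x)+w(x))$ up to order $\tilde k$ using the chain rule. A multi-index derivative $\partial^\gamma$ with $|\gamma|\le \tilde k$ produces, by Faà di Bruno, a finite sum of terms of the form $[D^{j+\ell}\Theta](\Phi+w)\cdot\prod_{m=1}^{\ell}\partial^{\beta_m}(\Phi+w)$ with $\sum|\beta_m|=|\gamma|\le\tilde k\le k$ and each $|\beta_m|\ge 1$. The leading factor $[D^{j+\ell}\Theta](\Phi+w)$ is bounded in $L^\infty$ because $\Theta\in C^{k+j}$ with $D^\ell\Theta$ globally Lipschitz for $\ell\le k+j$, hence $D^{j+\ell}\Theta$ is bounded on all of $\mathbb R^n$ (Lipschitz of the $(j+\ell-1)$-th derivative gives boundedness of the $(j+\ell)$-th, and the lower ones are bounded on the bounded range of $\Phi$ plus linear growth controlled by $\|w\|_{L^\infty}\lesssim\|w\|_{H^k}$); actually since global Lipschitzianity is assumed only up to order $k+j$, the top derivative $D^{k+j}\Theta$ need not be bounded, so I would instead bound $[D^{j+\ell}\Theta](\Phi+w)$ in $L^2$ or $L^\infty$ using $|D^{j+\ell}\Theta(\cdot)|\le |D^{j+\ell}\Theta(0)|+K|\cdot|$, which contributes at most a factor $1+\|w\|_{H^k}$. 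Each factor $\partial^{\beta_m}(\Phi+w)$ with $1\le|\beta_m|$ lies in $H^{k-|\beta_m|+?}$; more simply, since $|\beta_m|\ge 1$ and $\partial^\beta\Phi\in H^k$ for $|\beta|=1$ while $\Phi$ is bounded, and $\partial^{\beta_m}w\in H^{k-|\beta_m|}\subseteq L^\infty$ when $k-|\beta_m|>d/2$ — to stay safe I would pair the $L^2$ factor with $L^\infty$ factors and use the multilinear estimate \eqref{eq:nl:bnd:multilinear:general} (with the single $L^2$-taking derivative carrying the remaining budget), producing a bound $K\prod_m(1+\|w\|_{H^k})$. Collecting: each of the $\le\tilde k$ derivatives "spent" on $w$ contributes a factor $(1+\|w\|_{H^k})$, so the total is $K(1+\|w\|_{H^k}^{\tilde k})$; the "$1+$" covers the $w=0$ case where only $\Phi$-factors appear.

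For the Lipschitz estimate \eqref{eq:sm:bnd:d:j:theta:lip} I would use the telescoping identity $D^j\Theta(\Phi+w_A)-D^j\Theta(\Phi+w_B)=\int_0^1 D^{j+1}\Theta(\Phi+w_B+t(w_A-w_B))[w_A-w_B]\,dt$, reducing the difference to a single application of the already-proven bound \eqref{eq:sm:bnd:d:j:theta} (with $j$ replaced by $j+1$) applied to the convex combination $\Phi+w_B+t(w_A-w_B)$, whose $H^k$ norm is $\lesssim\|w_A\|_{H^k}+\|w_B\|_{H^k}$, with one of the $j+1$ linear slots occupied by $w_A-w_B$. One then distributes the extra derivative $\partial^\gamma$, $|\gamma|\le\tilde k$, over the product "(bounded factor)$\times\partial^{\beta}(w_A-w_B)\times\prod v_i$" via Leibniz: when $\partial^\gamma$ lands on $w_A-w_B$ one gets $\|w_A-w_B\|_{H^k}$ and the remaining budget $\le\tilde k-1$ gives $(1+\|w_A\|_{H^k}^{\tilde k-1}+\|w_B\|_{H^k}^{\tilde k-1})$, whereas if all of $\partial^\gamma$ lands on the $w$-dependent prefactor one spends up to $\tilde k$ derivatives on $w_A$ or $w_B$ (factor $\|w_A\|_{H^k}^{\tilde k}+\|w_B\|_{H^k}^{\tilde k}$) and one $L^\infty$ factor $\|w_A-w_B\|_{H^k}$; combining the two cases and absorbing the middle terms yields the stated form.

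The main obstacle is bookkeeping in the Faà di Bruno expansion: keeping track of how many derivatives are allocated to the $w$-factors versus the $\Phi$-factors, and ensuring that every $L^2\times L^\infty\times\cdots$ split is licensed by \eqref{eq:nl:bnd:multilinear:general} with $|\alpha_1|+\dots\le k$ — this forces one to only ever put a single $L^2$-taking derivative in the "hard" slot and requires the total derivative budget $\tilde k\le k$, which is exactly the hypothesis. A secondary subtlety is that global Lipschitzianity is only assumed up to order $k+j$, so the very highest composition term $D^{k+j}\Theta$ must be handled by its at-most-linear growth rather than boundedness; this is why the power $\tilde k$ (and not $\tilde k$ combined with an extra unit) appears and why the "$1+\|w\|^{\tilde k}$" form is the natural one. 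Everything else is routine algebra-property manipulation.
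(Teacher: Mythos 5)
There are two genuine gaps in the proposal. First, the initial reduction to ``the scalar-valued estimate $\|D^j\Theta(\Phi + w)\|_{H^{\tilde{k}}} \le K(1 + \|w\|_{H^k}^{\tilde{k}})$'' is not legitimate: for $j \ge 1$ the Nemytskii operator $D^j\Theta(\Phi + w)$ is merely a \emph{bounded} function on the unbounded cylinder $\mathcal{D} = \mathbb{R} \times \mathbb{T}^{d-1}$, and need not lie in $L^2$ at all (think of $\Theta = g$ with $g(u_\pm) = 0$ but $Dg(u_\pm) \ne 0$, so $Dg(\Phi_0)$ tends to a nonzero constant as $x \to \pm\infty$). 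The $L^2$ integrability of $D^j\Theta(\Phi + w)[v_1,\ldots,v_j]$ and of its derivatives comes entirely from the $v$-factors (which lie in $H^k \subset L^2$) and, after differentiation, from the factors $\partial^{\beta_m}(\Phi + w)$ with $|\beta_m| \ge 1$. This is precisely why the paper never factors the $v$'s out: it expands $\partial^\alpha D^j\Theta(\Phi+w)[v_1,\ldots,v_j]$ directly and then applies \eqref{eq:nl:bnd:multilinear:general}, which treats $D^{j+\ell}\Theta(\Phi+w)$ as a pointwise-bounded prefactor while placing the $L^2$ norm on the arguments. Your later ``to stay safe, pair the $L^2$ factor with $L^\infty$ factors'' remark points in the right direction, but it contradicts (and cannot rescue) the stated factorization, which has already discarded the $v$'s.

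Second, the fundamental-theorem-of-calculus route to \eqref{eq:sm:bnd:d:j:theta:lip} costs one derivative too many. Writing $D^j\Theta(\Phi+w_A) - D^j\Theta(\Phi+w_B) = \int_0^1 D^{j+1}\Theta(\cdots)[w_A-w_B]\,\mathrm dt$ and then differentiating up to order $\tilde{k}=k$ produces terms involving $D^{j+1+k}\Theta$, which requires $\Theta \in C^{k+j+1}$; the hypothesis grants only $\Theta \in C^{k+j}$ with $D^{k+j}\Theta$ globally Lipschitz. The paper avoids this by telescoping the Fa\`a di Bruno expansion of the difference: every term either carries a factor $\partial^{\beta_1}(w_A - w_B)$ (type $\mathcal{I}_{II}$) or has leading factor $D^{j+\ell}\Theta(\Phi+w_A) - D^{j+\ell}\Theta(\Phi+w_B)$ controlled by the Lipschitz constant of $D^{j+\ell}\Theta$ with $j+\ell \le j+k$ (type $\mathcal{I}_{III}$), never invoking $D^{j+\ell+1}\Theta$. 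This telescoping is also what yields the asymmetric powers $\|w_A\|^{\tilde{k}-1}$ versus $\|w_B\|^{\tilde{k}}$ in \eqref{eq:sm:bnd:d:j:theta:lip}, whereas the FTC route would only give the weaker $\|w_A\|^{\tilde{k}} + \|w_B\|^{\tilde{k}}$. To fix your argument you would need to (i) never factor the $v$'s out, working directly with the product as in the paper's $\mathcal{I}_1$, and (ii) replace the FTC step by the Lipschitz-difference telescope.
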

\begin{proof}
For any multi-index $\alpha \in \mathbb{Z}^d_{\ge 0}$ with $|\alpha| \le \tilde{k}$, we note that
$\partial^\alpha D^j \Theta(\Phi + w)[v_1, \ldots, v_j]$
can be written as a sum of terms of the form
\begin{equation}
\label{eq:sm:def:i:1:prep}
    \mathcal{I}_1 = D^{j+\ell} \Theta( \Phi + w) [ \partial^{\beta_1} (\Phi + w), \ldots,  \partial^{\beta_\ell} (\Phi + w), \partial^{\beta_{\ell+1}} v_1, \ldots, \partial^{\beta_{\ell + j}} v_j] ,
\end{equation}
in which $0 \le \ell \le |\alpha|$ and $|\beta_i| \ge 1$ for $1 \le i \le \ell$,
together with $|\beta_1| + \ldots + |\beta_{\ell + j}| = |\alpha|$.
When $j+\ell > 0$, we can use \eqref{eq:nl:bnd:multilinear:general}
together with the global Lipschitz properties of $\Theta$,
which ensure that $D^{j+\ell}\Theta$ is bounded pointwise, to conclude
\begin{equation}
\label{eq:sm:bnd:mathcal:i:1}
    \norm{ \mathcal{I}_1 }_{L^2} \le K \big[ 1 + \norm{w}_{H^k}^{\ell}]
    \norm{v_1}_{H^k} \cdots \norm{v_j}_{H^k}.
\end{equation}
When $\ell = j = 0$, we can use the pointwise bound $|\Theta(\Phi + w)| \le |\Theta(\Phi)| + K |w|$ and the inclusion $\Theta(\Phi_0) \in L^2$
to conclude $\norm{\Theta(\Phi + w)}_{L^2} \le K [ 1 + \norm{w}_{L^2}]$,
completing the proof of \eqref{eq:sm:bnd:d:j:theta}.

Turning to the Lipschitz bound \eqref{eq:sm:bnd:d:j:theta:lip}, we note
that $\partial^\alpha D^j \big(\Theta(\Phi + w_A)-\Theta(\Phi + w_B)\big)[v_1, \ldots, v_j]$ can be expressed
as a finite sum of expressions of two types.
Up to permutations of the first
$\ell$ elements,
the first type is given by
\begin{equation}
\label{eq:nl:def:i:1:theta}
\mathcal{I}_{II} =     D^{j+\ell}\Theta(\Phi + w_A)[\partial^{\beta_1}(w_A - w_B), \partial^{\beta_2} (\Phi + w_{\#_1}),
    \ldots,
    \partial^{\beta_{\ell}} (\Phi + w_{\#_{\ell}}),
    \partial^{\beta_{\ell+1}} v_1, \ldots , \partial^{\beta_{\ell+j}} v_j
    ],
\end{equation}
with $\#_i \in \{A , B\}$ and multi-indices $\{\beta_i\}_{i=1}^{\ell+j} \in \mathbb{Z}^d_{ \ge 0} $ that satisfy $|\beta_i| \ge 1$, for each $1 \le i \le \ell\leq |\alpha|$, together with $|\beta_1| + \ldots + |\beta_{\ell+j}| = |\alpha|$.
The second type is given by
\begin{equation}
\label{eq:nl:def:i:2:theta}
\mathcal{I}_{III} =
\Big(D^{j+\ell}\Theta (\Phi + w_A) - D^{j+\ell} \Theta (\Phi + w_B)\Big) \Big[ \partial^{\beta_1} ( \Phi + w_B), \dots, \partial^{\beta_\ell} (\Phi + w_B)  ,
 \partial^{\beta_{\ell+1}} v_1, \ldots , \partial^{\beta_{\ell+j}} v_j
\Big],
\end{equation}
with the same conditions on $\{\beta_i\}_{i=1}^\ell$, but where now $\ell = 0$ is allowed. This can be readily verified with induction.

Using \eqref{eq:nl:bnd:multilinear:general}
we obtain the bounds
\begin{equation}
\begin{array}{lcl}
   \| \mathcal{I}_{II} \|_{L^2(\mathcal D;\mathbb R^N)}
   & \le & K 
   \|w_A - w_B\|_{H^k} \big[ 1 + \|w_A\|^{\ell-1}_{H^k} + \|w_B\|^{\ell-1}_{H^k} \big]
   \norm{v_1}_{H^k} \cdots \norm{v_j}_{H^k}
   ,
   \\[0.2cm]
   \| \mathcal{I}_{III}\|_{L^2(\mathcal D;\mathbb R^N) }
   & \le & K \|w_A - w_B\|_{H^k} \big[ 1 + \|w_B\|^{\ell}_{H^k} \big]
    \norm{v_1}_{H^k} \cdots \norm{v_j}_{H^k} .
\end{array}
\end{equation}
Both terms can be absorbed in \eqref{eq:sm:bnd:d:j:theta:lip},
completing the proof.
\end{proof}

\begin{cor}
\label{cor:sm:diff:of:theta:map}
Consider a $C^{k_* + r + 1}$-smooth function $\Theta: \mathbb R^n \to \mathbb R^N$ for which $D^{\ell} \Theta$ is globally
Lipschitz for all $0 \le \ell \le k_* + r + 1$.
Assume furthermore that $\Phi$ is bounded with $\Theta(\Phi) \in L^2$
and $\partial^\beta \Phi\in H^{k_* + r + 1}$ for very multi-index $\beta \in \mathbb{Z}^d_{\ge 0}$ with $|\beta|=1$.
Then for any integer $0 \le j \le r + 1$
and every $k_* \le k \le k_* + r + 1 - j$
we have the smoothness property
\begin{equation}
  w \mapsto   \Theta(\Phi + w) \in C^j( H^k ; H^k).
\end{equation}
In addition, the derivatives of this map are given by \eqref{eq:sm:def:d:j:theta}
with $h = \Phi + w$.
\end{cor}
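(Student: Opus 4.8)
The plan is to argue by induction on $j$, carrying along the slightly stronger inductive claim that for every $0 \le \ell \le j$ the $\ell$-th Fr\'echet derivative of $w \mapsto \Theta(\Phi+w)$ exists as a map $H^k \to \mathscr{L}^{(\ell)}(H^k;H^k)$, is continuous, and is given by the bounded $\ell$-linear map $[v_1,\dots,v_\ell] \mapsto D^\ell\Theta(\Phi+w)[v_1,\dots,v_\ell]$ built from the pointwise substitution \eqref{eq:sm:def:d:j:theta}. Throughout, the range $k_* \le k \le k_*+r+1-j$ ensures $k+\ell \le k_*+r+1$ for every $0 \le \ell \le j$, so that Lemma \ref{lem:sm:deriv:theta:bnds} applies with $\tilde k = k$ at each stage, and decreasing $j$ by one only enlarges the admissible range of $k$, so the induction is self-consistent. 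Boundedness of the candidate $\ell$-linear derivative from $(H^k)^\ell$ into $H^k$, with operator norm controlled polynomially in $\norm{w}_{H^k}$, is immediate from \eqref{eq:sm:bnd:d:j:theta}. The base case $j=0$ is precisely the continuity of $w \mapsto \Theta(\Phi+w)$ on $H^k$, which follows from the local Lipschitz bound furnished by \eqref{eq:sm:bnd:d:j:theta:lip} with $j=0$ and $\tilde k = k$.

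For the inductive step, assuming the claim for $j-1$, it suffices to show that $w \mapsto D^{j-1}\Theta(\Phi+w) \in \mathscr{L}^{(j-1)}(H^k;H^k)$ is Fr\'echet differentiable with derivative $w \mapsto \big([v_1,\dots,v_{j-1}] \mapsto D^j\Theta(\Phi+w)[v_1,\dots,v_{j-1},\cdot\,]\big)$, and that $w \mapsto D^j\Theta(\Phi+w) \in \mathscr{L}^{(j)}(H^k;H^k)$ is continuous; combining these with the inductive hypothesis that the map is $C^{j-1}$ then upgrades it to $C^j$ with the stated derivative formulas. Continuity of $w \mapsto D^j\Theta(\Phi+w)$ is a direct consequence of \eqref{eq:sm:bnd:d:j:theta:lip}: taking the supremum over $\norm{v_i}_{H^k} \le 1$ yields a local Lipschitz bound in $\mathscr{L}^{(j)}(H^k;H^k)$. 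For the differentiability, I would fix $w,h,v_1,\dots,v_{j-1} \in H^k$ and apply the scalar fundamental theorem of calculus pointwise to $t \mapsto D^{j-1}\Theta\big(\Phi(x)+w(x)+th(x)\big)[v_1(x),\dots,v_{j-1}(x)]$, whose $t$-derivative at a point equals $D^j\Theta(\Phi+w+th)[v_1,\dots,v_{j-1},h]$ evaluated there. This exhibits the remainder
\[
\mathcal R(w,h)[v_1,\dots,v_{j-1}] = D^{j-1}\Theta(\Phi+w+h)[v_1,\dots,v_{j-1}] - D^{j-1}\Theta(\Phi+w)[v_1,\dots,v_{j-1}] - D^j\Theta(\Phi+w)[v_1,\dots,v_{j-1},h]
\]
as $\int_0^1 \big( D^j\Theta(\Phi+w+th) - D^j\Theta(\Phi+w)\big)[v_1,\dots,v_{j-1},h]\,\mathrm dt$. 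Regarding the integrand as an $H^k$-valued path in $t$ — which is Lipschitz in $t$, hence continuous and Bochner integrable, by \eqref{eq:sm:bnd:d:j:theta:lip} — Minkowski's integral inequality together with a further application of \eqref{eq:sm:bnd:d:j:theta:lip} (with $\tilde k = k$ and the $j$ arguments $v_1,\dots,v_{j-1},h$, and $w_A=w+th$, $w_B=w$) gives
\[
\norm{\mathcal R(w,h)[v_1,\dots,v_{j-1}]}_{H^k} \le K\big(1 + \norm{w}_{H^k}^{k-1} + \norm{h}_{H^k}^{k-1} + \norm{w}_{H^k}^{k}\big) \norm{h}_{H^k}^2\, \mathcal P^{(j-1)}_{k}[v].
\]
Taking the supremum over $\norm{v_i}_{H^k}\le 1$ shows $\norm{\mathcal R(w,h)}_{\mathscr{L}^{(j-1)}(H^k;H^k)} = o(\norm{h}_{H^k})$ as $h \to 0$, locally uniformly in $w$, and since the candidate derivative depends boundedly and linearly on $h$ by \eqref{eq:sm:bnd:d:j:theta}, it is genuinely the Fr\'echet derivative. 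This completes the induction, and evaluating the derivative formulas at $h = \Phi+w$ gives exactly the assertion of Corollary \ref{cor:sm:diff:of:theta:map}.

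I expect the only genuinely delicate point to be the passage from the pointwise fundamental theorem of calculus to the $H^k$-valued identity for $\mathcal R(w,h)$: one must verify that the pointwise-defined remainder coincides almost everywhere with the Bochner integral of the continuous $H^k$-valued path $t \mapsto \big(D^j\Theta(\Phi+w+th) - D^j\Theta(\Phi+w)\big)[v_1,\dots,v_{j-1},h]$, and justify the ensuing use of Minkowski's integral inequality. This is routine and can be dispatched by a Fubini-type argument (or by testing against $\partial^\alpha$ and pairing in $L^2$ for each $|\alpha| \le k$), but it is where the measure-theoretic bookkeeping has to be done carefully. Everything else reduces to the estimates of Lemma \ref{lem:sm:deriv:theta:bnds} together with tracking the smoothness and Lipschitz orders, for which the restrictions $0 \le j \le r+1$ and $k_* \le k \le k_*+r+1-j$ were designed.
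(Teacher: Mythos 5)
Your proposal is correct and takes essentially the same route as the paper: both arguments rest on the same two estimates \eqref{eq:sm:bnd:d:j:theta} and \eqref{eq:sm:bnd:d:j:theta:lip} from Lemma~\ref{lem:sm:deriv:theta:bnds}, use \eqref{eq:sm:bnd:d:j:theta} for boundedness of the candidate $\ell$-linear derivatives, use \eqref{eq:sm:bnd:d:j:theta:lip} for their continuity, and establish Fr\'echet differentiability by representing the remainder as an integral of a difference of $(\ell+1)$-st derivatives and then applying \eqref{eq:sm:bnd:d:j:theta:lip} again to get a quadratic bound. The only differences are organizational: you frame the argument as an explicit induction on $j$ while the paper handles all $0\le\ell<j$ in a single pass, and you spell out the Bochner-integrability justification behind the $H^k$-valued fundamental theorem of calculus, which the paper leaves implicit.
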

\begin{proof}
Fix $0 \le j \le r + 1$ and $k_* \le k \le k_* + r + 1 - j$.
Then for each $0 \le \ell \le j$ the pointwise derivative $w \mapsto D^\ell \Theta(\Phi + w)$
can be interpreted as a well-defined and continuous mapping into the space of
bounded $\ell$-linear functionals $(H^{k})^{\ell} \to H^k$
on account of \eqref{eq:sm:bnd:d:j:theta}
  and \eqref{eq:sm:bnd:d:j:theta:lip}, respectively. In addition,
  for each $0 \le \ell < j$
  and each pair $w_A,w_B \in H^k$
  we may write
  \begin{equation}
      \mathcal{Q} = D^{\ell}\Theta(\Phi + w_B)[v_1, \ldots, v_\ell] - D^{\ell}\Theta(\Phi+w_A)[v_1, \ldots, v_\ell]
      - D^{\ell+1}\Theta(\Phi+w_A)[w_B - w_A, v_1, \ldots, v_\ell]
  \end{equation}
  and observe that
  \begin{equation}
      \mathcal Q = \int_0^1 \big( D^{\ell+1}\Theta(\Phi +w_A + t (w_B - w_A)) - D^{\ell+1}\Theta(\Phi+w_A) \big)[w_B-w_A, v_1, \ldots , v_\ell] \, \mathrm dt.
  \end{equation}
  Applying \eqref{eq:sm:bnd:d:j:theta:lip} we now find
  the quadratic bound
  \begin{equation}
      \norm{\mathcal Q}_{H^k} \le K \big(1 + ||w_A||_{H^k}^{k-1}
    + ||w_B||_{H^k}^{k}\big) \|w_A - w_B\|_{H^k}^2 ||v_1||_{H^k} \cdots ||v_\ell||_{H^k},
  \end{equation}
  which implies the stated differentiability properties.
\end{proof}

For some of our results it is crucial to isolate the highest derivatives,  since we do not always have uniform control over their size; see, e.g., the integral in \eqref{eq:mr:def:n:res}. To this end, we introduce the notation
\begin{equation}
\begin{array}{lcl}
    \mathcal{P}^{(j)}_{k_A, k_B}[v]
    &=&\norm{v_1}_{H^{k_B}} \norm{v_2}_{H^{k_A}} \cdots \norm{v_j}_{H^{k_A}}
    + \ldots + \norm{v_1}_{H^{k_A}}  \cdots \norm{v_{j-1}}_{H^{k_A}} \norm{v_j}_{H^{k_B}}
\\[0.2cm]
    & = & \sum_{j'=1}^j  \norm{v_{j'}}_{H^{k_B}} \prod_{i \neq j'} \norm{v_i}_{H^{k_A}}.
\end{array}
\end{equation}

\begin{lem}
\label{lem:sm:hkp1:prep}
Pick $k > d/2$ together with $j \ge 0$ and
consider a $C^{k+j+1}$-smooth function $\Theta: \mathbb R^n \to \mathbb R^N$ for which $D^{\ell} \Theta$ is globally Lipschitz for all $0 \le \ell \le k+j+1$.
Assume furthermore that $\Phi$ is bounded with $\Theta(\Phi) \in L^2$
and $\partial^\beta \Phi\in H^{k+1}$ for every multi-index $\beta \in \mathbb{Z}^d_{\ge 0}$ with $|\beta|=1$.
Then there exists a constant $K > 0$ so that for each $w \in H^{k+1}$ and each tuple $(v_1, \ldots, v_j)\in (H^{k+1})^{j}$ we have
\begin{equation}
\begin{array}{lcl}
    || D^j\Theta(\Phi_0 + w)[v_1, \ldots, v_j]||_{H^{k+1}}
    & \le & K (1 + ||w||_{H^k}^{k}||w||_{H^{k+1}} ) P^{(j)}_k[v]
+ K (1 + ||w||_{H^k}^{k} ) \mathcal{P}^{(j)}_{k, k+1}[v] .
\end{array}
\end{equation}
\end{lem}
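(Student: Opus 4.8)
\textbf{Proof plan for Lemma \ref{lem:sm:hkp1:prep}.}
The strategy mirrors the proof of Lemma \ref{lem:sm:deriv:theta:bnds}, but we now track one extra derivative and must be careful to record where the top-order norm $\norm{\cdot}_{H^{k+1}}$ lands. First I would fix a multi-index $\alpha$ with $|\alpha| \le k+1$ and expand $\partial^\alpha D^j\Theta(\Phi_0 + w)[v_1, \ldots, v_j]$ via the Faà di Bruno / Leibniz rule into a finite sum of terms of the form
\begin{equation}
    \mathcal{I}_1 = D^{j+\ell}\Theta(\Phi_0 + w)\big[\partial^{\beta_1}(\Phi_0 + w), \ldots, \partial^{\beta_\ell}(\Phi_0 + w), \partial^{\beta_{\ell+1}} v_1, \ldots, \partial^{\beta_{\ell+j}} v_j\big],
\end{equation}
exactly as in \eqref{eq:sm:def:i:1:prep}, where $0 \le \ell \le |\alpha|$, $|\beta_i| \ge 1$ for $1 \le i \le \ell$, and $|\beta_1| + \ldots + |\beta_{\ell+j}| = |\alpha| \le k+1$. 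The point of the extra smoothness hypothesis $\Theta \in C^{k+j+1}$ (rather than $C^{k+j}$) is precisely that $\ell$ can now be as large as $k+1$, so $D^{j+\ell}\Theta$ of order up to $j+k+1$ is needed and is bounded pointwise by the global Lipschitz assumption.

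The core of the argument is a case split on which factor carries the ``expensive'' derivative count. Since $\sum |\beta_i| \le k+1$, at most one index $\beta_i$ can have $|\beta_i| \ge 2$; and if all $|\beta_i| = 1$ then $\ell + j \le k+1$. I would distinguish: (a) the top derivative falls on one of the $w$-factors, i.e. some $\partial^{\beta_{i}}(\Phi_0 + w)$ with $i \le \ell$ has $|\beta_i| = k+1 - (\ell+j-1)$ large — here the remaining factors $\partial^{\beta}(\Phi_0 + w)$ carry total order $\le$ something controlled in $H^k$, so applying \eqref{eq:nl:bnd:multilinear:general} with exponent $k$ gives $\norm{\Phi_0 + w}_{H^{k+1}}$ once and $\norm{w}_{H^k}$ at most $\ell - 1$ times, contributing a term bounded by $K(1 + \norm{w}_{H^k}^{\ell-1}\norm{w}_{H^{k+1}})\mathcal{P}^{(j)}_k[v]$, which is absorbed by the first term on the right-hand side after noting $\ell - 1 \le k$; (b) the top derivative falls on one of the $v$-factors $\partial^{\beta_{\ell+j'}} v_{j'}$ — then all the $w$-factors are differentiated to order $\le k$ and we get a bound $K(1 + \norm{w}_{H^k}^{\ell})\mathcal{P}^{(j)}_{k,k+1}[v]$, matching the second term; (c) no factor is expensive, meaning $|\beta_i| \le k$ for all $i$ — then a direct application of \eqref{eq:nl:bnd:multilinear:general} with exponent $k$ yields $K(1 + \norm{w}_{H^k}^\ell)\mathcal{P}^{(j)}_k[v]$, which is dominated by either right-hand side term (using $\norm{v_i}_{H^k} \le \norm{v_i}_{H^{k+1}}$ and absorbing). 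In the special degenerate case $\ell = j = 0$ I would reuse the pointwise estimate $|\Theta(\Phi_0 + w)| \le |\Theta(\Phi_0)| + K|w|$ together with $\Theta(\Phi_0) \in L^2$ to bound the $L^2$-part, exactly as at the end of the proof of Lemma \ref{lem:sm:deriv:theta:bnds}.

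The main technical obstacle, and the only place genuine care is needed, is the bookkeeping in case (a): one must verify that the multilinear estimate \eqref{eq:nl:bnd:multilinear:general} can indeed be applied with a single factor in $H^{k+1}$ and all others in $H^k$ when the derivative orders sum to $k+1$ rather than $k$. This requires the sharper observation that $\norm{\partial^{\beta_1} f_1 \cdots \partial^{\beta_N} f_N}_{L^2} \le K \norm{f_{i_0}}_{H^{k+1}} \prod_{i \ne i_0} \norm{f_i}_{H^k}$ whenever $|\beta_1| + \ldots + |\beta_N| \le k+1$ and $i_0$ indexes the factor with the largest $|\beta_i|$ — a routine consequence of the Gagliardo–Nirenberg / Sobolev-algebra machinery (put the most-differentiated factor in $L^2$ and the rest in $L^\infty \hookleftarrow H^k$, $k > d/2$), but one that must be invoked explicitly since it goes slightly beyond the literal statement of \eqref{eq:nl:bnd:multilinear:general}. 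Once this sub-estimate is in hand, every term in the expansion of $\partial^\alpha D^j\Theta(\Phi_0+w)[v]$ is seen to fall into case (a), (b) or (c), and summing over the (finitely many) terms and over $|\alpha| \le k+1$ produces the claimed inequality. I would not spell out the combinatorial coefficients, as they are harmless constants absorbed into $K$.
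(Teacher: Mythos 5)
Your proof follows the same strategy as the paper's: expand $\partial^\alpha D^j\Theta(\Phi + w)[v_1,\ldots,v_j]$ into terms $\mathcal{I}_1$ of the form \eqref{eq:sm:def:i:1:prep}, with $|\alpha| = k+1$ now allowed, and arrange for exactly one factor to land in $H^{k+1}$. The ``sharper observation'' you invoke to go beyond the literal statement of \eqref{eq:nl:bnd:multilinear:general} is obtained in the paper more economically by first applying a single differential operator $\partial^\gamma$ with $|\gamma| = 1$ to one of the $v_i$ or to $w$, so that $\partial^\gamma v_i \in H^k$ (resp.\ $\partial^\gamma w \in H^k$) becomes the new base function, the remaining derivative orders sum to $k$, and \eqref{eq:nl:bnd:multilinear:general} applies verbatim -- this derives exactly the refinement you describe as a corollary of the stated bound, without having to formulate a new sub-estimate.
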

\begin{proof}
    Inspecting the term $\mathcal{I}_1$ in \eqref{eq:sm:def:i:1:prep}
    where now also $|\alpha|=k+1$ is allowed, we see that the desired
    estimate can again be obtained by following the proof of Lemma \ref{lem:sm:deriv:theta:bnds}.
    Indeed, if necessary one can apply a differential operator $\partial^\gamma$
    with $|\gamma| = 1$ to one of the $v_i$ or $w$ before appealing to the bound \eqref{eq:nl:bnd:multilinear:general}.

\end{proof}

\subsection{Auxiliary functions}

We first consider the cut-off functions
\begin{equation}
\chi_h(u,\gamma)=\chi_{\rm high}(\|u-T_\gamma\Phi_{0}\|_{L^2})\quad\text{and}\quad
    \chi_l(u,\gamma)=\big[ \chi_{\rm low}\big(-\langle  u,T_\gamma\psi_{\rm tw}'\rangle_{L^2} \big) \big]^{-1},
\end{equation}
as defined in \eqref{eq:list:def:chi:h:l}.
We note here that $\chi_{\rm high}$ and $\chi_{\rm low}^{-1}$ are infinitely smooth and bounded. Our first result states that also $\chi_h$ and $\chi_l$ are infinitely smooth and provides a uniform bound on the derivatives that we need, complementing the results in \cite[Lem. 4.8]{bosch2024multidimensional}.

\begin{lem}
\label{lem:sm:cutoffs}
Assume that \textnormal{(HNL)} and \textnormal{(HTw)} are satisfied. Then we have
the smoothness properties
\begin{equation}
    w \mapsto \chi_h(\Phi_0 + w,0) \in C^\infty(L^2; \mathbb R),
    \qquad \qquad
    w \mapsto \chi_l(\Phi_0 + w,0) \in C^\infty(L^2; \mathbb R).
\end{equation}
In addition, there exists $K> 0$ so that for all $0 \le j \le r$
and all tuples $(v_1, \ldots, v_j) \in (L^2)^j$
we have the bound
\begin{equation}
    | D^j \chi_h (\Phi_0 + w, 0) [v_1, \ldots, v_j] |
    + | D^j \chi_l (\Phi_0 + w, 0) [v_1, \ldots, v_j] |
    \le K \norm{v_1}_{L^2} \cdots \norm{v_j}_{L^2}. 
\end{equation}
\end{lem}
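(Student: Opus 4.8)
The plan is to reduce everything to the chain-rule machinery already established in Lemma~\ref{lem:sm:deriv:theta:bnds} and Corollary~\ref{cor:sm:diff:of:theta:map}, applied on the space $L^2 = H^0$ (note $0 > d/2$ fails, so the algebra property is \emph{not} available; instead we will exploit that the outer functions $\chi_{\rm high}$ and $\chi_{\rm low}^{-1}$ are scalar-valued and act on a single real argument). First I would treat $\chi_h$. Here $\chi_h(\Phi_0 + w,0) = \chi_{\rm high}(\rho(w))$ where $\rho(w) = \norm{\Phi_0 + w - \Phi_0}_{L^2} = \norm{w}_{L^2}$; more precisely one works with the squared norm $N(w) = \norm{w}_{L^2}^2$, which is genuinely smooth (it is a bounded quadratic form: $DN(w)[v_1] = 2\langle w, v_1\rangle_{L^2}$, $D^2N(w)[v_1,v_2] = 2\langle v_1, v_2 \rangle_{L^2}$, and all higher derivatives vanish), and then observe $\chi_{\rm high}(\norm{w}_{L^2}) = \widetilde{\chi}(N(w))$ for the smooth bounded function $\widetilde{\chi}(s) = \chi_{\rm high}(\sqrt{s})$, which is smooth on $[0,\infty)$ because $\chi_{\rm high}$ is constant near $0$ by construction (see \eqref{eq:list:def:chi:h:l}). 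Composing the smooth scalar map $\widetilde{\chi}$ with the smooth map $N$ via the Fa\`a di Bruno formula gives the claimed $C^\infty$ regularity, and since $D^\ell \widetilde\chi$ is bounded on $[0,\infty)$ and each factor $DN$ or $D^2N$ contributes at most $\norm{v_{i_1}}_{L^2}\norm{v_{i_2}}_{L^2}$ (or $\norm{v_i}_{L^2}$ times a bounded $\norm{w}_{L^2}$-independent constant, because $DN(w)[v_1] = 2\langle w,v_1\rangle$ pairs $v_1$ against $w$, not against another $v$) — wait, that last point needs care and is flagged below — one extracts a bound of the form $K\,\norm{v_1}_{L^2}\cdots\norm{v_j}_{L^2}$.

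The subtlety just flagged is in fact the main obstacle: $DN(w)[v_1] = 2\langle w, v_1 \rangle_{L^2}$ involves $w$, which is \emph{not} controlled uniformly, so naively one would only get a bound $K(1 + \norm{w}_{L^2}^j)\prod\norm{v_i}$. The resolution is that $\chi_{\rm high}$ is \emph{compactly supported} — it vanishes once its argument exceeds some fixed threshold $R$ (this is the whole point of the cut-off, see \eqref{eq:list:def:chi:h:l}). Hence $D^\ell\widetilde\chi(N(w))$ is \emph{zero} unless $N(w) = \norm{w}_{L^2}^2 \le R^2$, i.e.\ unless $\norm{w}_{L^2} \le R$. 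Therefore on the support of every term in the Fa\`a di Bruno expansion we may freely replace factors of $\norm{w}_{L^2}$ by the constant $R$, absorbing them into $K$. This is the key idea and I would state it as a short lemma-internal remark: \emph{any derivative of a compactly-supported smooth function of $\norm{w}_{L^2}^2$ is bounded, with $w$-dependence harmlessly truncated on the support.}

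For $\chi_l$ the argument is structurally identical but the inner map is linear rather than quadratic. Write $\chi_l(\Phi_0+w,0) = [\chi_{\rm low}(L(w))]^{-1}$ with $L(w) = -\langle \Phi_0 + w, \psi_{\rm tw}'\rangle_{L^2} = -\langle \Phi_0,\psi_{\rm tw}'\rangle_{L^2} - \langle w, \psi_{\rm tw}'\rangle_{L^2}$, an affine (hence smooth) map $L^2 \to \mathbb{R}$ with $DL(w)[v_1] = -\langle v_1, \psi_{\rm tw}'\rangle_{L^2}$ (bounded by $\norm{\psi_{\rm tw}'}_{L^2}\norm{v_1}_{L^2}$, uniformly in $w$!) and $D^{\ell}L = 0$ for $\ell \ge 2$. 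Since $\psi_{\rm tw}' \in L^2$ — this follows from the exponential decay of $\psi_{\rm tw}$ and its derivatives guaranteed by (HTw), exactly as for $\Phi_0'$ — the linear functional $L$ is bounded, and $\chi_{\rm low}^{-1}$ is smooth and bounded with bounded derivatives on the relevant range (where $\chi_{\rm low}$ is bounded away from zero, by its definition in \eqref{eq:list:def:chi:h:l}). Composing $\chi_{\rm low}^{-1}$ with $L$ via Fa\`a di Bruno, every term is a product of $D^{\ell}(\chi_{\rm low}^{-1})(L(w))$ (bounded) times at most $j$ copies of $DL$, each contributing a factor $\norm{\psi_{\rm tw}'}_{L^2}\norm{v_i}_{L^2}$; here no truncation argument is even needed since $DL$ is uniformly bounded. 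Summing the finitely many terms (with combinatorial constants depending only on $j \le r$) gives the stated bound $K\norm{v_1}_{L^2}\cdots\norm{v_j}_{L^2}$, and adding the two estimates completes the proof. The only genuinely delicate point, as noted, is the compact-support truncation for $\chi_h$; everything else is routine Fa\`a di Bruno bookkeeping.
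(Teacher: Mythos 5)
Your proof is correct and takes essentially the same route as the paper: for $\chi_l$ one uses boundedness of the linear functional $w\mapsto\langle w,\psi_{\rm tw}'\rangle_{L^2}$, and for $\chi_h$ one composes $\chi_{\rm high}$ with the smooth quadratic form $w\mapsto\langle w,w\rangle_{L^2}$ and exploits the compact support of the cut-off to obtain an a~priori bound on $\norm{w}_{L^2}$ that absorbs the $w$-dependent factors from $DN$. The only thing you add beyond the paper's (very terse) proof is the explicit observation that $s\mapsto\chi_{\rm high}(\sqrt{s})$ is smooth at $s=0$ because $\chi_{\rm high}$ is constant near the origin — a detail the paper leaves implicit but which is indeed needed, so this is a welcome clarification rather than a deviation.
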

\begin{proof}
    The statements for $\chi_l$ follow from the fact that the map $w \mapsto \langle w, \psi'_{\rm tw} \rangle$ is bounded and linear from $L^2$ into $\mathbb R$. Turning to $\chi_h$, we first mention that
    the cut-off allows us to assume an a-priori bound for
    $\norm{ w }_{L^2}$. This implies that the two derivatives of
    the map $w \mapsto \langle w , w   \rangle_{L^2} $ are uniformly bounded,
    completing the proof.
\end{proof}

The following two results concern the function $g$. At several points it is convenient to use bounds for $g$ in the lower-regularity spaces $L^2$ and $H^1$, which we hence provide separately.

\begin{lem}
\label{lem:sm:bnds:on:g}
Suppose that \textnormal{(HNL)} and \textnormal{(HTw)} hold.
Pick 
an integer $0 \le j \le r + 1$
and an integer $k_* \le k \le  k_j$.
Then we have
\begin{equation}
    w \mapsto g(\Phi_0 + w) \in C^j(H^{k};H^{k}) \cup C^j(H^{k+1};H^{k+1}) .
\end{equation}
In addition, there is a constant $K > 0$ so that
for any $w \in H^k$ and any tuple $(v_1, \ldots v_j) \in (H^k)^{j}$
we have the bound
\begin{equation}
\begin{array}{lcl}
    \norm{ D^j g(\Phi_0 + w)[v_1, \ldots, v_j]}_{H^{k}}
    & \le & K (1 + ||w||_{H^k}^{k} ) \mathcal{P}^{(j)}_k[v],
\\[0.2cm]
\end{array}
\end{equation}
while for any $w \in H^{k+1}$ and any tuple $(v_1, \ldots, v_j) \in (H^{k+1})^{j}$
we have
\begin{equation}
\begin{array}{lcl}
\norm{ D^j g(\Phi_0 + w)[v_1, \ldots, v_j]}_{H^{k+1}}
    & \le & K (1 + ||w||_{H^k}^{k}||w||_{H^{k+1}} ) \mathcal{P}^{(j)}_k[v]
\\[0.2cm]
& & \qquad
+ K (1 + ||w||_{H^k}^{k} ) \mathcal{P}^{(j)}_{k,k+1}[v].
\end{array}
\end{equation}
\end{lem}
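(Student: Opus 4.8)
The plan is to obtain both the smoothness assertion and the two quantitative bounds by specializing the general Nemytskii estimates of this subsection to $\Theta = g$ with reference function $\Phi = \Phi_0$. First I would verify the hypotheses. Assumption \textnormal{(HNL)} provides $g \in C^{k_* + r + 2}(\mathbb R^n; \mathbb R^{n \times m})$ together with global Lipschitz control on $D^\ell g$ for all $0 \le \ell \le k_* + r + 2$; this is one order more regularity than is imposed on $f$, and that extra derivative is exactly what will let us reach the $H^{k+1}$-norm while still allowing $j$ as large as $r+1$. The reference profile $\Phi_0$ is bounded by \textnormal{(HTw)}, and its only nontrivial first-order spatial derivative $\Phi_0'$ lies in $H^{k_* + r + 2}$ by the regularity statement following \textnormal{(HTw)} (the $x_\perp$-derivatives of $\Phi_0$ vanish identically). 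Finally, $g(\Phi_0) \in L^2(\mathcal D; \mathbb R^{n \times m})$: since $g(u_\pm) = 0$ and $g$ is globally Lipschitz, one has the pointwise bound $|g(\Phi_0(x))| \le K_g |\Phi_0(x) - u_\pm|$, and combining the exponential decay of $\Phi_0 - u_\pm$ from \textnormal{(HTw)} with the finiteness of $|\mathbb T^{d-1}|$ gives square-integrability over the cylinder.

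With these checks in hand I would invoke Corollary \ref{cor:sm:diff:of:theta:map}, applied with the role of the exponent $r$ there played by $r + 1$ (legitimate since $g \in C^{k_* + (r+1) + 1}$ with the matching Lipschitz bounds), to conclude that $w \mapsto g(\Phi_0 + w)$ belongs to $C^j(H^{\tilde k}; H^{\tilde k})$ for every $0 \le j \le r + 2$ and every $k_* \le \tilde k \le k_* + r + 2 - j$, with derivatives given pointwise as in \eqref{eq:sm:def:d:j:theta} with $h = \Phi_0 + w$. Taking $\tilde k = k$ is allowed because $k \le k_j = k_* + r + 1 - j$, and taking $\tilde k = k + 1$ is allowed because $k + 1 \le k_j + 1 = k_* + r + 2 - j$; this yields the asserted membership in both $C^j(H^k; H^k)$ and $C^j(H^{k+1}; H^{k+1})$.

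The two estimates then follow directly from bounds already proved. The $H^k$-estimate is precisely \eqref{eq:sm:bnd:d:j:theta} of Lemma \ref{lem:sm:deriv:theta:bnds} applied with $\Theta = g$, $\Phi = \Phi_0$ and $\tilde k = k$. For the $H^{k+1}$-estimate I would apply Lemma \ref{lem:sm:hkp1:prep} with $\Theta = g$, $\Phi = \Phi_0$ and its running index equal to $k$; its hypotheses — namely $g \in C^{k+j+1}$, Lipschitz control on $D^\ell g$ for $\ell \le k + j + 1$, and $\partial^\beta \Phi_0 \in H^{k+1}$ for $|\beta| = 1$ — all hold because $k + j + 1 \le k_j + j + 1 = k_* + r + 2$. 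This reproduces verbatim the stated right-hand side $K(1 + \norm{w}_{H^k}^{k} \norm{w}_{H^{k+1}}) \mathcal{P}^{(j)}_k[v] + K(1 + \norm{w}_{H^k}^{k}) \mathcal{P}^{(j)}_{k,k+1}[v]$.

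I do not anticipate a genuine obstacle: the argument is essentially bookkeeping on top of the machinery already developed. The only points needing a little care are the index arithmetic — tracking how the single extra derivative on $g$ compared with $f$ accommodates the passage from $H^k$ to $H^{k+1}$ uniformly in $1 \le j \le r+1$ — and the verification that $g(\Phi_0)$ is square-integrable on the cylinder $\mathcal D$ rather than merely on $\mathbb R$.
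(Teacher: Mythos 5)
Your proposal is correct and follows the same route as the paper, which simply cites Lemmas \ref{lem:sm:deriv:theta:bnds} and \ref{lem:sm:hkp1:prep} and Corollary \ref{cor:sm:diff:of:theta:map}. You go further than the paper's one-line proof by carefully verifying the hypotheses — in particular the square-integrability of $g(\Phi_0)$ on the cylinder and, more importantly, the index bookkeeping showing that $g$'s one extra order of smoothness (as compared with $f$) is exactly what lets Corollary \ref{cor:sm:diff:of:theta:map} be applied with the exponent shifted by one to reach $C^j(H^{k+1};H^{k+1})$ for $k$ as large as $k_j$; the paper leaves that shift implicit, so spelling it out is a genuine improvement in clarity.
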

\begin{proof}
In view of the smoothness assumptions on $g$ formulated in (HNL), these statements follow from Lemmas \ref{lem:sm:deriv:theta:bnds} and \ref{lem:sm:hkp1:prep} and Corollary \ref{cor:sm:diff:of:theta:map}.
\end{proof}

\begin{lem}
\label{lem:sm:g:l2:h1}
Suppose that \textnormal{(HNL)} and \textnormal{(HTw)} hold.
Then there exists $K > 0$ so that for any integer
$1 \le j \le r$,
any  $w \in H^{k_*}$ and any tuple $(v_1, \ldots, v_j) \in (H^{k_*})^j$ we have
\begin{equation}
\label{eq:sm:bnds:d:j:g:hks}
\begin{array}{lcl}
    \norm{ D^j g(\Phi_0 + w)[v_1, \ldots, v_j] }_{ L^2}
    & \le & K  \mathcal{P}^{(j)}_{k_*}[v] ,
\\[0.2cm]
\norm{ D^j g(\Phi_0 + w)[v_1, \ldots, v_j] }_{ H^1}
    & \le & K\big[ 1 + ||w||_{H^{k_*}}]  \mathcal{P}^{(j)}_{k_*}[v] .
\end{array}
\end{equation}
In addition, for any $w \in H^1$ we have the bounds
\begin{equation}
\label{eq:sm:g:l2:h1:bnd}
\begin{array}{lcl}
    \norm{  g(\Phi_0 + w)}_{ L^2}
    & \le & K [ 1 + \norm{w}_{L^2}] ,
\\[0.2cm]
\norm{  \chi_h(\Phi_0 + w, 0) g(\Phi_0 + w)}_{ L^2}
    & \le & K  ,
\\[0.2cm]
\norm{  g(\Phi_0 + w) }_{ H^1}
    & \le & K\big[ 1 + ||w||_{H^{1}}] .
\end{array}
\end{equation}
\end{lem}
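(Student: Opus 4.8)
The plan is to separate the five estimates into the two derivative bounds in \eqref{eq:sm:bnds:d:j:g:hks} (which use $j\ge1$) and the three zeroth-order bounds in \eqref{eq:sm:g:l2:h1:bnd}, exploiting throughout that the global Lipschitz hypotheses in \textnormal{(HNL)} force the derivatives $D^jg$ with $j\ge1$ to be \emph{uniformly} bounded. Indeed, \eqref{eq:mr:lipschitz:g} contains the individual bound $|D^\ell g(u_A)-D^\ell g(u_B)|\le K_g|u_A-u_B|$ for every $0\le\ell\le k_*+r+2$, so each $D^\ell g$ is globally $K_g$-Lipschitz, whence $\sup_{u\in\mathbb R^n}|D^jg(u)|\le C K_g$ for every $1\le j\le k_*+r+2$, the constant $C$ only recording the equivalence between the norm used in \textnormal{(HNL)} and the operator norm on $j$-linear maps. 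Since $1\le j\le r$ throughout the lemma, this yields the a.e.\ pointwise bound $|D^jg(\Phi_0(x)+w(x))[v_1(x),\ldots,v_j(x)]|\le C K_g\,|v_1(x)|\cdots|v_j(x)|$ on $\mathcal D$. Taking $L^2$-norms and placing $j-1$ of the factors in $L^\infty$ via the embedding $H^{k_*}\hookrightarrow L^\infty$ (valid since $k_*>d/2$) and the remaining one in $L^2$ yields at once the first bound in \eqref{eq:sm:bnds:d:j:g:hks}. For the $H^1$-bound I would invoke Lemma \ref{lem:sm:deriv:theta:bnds} with $\Theta=g$, $\Phi=\Phi_0$, $k=k_*$ and $\tilde k=1$: the smoothness and Lipschitz requirements on $\Theta$ are exactly \textnormal{(HNL)}, while $\Phi_0$ is bounded, $g(\Phi_0)\in L^2$ (established below), and $\partial^\beta\Phi_0\in\{0,\Phi_0'\}\subseteq H^{k_*+r+2}\subseteq H^{k_*}$ for $|\beta|=1$; the resulting estimate $K(1+\norm{w}_{H^{k_*}})\mathcal{P}^{(j)}_{k_*}[v]$ is precisely what is claimed.

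For the zeroth-order bounds I would first record that $g(\Phi_0)\in L^2(\mathcal D)$, in fact with exponential spatial localization: for $x\le0$ we have $|g(\Phi_0(x))|=|g(\Phi_0(x))-g(u_-)|\le K_g|\Phi_0(x)-u_-|\le K_gKe^{-\nu_-|x|}$ by \textnormal{(HNL)} and \textnormal{(HTw)}, and symmetrically $|g(\Phi_0(x))|\le K_gKe^{-\nu_+|x|}$ for $x\ge0$; since the cross-section $\mathbb T^{d-1}$ has finite measure, square-integrability follows. The bound $\norm{g(\Phi_0+w)}_{L^2}\le K[1+\norm{w}_{L^2}]$ is then immediate from $|g(\Phi_0+w)|\le|g(\Phi_0)|+|g(\Phi_0+w)-g(\Phi_0)|\le|g(\Phi_0)|+K_g|w|$. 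For the cut-off version one uses that, by \eqref{eq:list:def:chi:h:l}, $\chi_h(\Phi_0+w,0)=\chi_{\rm high}(\norm{w}_{L^2})$ with $\chi_{\rm high}$ bounded and supported in a bounded interval; hence $\chi_h(\Phi_0+w,0)g(\Phi_0+w)$ vanishes unless $\norm{w}_{L^2}$ lies below a fixed threshold, in which case the previous bound supplies a uniform constant. Finally, for the $H^1$-bound it remains to control the first derivatives: the chain rule gives $\partial^\alpha[g(\Phi_0+w)]=Dg(\Phi_0+w)[\partial^\alpha\Phi_0+\partial^\alpha w]$ for $|\alpha|=1$, and since $\sup|Dg|\le K_g$ and $\partial^\alpha\Phi_0\in\{0,\Phi_0'\}\subseteq L^2$, taking $L^2$-norms yields $\norm{\partial^\alpha[g(\Phi_0+w)]}_{L^2}\le K_g(\norm{\Phi_0'}_{L^2}+\norm{w}_{H^1})$, which combines with the $L^2$-bound to give \eqref{eq:sm:g:l2:h1:bnd}.

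None of this is technically deep; the work is mostly organizational. The two points that need care are: that the global Lipschitz chain in \textnormal{(HNL)} promotes \emph{every} $D^jg$ with $j\ge1$ to a bounded map, which is what removes any $\norm{w}$-dependence from the $L^2$ derivative bound (in contrast to the $H^1$ bound, where differentiating $\Phi_0+w$ necessarily reintroduces a factor of $\norm{w}_{H^{k_*}}$); and that the square-integrability of $g(\Phi_0)$ genuinely uses \emph{both} end-state conditions $g(u_-)=g(u_+)=0$, since $\Phi_0-u_-$ alone need not be square-integrable on the right half-line when $u_-\neq u_+$. Beyond these, everything reduces to pointwise estimates, the Sobolev embedding $H^{k_*}\hookrightarrow L^\infty$, and the general Nemytskii bound of Lemma \ref{lem:sm:deriv:theta:bnds}.
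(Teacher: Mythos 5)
Your proof is correct, and for the two derivative bounds in \eqref{eq:sm:bnds:d:j:g:hks} it follows exactly the paper's route: the uniform pointwise bound on $D^j g$ together with the Sobolev embedding $H^{k_*}\hookrightarrow L^\infty$ is precisely what underlies the multilinear estimate \eqref{eq:nl:bnd:multilinear:general} that the paper invokes, and the $H^1$ bound is obtained from Lemma \ref{lem:sm:deriv:theta:bnds} with $\tilde k=1$ in both treatments. Where you depart from the paper is in \eqref{eq:sm:g:l2:h1:bnd}: the paper simply cites \cite[Lem.\ 4.6, Eq.\ (4.36), Lem.\ 4.12]{bosch2024multidimensional}, whereas you derive the three bounds directly (exponential decay of $g(\Phi_0)$ via the two end-state conditions $g(u_\pm)=0$, the Lipschitz bound $|g(\Phi_0+w)-g(\Phi_0)|\le K_g|w|$, the support of $\chi_{\rm high}$, and the chain rule for the $H^1$ estimate). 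These direct arguments are correct and are what the cited results encode; your version has the minor advantage of being self-contained, while the paper's simply avoids redundancy with its prior work. You are also right to flag that $g(\Phi_0)\in L^2$ genuinely requires both $g(u_-)=0$ and $g(u_+)=0$ and that the absence of any $\norm{w}$-dependence in the first line of \eqref{eq:sm:bnds:d:j:g:hks} is precisely because no derivative hits $\Phi_0+w$ there — two small points worth making explicit.
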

\begin{proof}
The $L^2$-estimate in \eqref{eq:sm:bnds:d:j:g:hks}
follows from \eqref{eq:nl:bnd:multilinear:general} and the uniform pointwise bounds
available for $D^j g$. The $H^1$-estimate in \eqref{eq:sm:bnds:d:j:g:hks}
is a consequence of Lemma \ref{lem:sm:deriv:theta:bnds}.
Finally, the bounds \eqref{eq:sm:g:l2:h1:bnd} follow from \cite[Lem. 4.6, Eq. (4.36) and Lem. 4.12]{bosch2024multidimensional}.
\end{proof}

We now turn to the functions $b$, $\tilde{\nu}$ and $\tilde{\mathcal{K}}_C$.
Inspecting the definition \eqref{eq:list:def:b:nu:tilde},
we introduce the function
\begin{equation}
    \Gamma_b: L^2(\mathcal{D}; \mathbb R^{n \times m}) \ni h \to \langle h[ \,\cdot\,] , \psi_{\rm tw} \rangle_{L^2} \in HS(L^2_Q; \mathbb R),
\end{equation}
which is bounded on account of \eqref{eq:mr:hs:bnd:z},
and write
\begin{equation}
\label{eq:sm:repr:b}
    b(u, 0) = - \chi_h(u, 0)^2 \chi_l(u, 0) \Gamma_b[ g(u)] .
\end{equation}
In addition,
we introduce the functional
\begin{equation}
    \Gamma_{\tilde{\nu}}:
L^2(\mathbb{R}^{m \times n})
\times L^2(\mathbb{R}^{m \times n}) \ni
   (  v, w) \mapsto  \langle Q  v \psi_{\rm tw}, w \psi_{\rm tw} \rangle_{L^2}
    \in \mathbb R,
\end{equation}
which satisfies the bound
\begin{equation}
    | \Gamma_{\tilde{\nu}}[ v, w] | \le \norm{q}_{L^1(\mathcal{D};\mathbb{R}^{m \times m})}
     \norm{v}_{L^2(\mathbb{R}^{m \times n})} \norm{w}_{L^2(\mathbb{R}^{m \times n})}\norm{\psi_{\rm tw}}_\infty^2
\end{equation}
and allows us to write
\begin{equation}
\label{eq:sm:repr:nu:t}
    \tilde{\nu}( u, 0) =
    \frac{1}{2} \chi_h(u, 0)^4  \chi_l(u,0)^2 \Gamma_{\tilde{\nu}}[ g^T(u), g^T(u)
    ].
\end{equation}
Finally, inspecting the definition \eqref{eq:list:def:wt:k:c}
we introduce the functional
\begin{equation}
    \Gamma_C: L^2(\mathcal{D}; \mathbb R^{m \times n}) \ni  h \mapsto Q h \psi_{\rm tw} \in L^2_Q
\end{equation}
and write
\begin{equation}
\label{eq:sm:repr:wt:k:c}
    \widetilde{\mathcal{K}}_C(u, 0)
    = \chi_l(u, 0) \chi_h(u,0) \Gamma_C[ g^T(u)].
\end{equation}
Note that the computations in \cite[Lem 4.8 and 4.17]{bosch2024multidimensional}
imply that for any multi-index $\beta \in \mathbb Z^d_{\ge 0}$ with $|\beta|=1$ we have
\begin{equation}
||\Gamma_C h||_{L^2_Q }
\le K  ||h||_{L^2(\mathcal{D}; \mathbb R^{m \times n})},
\qquad
||\partial^\beta \Gamma_C h||_{L^2_Q} \le K ||h||_{H^1(\mathcal{D}; \mathbb R^{m \times n})}.
\end{equation}

\begin{lem}
\label{lem:sm:b:nu:kct}
Suppose that \textnormal{(HNL)}, \textnormal{(HTw)} and \textnormal{(Hq)} hold.
    Pick 
    a multi-index $\beta \in \mathbb Z^{d}_{\ge 0}$ with $| \beta | = 1$. Then we have
    \begin{equation}
    \begin{array}{lcl}
        w \mapsto b(\Phi_0 + w, 0)  &\in & C^r(H^{k_*}; HS( L^2_Q;\mathbb R )),
        \\[0.2cm]
        w \mapsto   \widetilde{\nu}(\Phi_0 + w, 0) &\in & C^r( H^{k_*} ; \mathbb R) ,
        \\[0.2cm]
        w \mapsto \widetilde{\mathcal{K}}_C (\Phi_0 + w, 0)  &\in & C^r(H^{k_*}; L^2_Q),
        \\[0.2cm]
        w \mapsto \partial^\beta \widetilde{\mathcal{K}}_C (\Phi_0 + w, 0)  &\in & C^r(H^{k_*}; L^2_Q) .
    \end{array}
\end{equation}
In addition, there exists $K > 0$ so that for all $0 \le j \le r$,
all $w \in H^{k*}$ and all tuples $(v_1, \ldots, v_j) \in (H^{k_*})^j$
we have the bounds
\begin{equation}
\label{eq:nl:est:deriv:b:nut:wtkc}
\begin{array}{lcl}
 \norm{    D^j b(\Phi_0 + w) [v_1, \ldots, v_j] }_{HS(L^2_Q;\mathbb R)}
    &\le & K  \mathcal{P}^{(j)}_{k_*}[v] ,
\\[0.2cm]
| D^j \widetilde{\nu}(\Phi_0 + w) [v_1, \ldots, v_j] |
   &\le &  K  \mathcal{P}^{(j)}_{k_*}[v],
\\[0.2cm]
\norm{D^j \widetilde{\mathcal{K}}_C(\Phi_0 + w) [v_1, \ldots, v_j] }_{L^2_Q}
    &\le & K  \mathcal{P}^{(j)}_{k_*}[v] ,
\\[0.2cm]
\norm{ D^j \partial^\beta \widetilde{\mathcal{K}}_C(\Phi_0 + w) [v_1, \ldots, v_j] }_{L^2_Q}
    &\le & K \big[ 1 + \norm{w}_{H^{k_*}} \big]
    \mathcal{P}^{(j)}_{k_*}[v] .
\end{array}
\end{equation}
\end{lem}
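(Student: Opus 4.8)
The plan is to reduce everything to the three explicit representations \eqref{eq:sm:repr:b}, \eqref{eq:sm:repr:nu:t} and \eqref{eq:sm:repr:wt:k:c}, together with their $\partial^\beta$-differentiated versions, and then apply a product rule. First I would record a general lemma (or invoke the reasoning already used for $f$, $g$): if $w \mapsto F_i(\Phi_0+w)$ lies in $C^r(H^{k_*}; \mathcal{X}_i)$ for Banach spaces $\mathcal{X}_i$ equipped with a continuous multiplication $\mathcal{X}_1 \times \cdots \times \mathcal{X}_\ell \to \mathcal{Y}$ (here the relevant products are: scalar $\times$ scalar, scalar $\times$ Hilbert--Schmidt operator, scalar $\times$ $L^2_Q$-vector, and the bilinear form $\Gamma_{\tilde\nu}$), then the pointwise product $w \mapsto \prod_i F_i(\Phi_0+w)$ again lies in $C^r$, and by the Leibniz rule its $j$-th derivative evaluated at $(v_1,\dots,v_j)$ is a finite sum over ordered partitions $\{1,\dots,j\} = S_1 \sqcup \cdots \sqcup S_\ell$ of products $\prod_i D^{|S_i|}F_i(\Phi_0+w)[v_{S_i}]$. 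Applying this with $F$ running over $\chi_h(\cdot,0)$, $\chi_l(\cdot,0)$ (and powers thereof) and the $g$-built factors $\Gamma_b[g(\cdot)]$, $\Gamma_{\tilde\nu}[g^T(\cdot),g^T(\cdot)]$, $\Gamma_C[g^T(\cdot)]$ immediately yields the claimed smoothness statements for $b$, $\tilde\nu$ and $\widetilde{\mathcal{K}}_C$.

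For the quantitative bounds \eqref{eq:nl:est:deriv:b:nut:wtkc}, I would expand each $D^j$ via the Leibniz rule above and estimate term by term. Every factor coming from a cut-off is handled by Lemma \ref{lem:sm:cutoffs}, which gives $|D^{\ell}\chi_{h/l}(\Phi_0+w,0)[v_{S}]| \le K\,\prod_{i\in S}\norm{v_i}_{L^2} \le K\,\prod_{i\in S}\norm{v_i}_{H^{k_*}}$ with \emph{no} dependence on $\norm{w}$. The $g$-built factors are controlled by the $L^2$-estimate in \eqref{eq:sm:bnds:d:j:g:hks} of Lemma \ref{lem:sm:g:l2:h1}: since $\Gamma_b$, $\Gamma_{\tilde\nu}$ and $\Gamma_C$ are bounded (as recorded just before the lemma, using \eqref{eq:mr:hs:bnd:z} and the $L^1$-bound on $q$), one has e.g. $\norm{D^{\ell}\big(\Gamma_b[g(\Phi_0+w)]\big)[v_S]}_{HS(L^2_Q;\mathbb{R})} \le K\,\mathcal{P}^{(|S|)}_{k_*}[v_S]$, and likewise for the other two, with the quadratic $\Gamma_{\tilde\nu}$ factor contributing two such factors whose $H^{k_*}$-norms multiply out into $\mathcal{P}^{(j)}_{k_*}[v]$. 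Crucially, none of these $L^2$-level bounds involve $\norm{w}$, so multiplying over all blocks of the partition and summing over the finitely many partitions gives precisely the $w$-independent bound $K\,\mathcal{P}^{(j)}_{k_*}[v]$ for the first three lines.

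For the fourth line I apply $\partial^\beta$ with $|\beta|=1$ to \eqref{eq:sm:repr:wt:k:c} first, distributing the derivative across the three factors $\chi_l(\Phi_0+w,0)$, $\chi_h(\Phi_0+w,0)$, $\Gamma_C[g^T(\Phi_0+w)]$. When $\partial^\beta$ hits a cut-off factor, that factor becomes $D\chi(\Phi_0+w,0)[\partial^\beta(\Phi_0+w)]$, which by Lemma \ref{lem:sm:cutoffs} is bounded by $K\norm{\partial^\beta\Phi_0 + \partial^\beta w}_{L^2} \le K(1+\norm{w}_{H^{k_*}})$ since $\Phi_0'\in L^2$ by (HTw); when it hits $\Gamma_C[g^T(\cdot)]$ we use the $\partial^\beta$-bound for $\Gamma_C$ together with the $H^1$-estimate in \eqref{eq:sm:bnds:d:j:g:hks}, namely $\norm{D^j g(\Phi_0+w)[\,\cdot\,]}_{H^1}\le K[1+\norm{w}_{H^{k_*}}]\mathcal{P}^{(j)}_{k_*}[v]$. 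Either way a single factor of $(1+\norm{w}_{H^{k_*}})$ appears; applying $D^j$ afterwards via Leibniz as before (all non-distinguished factors still contributing $w$-independent $L^2$-bounds) produces the stated $K[1+\norm{w}_{H^{k_*}}]\mathcal{P}^{(j)}_{k_*}[v]$. The main obstacle is purely bookkeeping: organizing the nested Leibniz expansion (product of several $\Phi_0$-composed factors, each itself a composition whose derivatives are given by Corollary \ref{cor:sm:diff:of:theta:map}) so that one can transparently see that exactly the $L^2$-level estimates suffice and that no $\norm{w}$-powers beyond the claimed ones creep in — in particular verifying that the cut-off factors, which carry the only $w$-uniform bounds, never need to be differentiated in a direction that would spoil this, except for the single harmless $\partial^\beta$ in the last line.
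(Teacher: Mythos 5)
Your overall strategy — reduce to the representations \eqref{eq:sm:repr:b}, \eqref{eq:sm:repr:nu:t}, \eqref{eq:sm:repr:wt:k:c}, expand $D^j$ by the Leibniz rule, and estimate each factor by Lemma~\ref{lem:sm:cutoffs}, the boundedness of $\Gamma_b$, $\Gamma_{\tilde\nu}$, $\Gamma_C$, and Lemma~\ref{lem:sm:g:l2:h1} — is exactly the paper's route. However, two of your intermediate claims are incorrect and need repair.

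First, the assertion that ``none of these $L^2$-level bounds involve $\|w\|$'' fails for the \emph{undifferentiated} $g$-factor: by the first line of \eqref{eq:sm:g:l2:h1:bnd} one only has $\|g(\Phi_0+w)\|_{L^2}\le K[1+\|w\|_{L^2}]$, so $\|\Gamma_b[g(\Phi_0+w)]\|_{HS(L^2_Q;\mathbb R)}\le K[1+\|w\|_{L^2}]$ is \emph{not} $w$-uniform, and the same is true for the $j=0$ case of \eqref{eq:sm:bnds:d:j:g:hks}, which you cite as if it covered $j=0$ (it is stated for $1\le j\le r$). Whenever a Leibniz term places all $j$ derivatives on the cut-off factors, you must pair one of the $\chi_h$'s with $g$ and invoke the second line of \eqref{eq:sm:g:l2:h1:bnd}, $\|\chi_h(\Phi_0+w,0)g(\Phi_0+w)\|_{L^2}\le K$; that bound is recorded precisely for this purpose, and the paper flags the analogous point explicitly in the proof of Lemma~\ref{lem:sm:k:c:h:kp1}.

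Second, your treatment of $\partial^\beta\widetilde{\mathcal K}_C$ distributes $\partial^\beta$ onto the cut-off factors, producing a (claimed) contribution $D\chi(\Phi_0+w,0)[\partial^\beta(\Phi_0+w)]$. This term does not exist: $\chi_l(\Phi_0+w,0)$ and $\chi_h(\Phi_0+w,0)$ are \emph{scalars} (built from the spatially integrated $L^2$-inner product and $L^2$-norm of $w$), not Nemytskii-type compositions, so the spatial derivative $\partial^\beta$ annihilates them and only $\partial^\beta\Gamma_C[g^T(\Phi_0+w)]$ survives. The factor $[1+\|w\|_{H^{k_*}}]$ in the fourth bound comes entirely from this term, via $\|\partial^\beta\Gamma_C h\|_{L^2_Q}\le K\|h\|_{H^1}$ combined with the $H^1$-bounds of Lemma~\ref{lem:sm:g:l2:h1}. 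Taken literally, your extra $\partial^\beta$-on-$\chi$ contribution would carry a factor $[1+\|w\|_{H^{k_*}}]$ of its own, and multiplying by the $[1+\|w\|_{L^2}]$ from the undifferentiated $\Gamma_C[g^T]$ would give $[1+\|w\|_{H^{k_*}}]^2$, overshooting the stated estimate.
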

\begin{proof}
In view of the representations \eqref{eq:sm:repr:b}, \eqref{eq:sm:repr:nu:t}
and \eqref{eq:sm:repr:wt:k:c},
these statements follow from the smoothness of $\chi_h$ and $\chi_l$
as outlined in Lemma \ref{lem:sm:cutoffs}, the smoothness
of linear maps and the bounds for $g$
established in Lemma \ref{lem:sm:g:l2:h1}.
\end{proof}

\subsection{Bounds for \texorpdfstring{$\mathcal{R}_I$}{R1}}

In this part we set out to obtain bounds for the nonlinearity
\begin{equation}
\label{eq:sm:reminder:r:i}
\mathcal{R}_{I}(v)
    =
     \mathcal{N}_f(v)
    - \chi_l (\Phi_0 + v, 0) \langle \mathcal{N}_f(v), \psi_{\mathrm{tw}} \rangle \partial_x [\Phi_0 + v],
\end{equation}
where $\mathcal{N}_f$ is given by
\begin{equation}
    \mathcal{N}_f(w) = f(\Phi_0 + w) - f(\Phi_0) - Df(\Phi_0) w.
\end{equation}
Our first three results concern $f$ in $H^k$ and $L^2$ together with $D\mathcal{N}_f$,
showing that $\mathcal{N}_f$ is indeed quadratic.

\begin{lem}
\label{lem:nl:bnds:f}
Suppose that \textnormal{(HNL)} and \textnormal{(HTw)} hold.
    Pick  an integer $0 \le j \le r + 1$
    and an integer $k_* \le k \le  k_j$.
    Then we have
\begin{equation}
    w \mapsto f(\Phi_0 + w) \in C^j(H^{k};H^{k}).
\end{equation}
In addition, there is a constant $K > 0$ so that
for any $w \in H^k$ and any tuple $(v_1, \ldots v_j) \in (H^k)^{j}$
we have the bound
\begin{equation}
\begin{array}{lcl}
    \norm{ D^j f(\Phi_0 + w)[v_1, \ldots, v_j]}_{H^{k}}
    & \le & K (1 + ||w||_{H^k}^{k} ) \mathcal{P}^{(j)}_k[v] .
\\[0.2cm]
\end{array}
\end{equation}
\end{lem}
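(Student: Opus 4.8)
The plan is to read this statement as the specialization of the abstract Nemytskii estimates of {\S}\ref{sec:sm} to the choice $\Theta = f$ and $\Phi = \Phi_0$. Concretely, the smoothness assertion should be a direct consequence of Corollary \ref{cor:sm:diff:of:theta:map}, and the quantitative bound should follow from the $\tilde{k} = k$ case of \eqref{eq:sm:bnd:d:j:theta} in Lemma \ref{lem:sm:deriv:theta:bnds}.

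First I would check that $f$ and $\Phi_0$ meet the hypotheses of these two results. Assumption (HNL) provides $f \in C^{k_* + r + 1}(\mathbb{R}^n;\mathbb{R}^n)$, while the chain of bounds in \eqref{eq:mr:lipschitz:f} shows that $D^\ell f$ is globally Lipschitz (with constant at most $K_f$) for each $0 \le \ell \le k_* + r + 1$. From (HTw) the profile $\Phi_0$ is bounded, and the inclusion $\Phi_0' \in H^{k_* + r + 2}$ together with the fact that $\Phi_0$ does not depend on $x_\perp$ yields $\partial^\beta \Phi_0 \in H^{k_* + r + 1}$ for every multi-index $\beta$ with $|\beta| = 1$. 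The one point that is not completely automatic is the membership $f(\Phi_0) \in L^2(\mathcal{D})$: combining $f(u_\pm) = 0$ from (HNL) with \eqref{eq:mr:lipschitz:f} gives the pointwise estimate $|f(\Phi_0(x))| = |f(\Phi_0(x)) - f(u_\pm)| \le K_f\,|\Phi_0(x) - u_\pm|$, and the exponential decay bounds in (HTw) then force $f(\Phi_0) \in L^2(\mathbb{R})$; since $x_\perp$ ranges over the compact torus $\mathbb{T}^{d-1}$, this upgrades to $f(\Phi_0) \in L^2(\mathcal{D})$.

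With these hypotheses in place, Corollary \ref{cor:sm:diff:of:theta:map} applied with $N = n$ gives $w \mapsto f(\Phi_0 + w) \in C^j(H^k;H^k)$ for every $0 \le j \le r + 1$ and every $k_* \le k \le k_* + r + 1 - j = k_j$, with the derivatives represented by the pointwise substitution \eqref{eq:sm:def:d:j:theta} evaluated at $h = \Phi_0 + w$. Feeding this representation into \eqref{eq:sm:bnd:d:j:theta} with $\tilde{k} = k$ produces exactly the claimed bound $\norm{D^j f(\Phi_0 + w)[v_1,\ldots,v_j]}_{H^k} \le K(1 + \norm{w}_{H^k}^{k})\,\mathcal{P}^{(j)}_k[v]$. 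Since the genuine analytic content has already been carried out in the preliminary lemmas, I do not expect any real obstacle here; the only step requiring a moment's attention is the verification that $f(\Phi_0) \in L^2(\mathcal{D})$ described above, which is precisely what legitimizes invoking Lemma \ref{lem:sm:deriv:theta:bnds} and Corollary \ref{cor:sm:diff:of:theta:map} with $\Phi = \Phi_0$.
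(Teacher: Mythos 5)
Your proposal is correct and follows exactly the route taken in the paper: the proof there consists of the single observation that the statements follow from Lemma~\ref{lem:sm:deriv:theta:bnds} and Corollary~\ref{cor:sm:diff:of:theta:map} once (HNL) is invoked. The additional details you supply (verifying the global Lipschitz bounds, boundedness and decay of $\Phi_0$, and in particular $f(\Phi_0)\in L^2$ via $|f(\Phi_0(x))|\le K_f|\Phi_0(x)-u_\pm|$ and exponential decay) are precisely the routine hypothesis-checks the paper's terse proof leaves implicit.
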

\begin{proof}
In view of the smoothness assumptions in (HNL),
the statements
 follow directly from Lemma \ref{lem:sm:deriv:theta:bnds}
 and Corollary \ref{cor:sm:diff:of:theta:map}.
\end{proof}

\begin{lem}
\label{lem:sm:f:l2}
Suppose that \textnormal{(HNL)} and \textnormal{(HTw)} hold.
    Then there exists $K > 0$ so that for any integer $1 \le j \le r$, any $w \in H^{k_*}$ and any tuple $(v_1, \ldots, v_j) \in (H^{k_*})^j$ we have
    \begin{equation}
    \label{eq:sm:bnd:df:j:l2}
        \norm{D^j f(\Phi_0 + w)[v_1, \ldots, v_j]}_{L^2} \le K \mathcal{P}^{(j)}_{k_*}[v] ,
    \end{equation}
    together with
    \begin{equation}
    \label{eq:sm:bnd:f:l2}
        \norm{f(\Phi_0 + w)}_{L^2} \le K [1 + \norm{w}_{L^2}] .
    \end{equation}
\end{lem}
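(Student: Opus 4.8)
The plan is to follow the proof of Lemma~\ref{lem:sm:g:l2:h1}, since $f$ satisfies exactly the same type of global Lipschitz hypotheses as $g$ under \textnormal{(HNL)}, and to reduce everything to pointwise estimates composed with the algebra property of $H^{k_*}$.

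For the estimate \eqref{eq:sm:bnd:df:j:l2}, the key observation is that \eqref{eq:mr:lipschitz:f} forces $D^{j-1}f$ to be globally Lipschitz for every $1 \le j \le r+1$ --- indeed $j-1 \le r \le k_* + r + 1$ --- and a globally Lipschitz $C^1$ map has uniformly bounded derivative, so $\lvert D^j f(u) \rvert \le K_f$ for all $u \in \mathbb{R}^n$. Consequently, at each point $(x,x_\perp) \in \mathcal{D}$ the $j$-linear map $D^j f(\Phi_0(x,x_\perp) + w(x,x_\perp))$ has operator norm at most $K_f$, whence
\[
  \bigl| D^j f(\Phi_0 + w)[v_1, \ldots, v_j](x,x_\perp) \bigr| \le K_f \, \lvert v_1(x,x_\perp) \rvert \cdots \lvert v_j(x,x_\perp) \rvert .
\]
Taking $L^2$-norms and invoking \eqref{eq:nl:bnd:multilinear:general} with $k = k_*$ and no derivatives (i.e.\ using that $H^{k_*}$ is an algebra continuously embedded in $L^\infty$) yields $\norm{D^j f(\Phi_0 + w)[v_1, \ldots, v_j]}_{L^2} \le K \mathcal{P}^{(j)}_{k_*}[v]$, with a constant $K$ independent of $w$.

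For \eqref{eq:sm:bnd:f:l2}, I would use that $f$ itself is globally Lipschitz, so pointwise $\lvert f(\Phi_0 + w) \rvert \le \lvert f(\Phi_0) \rvert + K_f \lvert w \rvert$ and hence $\norm{f(\Phi_0 + w)}_{L^2} \le \norm{f(\Phi_0)}_{L^2} + K_f \norm{w}_{L^2}$. It then remains to check $f(\Phi_0) \in L^2(\mathcal{D})$; since $\Phi_0$ depends only on $x$, this reduces to $f(\Phi_0) \in L^2(\mathbb{R})$, which follows from $f(u_\pm) = 0$ together with the exponential decay $\lvert \Phi_0(x) - u_\pm \rvert \le K e^{-\nu_\pm \lvert x \rvert}$ supplied by \textnormal{(HTw)}, via $\lvert f(\Phi_0(x)) \rvert = \lvert f(\Phi_0(x)) - f(u_\pm) \rvert \le K_f K e^{-\nu_\pm \lvert x \rvert}$ for $\pm x \ge 0$. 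Alternatively one may simply cite the corresponding bound for $g$ from \cite[Lem. 4.6]{bosch2024multidimensional}, whose argument transfers verbatim.

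There is no substantive obstacle here; the only two points meriting care are extracting the uniform pointwise bound on $D^j f$ for $j \ge 1$ from the Lipschitz hypothesis on $D^{j-1} f$, and confirming the integrability of $f(\Phi_0)$ over the cylinder, which relies on pairing $f(u_\pm) = 0$ with the exponential tails of the wave profile.
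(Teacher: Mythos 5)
Your argument is correct and essentially reproduces the paper's proof: both bounds reduce to the uniform pointwise estimate $|D^j f| \le K_f$ extracted from the Lipschitz hypotheses in (HNL), combined with the Sobolev embedding $H^{k_*} \hookrightarrow L^\infty$ for \eqref{eq:sm:bnd:df:j:l2} (the paper cites this as the $\ell=0$ case of \eqref{eq:sm:bnd:mathcal:i:1}), and with the inclusion $f(\Phi_0) \in L^2$ for \eqref{eq:sm:bnd:f:l2}. Your explicit verification that $f(\Phi_0) \in L^2(\mathcal{D})$ via $f(u_\pm) = 0$ and the exponential tails in (HTw) is a welcome bit of detail that the paper leaves implicit.
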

\begin{proof}
The bound \eqref{eq:sm:bnd:df:j:l2} follows by considering \eqref{eq:sm:bnd:mathcal:i:1} with $\ell= 0$.
    On the other hand, \eqref{eq:sm:bnd:f:l2} follows
    from the uniform pointwise bound on $Df$
    and the fact that $f(\Phi_0) \in L^2$.
\end{proof}

\begin{lem}
\label{lem:sm:deriv:n:f}
Suppose that \textnormal{(HNL)} and \textnormal{(HTw)} hold.
Then there exists $K > 0$ so that for any $w \in H^{k_*}$ and $v \in H^{k_*}$ we have the bounds
\begin{equation}
\begin{array}{lcl}
\norm{  D\mathcal{N}_f(w)[v]}_{H^{k_*}}
    & \le & K (1 + ||w||_{H^{k_*}}^{k_*})||w||_{H^{k_*}} \norm{v}_{H^{k_*}} ,
\\[0.2cm]
    \norm{  D\mathcal{N}_f( w)[v]  }_{L^2}
    & \le & K \norm{w}_{H^{k_*}} \norm{v}_{H^{k_*}} .
\\[0.2cm]
\end{array}
\end{equation}
\end{lem}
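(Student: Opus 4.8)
The plan is to reduce both estimates to the Nemytskii bounds already established in Lemma~\ref{lem:sm:deriv:theta:bnds}. First I would differentiate the defining expression $\mathcal{N}_f(w) = f(\Phi_0+w) - f(\Phi_0) - Df(\Phi_0)w$. By Corollary~\ref{cor:sm:diff:of:theta:map} applied with $\Theta = f$ (which is admissible: (HNL) provides $f \in C^{k_*+r+1}$ with globally Lipschitz derivatives up to order $k_*+r+1$, while (HTw) gives that $\Phi_0$ is bounded with $f(\Phi_0) \in L^2$ and $\Phi_0' \in H^{k_*+r+1}$), the map $w \mapsto f(\Phi_0+w)$ is $C^1$ from $H^{k_*}$ into $H^{k_*}$ with Fréchet derivative acting by pointwise multiplication with $Df(\Phi_0+w)$. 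Consequently
\begin{equation}
\label{eq:plan:dnf}
  D\mathcal{N}_f(w)[v] = Df(\Phi_0+w)v - Df(\Phi_0)v = \big(Df(\Phi_0+w)-Df(\Phi_0)\big)v ,
\end{equation}
which is exactly the kind of expression controlled by the Lipschitz bound \eqref{eq:sm:bnd:d:j:theta:lip}.

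For the $H^{k_*}$-estimate I would apply \eqref{eq:sm:bnd:d:j:theta:lip} with $\Theta = f$, $j = 1$, $\tilde{k} = k = k_*$, $\Phi = \Phi_0$, and $(w_A, w_B) = (w, 0)$. Since $\norm{w_B}_{H^{k_*}} = 0$, this yields the bound $K\big(1 + \norm{w}_{H^{k_*}}^{k_*-1}\big)\norm{w}_{H^{k_*}}\norm{v}_{H^{k_*}}$, which is absorbed into the asserted estimate after enlarging $K$, using $1 + t^{k_*-1} \le 2(1 + t^{k_*})$ for $t \ge 0$. For the $L^2$-estimate I would argue pointwise instead: the global Lipschitz bound $|Df(a) - Df(b)| \le K_f|a-b|$ from (HNL) combined with \eqref{eq:plan:dnf} gives $|D\mathcal{N}_f(w)[v](x,x_\perp)| \le K_f\,|w(x,x_\perp)|\,|v(x,x_\perp)|$ almost everywhere on $\mathcal{D}$, so that $\norm{D\mathcal{N}_f(w)[v]}_{L^2} \le K_f \norm{w}_{L^\infty}\norm{v}_{L^2} \le K\norm{w}_{H^{k_*}}\norm{v}_{H^{k_*}}$ by the Sobolev embedding $H^{k_*} \hookrightarrow L^\infty$ (valid since $k_* > d/2$). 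One could equivalently invoke \eqref{eq:sm:bnd:d:j:theta:lip} with $\tilde{k} = 1$, which in fact gives the stronger estimate $\norm{D\mathcal{N}_f(w)[v]}_{H^1} \le K\norm{w}_{H^{k_*}}\norm{v}_{H^{k_*}}$.

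I do not anticipate any genuine obstacle here; the argument is a bookkeeping matter of matching the definition of $\mathcal{N}_f$ to the Nemytskii machinery of Section~\ref{sec:sm}. The only point that requires a moment's attention is checking that $f$ meets the smoothness and Lipschitz hypotheses of Lemma~\ref{lem:sm:deriv:theta:bnds} and Corollary~\ref{cor:sm:diff:of:theta:map} in the relevant range of indices, but this is immediate from (HNL) and the regularity of $\Phi_0$ recorded after (HTw).
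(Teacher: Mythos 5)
Your proof is correct. It does, however, take a different route from the paper's. You identify $D\mathcal{N}_f(w)[v]=\big(Df(\Phi_0+w)-Df(\Phi_0)\big)v$ and control this difference directly through the Lipschitz estimate \eqref{eq:sm:bnd:d:j:theta:lip} from Lemma~\ref{lem:sm:deriv:theta:bnds} (with $w_A=w$, $w_B=0$, $j=1$), plus a pointwise Lipschitz argument for the $L^2$-bound. The paper instead observes $D\mathcal{N}_f(0)=0$ and $D^2\mathcal{N}_f=D^2f$, writes the Taylor integral
\begin{equation}
D\mathcal{N}_f(w)[v]=\int_0^1 D^2 f(\Phi_0+tw)[w,v]\,\mathrm dt,
\end{equation}
and then reads off both bounds from Lemmas~\ref{lem:nl:bnds:f} and~\ref{lem:sm:f:l2} applied to $D^2f$. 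The two arguments are of course dual to one another — your Lipschitz bound for $Df$ is itself proved in Lemma~\ref{lem:sm:deriv:theta:bnds} by exhibiting the same kind of integral representation — but they pull on different lemmas in the toolkit. The paper's version is slightly tighter (it avoids the cosmetic $1+t^{k_*-1}\le 2(1+t^{k_*})$ absorption you need) and keeps the argument entirely inside the $f$-specific corollaries, whereas yours reaches back to the more primitive Nemytskii Lipschitz estimate. Both are bookkeeping-level proofs with no real content difference; either would be acceptable in the paper.
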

\begin{proof}
    In view of the fact that $D \mathcal{N}_f(0)= 0$
    and $D^2 \mathcal{N}_f = D^2 f$, we may write
    \begin{equation}
    D\mathcal{N}_f(w)[v]
    = \int_0^1 D^2 f( \Phi_0 + t w) [w, v] \, \mathrm dt.
    \end{equation}
    The desired bounds now
    follow directly from Lemmas  \ref{lem:nl:bnds:f}
    and \ref{lem:sm:f:l2}.
\end{proof}

\begin{cor}
\label{cor:sm:r:i}
Suppose that \textnormal{(HNL)} and \textnormal{(HTw)} hold.
Pick an integer $0 \le j \le r$
and an integer  $k_* \le k \le  k_j$.
Then we have
\begin{equation}
\begin{array}{lcl}
    w \mapsto \mathcal{R}_{I}(\Phi_0 + w)
     &\in & C^j(H^{k+1};H^{k}).
\\[0.2cm]
\end{array}
\end{equation}
In  addition, there is a constant $K > 0$ so that
for any $w \in H^k$ and any tuple $(v_1, \ldots v_j) \in (H^k)^{j}$
we have the bound
\begin{equation}
\label{eq:nl:bnd:d:j:r:i}
\begin{array}{lcl}
    \norm{ D^j \mathcal{R}_{I}(\Phi_0 + w)[v_1, \ldots, v_j]}_{H^{k}}
    & \le & K (1 + ||w||_{H^k}^{k} ) \mathcal{P}^{(j)}_k[v]
+ K ||w||_{H^{k+1}}  \mathcal{P}^{(j)}_{k_*}[v]
\\[0.2cm]
& & \qquad
+ K \mathcal{P}^{(j)}_{k_*, k + 1} [v]. 
\\[0.2cm]
\end{array}
\end{equation}
\end{cor}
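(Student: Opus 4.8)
The plan is to exploit the explicit representation \eqref{eq:sm:reminder:r:i}, splitting $\mathcal{R}_{I}$ into the genuinely quadratic term $\mathcal{N}_f$ and the product term, and to read off both the smoothness and the estimate \eqref{eq:nl:bnd:d:j:r:i} via the Leibniz rule; throughout, $(v_1,\ldots,v_j)$ denote directions for the $j$-th Fr\'echet derivative. For the first summand $\mathcal{N}_f(w)=f(\Phi_0+w)-f(\Phi_0)-Df(\Phi_0)w$, the inclusion $w\mapsto\mathcal{N}_f(w)\in C^j(H^k;H^k)$ is immediate from Lemma \ref{lem:nl:bnds:f}, since subtracting the affine map $w\mapsto f(\Phi_0)+Df(\Phi_0)w$ preserves differentiability. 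For $j\ge 2$ one has $D^j\mathcal{N}_f=D^j f(\Phi_0+\cdot)$, so Lemma \ref{lem:nl:bnds:f} gives $\| D^j\mathcal{N}_f(w)[v_1,\ldots,v_j]\|_{H^k}\le K(1+\|w\|_{H^k}^{k})\mathcal{P}^{(j)}_k[v]$, which already lies inside the first group of terms in \eqref{eq:nl:bnd:d:j:r:i}; the cases $j\in\{0,1\}$ are covered, with room to spare, by Lemma \ref{lem:sm:deriv:n:f} together with the integral remainder $\mathcal{N}_f(w)=\int_0^1(1-t)D^2 f(\Phi_0+tw)[w,w]\,\mathrm dt$.

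Next I would treat the product term $-\Psi_1(w)\Psi_2(w)\Psi_3(w)$ with scalar factors $\Psi_1(w)=\chi_l(\Phi_0+w,0)$ and $\Psi_2(w)=\langle\mathcal{N}_f(w),\psi_{\rm tw}\rangle_{L^2}$ and $H^k$-valued factor $\Psi_3(w)=\partial_x[\Phi_0+w]=\Phi_0'+\partial_x w$. By Lemma \ref{lem:sm:cutoffs}, $\Psi_1\in C^\infty(L^2;\mathbb R)$, hence also as a map on $H^k$, with every derivative bounded by a constant times a product of $L^2$-norms of the directions, themselves dominated by $H^{k_*}$-norms. Since $\psi_{\rm tw}\in L^2(\mathcal D)$, combining Lemmas \ref{lem:sm:f:l2} and \ref{lem:sm:deriv:n:f} with Corollary \ref{cor:sm:diff:of:theta:map} (applied to $\Theta=f$ with $k=k_*$) shows $\Psi_2\in C^\infty(H^{k_*};\mathbb R)$, hence as a map on $H^k$, with $|D^i\Psi_2(w)[v]|$ controlled by a product of $H^{k_*}$-norms of the $v_\ell$ (uniformly in $w$ once at least one derivative hits $\Psi_2$, and with at worst a bounded polynomial prefactor in $\|w\|_{H^{k_*}}$ otherwise). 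Finally $\Psi_3$ is affine from $H^{k+1}$ into $H^k$: $D\Psi_3(w)[v]=\partial_x v$, $D^{\ge 2}\Psi_3=0$, $\|\Psi_3(w)\|_{H^k}\le\|\Phi_0'\|_{H^k}+\|w\|_{H^{k+1}}$ and $\|D\Psi_3(w)[v]\|_{H^k}\le\|v\|_{H^{k+1}}$, using $\Phi_0'\in H^{k_*+r+2}\subseteq H^{k+1}$. As scalar multiplication $\mathbb R\times\mathbb R\times H^k\to H^k$ is bounded trilinear, the composition rules give $w\mapsto\Psi_1\Psi_2\Psi_3\in C^j(H^{k+1};H^k)$, which together with the first summand yields the asserted smoothness.

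To extract \eqref{eq:nl:bnd:d:j:r:i}, I would apply the Leibniz rule: $D^j[\Psi_1\Psi_2\Psi_3](w)[v_1,\ldots,v_j]$ is a finite sum of terms $D^{|A_1|}\Psi_1(w)[v_{A_1}]\cdot D^{|A_2|}\Psi_2(w)[v_{A_2}]\cdot D^{|A_3|}\Psi_3(w)[v_{A_3}]$ over partitions $\{1,\ldots,j\}=A_1\sqcup A_2\sqcup A_3$; since $D^{\ge 2}\Psi_3=0$, only $|A_3|\in\{0,1\}$ occur, and the whole trichotomy in \eqref{eq:nl:bnd:d:j:r:i} is governed by the fate of the single admissible derivative on $\Psi_3$. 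When $|A_3|=0$ and $\Psi_3$ contributes its $\Phi_0'$-part, the two scalar factors supply a product of $H^{k_*}$-norms, so the term is $\lesssim\mathcal{P}^{(j)}_{k_*}[v]\le\mathcal{P}^{(j)}_k[v]$ and is absorbed into the first group; when $|A_3|=0$ and $\Psi_3$ contributes its $\partial_x w$-part, one obtains the second group $K\|w\|_{H^{k+1}}\mathcal{P}^{(j)}_{k_*}[v]$; and when $|A_3|=1$, say $A_3=\{j'\}$, the factor $\|v_{j'}\|_{H^{k+1}}$ times the remaining $H^{k_*}$-norms, summed over $j'$, gives the third group $K\mathcal{P}^{(j)}_{k_*,k+1}[v]$. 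Combining with the bound for $\mathcal{N}_f$ then produces \eqref{eq:nl:bnd:d:j:r:i}.

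The bulk of this is bookkeeping; the one genuine point of care is the single spatial derivative lost in the factor $\partial_x[\Phi_0+\cdot]$, which is precisely what forces the domain $H^{k+1}$ and must be tracked so that only the direction (or the base point) actually hit by $\partial_x$ is charged an extra order of regularity, while the cut-off and projection prefactors $\chi_l$ and $\langle\,\cdot\,,\psi_{\rm tw}\rangle$ are deliberately kept at the $L^2/H^{k_*}$ level so that they impose no further regularity demand. The other thing to verify is genuine $C^j$ Fr\'echet differentiability rather than merely the formal estimates, but this is delivered by Corollary \ref{cor:sm:diff:of:theta:map} together with the smoothness of bounded multilinear maps.
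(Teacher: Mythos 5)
Your proposal takes essentially the same route as the paper's one-line proof, made explicit: split $\mathcal{R}_I$ into the Nemytskii part $\mathcal{N}_f$ and the product term $-\chi_l\langle\mathcal{N}_f,\psi_{\rm tw}\rangle\,\partial_x[\Phi_0+\cdot]$, bound the former via Lemmas~\ref{lem:nl:bnds:f}/\ref{lem:sm:f:l2}/\ref{lem:sm:deriv:n:f}, and use the Leibniz rule for the latter while tracking where the single spatial derivative lands; the three cases you identify reproduce the three groups in \eqref{eq:nl:bnd:d:j:r:i} precisely, and the smoothness assertion follows from Lemma~\ref{lem:sm:cutoffs} and Corollary~\ref{cor:sm:diff:of:theta:map}. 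Two small caveats: the claim $\Psi_2\in C^\infty(H^{k_*};\mathbb R)$ is an overstatement (Corollary~\ref{cor:sm:diff:of:theta:map} only yields $C^{r+1}$, as $f$ is $C^{k_*+r+1}$ by (HNL)), though $C^j$ for $j\le r$ is all that is needed; and in the case where no derivative falls on $\Psi_2$ and $\Psi_3$ contributes $\partial_x w$, the factor $|\Psi_2(w)|=|\langle\mathcal{N}_f(w),\psi_{\rm tw}\rangle|$ is only bounded by $K\|w\|_{L^2}$ (not uniformly in $w$), so the Leibniz term is really $\lesssim \|w\|_{L^2}\|w\|_{H^{k+1}}\mathcal{P}_{k_*}^{(j)}[v]$ rather than $\|w\|_{H^{k+1}}\mathcal{P}_{k_*}^{(j)}[v]$ as in the second group. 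You should make that $w$-dependence explicit rather than asserting that the scalar factors contribute only $H^{k_*}$-norms of the directions; but since the paper's own stated bound carries the same omission and every application of the corollary has $w=0$ or $\|w\|$ controlled by a stopping time, this does not affect the downstream results.
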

\begin{proof}
Upon inspecting \eqref{eq:sm:reminder:r:i}, these statements follow
readily from Lemmas \ref{lem:nl:bnds:f} and \ref{lem:sm:f:l2}.
\end{proof}

\begin{cor}
Suppose that \textnormal{(HNL)} and \textnormal{(HTw)} hold.
Then there exists $K > 0$ so that for any
$w \in H^{k_*+1}$ and $v \in H^{k_*+1}$
we have the bound
\begin{equation}
\label{eq:nl:bnd:d:r:i:compact}
\begin{array}{lcl}
    \norm{ D \widetilde{\mathcal{R}}_{I}(\Phi_0 + w)[v]}_{H^{k_*}}
    & \le & K \big(1 + ||w||_{H^{k_*}}^{k_*} \big) \big( ||w||_{H^{k_*}}  ||v||_{H^{k_*+1}} + ||w||_{H^{k_*+1}} ||v||_{H^{k_*}} \big) .
\\[0.2cm]
\end{array}
\end{equation}
\end{cor}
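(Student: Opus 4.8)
The plan is to deduce the estimate from the second-order bound \eqref{eq:nl:bnd:d:j:r:i} already established for $\mathcal{R}_I$, exploiting that the map in question vanishes to second order at the wave. Concretely, $w \mapsto \widetilde{\mathcal{R}}_I(\Phi_0 + w)$ is the map $w \mapsto \mathcal{R}_I(\Phi_0 + w)$ treated in Corollary~\ref{cor:sm:r:i}; since $r \ge 3$, that corollary gives $\widetilde{\mathcal{R}}_I(\Phi_0 + \cdot) \in C^2(H^{k_*+1}; H^{k_*})$. Recalling from the discussion after \eqref{eq:mr:inc} that $\mathcal{R}_I(0) = D\mathcal{R}_I(0) = 0$, the first derivative $D\widetilde{\mathcal{R}}_I(\Phi_0)$ vanishes, so I would write a first-order Taylor expansion with integral remainder around $w = 0$, which expresses $D\widetilde{\mathcal{R}}_I(\Phi_0 + w)[v]$ as $\int_0^1 D^2 \widetilde{\mathcal{R}}_I(\Phi_0 + tw)[w,v]\,\mathrm{d}t$.

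Next I would apply \eqref{eq:nl:bnd:d:j:r:i} with $j = 2$ and $k = k_*$ to the integrand, at the base point $tw$ and in the directions $w$ and $v$. Here $\mathcal{P}^{(2)}_{k_*}[w,v] = \norm{w}_{H^{k_*}}\norm{v}_{H^{k_*}}$ and $\mathcal{P}^{(2)}_{k_*,k_*+1}[w,v] = \norm{w}_{H^{k_*+1}}\norm{v}_{H^{k_*}} + \norm{w}_{H^{k_*}}\norm{v}_{H^{k_*+1}}$, and using $\norm{tw}_{H^{k_*}} \le \norm{w}_{H^{k_*}}$, $\norm{tw}_{H^{k_*+1}} \le \norm{w}_{H^{k_*+1}}$ and carrying out the trivial $t$-integral, this produces exactly three contributions, namely $K(1 + \norm{w}_{H^{k_*}}^{k_*})\norm{w}_{H^{k_*}}\norm{v}_{H^{k_*}}$, $K\norm{w}_{H^{k_*+1}}\norm{w}_{H^{k_*}}\norm{v}_{H^{k_*}}$ and $K\big(\norm{w}_{H^{k_*+1}}\norm{v}_{H^{k_*}} + \norm{w}_{H^{k_*}}\norm{v}_{H^{k_*+1}}\big)$. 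The final step is bookkeeping: for the first two contributions I use $\norm{w}_{H^{k_*}} \le 1 + \norm{w}_{H^{k_*}}^{k_*}$ (valid since $k_* \ge 1$) together with $\norm{v}_{H^{k_*}} \le \norm{v}_{H^{k_*+1}}$, while the third is already of the required shape because the prefactor $1 + \norm{w}_{H^{k_*}}^{k_*} \ge 1$. Summing and relabelling $K$ then yields \eqref{eq:nl:bnd:d:r:i:compact}.

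The only point that requires care — and the reason I would route the argument through the second derivative rather than differentiating the representation \eqref{eq:sm:reminder:r:i} directly — is the accounting of powers of $\norm{w}$ against the single lost spatial derivative (which sits on the $\partial_x v$ and $\partial_x[\Phi_0 + w]$ factors as well as on the Nemytskii terms via Lemma~\ref{lem:sm:hkp1:prep}). A direct differentiation of \eqref{eq:sm:reminder:r:i} produces a term of the form $\big(D\chi_l(\Phi_0 + w,0)[v]\big)\,\langle \mathcal{N}_f(w), \psi_{\rm tw}\rangle\,\partial_x w$, and a crude estimate $|\langle \mathcal{N}_f(w),\psi_{\rm tw}\rangle| \lesssim \norm{w}_{H^{k_*}}^2$ would leave an $\norm{w}_{H^{k_*}}^3$-type contribution that is not cleanly absorbed by the prefactor $1 + \norm{w}_{H^{k_*}}^{k_*}$ when $k_* = 1$. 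Going through Corollary~\ref{cor:sm:r:i} sidesteps this, since the extra power of $w$ coming from the Taylor direction is attached to a directional slot of $\mathcal{P}^{(2)}$ rather than multiplying the full polynomial in $\norm{w}_{H^{k_*}}$; so the whole argument requires no new analytic input beyond Corollary~\ref{cor:sm:r:i}.
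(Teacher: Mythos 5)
Your proposal is correct, and it takes a genuinely different route from the paper's own proof. The paper proceeds by differentiating the explicit representation \eqref{eq:sm:reminder:r:i} term by term and appealing to the quadratic estimate $\norm{\mathcal{N}_f(w)}_{L^2}\le K\norm{w}_{H^{k_*}}^2$ together with the bounds on $D\mathcal{N}_f$ from Lemma \ref{lem:sm:deriv:n:f}, arriving directly at an intermediate three-term estimate that it then absorbs into \eqref{eq:nl:bnd:d:r:i:compact}. You instead exploit the identity $D\mathcal{R}_I(0)=0$ to write $D\mathcal{R}_I(\Phi_0+w)[v]=\int_0^1 D^2\mathcal{R}_I(\Phi_0+tw)[w,v]\,\mathrm{d}t$ and feed the integrand through the already-established $j=2$ bound \eqref{eq:nl:bnd:d:j:r:i} of Corollary \ref{cor:sm:r:i}; the absorption steps you carry out at the end (using $\norm{v}_{H^{k_*}}\le\norm{v}_{H^{k_*+1}}$ and $\norm{w}_{H^{k_*}}\le 1+\norm{w}_{H^{k_*}}^{k_*}$) are all valid, so the argument closes. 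What your route buys is uniformity and transparency: the extra factor of $w$ coming from the vanishing of $D\mathcal{R}_I(0)$ is placed into a directional slot of $\mathcal{P}^{(2)}$ where it is benign, rather than multiplying the polynomial prefactor. Your concluding remark about the direct route is also well-taken: differentiating $\chi_l$ against $\langle \mathcal{N}_f(w),\psi_{\rm tw}\rangle\,\partial_x w$ does produce a contribution of the order $\norm{w}_{H^{k_*}}^2\norm{w}_{H^{k_*+1}}\norm{v}_{L^2}$, which the paper's displayed intermediate estimate does not list and which would only be cleanly absorbed by $(1+\norm{w}_{H^{k_*}}^{k_*})\norm{w}_{H^{k_*+1}}\norm{v}_{H^{k_*}}$ when $k_*\ge 2$; your proof via the $D^2$ bound confirms the claimed estimate without having to worry about that marginal case. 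In short, the result and the ingredient lemmas are the same, but the decomposition you use (Taylor-with-integral-remainder around $w=0$) is genuinely different from and arguably cleaner than the paper's direct differentiation of \eqref{eq:sm:reminder:r:i}.
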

\begin{proof}
As a consequence of
Lemma \ref{lem:sm:deriv:n:f} and the identity $\mathcal{N}_f(0) = 0$,
we note first that
\begin{equation}
    \norm{\mathcal{N}_f(w)}_{L^2} \le K \norm{w}_{H^{k_*}}^2  .
\end{equation}
In particular, a direct inspection of
\eqref{eq:sm:reminder:r:i} using the bounds in
Lemma \ref{lem:sm:deriv:n:f} leads to the
estimate
\begin{equation}
\begin{array}{lcl}
    \norm{ D \widetilde{\mathcal{R}}_{I}(\Phi_0 + w, 0)[v]}_{H^{k_*}}
    & \le & K (1 + ||w||_{H^{k_*}}^{k_*})||w||_{H^{k_*}}  ||v||_{H^{k_*}}
\\[0.2cm]
& & \qquad
+ K  ||w||_{H^{k_*}}^2 ||v||_{H^{k_*+1}}
+ K ||w||_{H^{k_*}} ||v||_{H^{k_*}} (1 + ||w||_{H^{k_*+1}}),
\end{array}
\end{equation}
which can be absorbed by \eqref{eq:nl:bnd:d:r:i:compact}.
\end{proof}

\subsection{Bounds for \texorpdfstring{$\mathcal{R}_{II}$}{RII}}

We now turn to the nonlinearity
\begin{equation}
\label{eq:sm:repr:r:ii}
    \mathcal{R}_{II}(v)
     =  \partial_x \mathcal{K}_C(\Phi_0 +v , 0)
    + \chi_l (\Phi_0 + v) \langle  \mathcal{K}_C(\Phi_0 +v , 0), \psi_{\mathrm{tw}}' \rangle \partial_x [\Phi_0 + v],
\end{equation}
in which $\mathcal{K}_C$ is given by
\begin{equation}
\label{eq:sm:def:k:c}
    \mathcal{K}_C(u, \gamma) = \chi_h(u, \gamma) g(u) \widetilde{\mathcal{K}}_C(u,\gamma);
\end{equation}
see \eqref{eq:list:def:r:i:i:ups}
and \eqref{eq:list:def:wt:k:c}.
The key towards
obtaining $H^{k+1}$-bounds for this combination is to use the splitting
    \begin{equation}
    \label{eq:sm:splt:k:c}
     \partial^\beta  \mathcal{K}(u) = \chi_h(u) [\partial^\beta g(u)] [ \widetilde{\mathcal{K}}_C(u) ]
     + [\chi_h(u) g(u) ] [ \partial^\beta \widetilde{\mathcal{K}}_C(u) ]
    \end{equation}
for any multi-index $\beta \in \mathbb Z^{d}_{\ge 0}$ with $|\beta|=1$.
Indeed, one can consider $g(u)$ and $\partial^\beta g(u)$ as elements in $HS(L^2_Q;H^k)$ via the bound \eqref{eq:mr:hs:bnd:z},
while both $\widetilde{\mathcal{K}}_C(u)$ and
$ \partial^\beta \widetilde{\mathcal{K}}_C(u) $ can be considered
as elements in $L^2_Q$ in view of Lemma \ref{lem:sm:b:nu:kct}.

\begin{lem}
\label{lem:sm:k:c:h:kp1}
Suppose that \textnormal{(HNL)}, \textnormal{(HTw)} and \textnormal{(Hq)} hold.
Pick 
an integer $0 \le j \le r$
and an integer  $k_* \le k \le  k_j$.
Then we have
\begin{equation}
    w \mapsto \mathcal{K}_C(\Phi_0 + w, 0) \in C^j(H^{k+1};H^{k+1}).
\end{equation}
In  addition, there is a constant $K > 0$ so that
for any $w \in H^{k+1}$ and any tuple
$(v_1, \ldots v_j) \in (H^{k+1})^{j}$
we have the bounds
\begin{equation}
\begin{array}{lcl}
    \norm{ D^j \mathcal{K}_C(\Phi_0 + w, 0)[v_1, \ldots, v_j]}_{H^{k+1}}
    & \le & K (1 + ||w||_{H^k}^{k}||w||_{H^{k+1}} ) \mathcal{P}^{(j)}_k[v] 
\\[0.2cm]
& & \qquad
+ K (1 + ||w||_{H^k}^{k} ) \mathcal{P}^{(j)}_{k,k+1}[v] ,
\\[0.2cm]
    \norm{ D^j \mathcal{K}_C(\Phi_0 + w, 0)[v_1, \ldots, v_j]}_{L^2}
    & \le & K   \mathcal{P}^{(j)}_{k_*}[v].
\end{array}
\end{equation}
\end{lem}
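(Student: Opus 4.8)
The plan is to combine the factorization \eqref{eq:sm:def:k:c}, namely $\mathcal K_C(u,0)=\chi_h(u,0)\,g(u)\,\widetilde{\mathcal K}_C(u,0)$, with the pointwise product rule \eqref{eq:sm:splt:k:c}. The structural fact that makes \eqref{eq:sm:splt:k:c} hold — and that drives the whole argument — is that $\chi_h(u,\gamma)$ depends on $u$ only through the scalar $\|u-T_\gamma\Phi_0\|_{L^2}$, so it is \emph{spatially constant} and hence $\partial^\beta\chi_h(u)=0$ for every multi-index $\beta$ with $|\beta|=1$. I would view $w\mapsto\mathcal K_C(\Phi_0+w,0)$ as the composition of the three building-block maps $w\mapsto\chi_h(\Phi_0+w,0)$, $w\mapsto g(\Phi_0+w)$ and $w\mapsto\widetilde{\mathcal K}_C(\Phi_0+w,0)$ with the bounded trilinear map $(\lambda,T,\xi)\mapsto\lambda\,T\xi$ from $\mathbb R\times HS(L^2_Q;X)\times L^2_Q$ into $X$, valid for any Banach space $X$ (here $X=L^2$, $H^k$ or $H^{k+1}$); likewise each summand of \eqref{eq:sm:splt:k:c} is such a trilinear image, with $g$ replaced by $\partial^\beta g$ in the first term and $\widetilde{\mathcal K}_C$ by $\partial^\beta\widetilde{\mathcal K}_C$ in the second.

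The three building blocks are handled by the earlier lemmas. Lemma \ref{lem:sm:cutoffs} gives $w\mapsto\chi_h(\Phi_0+w,0)\in C^\infty(L^2;\mathbb R)$, hence $C^\infty(H^{k_*};\mathbb R)$, with all relevant derivatives bounded by $K\,\mathcal P^{(\cdot)}_{k_*}[v]$. Lemma \ref{lem:sm:bnds:on:g} together with Corollary \ref{cor:sm:diff:of:theta:map} and the Hilbert--Schmidt embedding \eqref{eq:mr:hs:bnd:z} gives that $w\mapsto g(\Phi_0+w)$ is $C^j$ from $H^{k+1}$ into $HS(L^2_Q;H^{k+1})$ and into $HS(L^2_Q;H^k)$, while $w\mapsto\partial^\beta g(\Phi_0+w)$ is $C^j$ from $H^{k+1}$ into $HS(L^2_Q;H^k)$; the derivative bounds are read off from the $H^k$- and $H^{k+1}$-estimates in Lemma \ref{lem:sm:bnds:on:g} using the identity $D^\ell\big[\partial^\beta g(\Phi_0+w)\big][v_1,\dots,v_\ell]=\partial^\beta\big(D^\ell g(\Phi_0+w)[v_1,\dots,v_\ell]\big)$. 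Finally Lemma \ref{lem:sm:b:nu:kct} gives that $w\mapsto\widetilde{\mathcal K}_C(\Phi_0+w,0)$ and $w\mapsto\partial^\beta\widetilde{\mathcal K}_C(\Phi_0+w,0)$ are $C^r$ from $H^{k_*}$ into $L^2_Q$, with derivatives bounded by $K\,\mathcal P^{(\cdot)}_{k_*}[v]$ and $K[1+\|w\|_{H^{k_*}}]\,\mathcal P^{(\cdot)}_{k_*}[v]$ respectively. Composing with the trilinear multiplication maps immediately yields the claimed smoothness $w\mapsto\mathcal K_C(\Phi_0+w,0)\in C^j(H^{k+1};H^{k+1})$, and \emph{a fortiori} $C^j$ into $L^2$.

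For the bounds I would argue in two steps. \emph{(i) The $L^2$ estimate.} Expanding $D^j\mathcal K_C(\Phi_0+w,0)[v_1,\dots,v_j]$ by the Leibniz rule produces a finite sum of terms $\big(D^a\chi_h[\cdot]\big)\big(D^bg[\cdot]\big)\big(D^c\widetilde{\mathcal K}_C[\cdot]\big)$ with $a+b+c=j$ and the arguments $v_i$ distributed over the three factors; bounding the factors in $\mathbb R$, $HS(L^2_Q;L^2)$ and $L^2_Q$ by Lemmas \ref{lem:sm:cutoffs}, \ref{lem:sm:g:l2:h1} (its $L^2$-bound on $D^bg$, together with $\|\chi_h\,g\|_{L^2}\le K$ when $b=0$) and \ref{lem:sm:b:nu:kct}, and multiplying, yields $\|D^j\mathcal K_C(\Phi_0+w,0)[v_1,\dots,v_j]\|_{L^2}\le K\,\mathcal P^{(j)}_{k_*}[v]$. \emph{(ii) The $H^{k+1}$ estimate.} Use the elementary equivalence $\|h\|_{H^{k+1}}^2\le K\big(\|h\|_{L^2}^2+\sum_{|\beta|=1}\|\partial^\beta h\|_{H^k}^2\big)$; the $L^2$-part is (i) (and $\mathcal P^{(j)}_{k_*}[v]\le\mathcal P^{(j)}_k[v]$ since $k\ge k_*$), and for each $|\beta|=1$ expand $\partial^\beta\mathcal K_C(\Phi_0+w,0)$ by \eqref{eq:sm:splt:k:c} into the two trilinear images above and differentiate each $j$ times. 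In the first summand $\|D^b(\partial^\beta g(\Phi_0+w))[\cdot]\|_{HS(L^2_Q;H^k)}\le K\|D^bg(\Phi_0+w)[\cdot]\|_{H^{k+1}}$ is bounded by the $H^{k+1}$-estimate of Lemma \ref{lem:sm:bnds:on:g}, contributing the $(1+\|w\|_{H^k}^k\|w\|_{H^{k+1}})\mathcal P^{(b)}_k$ and $(1+\|w\|_{H^k}^k)\mathcal P^{(b)}_{k,k+1}$ pieces, while the $\chi_h$- and $\widetilde{\mathcal K}_C$-factors contribute only $K\,\mathcal P^{(\cdot)}_{k_*}$. In the second summand $\|D^\ell(\chi_h(\Phi_0+w)g(\Phi_0+w))[\cdot]\|_{HS(L^2_Q;H^k)}\le K(1+\|w\|_{H^k}^k)\mathcal P^{(\ell)}_k$ follows from the $H^k$-estimate of Lemma \ref{lem:sm:bnds:on:g} and the boundedness of the derivatives of $\chi_h$, and $\|D^c(\partial^\beta\widetilde{\mathcal K}_C(\Phi_0+w,0))[\cdot]\|_{L^2_Q}\le K[1+\|w\|_{H^{k_*}}]\mathcal P^{(c)}_{k_*}$ from Lemma \ref{lem:sm:b:nu:kct}; the extra factor $[1+\|w\|_{H^{k_*}}]$ is absorbed via $\|w\|_{H^{k_*}}\le\|w\|_{H^k}\le\|w\|_{H^{k+1}}$, since $(1+\|w\|_{H^k}^k)(1+\|w\|_{H^{k_*}})\le 2(1+\|w\|_{H^k}^k\|w\|_{H^{k+1}})$. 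Collecting everything and using $\mathcal P^{(b)}_k[\cdot]\,\mathcal P^{(c)}_{k_*}[\cdot]\le\mathcal P^{(j)}_k[v]$ and $\mathcal P^{(b)}_{k,k+1}[\cdot]\,\mathcal P^{(c)}_{k_*}[\cdot]\le\mathcal P^{(j)}_{k,k+1}[v]$ produces the stated bound.

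The main obstacle is the bookkeeping that confines the top-order $H^{k+1}$-norm to a single factor: this is exactly what \eqref{eq:sm:splt:k:c} secures, by routing one spatial derivative either onto $g$ — where Lemma \ref{lem:sm:bnds:on:g} separates off the $\mathcal P^{(j)}_{k,k+1}$ term — or onto $\widetilde{\mathcal K}_C$, which stays an $L^2_Q$-object and costs only the mild factor $1+\|w\|_{H^{k_*}}$, the scalar $\chi_h$ being harmless because of its vanishing spatial derivative and uniformly bounded derivatives. The only other point needing care is upgrading ``bounded'' to ``$C^j$'' for the Nemytskii pieces $w\mapsto g(\Phi_0+w)$ and $w\mapsto\partial^\beta g(\Phi_0+w)$ at each regularity level, which is done by Corollary \ref{cor:sm:diff:of:theta:map} and then composing with the bounded (multi)linear maps above and \eqref{eq:mr:hs:bnd:z}.
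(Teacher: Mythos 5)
Your proof is correct and takes essentially the same route as the paper: the factorization \eqref{eq:sm:def:k:c}, the exact splitting \eqref{eq:sm:splt:k:c} (which, as you rightly emphasise, holds because $\chi_h$ is spatially constant), the Hilbert--Schmidt embedding \eqref{eq:mr:hs:bnd:z}, and the building-block estimates from Lemmas \ref{lem:sm:cutoffs}, \ref{lem:sm:g:l2:h1} and \ref{lem:sm:b:nu:kct}. One small point worth noting: for the $H^{k+1}$ estimate you correctly invoke Lemma \ref{lem:sm:bnds:on:g} to control $D^b g$ in $H^k$ and $H^{k+1}$, whereas the paper's terse proof cites only Lemma \ref{lem:sm:g:l2:h1} (which gives only $L^2$ and $H^1$ bounds) and appears to omit that reference -- so your version actually patches a minor citation gap; the constant $2$ in your elementary bound $(1+\|w\|_{H^k}^{k})(1+\|w\|_{H^{k_*}}) \le 2(1+\|w\|_{H^k}^{k}\|w\|_{H^{k+1}})$ should be slightly larger, but this is of course absorbed into $K$.
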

\begin{proof}
These bounds follow from the definition
\eqref{eq:sm:def:k:c}
and the splitting \eqref{eq:sm:splt:k:c},
using \eqref{eq:mr:hs:bnd:z} together with Lemmas
\ref{lem:sm:cutoffs},
    \ref{lem:sm:g:l2:h1} and
    \ref{lem:sm:b:nu:kct}.
Note in particular that the $L^2$-bound requires the
uniform bound on the product $\chi_h(\Phi_0 + w, 0) g(\Phi_0 + w)$
provided in \eqref{eq:sm:g:l2:h1:bnd}.
\end{proof}

\begin{cor}
\label{cor:sm:r:ii}
Suppose that \textnormal{(HNL)}, \textnormal{(HTw)} and \textnormal{(Hq)} hold.
Pick 
an integer
$0 \le j \le r$ and an integer $k_* \le k \le k_j$.
Then we have
\begin{equation}
\begin{array}{lcl}
w \mapsto \widetilde{\mathcal{R}}_{II}(\Phi_0 + w)
     &\in & C^j(H^{k+1};H^{k}) .
\\[0.2cm]
\end{array}
\end{equation}
In  addition, there is a constant $K > 0$ so that
for any $w \in H^{k+1}$ and any tuple $(v_1, \ldots v_j) \in (H^{k+1})^j$
we have the bound
\begin{equation}
\label{eq:nl:bnd:d:j:r:ii}
\begin{array}{lcl}
    \norm{ D^j \widetilde{\mathcal{R}}_{II}(\Phi_0 + w, 0)[v_1, \ldots, v_j]}_{H^{k}}
    & \le & K (1 + ||w||_{H^k}^{k}  ||w||_{H^{k+1}} )
      \mathcal{P}^{(j)}_k[v]
\\[0.2cm]
& & \qquad + ||w||_{H^{k+1}}       \mathcal{P}^{(j)}_{k_*}[v]
+ K (1 + ||w||_{H^k}^{k} ) \mathcal{P}^{(j)}_{k,k+1}[v].  
\end{array}
\end{equation}
\end{cor}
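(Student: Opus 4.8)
The plan is to work from the explicit representation \eqref{eq:sm:repr:r:ii}, which expresses the map under consideration as the sum of a `top-order' term $w\mapsto\partial_x\mathcal{K}_C(\Phi_0+w,0)$ and a `projection' term
\[
    w\ \longmapsto\ \chi_l(\Phi_0+w,0)\,\big\langle \mathcal{K}_C(\Phi_0+w,0),\psi_{\rm tw}'\big\rangle_{L^2}\,\partial_x[\Phi_0+w].
\]
I would bound the contributions of these two terms separately and then add the resulting estimates.

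For the first term I would simply observe that $\partial_x$ is a bounded linear operator from $H^{k+1}$ into $H^k$. Since Lemma \ref{lem:sm:k:c:h:kp1} already gives $w\mapsto\mathcal{K}_C(\Phi_0+w,0)\in C^j(H^{k+1};H^{k+1})$, composing with $\partial_x$ yields $w\mapsto\partial_x\mathcal{K}_C(\Phi_0+w,0)\in C^j(H^{k+1};H^k)$ with $D^j[\partial_x\mathcal{K}_C(\Phi_0+w,0)][v_1,\ldots,v_j]=\partial_x D^j\mathcal{K}_C(\Phi_0+w,0)[v_1,\ldots,v_j]$, and the $H^{k+1}$-bound of Lemma \ref{lem:sm:k:c:h:kp1} passes through $\partial_x$ verbatim to produce precisely the first and third terms on the right-hand side of \eqref{eq:nl:bnd:d:j:r:ii}.

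For the projection term I would view it as a product of two scalar-valued factors, $A(w)=\chi_l(\Phi_0+w,0)$ and $B(w)=\langle\mathcal{K}_C(\Phi_0+w,0),\psi_{\rm tw}'\rangle_{L^2}$, together with one $H^k$-valued factor $C(w)=\Phi_0'+\partial_x w$. Lemma \ref{lem:sm:cutoffs} gives $A\in C^\infty(L^2;\mathbb R)$ with $|D^jA(\Phi_0+w)[v_1,\ldots,v_j]|\le K\,\mathcal{P}^{(j)}_{k_*}[v]$; the $L^2$-bound in Lemma \ref{lem:sm:k:c:h:kp1} combined with Cauchy--Schwarz against the (exponentially localized) function $\psi_{\rm tw}'\in L^2$ gives $B\in C^j(H^{k+1};\mathbb R)$ with a bound of the same type; and $C$ is affine in $w$, satisfying $\norm{C(w)}_{H^k}\le K(1+\norm{w}_{H^{k+1}})$, $DC(w)[v]=\partial_x v$ and $D^\ell C=0$ for $\ell\ge 2$. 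Applying the Leibniz rule to $D^j[ABC]$, at most one derivative can land on $C$. In the terms where $C$ carries no derivative I bound by $\norm{C(w)}_{H^k}\,K\,\mathcal{P}^{(j)}_{k_*}[v]\le K(1+\norm{w}_{H^{k+1}})\,\mathcal{P}^{(j)}_{k_*}[v]$, which is absorbed into the first two terms of \eqref{eq:nl:bnd:d:j:r:ii} (using $\mathcal{P}^{(j)}_{k_*}\le\mathcal{P}^{(j)}_{k}$); in the terms where $C$ carries the derivative in the direction $v_{j'}$ I bound by $\norm{\partial_x v_{j'}}_{H^k}\,K\prod_{i\ne j'}\norm{v_i}_{H^{k_*}}\le K\,\norm{v_{j'}}_{H^{k+1}}\prod_{i\ne j'}\norm{v_i}_{H^{k_*}}$, and summing over $j'$ gives $K\,\mathcal{P}^{(j)}_{k_*,k+1}[v]\le K\,\mathcal{P}^{(j)}_{k,k+1}[v]$, which is absorbed into the third term of \eqref{eq:nl:bnd:d:j:r:ii}.

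The smoothness assertion $w\mapsto\widetilde{\mathcal{R}}_{II}(\Phi_0+w)\in C^j(H^{k+1};H^k)$ then follows by combining the established smoothness of the three factors with the boundedness of the pointwise multiplication $H^k\times H^k\to H^k$ and of the pairing against $\psi_{\rm tw}'$, exactly as in the proof of Corollary \ref{cor:sm:r:i}. I do not expect any step to be a genuine obstacle; the only point requiring care is to keep the scalar factors $A,B$ separate from the $H^k$-valued factor $C$ in the Leibniz expansion, so that the single highest-order derivative $\partial_x$---falling either on $w$ through $C$ or on a test direction $v_{j'}$---is correctly accounted for by the mixed-norm quantity $\mathcal{P}^{(j)}_{k,k+1}[v]$ and by the single power $\norm{w}_{H^{k+1}}$ that appear in \eqref{eq:nl:bnd:d:j:r:ii}.
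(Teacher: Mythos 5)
Your proof is correct and follows exactly the route the paper has in mind: split $\mathcal{R}_{II}$ via \eqref{eq:sm:repr:r:ii} into $\partial_x\mathcal{K}_C$ (handled by the $H^{k+1}$-bound of Lemma \ref{lem:sm:k:c:h:kp1} and the continuity of $\partial_x\colon H^{k+1}\to H^k$) and the scalar projection term (handled by Lemma \ref{lem:sm:cutoffs}, the $L^2$-bound of Lemma \ref{lem:sm:k:c:h:kp1}, and Leibniz applied to the product of the two scalars with the $H^k$-valued factor $\partial_x(\Phi_0+w)$). The paper simply states ``this follows from Lemma \ref{lem:sm:k:c:h:kp1} and inspection of the product structure in \eqref{eq:sm:repr:r:ii}''; your write-up is a faithful unpacking of that same argument.
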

\begin{proof}
    This follows from Lemma \ref{lem:sm:k:c:h:kp1}
    and inspection of the product structure
    in \eqref{eq:sm:repr:r:ii}.
\end{proof}

\subsection{Bounds for \texorpdfstring{$\Upsilon$}{Y}}

In this part we consider the nonlinearity
\begin{equation}
\Upsilon = \Upsilon_I + \Upsilon_{II}
\end{equation}
defined in \eqref{eq:list:def:ups:i:ii}
and \eqref{eq:list:def:ups}.
In particular, we have
\begin{equation}
\label{eq:sm:def:ups:i}
        \Upsilon_I(v)  = \widetilde{\nu}(\Phi_0 + v, 0) \partial^2_x [\Phi_0 +v],
\end{equation}
while $\Upsilon_{II}$ can be written in the form
\begin{equation}
\label{eq:sm:def:ups:ii}
    \Upsilon_{II}(v) =   -\chi_l (\Phi_0 + v, 0) \tilde{\nu}(\Phi_0 + v, 0)\langle \Phi_0 + v, \psi_{\mathrm{tw}}'' \rangle_{L^2} \partial_x [\Phi_0 + v] .
\end{equation}

\begin{cor}
\label{cor:sm:bnd:on:ups:i}
Suppose that \textnormal{(HNL)} and \textnormal{(HTw)} hold.
Pick 
an integer $k_* \le k \le k_* + r$.
Then we have
\begin{equation}
  w \mapsto   \Upsilon_I(w)  \in C^r( H^{k+2} ; H^k ) .
\end{equation}
In addition, there exists $K >0$ so that for all $0 \le j \le r$,
any $w \in H^{k+2}$ and any tuple $(v_1, \ldots, v_j) \in (H^{k+2} )^j$
we have the bounds
\begin{equation}
\begin{array}{lcl}
    \norm{ D^j \Upsilon_I( w)
    [v_1, \ldots v_j] }_{H^k}
    & \le &
    K \big[ 1 + \norm{w}_{H^{k+2}}]  \mathcal{P}^{(j)}_{k_*}[v]
     + K \mathcal{P}^{(j)}_{k_*,k+2}[v] ,
    \\[0.2cm]
    \abs{ \langle  D^j \Upsilon_I( w)
    [v_1, \ldots v_j] , \psi_{\rm tw} \rangle_{L^2} }
    & \le &
       K \big[ 1 + \norm{w}_{L^2}] \mathcal{P}^{(j)}_{k_*}[v] .
    \\[0.2cm]
\end{array}
\end{equation}
\end{cor}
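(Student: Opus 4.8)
The plan is to exploit the product structure in \eqref{eq:sm:def:ups:i}, writing $\Upsilon_I(w) = \widetilde\nu(\Phi_0+w,0)\,\partial^2_x[\Phi_0+w]$ as the product of the scalar-valued map $w\mapsto\widetilde\nu(\Phi_0+w,0)$, which is $C^r(H^{k_*};\mathbb R)$ with the derivative bound recorded in Lemma \ref{lem:sm:b:nu:kct}, and the \emph{affine} bounded map $w\mapsto\partial^2_x\Phi_0+\partial^2_x w$ from $H^{k+2}$ into $H^k$. Here I use that $\Phi_0'\in H^{k_*+r+2}$ (from the discussion following (HTw)), so that $\partial^2_x\Phi_0=\Phi_0''\in H^{k_*+r+1}$ and hence $\|\partial^2_x\Phi_0\|_{H^k}\le K$ for every $k\le k_*+r$; since $H^{k+2}$ embeds continuously in $H^{k_*}$, the product of the two maps is $C^r$ from $H^{k+2}$ into $H^k$, which is the claimed smoothness.

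Because the affine factor has vanishing derivatives of order $\ge 2$, the Leibniz rule collapses to two groups of terms: for $1\le j\le r$,
\[
 D^j\Upsilon_I(w)[v_1,\ldots,v_j] = D^j\widetilde\nu(\Phi_0+w,0)[v_1,\ldots,v_j]\,\partial^2_x[\Phi_0+w] + \sum_{\ell=1}^{j} D^{j-1}\widetilde\nu(\Phi_0+w,0)\big[(v_i)_{i\ne\ell}\big]\,\partial^2_x v_\ell ,
\]
with the obvious reading for $j=0$. For the $H^k$-bound I then note that $D^j\widetilde\nu$ is scalar, so the first term has $H^k$-norm at most $|D^j\widetilde\nu(\Phi_0+w,0)[v]|\big(\|\partial^2_x\Phi_0\|_{H^k}+\|w\|_{H^{k+2}}\big)$, which is $\le K(1+\|w\|_{H^{k+2}})\,\mathcal{P}^{(j)}_{k_*}[v]$ by \eqref{eq:nl:est:deriv:b:nut:wtkc}; the $\ell$-th summand of the second term is at most $|D^{j-1}\widetilde\nu(\ldots)|\,\|v_\ell\|_{H^{k+2}}\le K\|v_\ell\|_{H^{k+2}}\prod_{i\ne\ell}\|v_i\|_{H^{k_*}}$, and summing over $\ell$ gives exactly $K\,\mathcal{P}^{(j)}_{k_*,k+2}[v]$. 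Adding the two contributions yields the first displayed inequality.

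For the inner product estimate the key step is to integrate by parts twice in the $x$-variable, using that $\psi_{\rm tw}$ (extended constantly in $x_\perp$) is smooth with exponentially decaying derivatives — a standard bootstrap on the ODE $\mathcal{L}^{\rm adj}_{\rm tw}\psi_{\rm tw}=0$ using the smoothness of $f$ — so that $\partial^2_x\psi_{\rm tw}\in L^2$ with $\|\partial^2_x\psi_{\rm tw}\|_{L^2}\le K$. This turns $\langle\partial^2_x v_\ell,\psi_{\rm tw}\rangle_{L^2}$ into $\langle v_\ell,\partial^2_x\psi_{\rm tw}\rangle_{L^2}$, bounded by $K\|v_\ell\|_{L^2}\le K\|v_\ell\|_{H^{k_*}}$, and likewise $\langle\partial^2_x[\Phi_0+w],\psi_{\rm tw}\rangle_{L^2}=\langle\partial^2_x\Phi_0,\psi_{\rm tw}\rangle_{L^2}+\langle w,\partial^2_x\psi_{\rm tw}\rangle_{L^2}$ is bounded by $K(1+\|w\|_{L^2})$. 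Pairing these with the scalar factors from the Leibniz formula and using $\sum_{\ell}\|v_\ell\|_{L^2}\prod_{i\ne\ell}\|v_i\|_{H^{k_*}}\le j\,\mathcal{P}^{(j)}_{k_*}[v]$, the whole expression is bounded by $K(1+\|w\|_{L^2})\,\mathcal{P}^{(j)}_{k_*}[v]$, as required.

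There is no real obstacle here; the argument is bookkeeping with the Leibniz rule and the estimates of Lemma \ref{lem:sm:b:nu:kct}. The only points needing a little care are verifying that $\partial^2_x\Phi_0$ and $\partial^2_x\psi_{\rm tw}$ lie in the spaces used above — which follows from the regularity and exponential-decay statements gathered after (HTw) — and the double integration by parts, which is precisely what lets the $\psi_{\rm tw}$-paired estimates be phrased through $\|\cdot\|_{L^2}$ rather than $\|\cdot\|_{H^2}$; this refinement is what will be needed when the bound is later applied to the projection $P$ onto $\Phi_0'$.
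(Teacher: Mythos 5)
Your proof is correct and fills in the details of exactly the argument the paper has in mind: exploiting the scalar-times-affine product structure of $\Upsilon_I$, reading the resulting Leibniz terms through the $\widetilde{\nu}$-bounds of Lemma~\ref{lem:sm:b:nu:kct}, and integrating by parts twice against $\psi_{\rm tw}$ (using its exponential decay and smoothness from the adjoint eigenfunction ODE) to make the second estimate see only $\norm{w}_{L^2}$ and $\norm{v_\ell}_{L^2}$. The paper's proof is a one-line appeal to the same ingredients, so the approach is essentially identical.
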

\begin{proof}
Recalling that
 $\Phi_0'' \in H^{k_* + r + 1}$ and hence also in $H^{k_* + r}$,
 this follows from the product structure of \eqref{eq:sm:def:ups:i} and the properties of $\tilde{\nu}$ outlined in Lemma \ref{lem:sm:b:nu:kct}.
\end{proof}

\begin{cor}
\label{cor:sm:bnd:on:ups:ii}
Suppose that \textnormal{(HNL)} and \textnormal{(HTw)} hold.
Pick 
an integer $k_* \le k \le k_* + r$.
Then we have
\begin{equation}
  w \mapsto   \Upsilon_{II}(w)  \in C^r( H^{k+1} ; H^k ).
\end{equation}
In addition, there exists $K > 0$ so that for all $0 \le j \le r$,
any $w \in H^{k+1}$ and any tuple $(v_1, \ldots , v_j) \in (H^{k+1})^j$
we have the bound
\begin{equation}
\label{eq:nl:bnd:dj:ups:ii}
\begin{array}{lcl}
    \norm{ D^j \Upsilon_{II}( w)
    [v_1, \ldots v_j] }_{H^k}
    & \le &
    K \big( 1 + \norm{w}_{L^2} \big) \big(1 +  \norm{w}_{H^{k+1}})
    \mathcal{P}^{(j)}_{k_*}[v]
    \\[0.2cm]
    & & \qquad
        + K \big[1 + \norm{w}_{L^2} \big] \mathcal{P}^{(j)}_{k_*,k+1}[v].
\end{array}
\end{equation}
\end{cor}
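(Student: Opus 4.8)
The plan is to read off the conclusion from the explicit product structure \eqref{eq:sm:def:ups:ii}, which presents $\Upsilon_{II}(w)$ as the product of three real-valued factors --- $\chi_l(\Phi_0+w,0)$, $\widetilde{\nu}(\Phi_0+w,0)$ and $\langle\Phi_0+w,\psi_{\rm tw}''\rangle_{L^2}$ --- together with the $\mathbb{R}^n$-valued factor $\partial_x[\Phi_0+w]=\Phi_0'+\partial_x w$. First I would record the smoothness of each factor as a function of $w$: for $\chi_l$ and $\widetilde{\nu}$ this is Lemmas \ref{lem:sm:cutoffs} and \ref{lem:sm:b:nu:kct} respectively, after precomposing with the bounded linear embedding $H^{k+1}\hookrightarrow H^{k_*}\hookrightarrow L^2$ (valid since $k\ge k_*$); the map $w\mapsto\langle\Phi_0+w,\psi_{\rm tw}''\rangle_{L^2}$ is affine and bounded from $L^2$ into $\mathbb{R}$, since the exponential localization of $\psi_{\rm tw}$ gives $\psi_{\rm tw}''\in L^1\cap L^2$ so that $\langle\Phi_0,\psi_{\rm tw}''\rangle_{L^2}$ is a finite constant; and $w\mapsto\partial_x[\Phi_0+w]$ is affine and bounded from $H^{k+1}$ into $H^k$, using $\Phi_0'\in H^{k_*+r+2}\subseteq H^{k+1}$. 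As scalar multiplication $\mathbb{R}\times H^k\to H^k$ is bounded and bilinear, the product of these four maps lies in $C^r(H^{k+1};H^k)$, giving the first assertion.

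For the estimate \eqref{eq:nl:bnd:dj:ups:ii} I would expand $D^j\Upsilon_{II}(w)[v_1,\ldots,v_j]$ by the Leibniz rule into a finite sum of terms
\[
  D^{a}\chi_l(\Phi_0+w,0)[\cdot]\;\cdot\;D^{b}\widetilde{\nu}(\Phi_0+w,0)[\cdot]\;\cdot\;D^{c}\big(\langle\Phi_0+w,\psi_{\rm tw}''\rangle_{L^2}\big)[\cdot]\;\cdot\;D^{d}\big(\partial_x[\Phi_0+w]\big)[\cdot],
\]
with $a+b+c+d=j$ and the directions $v_1,\ldots,v_j$ partitioned among the four slots. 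Because the last two factors are affine in $w$, a term vanishes unless $c\in\{0,1\}$ and $d\in\{0,1\}$; in particular at most one of the $v_i$ is ever forced into the strong norm $H^{k+1}$, namely when $d=1$ and $D^1\big(\partial_x[\Phi_0+w]\big)[v_i]=\partial_x v_i$ contributes $\norm{v_i}_{H^{k+1}}$, whereas $d=0$ contributes $\norm{\Phi_0'+\partial_x w}_{H^k}\le K(1+\norm{w}_{H^{k+1}})$. I would then bound the other factors using Lemma \ref{lem:sm:cutoffs} ($\abs{D^a\chi_l(\Phi_0+w,0)[\cdot]}\le K\prod\norm{v_i}_{L^2}$), Lemma \ref{lem:sm:b:nu:kct} ($\abs{D^b\widetilde{\nu}(\Phi_0+w,0)[\cdot]}\le K\prod\norm{v_i}_{H^{k_*}}$), and the affine estimates $\abs{\langle\Phi_0+w,\psi_{\rm tw}''\rangle_{L^2}}\le K(1+\norm{w}_{L^2})$ for $c=0$ and $\abs{D^1(\cdot)[v_i]}\le K\norm{v_i}_{L^2}$ for $c=1$; upgrading every $L^2$-norm of a $v_i$ to an $H^{k_*}$-norm via Sobolev embedding then shows that each term with $d=0$ is dominated by $K(1+\norm{w}_{L^2})(1+\norm{w}_{H^{k+1}})\mathcal{P}^{(j)}_{k_*}[v]$ (inserting the harmless factor $1\le 1+\norm{w}_{L^2}$ when $c=1$) and each term with $d=1$ by $K(1+\norm{w}_{L^2})\mathcal{P}^{(j)}_{k_*,k+1}[v]$. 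Summing over the finitely many terms yields \eqref{eq:nl:bnd:dj:ups:ii}.

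The only genuinely delicate point will be the bookkeeping in the Leibniz expansion: one must track how the $H^{k_*}$- and $H^{k+1}$-norms get distributed over $v_1,\ldots,v_j$ and check that the strong norm lands on at most one factor, which is precisely the consequence of $w\mapsto\partial_x[\Phi_0+w]$ being affine (so that $D^d$ of it vanishes for $d\ge 2$). Everything else is a routine application of the cited smoothness lemmas together with the algebra and embedding properties of the $H^k(\mathcal{D})$ scale. The argument runs in parallel with the proof of Corollary \ref{cor:sm:bnd:on:ups:i}; the only structural differences are that the fixed high-regularity profile factor is $\Phi_0'$ rather than $\Phi_0''$, so that $H^{k+1}$-control on $w$ (rather than $H^{k+2}$) suffices, and that the inner-product factor is taken against $\psi_{\rm tw}''$, which is still exponentially localized and so contributes only low-regularity norms.
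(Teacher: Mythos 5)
Your proof is correct and takes essentially the same route as the paper's: read off the smoothness and the bound from the product structure of $\Upsilon_{II}$ via a Leibniz expansion, tracking that the two affine factors $\langle\Phi_0+w,\psi_{\rm tw}''\rangle_{L^2}$ and $\partial_x[\Phi_0+w]$ can each receive at most one differentiation direction, so the strong $H^{k+1}$-norm lands on at most one of the $v_i$. The only cosmetic difference is that the paper's one-line proof bundles $\widetilde{\nu}(\Phi_0+w,0)\langle\Phi_0+w,\psi_{\rm tw}''\rangle_{L^2}=\langle\Upsilon_I(w),\psi_{\rm tw}\rangle_{L^2}$ (integrating by parts twice) and cites the second bound of Corollary~\ref{cor:sm:bnd:on:ups:i} for this combination, whereas you bound $\widetilde{\nu}$ via Lemma~\ref{lem:sm:b:nu:kct} and the pairing factor directly; unpacking the cited corollary's proof shows the two bookkeeping schemes produce the identical estimate.
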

\begin{proof}
This follows from the product structure of \eqref{eq:sm:def:ups:ii}
together with the bounds in Corollary \ref{cor:sm:bnd:on:ups:i}.
\end{proof}

\begin{cor}
\label{cor:sm:ups}
Suppose that \textnormal{(HNL)} and \textnormal{(HTw)} hold.
Then for any $k_* \le k \le k_* + r$
we have
\begin{equation}
  w \mapsto   \Upsilon(w)  \in C^r( H^{k+2} ; H^k ).
\end{equation}
In addition, there exists $K >0$ so that for all $0 \le j \le r$,
any $w \in H^{k+2}$ and any tuple $(v_1, \ldots, v_j) \in (H^{k+2} )^j$
we have the bound
\begin{equation}
\label{eq:sm:bnd:derivs:ups}
\begin{array}{lcl}
    \norm{ D^j \Upsilon( w)
    [v_1, \ldots v_j] }_{H^k}
    & \le &
    K \big[ 1 + \norm{w}_{H^{k+2}} + \norm{w}_{L^2} \norm{w}_{H^{k+1}} \big]
    \mathcal{P}^{(j)}_{k_*}[v]
    \\[0.2cm]
    & & \qquad
        + K \mathcal{P}^{(j)}_{k_*,k+2}[v]
        + K \big[1 + \norm{w}_{L^2} \big] \mathcal{P}^{(j)}_{k_*,k+1}[v] .
\end{array}
\end{equation}
\end{cor}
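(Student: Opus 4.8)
The plan is to deduce this corollary directly from the two preceding ones, since $\Upsilon = \Upsilon_I + \Upsilon_{II}$ as recorded in \eqref{eq:sm:def:ups:i}--\eqref{eq:sm:def:ups:ii}. First I would invoke Corollary \ref{cor:sm:bnd:on:ups:i}, which already gives $w \mapsto \Upsilon_I(w) \in C^r(H^{k+2};H^k)$, and Corollary \ref{cor:sm:bnd:on:ups:ii}, which gives $w \mapsto \Upsilon_{II}(w) \in C^r(H^{k+1};H^k)$. Since $k+2 > k+1$ there is a bounded inclusion $H^{k+2} \hookrightarrow H^{k+1}$, so $\Upsilon_{II}$ may be regarded as a $C^r$ map on $H^{k+2}$; as the sum of two $C^r$ maps into $H^k$ is again $C^r$, this settles the smoothness claim $w \mapsto \Upsilon(w) \in C^r(H^{k+2};H^k)$, with derivatives given term by term.

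For the quantitative estimate \eqref{eq:sm:bnd:derivs:ups} I would add the bound for $\norm{D^j \Upsilon_I(w)[v_1,\ldots,v_j]}_{H^k}$ from Corollary \ref{cor:sm:bnd:on:ups:i} to the bound \eqref{eq:nl:bnd:dj:ups:ii} for $\norm{D^j \Upsilon_{II}(w)[v_1,\ldots,v_j]}_{H^k}$ and then harmonize the various prefactors using monotonicity of Sobolev norms in the exponent: $\norm{w}_{L^2} \le \norm{w}_{H^{k+2}}$, $\norm{w}_{H^{k+1}} \le \norm{w}_{H^{k+2}}$ and $\mathcal{P}^{(j)}_{k_*,k+1}[v] \le \mathcal{P}^{(j)}_{k_*,k+2}[v]$. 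Expanding the factor $(1 + \norm{w}_{L^2})(1 + \norm{w}_{H^{k+1}})$ appearing in \eqref{eq:nl:bnd:dj:ups:ii} as $1 + \norm{w}_{L^2} + \norm{w}_{H^{k+1}} + \norm{w}_{L^2}\norm{w}_{H^{k+1}}$, every term is dominated by a multiple of $1 + \norm{w}_{H^{k+2}} + \norm{w}_{L^2}\norm{w}_{H^{k+1}}$, which is exactly the prefactor of $\mathcal{P}^{(j)}_{k_*}[v]$ on the right-hand side of \eqref{eq:sm:bnd:derivs:ups}. The remaining contributions, namely $K\,\mathcal{P}^{(j)}_{k_*,k+2}[v]$ coming from $\Upsilon_I$ and $K[1 + \norm{w}_{L^2}]\,\mathcal{P}^{(j)}_{k_*,k+1}[v]$ coming from $\Upsilon_{II}$, appear verbatim, so after enlarging $K$ all terms are absorbed by \eqref{eq:sm:bnd:derivs:ups}.

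I do not expect a genuine obstacle here: the statement is a direct corollary of its two component estimates together with Sobolev embedding. The only point requiring mild care is the bookkeeping just described, i.e.\ checking that the assortment of prefactors involving $\norm{w}_{L^2}$, $\norm{w}_{H^{k+1}}$ and $\norm{w}_{H^{k+2}}$ collapses onto the three displayed types $1$, $\norm{w}_{H^{k+2}}$ and $\norm{w}_{L^2}\norm{w}_{H^{k+1}}$, and similarly for the mixed-norm products $\mathcal{P}^{(j)}_{k_*,k+1}$ versus $\mathcal{P}^{(j)}_{k_*,k+2}$; both reductions are immediate from monotonicity of the $H^s$-norms in $s$ and absorption of embedding constants into $K$.
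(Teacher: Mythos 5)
Your proposal is correct and matches the paper's proof, which simply combines Corollaries \ref{cor:sm:bnd:on:ups:i} and \ref{cor:sm:bnd:on:ups:ii} term by term. The bookkeeping you spell out — expanding $(1+\norm{w}_{L^2})(1+\norm{w}_{H^{k+1}})$, absorbing lower-order Sobolev norms via monotonicity, and restricting $\Upsilon_{II}$ along the embedding $H^{k+2}\hookrightarrow H^{k+1}$ — is exactly what is left implicit in the paper's one-line proof.
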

\begin{proof}
These statements follow by combining Corollaries \ref{cor:sm:bnd:on:ups:i} and \ref{cor:sm:bnd:on:ups:ii}.
\end{proof}

\subsection{Bounds for \texorpdfstring{$\mathcal{S}$}{S}}

Our final result here concerns the nonlinearity $\mathcal{S}$ given by
\begin{equation}
\label{eq:sm:def:S}
\begin{array}{lcl}
\mathcal S(v)[\xi]
&=&g(\Phi_0+v)[\xi]+\partial_x(\Phi_0+v)  b(\Phi_0+v,0)[\xi];
\end{array}
\end{equation}
see \eqref{eq:app:def:R:sigma:S}.
Using our previous results for $g$ and $b$ this nonlinearity can be readily analyzed.

\begin{lem}
\label{lem:sm:s}
Suppose that \textnormal{(HNL)}, \textnormal{(HTw)} and \textnormal{(Hq)} hold.
Pick 
an integer $0 \le j \le r$
and an integer $k_* \le k \le k_j$.
Then we have
\begin{equation}
    w \mapsto \mathcal{S}(\Phi_0 + w) \in C^j\big(H^{k+1};HS(L^2_Q;H^{k}) \big).
\end{equation}
In  addition, there is a constant $K > 0$ so that
for any $w \in H^{k+1}$ and any tuple $(v_1, \ldots v_j) \in (H^{k+1})^j$
we have the bound
\begin{equation}
\label{eq:nl:bnd:d:j:mathcal:s}
\begin{array}{lcl}
    \norm{ D^j \mathcal{S}(\Phi_0 + w, 0)[v_1, \ldots, v_j]}_{HS(L^2_Q;H^{k})}
    & \le & K (1 + ||w||_{H^k}^{k}  ) \mathcal{P}^{(j)}_k[v]
\\[0.2cm]
& & \qquad
+ K ||w||_{H^{k+1}} \mathcal{P}^{(j)}_{k_*}[v]
+ K \mathcal{P}^{(j)}_{k_*, k+1}[v] .
\end{array}
\end{equation}
\end{lem}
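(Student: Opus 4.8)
The plan is to treat the two summands in the decomposition \eqref{eq:sm:def:S} separately. For the Nemytskii term $g(\Phi_0 + w)$, Lemma \ref{lem:sm:bnds:on:g} already supplies the smoothness $w \mapsto g(\Phi_0+w) \in C^j(H^{k+1};H^{k+1}) \subseteq C^j(H^{k+1};H^k)$ together with the derivative bound $\norm{D^j g(\Phi_0+w)[v_1,\dots,v_j]}_{H^k} \le K(1+\norm{w}_{H^k}^k)\mathcal{P}^{(j)}_k[v]$; the $H^k(\mathcal{D};\mathbb{R}^{n\times m})$-valued outputs are then reinterpreted as Hilbert--Schmidt operators into $H^k$ via the embedding \eqref{eq:mr:hs:bnd:z}. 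This immediately accounts for the first term on the right-hand side of \eqref{eq:nl:bnd:d:j:mathcal:s} and for the corresponding part of the smoothness statement.

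For the product term $\partial_x(\Phi_0 + w)\, b(\Phi_0+w,0)$, I would set $A(w) = \Phi_0' + \partial_x w$ and $B(w) = b(\Phi_0+w,0)$. The map $w \mapsto A(w)$ is affine and continuous from $H^{k+1}$ into $H^k$, using $\Phi_0' \in H^{k_*+r+2} \subseteq H^k$ guaranteed by (HTw) and (HNL). The map $w \mapsto B(w)$ is $C^r$, hence $C^j$, from $H^{k+1}$ into $HS(L^2_Q;\mathbb{R})$: this follows from Lemma \ref{lem:sm:b:nu:kct} precomposed with the continuous embedding $H^{k+1}\hookrightarrow H^{k_*}$, and its derivatives $D^m B(w)$ are bounded in $HS(L^2_Q;\mathbb{R})$ by $K$ times the product of the $H^{k_*}$-norms of their arguments. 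Finally, pointwise multiplication $(z,\beta)\mapsto z\beta$, where $(z\beta)[\xi] = z\,(\beta[\xi])$, is a bounded bilinear map $H^k(\mathcal{D};\mathbb{R}^n)\times HS(L^2_Q;\mathbb{R}) \to HS(L^2_Q;H^k)$ with $\norm{z\beta}_{HS(L^2_Q;H^k)} \le K\norm{z}_{H^k}\norm{\beta}_{HS(L^2_Q;\mathbb{R})}$, since multiplication by a scalar commutes with differentiation and with the $L^2$-norm. As the composition of smooth maps through a bounded bilinear map is smooth, this yields $w \mapsto A(w)B(w) \in C^j(H^{k+1};HS(L^2_Q;H^k))$, which together with the first paragraph gives the asserted smoothness of $\mathcal{S}(\Phi_0 + \cdot)$.

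For the quantitative bound, I would apply the Leibniz rule to $w \mapsto A(w)B(w)$. Since $A$ is affine, only $D^0 A(w) = \Phi_0' + \partial_x w$ and $D^1 A(w)[v] = \partial_x v$ are nonzero, so
\[
D^j\big[A(w)B(w)\big][v_1,\dots,v_j]
= \big(\Phi_0' + \partial_x w\big)\, D^j B(w)[v_1,\dots,v_j]
+ \sum_{i=1}^j (\partial_x v_i)\, D^{j-1} B(w)\big[v_1,\dots,\widehat{v_i},\dots,v_j\big].
\]
Bounding $D^m B(w)$ through Lemma \ref{lem:sm:b:nu:kct}, the $\Phi_0'$-contribution is at most $K\norm{\Phi_0'}_{H^k}\mathcal{P}^{(j)}_{k_*}[v]$, which is absorbed into $K(1+\norm{w}_{H^k}^k)\mathcal{P}^{(j)}_k[v]$ since $\mathcal{P}^{(j)}_{k_*}[v]\le\mathcal{P}^{(j)}_k[v]$; the $\partial_x w$-contribution is at most $K\norm{\partial_x w}_{H^k}\mathcal{P}^{(j)}_{k_*}[v]\le K\norm{w}_{H^{k+1}}\mathcal{P}^{(j)}_{k_*}[v]$, producing the second term of \eqref{eq:nl:bnd:d:j:mathcal:s}; and each remaining summand is bounded by $K\norm{v_i}_{H^{k+1}}\prod_{l\neq i}\norm{v_l}_{H^{k_*}}$, whose sum over $i$ is exactly $K\mathcal{P}^{(j)}_{k_*,k+1}[v]$, the third term. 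Adding the $g$-contribution completes the estimate.

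The bookkeeping above is routine; the only point meriting care is the identification of the product $\partial_x(\Phi_0+w)\,b(\Phi_0+w,0)$ of an $\mathbb{R}^n$-valued $H^k$ function with a scalar-valued Hilbert--Schmidt operator as a genuine element of $HS(L^2_Q;H^k(\mathcal{D};\mathbb{R}^n))$ with the expected norm. This is precisely where the algebra property of $H^k$ and the bound \eqref{eq:mr:hs:bnd:z} enter, in the same spirit as the splitting \eqref{eq:sm:splt:k:c} used for $\mathcal{R}_{II}$, and no estimate beyond those already assembled in Lemmas \ref{lem:sm:bnds:on:g} and \ref{lem:sm:b:nu:kct} is required.
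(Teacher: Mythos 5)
Your proof is correct and follows the same strategy as the paper: split $\mathcal{S}$ via the decomposition \eqref{eq:sm:def:S}, handle the $g$ term through Lemma \ref{lem:sm:bnds:on:g} and the Hilbert--Schmidt embedding \eqref{eq:mr:hs:bnd:z}, and handle the product $\partial_x(\Phi_0+w)\,b(\Phi_0+w,0)$ through Lemma \ref{lem:sm:b:nu:kct}. The paper's proof is a two-sentence pointer to these lemmas; yours fills in the Leibniz-rule bookkeeping and the bilinearity of scalar Hilbert--Schmidt multiplication, but it is the same argument.
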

\begin{proof}
   The statements for the first term $g$ follow from \eqref{eq:mr:hs:bnd:z}
   together with Lemma \ref{lem:sm:bnds:on:g}.
   The desired properties for the product term involving $b$ follow from Lemma \ref{lem:sm:b:nu:kct}.
\end{proof}

\section{Taylor expansion}
\label{sec:tay}

In this section we study the Taylor expansion terms
defined in \eqref{eq:mr:def:w:j:all}.
In particular, we establish Proposition \ref{prop:mr:tay}. A key role is reserved for the following working hypothesis, which is
defined in terms of an integer  $2 \le j_* \le r$.
\begin{itemize}
    \item[(WH)]{
    There exists $K > 0$ so that
for every $1 \le j < j_*$, any real $p \ge 1$, any $\sigma \ge 0$, any $\delta \ge 0$ and $T \ge 2$ we have the bound
\begin{equation}
\label{eq:tay:work:hyp}
E \sup_{0 \le t \le T} \norm{Y_j[\sigma,\delta](t)}_{j}^{2p}
    \le  \big( [\sigma^{2}p]^{jp} +  [\sigma^2 \ln T + \delta^2]^{jp}
     \big)
    K^{2p} .
\end{equation}
}
\end{itemize}

\begin{prop}[{see {\S}\ref{subsec:tay:size}}]
\label{prop:tay:wh:holds}
Suppose that \textnormal{(HNL)}, \textnormal{(HTw)}, \textnormal{(H$V_*$)} and \textnormal{(Hq)} all hold.
Then \textnormal{(WH)} holds for $j_* = r$.
\end{prop}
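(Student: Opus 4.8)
The plan is to prove \eqref{eq:tay:work:hyp} for every $j\in\{1,\dots,r-1\}$ by strong induction on $j$: since (WH) with parameter $j_*$ is exactly the assertion that \eqref{eq:tay:work:hyp} holds for all $1\le j<j_*$, this yields (WH) for $j_*=r$. Throughout I abbreviate $\alpha^2=\sigma^2\ln T+\delta^2$ and record that, because $T\ge 2$, there is a universal constant $C_0$ with $\sigma^2\le C_0\alpha^2$ and $\delta^2\le\alpha^2$; these are the estimates that convert the spurious $\sigma$- and $\sigma^2$-prefactors carried by $\widetilde B_j$, $\widetilde N_{j;II}$ and $\widetilde N_{j;III}$ into the $\alpha$-scaling. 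I would first observe that $\widetilde N_j$ and $\widetilde B_j$ are $P^\perp$-valued (a built-in feature of the freezing construction, since the $\chi_l$-terms project out exactly the translational direction $\Phi_0'$), so that with the admissible choice $\nu\equiv 1$ the operator $\mathcal L_{\rm tw}+\Delta_{x_\perp}$ fits \eqref{eq:lin_gen}, and the two convolutions in \eqref{eq:mr:def:w:j:all} are precisely of the form $\mathcal E^d_N$ and $\mathcal E^s_B$ analysed in {\S}\ref{sec:conv}; Propositions \ref{prp:cnv:hn} and \ref{prp:cnv:hb} are therefore directly applicable. Along the same induction one also checks that each $Y_j$ is progressively measurable and lies in $L^p\bigl(\Omega;C([0,T];H_j)\bigr)$ for all $p$, as a convolution of a progressively measurable, time-continuous integrand against the evolution family is again of that type; this is what is needed to invoke {\S}\ref{sec:conv} at the next level.

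\textbf{Base case.} For $j=1$ I use the explicit formula \eqref{eq:mr:def:w:only:one}. The deterministic term is bounded pathwise by $\delta M e^{-\beta t}\norm{V_*}_{H^{k_1}}\le\delta M$, using \eqref{eq:conv:bnds:e}, the normalisation $\norm{V_*}_{H^{k_*+r}}=1$ and $P^\perp V_*=V_*$ (from (H$V_*$)). The stochastic term $\sigma\,\mathcal E^s_{\mathcal S(0)}$ has the fixed integrand $\mathcal S(0)\in HS(L^2_Q;H^{k_1})$, so (hB) holds with $n=1$, $\Theta_1=\norm{\mathcal S(0)}_{HS(L^2_Q;H^{k_1})}$ and $\Theta_2=0$; Proposition \ref{prp:cnv:hb} then gives $\mathbb E\sup_{[0,T]}\norm{\sigma\,\mathcal E^s_{\mathcal S(0)}}_{k_1}^{2p}\le C^{2p}\bigl[(\sigma^2 p)^p+(\sigma^2\ln T)^p\bigr]$, and adding the two contributions gives \eqref{eq:tay:work:hyp} for $j=1$.

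\textbf{Inductive step.} Fix $2\le j\le r-1$ and assume \eqref{eq:tay:work:hyp} for all $1\le j'<j$. I would expand $\widetilde N_j$ and $\widetilde B_j$ into their defining multilinear monomials $D^\ell\Xi(0)[Y_{i_1},\dots,Y_{i_\ell}]$ with $\Xi\in\{\mathcal R_I,\mathcal R_{II},\Upsilon,\mathcal S\}$ and prefactor $1$, $\sigma^2$ or $\sigma$, where $i_1+\dots+i_\ell$ equals $j$, $j-2$ or $j-1$ respectively. The inclusions \eqref{eq:mr:inc} make each $D^\ell\Xi(0)$ a bounded $\ell$-linear map out of $(H^{k_j+1})^\ell$ or $(H^{k_j+2})^\ell$ into $H^{k_j}$ (resp.\ $HS(L^2_Q;H^{k_j})$), and since each $i_m\le j-1$ the embeddings $H_{i_m}\hookrightarrow H^{k_j+1}$ (resp.\ $H^{k_j+2}$) hold, so each monomial is bounded pointwise by a constant times $\prod_m\norm{Y_{i_m}(t)}_{i_m}$. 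Raising to the $2p$-th power, taking $\sup_{[0,T]}$, and applying H\"older's inequality (the supremum of a product is at most the product of the suprema, then H\"older with $\ell$ exponents each equal to $\ell$) reduces matters to the inductive bounds for the $Y_{i_m}$ at exponent $p\ell$. Expanding the resulting product of binomials $(\sigma^2 p\ell)^{i_m p}+\alpha^{2 i_m p}$ and restoring the prefactor produces, for each monomial, a finite sum of terms of the form $K^{2p}\,p^{a p}\,[(\sigma^2)^{e}\alpha^{2b}]^p$ (the combinatorial factors, which involve only $\ell\le r$ and the number of monomials, being absorbed into $K$), and each such term matches (hN) with $n=a$ — resp.\ (hB) with $n=a+1$ — for a monomial choice of $\Theta_1$ and $\Theta_2=0$, so that no division by $\sigma$ is ever required.

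\textbf{Reassembly and the main obstacle.} Applying Proposition \ref{prp:cnv:hn} (which leaves the $p$-power unchanged) or Proposition \ref{prp:cnv:hb} (which raises it by one unit and replaces $\Theta_2$ by $\ln T$) then bounds the corresponding convolution by an expression of the form $C^{2p}\bigl(p^{cp}+(\ln T)^{cp}\bigr)[(\sigma^2)^{e'}\alpha^{2b'}]^p$ in which the degree totals work out to $j$; using $\sigma^2\le C_0\alpha^2$ and $\sigma^2\ln T\le\alpha^2$ to handle the $\ln T$- and prefactor-pieces, and the weighted Young inequality $X^{s}Y^{1-s}\le sX+(1-s)Y$ with $X=(\sigma^2 p)^{j}$, $Y=\alpha^{2j}$, every resulting term is dominated by $C^{2p}\bigl[(\sigma^2 p)^{j p}+\alpha^{2 j p}\bigr]$. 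Summing the finitely many pieces through Minkowski's inequality in $L^{2p}\bigl(\Omega;C([0,T];H_j)\bigr)$ and combining the deterministic and stochastic parts then yields \eqref{eq:tay:work:hyp} for $j$, completing the induction. I expect the only real difficulty to be the bookkeeping: one must track how the degree in $(\sigma,\delta)$, the power of $p$, and the power of $\ln T$ interlock through the H\"older step and the two convolution estimates, and verify that the edge contributions — the constant integrands $\sigma^2\mathcal R_{II}(0)$ at $j=2$ and $\sigma\mathcal S(0)$ at $j=1$, and the $\sigma$/$\sigma^2$ prefactors in general — do not spoil the claimed degree-$j$ scaling, which is precisely the point at which $\sigma^2\lesssim\alpha^2$ is used.
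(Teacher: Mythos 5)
Your overall architecture matches the paper's: you reduce the claim to the convolution estimates of Propositions~\ref{prp:cnv:hn}--\ref{prp:cnv:hb}, run a strong induction on $j$, treat the base case $j=1$ via the explicit formula \eqref{eq:mr:def:w:only:one} with $(n,\Theta_1,\Theta_2)=(1,\norm{\mathcal S(0)},0)$, and in the inductive step bound the monomials of $\widetilde N_j$ and $\widetilde B_j$ using the smoothness results of {\S}\ref{sec:sm}. Where you diverge is the pointwise step: the paper applies the weighted AM--GM inequality \eqref{eq:tay:prod:bnd:w}, which converts each product $\prod_m\norm{Y_{i_m}}_{i_m}$ into a \emph{sum} of the simple powers $\norm{Y_i}_i^{i_{\rm tot}/i}$; one then invokes the reformulated induction hypothesis of Lemma~\ref{lem:tay:first:est:w:ind:hyp} and obtains for each integrand a \emph{single} moment bound of the form $K^{2p}\big[(\sigma^2 p)^{pi_{\rm tot}}+\alpha^{2pi_{\rm tot}}\big]$, which is then identified with the hypotheses (hN)/(hB) using $\Theta_1\propto\sigma^{i_{\rm tot}}$, $\Theta_2=\alpha^2/\sigma^2$, $n=i_{\rm tot}$ (the case $\sigma=0$ being dispatched separately, as the stochastic and $\sigma^2$-prefixed pieces then vanish).

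Your H\"older route instead produces a product of moment bounds, hence a sum of $2^\ell$ cross-terms with heterogeneous $p$-powers $p^{e_0p}$ for $0\le e_0\le i_{\rm tot}$. Here is the genuine gap: your plan is to match each cross-term with a separate (hN)/(hB) triple $(n_a,\Theta_{1,a},\Theta_2=0)$ and apply Propositions~\ref{prp:cnv:hn}--\ref{prp:cnv:hb} term by term, but (hN)/(hB) are hypotheses on the \emph{process} $N$ or $B$, not on summands of a moment bound, and Proposition~\ref{prp:cnv:hb} does not simply linearise in the moment bound: the union-over-intervals step rests on Corollary~\ref{cor:prlm:exp:bnd:to:exp:bnd:with:n}, whose proof passes through the exponential tail estimate of Corollary~\ref{cor:prlm:gen:bnd:for:tail:distr} and so requires the single-triple form. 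If you instead merge the cross-terms into a single (hB) triple with $\Theta_2=0$, you are forced to take $n=\max_a n_a$, and then Proposition~\ref{prp:cnv:hb} multiplies the cross-term with $e_0=0$ (whose $\Theta_{1,a}^2\propto\sigma^2\alpha^{2(j-1)}$) by $[\ln T]^{jp}$, producing $[\ln T]^j\sigma^2\alpha^{2(j-1)}$; dividing by the target $\alpha^{2j}$ and using $\sigma^2/\alpha^2\le 1/\ln T$ only controls one power of $\ln T$, so the quotient still grows like $[\ln T]^{j-1}$. In short, the $\Theta_2=0$ choice you adopt to avoid dividing by $\sigma$ is exactly what breaks. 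The repair is to Young the cross-terms into a single expression \emph{before} invoking the propositions, at which point one is forced back to $\Theta_2=\alpha^2/\sigma^2$ and $\Theta_1\propto\sigma^{i_{\rm tot}}$ (the `division by $\sigma$' is harmless: it is only needed when $\sigma>0$); or, more directly, replace the H\"older step with the paper's AM--GM reduction \eqref{eq:tay:prod:bnd:w}, which avoids cross-terms altogether.
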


We are also interested in the remainder that arises
when substituting the full Taylor approximation
\begin{equation}
    Y_{\rm tay}[\sigma, \delta] = Y_1[\sigma, \delta] + \ldots + Y_{r-1}[\sigma, \delta]
\end{equation}
into the full nonlinearities
$\mathcal{R}_I$, $\mathcal{R}_{II}$, $\Upsilon$ and $\mathcal{S}$.
To this end,
we recall
\eqref{eq:mr:def:n:b:j:no:tilde}
and
define the remainder functions
\begin{equation}
\label{eq:tay:def:rem:fncs}
\begin{array}{lcl}
    N_{\rm rem}[\sigma, \delta]
    &=& \mathcal{R}_I(Y_{\rm tay}[\sigma,\delta]) +
    \sigma^2 \mathcal{R}_{II}(Y_{\rm tay}[\sigma,\delta]) + \sigma^2 \Upsilon(Y_{\rm tay}[\sigma,\delta])
    - \sum_{j=1}^{r-1} N_j[\sigma,\delta] ,
\\[0.2cm]
B_{\rm rem}[\sigma,\delta] & = &
\sigma \mathcal{S}\big(Y_{\rm tay}[\sigma,\delta] \big)
    - \sum_{j=1}^{r-1} B_j[\sigma,\delta].
\end{array}
\end{equation}
We also introduce the stopping time
\begin{equation}
    t_{\rm tay}(\eta) =
    \mathrm{inf} \{ t \ge 0: \norm{Y_1}_{1}^2 + \ldots + \norm{Y_{r-1}}_{r-1}^2 \ge \eta \},
\end{equation}
writing $t_{\rm tay}(\eta) = \infty$ if the set is empty.

\begin{prop}[{see {\S}\ref{subsec:tay:rem}}]
\label{prop:tay:bnd:rem}
Suppose that \textnormal{(HNL)}, \textnormal{(HTw)}, \textnormal{(H$V_*$)} and \textnormal{(Hq)} hold.
    Then there exists $K > 0$ so that for all real $p \ge 1$,
    all $0 \le \sigma  \le 1$, all $\delta \ge 0$ and  all $T \ge 2$
    we have the bounds
    \begin{equation}
    \label{eq:tay:bnds:n:b:rem}
\begin{array}{lcl}
\mathbb E \sup_{0 \le t \le T\wedge t_{\rm tay}(1) } \norm{N_{\rm rem}[\sigma,\delta](t)}_{H^{k_*}}^{2p}  
 & \le & K^{2 p }\big( [\sigma^2 p]^{  p r }
+ [\sigma^2 \ln T + \delta^2]^{  p r  }    \big) ,
        \\[0.2cm]
\mathbb E \sup_{0 \le t \le T\wedge t_{\rm tay}(1) } \norm{B_{\rm rem}[\sigma,\delta](t)}_{HS(L^2_Q;H^{k_*})}^{2 p} 
 & \le & \sigma^{2 p} K^{2 p} \big( [\sigma^2 p]^{  p (r-1) }
+ [\sigma^2 \ln T + \delta^2]^{  p (r-1)  }   \big) .
\end{array}
\end{equation}
\end{prop}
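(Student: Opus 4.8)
The plan is to obtain \eqref{eq:tay:bnds:n:b:rem} by Taylor-expanding each of the nonlinearities around the origin, matching the low-order part of the expansion against the definitions \eqref{eq:mr:def:n:j}--\eqref{eq:mr:def:b:j} of the $\widetilde N_j$ and $\widetilde B_j$, and controlling what is left over using the moment bounds \eqref{eq:mr:mo:bnds:w:j} together with the pathwise control available up to time $t_{\rm tay}(1)$. By \eqref{eq:mr:inc} with $j=r$ and $k=k_*$ we have $\mathcal{R}_I,\mathcal{R}_{II}\in C^{r}(H^{k_*+1};H^{k_*})$, $\Upsilon\in C^{r}(H^{k_*+2};H^{k_*})$ and $\mathcal{S}\in C^{r}(H^{k_*+1};HS(L^2_Q;H^{k_*}))$; since $k_j\ge k_*+2$ for $1\le j\le r-1$, we have $H^{k_j}\hookrightarrow H^{k_*+2}$, so $Y_{\rm tay}$ lies in the domain of all of these maps and of all their derivatives. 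I would write Taylor's formula with integral remainder for $\mathcal{R}_I$ through order $r-1$ (using $\mathcal{R}_I(0)=D\mathcal{R}_I(0)=0$), for $\sigma^2\mathcal{R}_{II}$ and $\sigma^2\Upsilon$ through order $r-3$, and for $\sigma\mathcal{S}$ through order $r-2$; expanding $Y_{\rm tay}^{\otimes\ell}=\sum_{(i_1,\dots,i_\ell)\in\{1,\dots,r-1\}^{\ell}}[Y_{i_1},\dots,Y_{i_\ell}]$ by multilinearity and regrouping the multilinear terms according to the index sum $i_1+\dots+i_\ell$, one checks that the part for which the total $(\sigma,\delta)$-order — counting $\sigma^2$ as $2$, $\sigma$ as $1$, and each $Y_{i_m}$ as $i_m$ — stays below $r$ reassembles \emph{exactly} into $\sum_{j=1}^{r-1}N_j[\sigma,\delta]$ for $N_{\rm rem}$ and into $\sum_{j=1}^{r-1}B_j[\sigma,\delta]$ for $B_{\rm rem}$, matching the index ranges $2\le\ell\le j$, $0\le\ell\le j-2$ and $0\le\ell\le j-1$ in \eqref{eq:mr:def:n:j}--\eqref{eq:mr:def:b:j}. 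Consequently $N_{\rm rem}[\sigma,\delta]$ (resp.\ $B_{\rm rem}[\sigma,\delta]$) is a sum of a fixed finite number of terms — the number depending only on $r$ — of two kinds: leftover multilinear terms $\sigma^{a}D^{\ell}F(0)[Y_{i_1},\dots,Y_{i_\ell}]$ with $a\in\{0,1,2\}$ and total order $a+\sum_m i_m\ge r$, and integral-remainder terms $\sigma^{a}\int_0^1(1-t)^{m-1}D^{m}F(tY_{\rm tay})[Y_{\rm tay}^{\otimes m}]\,\mathrm dt$ of total order $\ge r$; for $B_{\rm rem}$ all terms carry $a=1$ and $\sum_m i_m\ge r-1$. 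Progressive measurability is inherited from that of the $Y_j$ and continuity of the nonlinearities.

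The second ingredient is a product estimate on the stabilised event. Set $\mathsf b=\sigma^2\ln T+\delta^2$, and note $\mathsf b\ge\sigma^2\ln 2$ since $T\ge 2$, whence $\sigma^2\le(\ln2)^{-1}\mathsf b$. By definition of $t_{\rm tay}(1)$ we have the pathwise bound $\sum_{j=1}^{r-1}\|Y_j(t)\|_j^2\le 1$, hence $\|Y_j(t)\|_j\le 1$ and $\|Y_{\rm tay}(t)\|_{H^{k_*+2}}\le\sqrt{r-1}$, for every $t\in[0,T\wedge t_{\rm tay}(1)]$. I claim that for indices $i_1,\dots,i_\nu\in\{1,\dots,r-1\}$ with $s=i_1+\dots+i_\nu$, any integer $s'$ with $\max_m i_m\le s'\le s$, and any real $p\ge 1$,
\begin{equation*}
\mathbb E\sup_{0\le t\le T\wedge t_{\rm tay}(1)}\Big(\prod_{m=1}^{\nu}\|Y_{i_m}(t)\|_{i_m}\Big)^{2p}\ \le\ C^{2p}\big((\sigma^2 p)^{p s'}+\mathsf b^{\,p s'}\big),
\end{equation*}
for a constant $C=C(r,\nu)$. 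This follows by first lowering the exponent from $2p$ to $2ps'/s$ using $\|Y_{i_m}(t)\|_{i_m}\le 1$, then applying the weighted AM--GM inequality with weights $i_m/s$ to get $\prod_m\|Y_{i_m}\|_{i_m}^{2ps'/s}\le\sum_m (i_m/s)\,\|Y_{i_m}\|_{i_m}^{2ps'/i_m}$, taking supremum and expectation, and invoking \eqref{eq:mr:mo:bnds:w:j} with exponent $ps'/i_m\ge p\ge1$ (which is valid since \eqref{eq:mr:mo:bnds:w:j} is stated for real exponents $\ge1$); the constants $(s'/i_m)^{\cdot}\le r^{\cdot}$ are absorbed into $C$.

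To conclude, for a leftover multilinear term $\sigma^{a}D^{\ell}F(0)[Y_{i_1},\dots,Y_{i_\ell}]$ in $N_{\rm rem}$, boundedness of $D^{\ell}F(0)$ on the relevant Sobolev product and the embeddings $H^{k_{i_m}}\hookrightarrow H^{k_*+1}$ (or $H^{k_*+2}$ for $F=\Upsilon$) give the pathwise bound $\le C\,\sigma^{a}\prod_m\|Y_{i_m}\|_{i_m}$; if $\sum_m i_m\ge r$ I would discard $\sigma^{a}$ (using $\sigma\le 1$) and apply the product estimate with $s'=r$, and if $a=2$ with $\sum_m i_m\in\{r-2,r-1\}$ I would keep $\sigma^{2}$, apply the product estimate with $s'=\sum_m i_m$, and reach exponent $pr$ in both the $(\sigma^2 p)^{\cdot}$ and $\mathsf b^{\cdot}$ pieces using $\sigma\le1$ and $\sigma^2\le(\ln2)^{-1}\mathsf b$. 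The integral-remainder terms are handled identically after bounding $\|D^{m}F(tY_{\rm tay})[Y_{\rm tay}^{\otimes m}]\|$ on the pathwise-bounded set $\{\|Y_{\rm tay}\|_{H^{k_*+2}}\le\sqrt{r-1}\}$ by means of the explicit derivative estimates of {\S}\ref{sec:sm} (Corollaries \ref{cor:sm:r:i}, \ref{cor:sm:r:ii}, \ref{cor:sm:ups} and Lemma \ref{lem:sm:s}) and then expanding the arising products $\|Y_{\rm tay}\|_{H^{\tilde k}}\,\|Y_{\rm tay}\|_{H^{k_*}}^{m-1}$ into sums of products $\prod\|Y_{i_m}\|_{i_m}$ of total order $\ge m$. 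Summing the finitely many contributions and absorbing the combinatorial constants yields the first bound in \eqref{eq:tay:bnds:n:b:rem}. For $B_{\rm rem}$ the same scheme applies with target order $r-1$ for the $Y$-product and the outer factor $\sigma^{2p}$ retained throughout, using the $HS(L^2_Q;H^{k_*})$-valued estimates of Lemma \ref{lem:sm:s}, giving the second bound.

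The step I expect to be most delicate is the bookkeeping in the first paragraph: verifying that, after the multinomial expansion of $Y_{\rm tay}^{\otimes\ell}$ and regrouping by total index, the low-order terms reassemble \emph{precisely} into $\sum_j N_j$ and $\sum_j B_j$ — matching the exact index ranges in \eqref{eq:mr:def:n:j}--\eqref{eq:mr:def:b:j} — and that every leftover term genuinely carries total order $\ge r$. A secondary subtlety is that $p$, $T$ and $\delta$ are unrestricted, so both $\sigma^2 p$ and $\mathsf b$ may be large; this is what forces the careful separation between the $(\sigma^2 p)^{\cdot}$ and $\mathsf b^{\cdot}$ pieces in the product estimate and the use of $\mathsf b\gtrsim\sigma^2$ — rather than any bound of the form $\sigma^2\ln T\lesssim1$ — to absorb the explicit powers of $\sigma$.
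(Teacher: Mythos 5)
Your proposal is correct and follows essentially the same route as the paper's own proof in \S\ref{subsec:tay:rem}. The paper likewise splits each of $N_{\rm rem;I}$, $N_{\rm rem;II}$, $N_{\rm rem;III}$ and $B_{\rm rem}$ into a multilinear leftover part (total index $\ge r$, or $\ge r-2$, $\ge r-1$ for the $\sigma^2$- and $\sigma$-weighted pieces) plus a genuine Taylor integral remainder (Lemmas \ref{lem:rem:bnd:n:rem:i:a}--\ref{lem:rem:bnd:b:rem:b}), bounds the multilinear parts using exactly the weighted AM--GM trick \eqref{eq:tay:prod:bnd:w} combined with the pathwise a-priori bound from the stopping time to reduce the total order to exactly $r$ (respectively $r-1$), invokes \eqref{eq:tay:bnd:w:j:s:ell:p:div:j} --- which is the same consequence of Proposition \ref{prop:mr:tay}(iii) that your product estimate encapsulates --- and bounds the integral remainders by the pathwise bound \eqref{eq:tay:bnd:w:unif} and \eqref{eq:tay:bnd:w:tay}. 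The only cosmetic differences are that the paper uses an iterated ($t_1\cdots t_r$) form of the integral remainder rather than the single-variable Lagrange form you write, and that the paper absorbs the loose power of $\sigma^2$ arising in the $N_{\rm rem;II}$, $N_{\rm rem;III}$ pieces via Young's inequality \eqref{eq:tay:yng:ineq}, whereas you propose $\sigma^2\le(\ln 2)^{-1}(\sigma^2\ln T+\delta^2)$ for the $\mathsf b$-part (and, implicitly, $\sigma^2\le\sigma^2 p$ rather than $\sigma\le 1$ for the $(\sigma^2 p)$-part); both are valid.
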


\subsection{Bounds on \texorpdfstring{$Y_j$}{Yj}}
\label{subsec:tay:size}

Our main task here is to inductively establish the working hypothesis (WH).
To this end, we recall that $Y_1$ is given by
\begin{equation}
\label{eq:tay:def:w1:bis}
    Y_1[\sigma, \delta](t) = \delta \mathrm{exp}[ (\mathcal{L}_{\rm tw} + \Delta_{x_\perp}) t ] V_*  +
    \sigma \int_0^t \mathrm{exp}[ (\mathcal{L}_{\rm tw} + \Delta_{x_\perp}) (t-s) ] \mathcal{S}(0) \, \mathrm d W^Q_s,
\end{equation}
in which $\norm{V_*}_{H^{k_1}} = 1$ and $P^\perp V_* = V_*$, together with
\begin{equation}
\mathcal{S}(0) = g(\Phi_0)  + b(\Phi_0, 0) \partial_x \Phi_0 \in HS(L^2_Q;H^{k_1}) ,
\end{equation}
which implies that also $P^\perp \mathcal{S}(0) = \mathcal{S}(0)$.
Upon taking $\nu = 1$, we note that
the evolution family $E$ discussed in {\S}\ref{sec:conv} can be represented as
\begin{equation}
\label{eq:tay:id:flow:E}
    E(t, s) = \mathrm{exp}\big[ (\mathcal{L}_{\rm tw} + \Delta_{x_\perp}) (t-s) \big].
\end{equation}
In particular, the estimate \eqref{eq:conv:bnds:e} guarantees that
the first term in \eqref{eq:tay:def:w1:bis} satisfies the bound
\eqref{eq:tay:work:hyp}. The convolution term can also be seen to satisfy
\eqref{eq:conv:bnds:e}
by applying Proposition \ref{prp:cnv:hb} and taking $n=1$, $\Theta_2 = 0$
and $\Theta_1 = \norm{ \mathcal{S}(0) }_{H^{k_1}}$.

In particular, (WH) holds for $j_* = 2$.
Before turning to the induction step, we first record a useful consequence
of this hypothesis.

\begin{lem}
\label{lem:tay:first:est:w:ind:hyp}
Assume \textnormal{(WH)}. Then there exist $K > 0$ so that for any
$1 \le j < j_*$, any $\ell \ge j$, any (real)
$\tilde{p} \ge 1$, any $\sigma \ge 0$, any $\delta \ge 0$
and $T \ge 2$ we have the bound
\begin{equation}
\label{eq:tay:bnd:w:j:s:ell:p:div:j}
    \mathbb E \sup_{0 \le t \le T}
    \norm{Y_{j}[\sigma,\delta](t)}_{j}^{2 \ell \tilde{p}/j}
    \le
    \big( [\sigma^2 \tilde{p}]^{ \tilde{p} \ell }
+ [\sigma^2 \ln T + \delta^2]^{\tilde{p} \ell  }   \big)
      \ell^{\tilde{p} \ell} K^{2 \ell \tilde{p}}  ,
\end{equation}
together with
\begin{equation}
\label{eq:tay:bnd:w:j:s:ell:p}
    \mathbb E \sup_{0 \le t \le T}
    \norm{Y_{j}[\sigma,\delta](t)}_{j}^{2 \tilde{p} \ell }
    \le
    \big( [\sigma^2 \tilde{p}]^{  j \tilde{p} \ell }
+ [\sigma^2 \ln T + \delta^2]^{ j \tilde{p} \ell  }   \big)
       \ell^{j \tilde{p} \ell} K^{2 \ell \tilde{p}}.
\end{equation}
\end{lem}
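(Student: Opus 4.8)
The plan is to derive both estimates directly from the working hypothesis \textnormal{(WH)} by substituting carefully chosen values of the free exponent $p$ appearing there, followed only by elementary manipulations. Throughout, we may assume without loss of generality that the constant furnished by \textnormal{(WH)} satisfies $K \ge 1$, replacing it by $\max\{K,1\}$ if necessary. No serious obstacle is expected: this is essentially a bookkeeping lemma, and the single point requiring attention is that each substituted exponent $p$ must lie in the admissible range $p \ge 1$, which is guaranteed by the standing hypotheses $\ell \ge j$, $j \ge 1$ and $\tilde p \ge 1$.

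For \eqref{eq:tay:bnd:w:j:s:ell:p:div:j}, I would apply \textnormal{(WH)} (with the same index $j$, for which $1 \le j < j_*$) using the exponent $p = \ell\tilde p/j$. Since $\ell \ge j$ and $\tilde p \ge 1$ we have $p \ge 1$, so the substitution is legitimate; it produces the power $2p = 2\ell\tilde p/j$ on the left and the power $jp = \ell\tilde p$ on the right, giving
\begin{equation}
\mathbb E \sup_{0 \le t \le T} \norm{Y_j[\sigma,\delta](t)}_{j}^{2\ell\tilde p/j}
\le \Big( (\ell/j)^{\ell\tilde p}[\sigma^2\tilde p]^{\ell\tilde p} + [\sigma^2\ln T + \delta^2]^{\ell\tilde p} \Big) K^{2\ell\tilde p/j}.
\end{equation}
One then uses $(\ell/j)^{\ell\tilde p} \le \ell^{\ell\tilde p}$ (since $j \ge 1$) and $K^{2\ell\tilde p/j} \le K^{2\ell\tilde p}$ (since $K \ge 1$ and $\ell\tilde p/j \le \ell\tilde p$), and finally factors the quantity $\ell^{\ell\tilde p} \ge 1$ out of the bracket, bounding the second summand by $\ell^{\ell\tilde p}[\sigma^2\ln T + \delta^2]^{\ell\tilde p}$ as well. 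Recalling $\ell\tilde p = \tilde p\ell$, this is precisely the claimed right-hand side.

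For \eqref{eq:tay:bnd:w:j:s:ell:p} I would instead apply \textnormal{(WH)} with the exponent $p = \tilde p\ell$, which again satisfies $p \ge 1$ because $\tilde p \ge 1$ and $\ell \ge j \ge 1$. This yields the power $2\tilde p\ell$ on the left and $jp = j\tilde p\ell$ on the right, with $[\sigma^2 p]^{jp} = \ell^{j\tilde p\ell}[\sigma^2\tilde p]^{j\tilde p\ell}$ and $K^{2p} = K^{2\ell\tilde p}$. Pulling the common power $\ell^{j\tilde p\ell} \ge 1$ out of the bracket (and again bounding $[\sigma^2\ln T + \delta^2]^{j\tilde p\ell}$ by $\ell^{j\tilde p\ell}$ times itself) gives the second inequality, with the constant already in the stated form. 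As noted, the only thing one must verify is the admissibility $p \ge 1$ of both substitutions; everything else is routine.
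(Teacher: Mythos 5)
Your proposal is correct and follows exactly the paper's approach: both estimates are obtained by substituting $p = \tilde p\ell/j$ and $p = \tilde p\ell$ respectively into (WH), noting that each choice satisfies $p \ge 1$, and then absorbing the harmless factors of $\ell/j$ and $K^{2p/j}$ into the stated constants. The paper states this in a single sentence; your version simply spells out the elementary bookkeeping.
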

\begin{proof}
After adjusting the constant $K > 0$,
the first bound follows from (WH) by taking
    $p = \tilde{p} \ell / j \ge \tilde{p} \ge 1$,
while the second bound follows by picking
    $p = \ell \tilde{p} \ge 1$.
\end{proof}

In order to proceed, we need to consider integers $2 \le j \le r-1$
and analyze deterministic convolutions
of the semigroup \eqref{eq:tay:id:flow:E}
with respect to the functions
\begin{equation}
\begin{array}{lcl}
N_{j;I}[\sigma,\delta]
& = & \widetilde{N}_{j;I}\big[ Y_1[\sigma,\delta], \ldots, Y_{j-1}[\sigma,\delta] \big],
\\[0.2cm]
N_{j;II}[\sigma,\delta]
& = & \widetilde{N}_{j;II}\big[ Y_1[\sigma,\delta], \ldots, Y_{j-2}[\sigma,\delta] \big],
\\[0.2cm]
N_{j;III}[\sigma,\delta]
& = & \widetilde{N}_{j;III}\big[ Y_1[\sigma,\delta], \ldots, Y_{j-2}[\sigma,\delta] \big],
\end{array}
\end{equation}
together with stochastic convolutions with respect to the functions
\begin{equation}
    B_j[\sigma,\delta] = \tilde{B}_j\big[ Y_1[\sigma,\delta], \ldots, Y_{j-1}[\sigma,\delta] \big].
\end{equation}
These involve the expressions
\begin{equation}
\label{eq:tay:def:n:j}
\begin{array}{lcl}
    \widetilde{N}_{j;I}[y_1, \ldots, y_{j-1}] & = & \sum_{\ell=2}^j \sum_{i_1 + \ldots + i_{\ell} = j} \frac{1}{\ell !}D^{\ell} \mathcal{R}_I(0) [ y_{i_1}, \ldots , y_{i_\ell}] ,
\\[0.2cm]
  \widetilde{N}_{j;II}[y_1, \ldots, y_{j-2}] & = & \sigma^2  \sum_{\ell = 0}^{j-2} \sum_{i_1 + \ldots + i_{\ell} = j-2} \frac{1}{\ell !} D^{\ell} \mathcal{R}_{II}(0) [ y_{i_1}, \ldots , y_{i_\ell}] ,
\\[0.2cm]
\widetilde{N}_{j;III}[y_1, \ldots, y_{j-2}] & = & \sigma^2  \sum_{\ell = 0}^{j-2} \sum_{i_1 + \ldots + i_{\ell} = j-2} \frac{1}{\ell !} D^{\ell} \Upsilon(0) [ y_{i_1}, \ldots , y_{i_\ell}],
\end{array}
\end{equation}
together with
\begin{equation}
\label{eq:tay:def:b:j}
\begin{array}{lcl}
    \widetilde{B}_{j}[y_1, \ldots, y_{j-1}] & = & \sigma  \sum_{\ell = 0}^{j-1} \sum_{i_1 + \ldots + i_{\ell} = j-1} \frac{1}{\ell !}D^{\ell} \mathcal{S}(0) [ y_{i_1}, \ldots , y_{i_\ell}],
\end{array}
\end{equation}
where in each term we have $i_{\ell'} \ge 1$ for $\ell' \in \{ 1,\ldots,\ell\}$; see \eqref{eq:mr:def:n:j} and \eqref{eq:mr:def:b:j}.
To assist our computations, we first consider an arbitrary tuple $\{i_1, \ldots, i_{\ell} \} \in \{1 , \ldots, r-1 \}^{\ell}$ with $\ell \ge 1$ and write $i_{\rm tot} = i_1 + \ldots + i_{\ell}$.
The weighted arithmetic-geometric-mean 
inequality now yields the useful bound
\begin{equation}
\label{eq:tay:prod:bnd:w}
    \norm{y_{i_1}}_{i_1} \cdots \norm{y_{i_{\ell}}}_{i_\ell}
    \le \frac{1}{i_{\rm tot}}\big[ i_1 \norm{y_{i_1}}_{i_1}^{i_{\rm tot}/i_1}
    + \ldots +
    i_\ell \norm{y_{i_\ell}}_{i_{\ell}}^{i_{\rm tot}/i_\ell}
    \big].
\end{equation}

\begin{lem}
\label{lem:tay:bnd:n:j:i}
Suppose that \textnormal{(HNL)}, \textnormal{(HTw)} and \textnormal{(Hq)} hold.
Then there exists $K_{I} > 0$ so that
for any $2 \le j \le r - 1$
and any tuple $(y_1 , \ldots y_{j-1}) \in H_1 \times \ldots \times H_{j-1}$
we have the bound
\begin{equation}
\begin{array}{lcl}
    \norm{\widetilde{N}_{j;I}[y_1, \ldots , y_{j-1}]}_{j}
     & \le & K_I \sum_{i=1}^{j-1} \norm{y_i}_{i}^{j/i} .
\\[0.2cm]
\end{array}
\end{equation}
\end{lem}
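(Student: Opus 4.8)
The plan is to reduce the estimate to a term-by-term bound on the multilinear maps $D^{\ell}\mathcal{R}_I(0)$ that constitute $\widetilde{N}_{j;I}$, combining the regularity bound for $\mathcal{R}_I$ from Corollary~\ref{cor:sm:r:i} with the weighted arithmetic--geometric mean inequality \eqref{eq:tay:prod:bnd:w}. Recall from \eqref{eq:tay:def:n:j} that $\widetilde{N}_{j;I}[y_1,\ldots,y_{j-1}]$ is a finite sum, over integers $2 \le \ell \le j$ and over tuples $(i_1,\ldots,i_\ell)$ of positive integers with $i_1 + \cdots + i_\ell = j$, of the terms $\tfrac{1}{\ell!}D^{\ell}\mathcal{R}_I(0)[y_{i_1},\ldots,y_{i_\ell}]$. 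Since each such tuple has length $\ell \ge 2$, every index obeys $1 \le i_m \le j-1$, whence $k_{i_m} = k_* + r + 1 - i_m \ge k_* + r + 2 - j = k_j + 1$. Consequently the inclusions $H_{i_m} = H^{k_{i_m}} \hookrightarrow H^{k_j+1} \hookrightarrow H^{k_j}$ and $H_{i_m} \hookrightarrow H^{k_*}$ all hold with norm at most one.

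For a fixed such term I would apply Corollary~\ref{cor:sm:r:i} with differentiation order $\ell$ and regularity index $k = k_j$; this is admissible because $0 \le \ell \le r$ (indeed $\ell \le j \le r-1$) and $k_* \le k_j \le k_* + r + 1 - \ell$, the latter being equivalent to $\ell \le j$. Evaluating the bound \eqref{eq:nl:bnd:d:j:r:i} at $w = 0$ makes the summand carrying the factor $\|w\|_{H^{k_j+1}}$ vanish and reduces the factor $(1 + \|w\|_{H^{k_j}}^{k_j})$ to $1$, leaving
\begin{equation*}
   \norm{D^{\ell}\mathcal{R}_I(0)[y_{i_1},\ldots,y_{i_\ell}]}_{j} \;\le\; K\,\mathcal{P}^{(\ell)}_{k_j}[y_{i_1},\ldots,y_{i_\ell}] + K\,\mathcal{P}^{(\ell)}_{k_*, k_j+1}[y_{i_1},\ldots,y_{i_\ell}].
\end{equation*}
By the inclusions above, $\mathcal{P}^{(\ell)}_{k_j}[y_{i_1},\ldots,y_{i_\ell}] \le \prod_{m=1}^{\ell}\norm{y_{i_m}}_{i_m}$, while $\mathcal{P}^{(\ell)}_{k_*, k_j+1}$ is a sum of $\ell$ products each bounded the same way, so the right-hand side is at most $K(1+\ell)\prod_{m=1}^{\ell}\norm{y_{i_m}}_{i_m} \le K(1+r)\prod_{m=1}^{\ell}\norm{y_{i_m}}_{i_m}$.

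Next I would invoke \eqref{eq:tay:prod:bnd:w} with $i_{\mathrm{tot}} = j$ to convert this product into the desired sum of powers:
\begin{equation*}
   \prod_{m=1}^{\ell}\norm{y_{i_m}}_{i_m} \;\le\; \frac{1}{j}\sum_{m=1}^{\ell} i_m\,\norm{y_{i_m}}_{i_m}^{j/i_m} \;\le\; \sum_{m=1}^{\ell}\norm{y_{i_m}}_{i_m}^{j/i_m} \;\le\; \ell \sum_{i=1}^{j-1}\norm{y_i}_{i}^{j/i},
\end{equation*}
the last inequality because each $i_m \in \{1,\ldots,j-1\}$. Summing over $\ell$ and over the tuples, the number of tuples together with the weights $\ell/\ell!$ contribute only a constant depending on $j$, hence on $r$; taking $K_I$ to be the maximum (times $K(1+r)$) of these constants over $2 \le j \le r-1$ yields the claim.

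The argument is essentially bookkeeping, so I do not anticipate a genuine obstacle; the two points requiring care are the verification that Corollary~\ref{cor:sm:r:i} may be invoked with the pair $(\ell, k_j)$ --- which hinges precisely on $\ell \le j$, the range over which the defining sum runs --- and the direction of the Sobolev embeddings, which relies on $i_m \le j-1$ and hence on the restriction $\ell \ge 2$ built into $\widetilde{N}_{j;I}$.
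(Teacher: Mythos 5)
Your proposal is correct and matches the paper's own proof of the lemma; both reduce each term $D^\ell\mathcal R_I(0)[y_{i_1},\dots,y_{i_\ell}]$ of $\widetilde N_{j;I}$ to a bound via Corollary~\ref{cor:sm:r:i} with $w=0$, use the embedding $k_{i_m}\ge k_j+1$ (valid since $i_m\le j-1$ for $\ell\ge 2$), and then apply the weighted AM--GM inequality \eqref{eq:tay:prod:bnd:w} with $i_{\rm tot}=j$. The paper's proof is terser --- it simply notes these ingredients and says the estimate ``can be read off'' --- whereas you verify explicitly the admissibility $k_*\le k_j\le k_*+r+1-\ell$ (equivalent to $\ell\le j$) and carry out the bookkeeping of the $\mathcal P^{(\ell)}_{k_j}$ and $\mathcal P^{(\ell)}_{k_*,k_j+1}$ terms, which is a welcome but not conceptually different elaboration.
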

\begin{proof}
Consider one of the terms in the sum
\eqref{eq:tay:def:n:j} for $\widetilde{N}_{j;I}$ and note that $\ell > 0$.
Observe that the term is well-defined in $H^{k_j}$
on account of Corollary \ref{cor:sm:r:i}
and the fact that for every $1 \le \ell' \le \ell$
we have $i_\ell' \le j-1$, which means
$k_{i_{\ell'}} \ge k_j + 1$. In particular, we also have
\begin{equation}
    \norm{ y_{i_{\ell'}} }_{H^{k_j + 1}} \le \norm{y_{i_{\ell'}} }_{i_{\ell'}}.
\end{equation}
This allows the desired estimate to be read off from
\eqref{eq:nl:bnd:d:j:r:i} and \eqref{eq:tay:prod:bnd:w}.
\end{proof}

\begin{cor}
\label{cor:tay:bnd:sup:n:i}
Suppose that \textnormal{(HNL)}, \textnormal{(HTw)} and \textnormal{(Hq)} hold,
together with \textnormal{(WH)} for some $2 \le j_* \le r-1$. Then
there exists $K > 0$ so that
for any real $p \ge 1$, any $\sigma \ge 0$, any $\delta > 0$
and any $T \ge 3$ we have the bound
\begin{equation}
\label{eq:tay:bnd:sup:n:j:i}
\begin{array}{lcl}
\mathbb E \sup_{0 \le t \le T } \norm{N_{j_*;I}[\sigma,\delta](t)}_{j_*}^{2p}
 & \le &  \big( [\sigma^2 p ]^{ p j_* }
+ [\sigma^2 \ln T + \delta^2]^{  p j_*  }  \big) K^{2 p }.
\\[0.2cm]
\end{array}
\end{equation}
\end{cor}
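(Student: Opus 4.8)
The plan is to combine the pathwise estimate from Lemma~\ref{lem:tay:bnd:n:j:i} with the moment bounds on the individual expansion functions $Y_i$ guaranteed by the working hypothesis (WH). First I would fix $2 \le j_* \le r-1$ and, using Lemma~\ref{lem:tay:bnd:n:j:i} with $j = j_*$ and $y_i = Y_i[\sigma,\delta](t)$, obtain the pathwise bound
\begin{equation}
\norm{N_{j_*;I}[\sigma,\delta](t)}_{j_*} \le K_I \sum_{i=1}^{j_*-1} \norm{Y_i[\sigma,\delta](t)}_{i}^{j_*/i} .
\end{equation}
Taking the supremum over $0 \le t \le T$, raising to the power $2p$, and using $\big(\sum_{i} a_i\big)^{2p} \le (j_*-1)^{2p-1} \sum_i a_i^{2p}$ together with the monotonicity of expectation reduces matters to estimating, for each $1 \le i \le j_*-1$,
\begin{equation}
\mathbb E \sup_{0 \le t \le T} \norm{Y_i[\sigma,\delta](t)}_i^{2p j_*/i}.
\end{equation}

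Next I would invoke Lemma~\ref{lem:tay:first:est:w:ind:hyp}, which is precisely designed to turn (WH) into moment bounds at the scaled exponent $2 \ell \tilde p / i$. Applying \eqref{eq:tay:bnd:w:j:s:ell:p:div:j} with the identification $\tilde p = p$ and $\ell = j_*$ (so that $2 \ell \tilde p / i = 2 p j_* / i$, and the hypothesis $\ell \ge i$ holds since $i \le j_* - 1 < j_*$) yields
\begin{equation}
\mathbb E \sup_{0 \le t \le T} \norm{Y_i[\sigma,\delta](t)}_i^{2 p j_*/i}
\le \big( [\sigma^2 p]^{p j_*} + [\sigma^2 \ln T + \delta^2]^{p j_*} \big) j_*^{p j_*} K^{2 p j_*} .
\end{equation}
Since $j_*$ is a fixed integer bounded by $r-1$, the factor $j_*^{p j_*} K^{2 p j_*}$ is of the form $\widetilde K^{2p}$ for a new constant $\widetilde K$ depending only on $j_*$, $r$ and $K$. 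Summing over $i$ (there are at most $r-2$ terms) and absorbing all combinatorial factors $(j_*-1)^{2p-1}$, $K_I^{2p}$ and the number of terms into a single constant gives \eqref{eq:tay:bnd:sup:n:j:i}.

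I do not anticipate a serious obstacle here: the proof is essentially bookkeeping, with the two substantive inputs—the pathwise multilinear bound and the scaled moment bound—already isolated as Lemma~\ref{lem:tay:bnd:n:j:i} and Lemma~\ref{lem:tay:first:est:w:ind:hyp}. The only point requiring a little care is the exponent arithmetic: one must check that the exponent $j_*$ appearing on $[\sigma^2 p]$ and $[\sigma^2 \ln T + \delta^2]$ in the target bound matches what \eqref{eq:tay:bnd:w:j:s:ell:p:div:j} produces after choosing $\ell = j_*$, and that choosing $\ell = j_*$ (rather than some $\ell$ depending on $i$) is legitimate, which it is precisely because $\ell \ge j$ is the only constraint in that lemma and $j_* > i$ for every relevant $i$. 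One should also note that the case $j_* = 2$ is covered since then the sum runs only over $i = 1$, where $j_*/i = 2$ is an integer and (WH) applies directly. A short remark that the non-integer $p$ case needs no separate treatment (the lemmas already handle real $p \ge 1$) completes the argument.
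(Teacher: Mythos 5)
Your proof is correct and follows exactly the same strategy as the paper's: apply the pathwise bound of Lemma~\ref{lem:tay:bnd:n:j:i}, raise to the power $2p$ with the convexity estimate on the finite sum, then invoke Lemma~\ref{lem:tay:first:est:w:ind:hyp} (i.e.\ the scaled form \eqref{eq:tay:bnd:w:j:s:ell:p:div:j}) with $\ell = j_*$ and $\tilde p = p$, and absorb constants. The only cosmetic difference is that you spell out the combinatorial factor $(j_*-1)^{2p-1}$, while the paper folds it into $j_*^{2p}$; the substance is identical.
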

\begin{proof}
We first note that Lemma \ref{lem:tay:bnd:n:j:i}
implies that
\begin{equation}
\begin{array}{lcl}
    \norm{N_{j_*;I}[\sigma,\delta](t)}_{j_*}^{2p}
    & \le & K_I^{2p} j_*^{2p} \sum_{i=1}^{j_*-1}
    \norm{Y_i[\sigma,\delta](t)}_{i}^{2p j_*/i}.
\\[0.2cm]
\end{array}
\end{equation}
Applying \eqref{eq:tay:bnd:w:j:s:ell:p:div:j} with $\ell= j_*$,
we hence conclude
\begin{equation}
\begin{array}{lcl}
\mathbb E \sup_{0 \le t \le T}   \norm{N_{j_*;I}[\sigma,\delta](t)}_{k_{j_*}}^{2p}
& \le &
K_I^{2p} j_*^{2p} \sum_{i=1}^{j_*-1}
    \big( [\sigma^2 p]^{p j_*} + [\sigma^2 \ln T + \delta^2]^{p j_*}
    \big) j_*^{p j_*} K^{2 j_* p},
\end{array}
\end{equation}
which fits the stated bound.
\end{proof}

\begin{lem}
\label{lem:tay:bnd:n:j:ii:ii}
Suppose that \textnormal{(HNL)}, \textnormal{(HTw)} and \textnormal{(Hq)} hold.
Pick $k_* > d/2$. Then there exists $K_{II;III} > 0$ so that
for any $2 \le j \le r - 1$,
any tuple $(y_1 , \ldots y_{j-2}) \in H_1 \times \ldots \times H_{j-2}$
and any $\sigma \ge 0$ we have the bound
\begin{equation}
\begin{array}{lcl}
\norm{\widetilde{N}_{j;II}[y_1, \ldots, y_{j-2}]}_{j} +  \norm{\widetilde{N}_{j;III}[y_1, \ldots , y_{j-2}]}_{j}
 & \le & K_{II;III} \sigma^2 \big[ \mathbf{1}_{j=2} + \sum_{i=1}^{j-2} \norm{y_i}_{i}^{(j-2)/i} \big] .
 \\[0.2cm]
\end{array}
\end{equation}
\end{lem}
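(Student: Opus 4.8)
The plan is to mimic the proof of Lemma~\ref{lem:tay:bnd:n:j:i} almost verbatim, now carrying the shifted total weight $j-2$ in place of $j$ and accounting for the slightly heavier regularity loss of $\Upsilon$. Fix $2 \le j \le r-1$ and consider one constituent term of the sums \eqref{eq:tay:def:n:j} defining $\widetilde{N}_{j;II}$ or $\widetilde{N}_{j;III}$, i.e.\ an expression of the form $\sigma^2 \tfrac{1}{\ell!} D^\ell \mathcal{R}_{II}(0)[y_{i_1}, \ldots, y_{i_\ell}]$ or $\sigma^2 \tfrac{1}{\ell!} D^\ell \Upsilon(0)[y_{i_1}, \ldots, y_{i_\ell}]$, with $0 \le \ell \le j-2$, each $i_{\ell'} \ge 1$, and $i_{\mathrm{tot}} := i_1 + \cdots + i_\ell = j-2$.

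First I would isolate the degenerate case $\ell = 0$. Since each $i_{\ell'} \ge 1$, the constraint $i_{\mathrm{tot}} = j-2$ together with $\ell = 0$ forces $j = 2$, and the term is then just $\sigma^2 \mathcal{R}_{II}(0)$, respectively $\sigma^2 \Upsilon(0)$. The inclusions \eqref{eq:mr:inc} applied with differentiation order $0$ guarantee $\mathcal{R}_{II}(0), \Upsilon(0) \in H^{k_2}$, so these contributions are bounded by $K\sigma^2 \mathbf{1}_{j=2}$, which matches the indicator on the right-hand side.

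For $\ell \ge 1$ (which forces $j \ge 3$), the key observation is that every index obeys $i_{\ell'} \le i_{\mathrm{tot}} = j-2$, hence $k_{i_{\ell'}} = k_* + r + 1 - i_{\ell'} \ge k_* + r + 3 - j = k_j + 2$. Thus $\norm{y_{i_{\ell'}}}_{H^{k_j+2}} \le \norm{y_{i_{\ell'}}}_{i_{\ell'}}$, which is precisely the regularity the arguments need in order to be fed into the bound \eqref{eq:nl:bnd:d:j:r:ii} for $D^\ell \mathcal{R}_{II}$ (a one-derivative loss) and the bound \eqref{eq:sm:bnd:derivs:ups} for $D^\ell \Upsilon$ (a two-derivative loss). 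Evaluating those estimates at $w=0$ kills all the polynomial-in-$w$ prefactors, and each surviving $\mathcal{P}$-product — whether built from $H^{k_*}$, $H^{k_j}$, or the mixed $H^{k_j+1}$/$H^{k_j+2}$ norms — collapses, via $k_* \le k_j$ and the comparison $\norm{y_{i_{\ell'}}}_{H^{k_j+2}} \le \norm{y_{i_{\ell'}}}_{i_{\ell'}}$, to a constant multiple of $\prod_{\ell'=1}^{\ell}\norm{y_{i_{\ell'}}}_{i_{\ell'}}$. I would then apply the weighted arithmetic--geometric-mean inequality \eqref{eq:tay:prod:bnd:w} with total weight $i_{\mathrm{tot}} = j-2$ to turn this product into $\sum_{\ell'=1}^{\ell} \norm{y_{i_{\ell'}}}_{i_{\ell'}}^{(j-2)/i_{\ell'}} \le \sum_{i=1}^{j-2} \norm{y_i}_i^{(j-2)/i}$.

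The remaining step is bookkeeping: the sums in \eqref{eq:tay:def:n:j} contain only finitely many terms, with cardinality controlled by $r$ alone, so summing the individual estimates and choosing $K_{II;III}$ large enough yields the claimed bound uniformly in $2 \le j \le r-1$. I do not expect any genuine analytic difficulty here — the lemma is a direct descendant of Lemma~\ref{lem:tay:bnd:n:j:i} combined with the evaluate-at-zero specialisations of the smoothness estimates from {\S}\ref{sec:sm}. The one place where care is required is making sure that the stronger regularity demand ($H^{k_j+2}$, coming from $\Upsilon$) is still met by every argument, which is exactly what the inequality $k_{i_{\ell'}} \ge k_j + 2$ delivers, and correctly pinning down the $\ell = 0$ contribution that produces the $\mathbf{1}_{j=2}$ term.
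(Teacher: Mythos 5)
Your proposal is correct and follows essentially the same route as the paper's proof: isolate the $\ell=0$ case (which forces $j=2$ and gives the $\mathbf{1}_{j=2}$ contribution), then for $\ell\ge1$ use $i_{\ell'} \le j-2 \Rightarrow k_{i_{\ell'}} \ge k_j+2$ to ensure the arguments land in the space required by the two-derivative loss of $\Upsilon$, invoke the derivative bounds from Corollaries~\ref{cor:sm:r:ii} and~\ref{cor:sm:ups} at $w=0$, and finish with the weighted AM--GM inequality \eqref{eq:tay:prod:bnd:w}. Your version is a bit more explicit about how the $\mathcal{P}$-products collapse, but the substance is identical.
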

\begin{proof}
Consider one of the terms in the sum
\eqref{eq:tay:def:n:j} for $\widetilde{N}_{j;II}$ or $\widetilde{N}_{j;III}$.
We note that $\ell = 0$
is only possible when $j=2$, which is covered
by the $\mathbf{1}_{j=2}$ term in the bounds.
If $\ell > 0$, the term is well-defined in $H^{k_j}$
on account of Corollary \ref{cor:sm:r:ii} or
Corollary \ref{cor:sm:ups}
and the fact that for every $1 \le \ell' \le \ell$
we have $i_{\ell'} \le j-2$, which implies
$k_{i_{\ell'}} \ge k_j + 2$. Arguing as in the proof of Lemma
\ref{lem:tay:bnd:n:j:i}, the desired bound
now follows from \eqref{eq:tay:prod:bnd:w},
\eqref{eq:nl:bnd:d:j:r:ii}
and \eqref{eq:sm:bnd:derivs:ups}.
\end{proof}

\begin{cor}
\label{cor:tay:bnd:sup:n:ii:iii}
Suppose that \textnormal{(HNL)}, \textnormal{(HTw)} and \textnormal{(Hq)} hold,
together with \textnormal{(WH)} for some $2 \le j_* \le r-1$. Then
there exists $K > 0$ so that
for any real $p \ge 1$, any $\sigma \ge 0$, any $\delta \ge 0$
and any $T \ge 2$ we have the bound
\begin{equation}
\label{eq:tay:exp:bnd:n:j:ii:iii}
\begin{array}{lcl}
\mathbb E \sup_{0 \le t \le T }\big[ \norm{N_{j_*;II}[\sigma,\delta](t)}_{{j_*}}^{2p}
+ \norm{N_{j_*;III}[\sigma,\delta](t)}_{{j_*}}^{2p}\big]
 & \le &  \big( [\sigma^2 p ]^{ p j_* }
+ [\sigma^2 \ln T + \delta^2]^{  p j_*  }    \big) K^{2 p }.
\\[0.2cm]
\end{array}
\end{equation}
\end{cor}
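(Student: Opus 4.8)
The plan is to mirror the proof of Corollary~\ref{cor:tay:bnd:sup:n:i}, replacing the pathwise estimate of Lemma~\ref{lem:tay:bnd:n:j:i} by that of Lemma~\ref{lem:tay:bnd:n:j:ii:ii}. First I would substitute $j = j_*$ and $y_i = Y_i[\sigma,\delta](t)$ into Lemma~\ref{lem:tay:bnd:n:j:ii:ii} to obtain the pathwise bound
\[
  \norm{N_{j_*;II}[\sigma,\delta](t)}_{j_*} + \norm{N_{j_*;III}[\sigma,\delta](t)}_{j_*}
  \le K_{II;III}\,\sigma^2\Big[\,\mathbf{1}_{j_*=2} + \sum_{i=1}^{j_*-2}\norm{Y_i[\sigma,\delta](t)}_{i}^{(j_*-2)/i}\Big].
\]
Raising to the power $2p$ and applying the elementary inequalities $(a+b)^{2p}\le 2^{2p-1}(a^{2p}+b^{2p})$ and $\big(\sum_{i=1}^{m}a_i\big)^{2p}\le m^{2p-1}\sum_{i=1}^{m}a_i^{2p}$, it then suffices to control $\mathbb E\sup_{0\le t\le T}$ of $\sigma^{4p}\mathbf{1}_{j_*=2}$ and of $\sigma^{4p}\norm{Y_i[\sigma,\delta](t)}_{i}^{2p(j_*-2)/i}$ for each $1\le i\le j_*-2$, up to a prefactor $K_{II;III}^{2p}(2j_*)^{2p}$ that is absorbed into the final constant.

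The term $\sigma^{4p}\mathbf{1}_{j_*=2}$ is only present when $j_*=2$, and there $\sigma^{4p}=[\sigma^2]^{2p}\le[\sigma^2 p]^{2p}=[\sigma^2 p]^{pj_*}$ since $p\ge 1$, so it fits the stated bound. For the sum terms (which only occur when $j_*\ge 3$, so that $j_*-2\ge 1$) I would invoke the estimate \eqref{eq:tay:bnd:w:j:s:ell:p:div:j} from Lemma~\ref{lem:tay:first:est:w:ind:hyp} with $\tilde p = p$, index choice $\ell = j_*-2$ and $j = i$; this is legitimate because $1\le i\le j_*-2<j_*$ and $\ell = j_*-2\ge i$, and $(\mathrm{WH})$ holds for $j_*$ by hypothesis. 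It yields
\[
  \mathbb E\sup_{0\le t\le T}\norm{Y_i[\sigma,\delta](t)}_{i}^{2p(j_*-2)/i}
  \le \big([\sigma^2 p]^{p(j_*-2)} + [\sigma^2\ln T+\delta^2]^{p(j_*-2)}\big)(j_*-2)^{p(j_*-2)}K^{2p(j_*-2)}.
\]

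Multiplying through by $\sigma^{4p}=[\sigma^2]^{2p}$, the first contribution becomes $[\sigma^2]^{2p}[\sigma^2 p]^{p(j_*-2)}\le[\sigma^2 p]^{pj_*}$. For the second contribution I would split on whether $\sigma^2\le\sigma^2\ln T+\delta^2$: if so, $[\sigma^2]^{2p}[\sigma^2\ln T+\delta^2]^{p(j_*-2)}\le[\sigma^2\ln T+\delta^2]^{pj_*}$; if not, then necessarily $\sigma^2\ln T+\delta^2<\sigma^2$, whence $[\sigma^2]^{2p}[\sigma^2\ln T+\delta^2]^{p(j_*-2)}<[\sigma^2]^{pj_*}\le[\sigma^2 p]^{pj_*}$. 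Summing the (at most $j_*$-many) contributions and absorbing all numerical factors into a single constant $K>0$ then gives \eqref{eq:tay:exp:bnd:n:j:ii:iii}. I do not anticipate a genuine obstacle; the only point needing care is the bookkeeping of the extra $\sigma^{4p}$ factor coming from the $\sigma^2$ prefactor in Lemma~\ref{lem:tay:bnd:n:j:ii:ii}, which is precisely why the final case distinction on the size of $\sigma^2$ relative to $\sigma^2\ln T+\delta^2$ is required.
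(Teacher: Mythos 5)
Your proof is correct and follows essentially the same route as the paper: the same split into $j_*=2$ versus $j_*\ge 3$, the same pathwise bound from Lemma~\ref{lem:tay:bnd:n:j:ii:ii}, and the same invocation of \eqref{eq:tay:bnd:w:j:s:ell:p:div:j} with $\ell=j_*-2$. The only (cosmetic) difference is in the final bookkeeping: to absorb the $\sigma^{4p}$ prefactor you use a case distinction on $\sigma^2$ versus $\sigma^2\ln T+\delta^2$, whereas the paper applies the weighted Young inequality \eqref{eq:tay:yng:ineq}; both are equally elementary and yield the same estimate.
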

\begin{proof}
For $j_* = 2$, we note that Lemma
\ref{lem:tay:bnd:n:j:ii:ii} implies
the pathwise bound
\begin{equation}
    \norm{N_{2;II}[\sigma,\delta](t)}_{2}^{2p} \le K^{2p} \sigma^{4 p},
\end{equation}
which can be absorbed by \eqref{eq:tay:exp:bnd:n:j:ii:iii} since $p \ge 1$. For $j_* \ge 3$, Lemma \ref{lem:tay:bnd:n:j:ii:ii} implies
\begin{equation}
\begin{array}{lcl}
    \norm{N_{j_*;II}[\sigma,\delta](t)}_{j_*}^{2p}
    & \le & K_{II;III}^{2p} \sigma^{4p} j_*^{2p} \sum_{i=1}^{j_*-2}
    \norm{Y_i[\sigma,\delta](t)}_{i}^{2p (j_*-2)/i}.
\\[0.2cm]
\end{array}
\end{equation}
Applying \eqref{eq:tay:bnd:w:j:s:ell:p:div:j} with $\ell= j_*-2$,
this yields
\begin{equation}
    \mathbb E \sup_{0 \le t \le T}  \norm{N_{j_*;II}[\sigma,\delta](t)}_{j_*}^{2p}
 \le
K_{II;III}^{2p} \sigma^{4p} j_*^{2p+1}
    \big( [\sigma^2 p]^{p (j_*-2)} + [\sigma^2 \ln T + \delta^2]^{p (j_*-2)}
     \big) (j_*-2)^{p (j_*-2)} K^{2 j_* p}.
\end{equation}
This can be absorbed in \eqref{eq:tay:exp:bnd:n:j:ii:iii}
after using Young's inequality to find
\begin{equation}
\label{eq:tay:yng:ineq}
    \sigma^{4 p} [\sigma^2 \ln T + \delta^2]^{p(j_*-2)} \le \sigma^{2p j_*}\frac{2}{j_*}+ [\sigma^2 \ln T + \delta^2]^{p j_*} \frac{j_*-2}{j_*}
\end{equation}
and noting that $p \ge 1$. The same estimates hold for $N_{j_*;III}$.
\end{proof}

\begin{lem}
\label{lem:tay:bnd:b:j}
Suppose that \textnormal{(HNL)}, \textnormal{(HTw)} and \textnormal{(Hq)} hold.
Then there exists $K >0$
so that for any $2 \le j \le r-1$,
any tuple $(y_1 , \ldots y_{j-1}) \in H_1 \times \ldots \times H_{j-1}$
and any $\sigma \ge 0$ we have
the bound
\begin{equation}
\begin{array}{lcl}
    \norm{\widetilde{B}_{j}[y_1, \ldots , y_{j-1}]}_{HS(L^2_Q; H_j)}
 & \le & K_B \sigma  
 \sum_{i=1}^{j-1} \norm{y_i}_{i}^{(j-1)/i} . 
 \\[0.2cm]
\end{array}
\end{equation}
\end{lem}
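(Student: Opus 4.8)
The plan is to mirror the proof of Lemma~\ref{lem:tay:bnd:n:j:i} almost verbatim, with the nonlinearity $\mathcal{R}_I$ replaced by $\mathcal{S}$ and the smoothness bounds of Corollary~\ref{cor:sm:r:i} replaced by those of Lemma~\ref{lem:sm:s}. First I would recall the definition \eqref{eq:tay:def:b:j} of $\widetilde{B}_j$ and note that, because $j \ge 2$, every summand has $\ell \ge 1$: the index $\ell = 0$ forces $i_1 + \cdots + i_\ell = j - 1 = 0$, which only occurs for $\widetilde{B}_1$. Hence it suffices to bound a single term $\tfrac{1}{\ell!} D^\ell \mathcal{S}(0)[y_{i_1}, \ldots, y_{i_\ell}]$ with $\ell \ge 1$, each $i_{\ell'} \ge 1$ and $i_1 + \cdots + i_\ell = j - 1$, and then sum over the finitely many such terms, pulling out the prefactor $\sigma$ present in \eqref{eq:tay:def:b:j}.

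For a fixed summand I would observe that each index satisfies $i_{\ell'} \le j - 1$, so that $k_{i_{\ell'}} = k_* + r + 1 - i_{\ell'} \ge k_j + 1$; this gives the embedding $H_{i_{\ell'}} \hookrightarrow H^{k_j + 1}$ together with $\norm{y_{i_{\ell'}}}_{H^{k_j+1}} \le \norm{y_{i_{\ell'}}}_{i_{\ell'}}$. Lemma~\ref{lem:sm:s}, applied with target regularity index $k = k_j$ and order of smoothness $\ell$ (admissible since $\ell \le j - 1 < j$ forces $k_j \le k_\ell$, while $\ell \le r-2 \le r$ and $k_j \ge k_*$), shows that $D^\ell \mathcal{S}(0)$ is a bounded $\ell$-linear map from $(H^{k_j+1})^\ell$ into $HS(L^2_Q; H^{k_j})$; evaluating \eqref{eq:nl:bnd:d:j:mathcal:s} at $w = 0$, where the terms carrying an $H^{k+1}$-norm of the base point drop, yields
\[
\norm{D^\ell \mathcal{S}(0)[y_{i_1}, \ldots, y_{i_\ell}]}_{HS(L^2_Q; H_j)}
\le K \prod_{\ell' = 1}^{\ell} \norm{y_{i_{\ell'}}}_{H^{k_j + 1}}
\le K \prod_{\ell' = 1}^{\ell} \norm{y_{i_{\ell'}}}_{i_{\ell'}},
\]
with $K$ independent of $j$ since $\ell \le r - 1$.

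Finally, I would apply the weighted arithmetic--geometric-mean inequality \eqref{eq:tay:prod:bnd:w} with $i_{\rm tot} = j - 1$ to convert the product into the sum $\sum_{i=1}^{j-1} \norm{y_i}_i^{(j-1)/i}$ up to a combinatorial constant, and then absorb the factors $\tfrac{1}{\ell!}$ and the bounded number of summands into a single constant $K_B$. I do not anticipate a genuine obstacle here; the only points demanding care are the bookkeeping of the Sobolev indices — in particular checking the edge case $\ell = 1$, $i_1 = j - 1$, in which $y_{j-1}$ lies exactly in $H^{k_j+1} = H_{j-1}$ — and confirming that the pair $(\ell, k_j)$ is admissible in Lemma~\ref{lem:sm:s} for all relevant $\ell$.
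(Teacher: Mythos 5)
Your proof is correct and matches the paper's approach exactly: the paper's proof of this lemma is a one-line pointer to the argument of Lemma~\ref{lem:tay:bnd:n:j:i}, invoking \eqref{eq:tay:def:b:j}, Lemma~\ref{lem:sm:s}, and the AGM bound \eqref{eq:tay:prod:bnd:w}, which is precisely the route you take. Your more explicit bookkeeping of the Sobolev indices (checking $k_{i_{\ell'}} \ge k_j + 1$ and the admissibility of $(\ell, k_j)$ in Lemma~\ref{lem:sm:s}) simply spells out the details the paper leaves implicit, and is all accurate.
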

\begin{proof}
Arguing as in the proof of Lemma \ref{lem:tay:bnd:n:j:i},
this bound follows by inspecting
\eqref{eq:tay:def:b:j}
and using
Lemma \ref{lem:sm:s} together with \eqref{eq:tay:prod:bnd:w}.
\end{proof}

\begin{cor}
\label{cor:tay:bnd:sup:b}
Suppose that \textnormal{(HNL)}, \textnormal{(HTw)} and \textnormal{(Hq)} hold,
together with  \textnormal{(WH)} for some $2 \le j_* \le r-1$
Then
there exists $K > 0$ so that
for any real $p \ge 1$, any $\sigma \ge 0$, any $\delta \ge 0$
and any $T \ge 2$ we have the bound
\begin{equation}
\label{eq:tay:bnd:sup:b:j}
\begin{array}{lcl}
\mathbb E \sup_{0 \le t \le T } \norm{B_{j_*}[\sigma,\delta](t)}_{HS(L^2_Q; H_{j_*})}^{2p}
 & \le & \sigma^{2 p} K^{2p} \big( [\sigma^2 p]^{  p (j_*-1) }
+ [\sigma^2 \ln T + \delta^2]^{  p (j_*-1)  }    \big).
\\[0.2cm]
\end{array}
\end{equation}
\end{cor}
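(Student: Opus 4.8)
The plan is to mimic the proof of Corollary~\ref{cor:tay:bnd:sup:n:i}, combining the pathwise algebraic bound on $\widetilde{B}_{j_*}$ from Lemma~\ref{lem:tay:bnd:b:j} with the moment estimates for the lower-order expansion functions supplied by \textnormal{(WH)} through Lemma~\ref{lem:tay:first:est:w:ind:hyp}. First I would apply Lemma~\ref{lem:tay:bnd:b:j} with $(y_1,\ldots,y_{j_*-1}) = \big(Y_1[\sigma,\delta](t),\ldots,Y_{j_*-1}[\sigma,\delta](t)\big)$ to obtain, for every $t \ge 0$, the pathwise bound
\begin{equation*}
\norm{B_{j_*}[\sigma,\delta](t)}_{HS(L^2_Q;H_{j_*})} \le K_B \, \sigma \sum_{i=1}^{j_*-1} \norm{Y_i[\sigma,\delta](t)}_{i}^{(j_*-1)/i}.
\end{equation*}
Raising to the power $2p$ and using $\big(\sum_{i=1}^{j_*-1} a_i\big)^{2p} \le (j_*-1)^{2p}\sum_{i=1}^{j_*-1} a_i^{2p}$ yields
\begin{equation*}
\norm{B_{j_*}[\sigma,\delta](t)}_{HS(L^2_Q;H_{j_*})}^{2p} \le K_B^{2p}\,\sigma^{2p}\,(j_*-1)^{2p}\sum_{i=1}^{j_*-1}\norm{Y_i[\sigma,\delta](t)}_{i}^{2p(j_*-1)/i}.
\end{equation*}

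Next I would take $\sup_{0\le t\le T}$ followed by the expectation and apply the bound \eqref{eq:tay:bnd:w:j:s:ell:p:div:j} from Lemma~\ref{lem:tay:first:est:w:ind:hyp} to each summand with $\ell = j_*-1$ and $\tilde p = p$. This is admissible since $1 \le i \le j_*-1 < j_*$ and $\ell = j_*-1 \ge i$, so the hypotheses of that lemma together with \textnormal{(WH)} for $j_*$ are in force, and $2\ell\tilde p/i = 2p(j_*-1)/i$ matches the exponent above. This produces, for each $i$,
\begin{equation*}
\mathbb E \sup_{0 \le t \le T}\norm{Y_i[\sigma,\delta](t)}_{i}^{2p(j_*-1)/i} \le \big([\sigma^2 p]^{p(j_*-1)} + [\sigma^2\ln T + \delta^2]^{p(j_*-1)}\big)(j_*-1)^{p(j_*-1)}K^{2p(j_*-1)}.
\end{equation*}
Summing over the at most $j_*-1$ values of $i$ and collecting the $j_*$-dependent prefactors, I arrive at
\begin{equation*}
\mathbb E \sup_{0 \le t \le T}\norm{B_{j_*}[\sigma,\delta](t)}_{HS(L^2_Q;H_{j_*})}^{2p} \le \sigma^{2p}\,C^p\,\big([\sigma^2 p]^{p(j_*-1)} + [\sigma^2\ln T + \delta^2]^{p(j_*-1)}\big),
\end{equation*}
where $C$ depends only on $K_B$, the constant from \textnormal{(WH)}, and $r$, using $j_* \le r-1$ and $p \ge 1$ to absorb the factors $(j_*-1)^{2p}$, $(j_*-1)$ and $(j_*-1)^{p(j_*-1)} = \big((j_*-1)^{j_*-1}\big)^p$ into $C^p$. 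Relabelling $C^p = K^{2p}$ for a suitable $K$ gives the claim.

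There is no substantial obstacle: this is a routine corollary once Lemmas~\ref{lem:tay:bnd:b:j} and~\ref{lem:tay:first:est:w:ind:hyp} are available. The only points requiring a little care are the index bookkeeping—checking that $\ell = j_*-1 \ge i$ for every $i$ appearing in the sum, which is exactly what lets us invoke \eqref{eq:tay:bnd:w:j:s:ell:p:div:j}—and verifying that every combinatorial constant depending on $j_*$ (hence on $r$) can be pushed into the final constant because $p \ge 1$ and $j_*-1 \ge 1$.
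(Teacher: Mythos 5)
Your proof is correct and mirrors the paper's argument exactly: the paper likewise applies Lemma \ref{lem:tay:bnd:b:j} to get the pathwise bound, then invokes \eqref{eq:tay:bnd:w:j:s:ell:p:div:j} with $\ell = j_*-1$ and absorbs the $j_*$-dependent combinatorial factors into the final constant. The only cosmetic difference is the prefactor $(j_*-1)^{2p}$ versus the paper's $j_*^{2p}$, which is immaterial since both are absorbed into $K^{2p}$.
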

\begin{proof}
Applying Lemma \ref{lem:tay:bnd:b:j} we obtain
the pathwise bound
\begin{equation}
\begin{array}{lcl}
\norm{B_{j_*}[\sigma,\delta](t)}_{HS(L^2_Q; H_{j_*})}^{2p}
    & \le & K_B^{2p} \sigma^{2p} j_*^{2p} \sum_{i=1}^{j_*-1}
    \norm{Y_i[\sigma,\delta](t)}_{i}^{2p (j_*-1)/i} .
\\[0.2cm]
\end{array}
\end{equation}
Using
\eqref{eq:tay:bnd:w:j:s:ell:p:div:j}
with $\ell = j_* - 1$
we hence conclude
\begin{equation}
\begin{array}{lcl}
\mathbb E \sup_{0 \le s \le T}
\norm{B_{j_*}[W]}_{k_{j_*}}^{2p}
& \le &
K_B^{2p} j_*^{2p+1} \sigma^{2p}
    \big( [\sigma^2 p]^{p (j_*-1)} + [\sigma^2 \ln T + \delta^2]^{p (j_*-1)}
     \big) j_*^{p (j_*-1)} K^{2 (j_*-1) p},
\end{array}
\end{equation}
which fits the stated bound.
\end{proof}

\begin{proof}[Proof of Proposition \ref{prop:tay:wh:holds}]
We proceed by induction, noting that the base case $j_* = 2$
is covered by the discussion following \eqref{eq:tay:def:w1:bis}.
Assuming that (WH) holds for some $2 \le j_* \le r - 1$,
we write
the definition \eqref{eq:mr:def:w:j:all}
in the form
\begin{equation}
    Y_{j_*} = \mathcal{E}^d_{N_{j_*:I}}
    + \mathcal{E}^d_{N_{j_*:II}}
    + \mathcal{E}^d_{N_{j_*;III}}
    + \mathcal{E}^s_{B_{j_*}}.
\end{equation}
Assuming without loss that $\sigma > 0$,
the estimates \eqref{eq:tay:bnd:sup:n:j:i}
and \eqref{eq:tay:exp:bnd:n:j:ii:iii}
imply that $N_{j_*;I}$, $N_{j_*;II}$ and $N_{j_*;III}$
all satisfy
(hN) with
\begin{equation}
\Theta_1 = \sigma^{j_*} K,
\qquad \Theta_2 = \sigma^{-2}[ \sigma^2 \ln T + \delta^2],
\qquad n = j_*.
\end{equation}
In particular, an application of Proposition \ref{prp:cnv:hn}
shows that the corresponding convolutions
satisfy the bounds in \eqref{eq:tay:work:hyp}.
In addition, the estimate \eqref{eq:tay:bnd:sup:b:j}
shows that the $B_{j_*} / \sigma$
satisfies (hB) with
\begin{equation}
        \Theta_1 = \sigma^{j_*-1} K,
         \qquad
         \Theta_2 = \sigma^{-2}[ \sigma^2 \ln T + \delta^2],
         \qquad n= j_*.
    \end{equation}
An application of Proposition \ref{prp:cnv:hb}
now yields
\begin{equation}
    \sigma^{-2p} \mathbb E \sup_{0 \le t \le T }
    \norm{\mathcal{E}^s_{B_{j_*}}}^{2p}_{k_{j_*}}
    \le K_{\rm gr}^{2p} (32 e j_*)^{2j_*p}
    \big( p^{j_* p} + [2 \ln T + \sigma^{-2} \delta^2 ]^{j_*p } \big)
    \sigma^{ 2p(j_* - 1)} K^{2p}.
\end{equation}
This can be rewritten as
\begin{equation}
     \mathbb E \sup_{0 \le t \le T }
    \norm{\mathcal{E}^s_{B_{j_*}}}^{2p}_{k_{j_*}}
    \le K_{\rm gr}^{2p} (32 e j_*)^{2j_*p} 2^{j_* p}
    \big( (\sigma^2 p)^{j_* p} + [\sigma^2 \ln T + \delta^2 ]^{j_*p } \big)
     K^{2p},
\end{equation}
establishing that $\mathcal{E}^s_B$ satisfies
the bounds in \eqref{eq:tay:work:hyp}. In particular,
(WH) is satisfied for $j_* + 1$.
\end{proof}

\begin{proof}[Proof of Proposition \ref{prop:mr:tay}]
Item (i) follows from \cite[Cor. 3.8]{bosch2024multidimensional},
item (ii) follows from the proof of \cite[Prop. 6.4]{bosch2024multidimensional}
and item (iii) follows from Proposition \ref{prop:tay:wh:holds}.
\end{proof}

\subsection{Remainder bounds}
\label{subsec:tay:rem}

We now focus on obtaining estimates for the remainder terms \eqref{eq:tay:def:rem:fncs}.
As a first observation,
we note that
\begin{equation}
\label{eq:tay:rem:bnds:k_j:big}
    k_j \ge k_* + 2 , \hbox{ for all } 1 \le j \le r-1.
\end{equation}
In particular, for any $\ell \ge 1$ we see that
\begin{equation}
\label{eq:tay:bnd:w:tay}
    \norm{Y_{\rm tay}}_{H^{k_* + 2}}^{\ell}  \le (r-1)^{\ell-1} \big[ \norm{Y_1}_{1}^{\ell} + \ldots + \norm{Y_{r-1}}_{r-1}^{\ell} \big].
\end{equation}
In addition, the properties of the stopping time
allow us to assume a uniform pathwise bound
\begin{equation}
\label{eq:tay:bnd:w:unif}
    \mathbf{1}_{0 \le t \le t_{\rm tay}(\eta)}
    \norm{Y_{\rm tay}[\sigma,\delta](t)}_{H^{k_* + 2}} \le K_{\rm tay} \sqrt{\eta}.
\end{equation}

Let us first write
\begin{equation}
    N_{\rm rem;I}[\sigma,\delta] = \mathcal{R}_I\big(Y_{\rm tay}[\sigma, \delta] \big) - \sum_{j=1}^{r-1} N_{j;I}\big[Y_1[\sigma, \delta] , \ldots , Y_{j-1}[\sigma, \delta] \big].
\end{equation}
The strategy is to make the decomposition
\begin{equation}
    N_{\rm rem;I}[\sigma,\delta] = N_{\rm rem;Ia}[\sigma,\delta]
    + N_{\rm rem;Ib}[\sigma,\delta]
\end{equation}
involving the multi-linear term
\begin{equation}
\label{eq:rem:def:n:rem:i:a}
\begin{array}{lcl}
    N_{\rm rem;Ia}[\sigma,\delta] & = &
    \sum_{\ell=2}^{r-1} \sum_{i_1 + \ldots + i_{\ell} \ge r} \frac{1}{\ell !} D^{\ell} \widetilde{\mathcal{R}}_I(0) \big[ Y_{i_1}[\sigma,\delta], \ldots , Y_{i_\ell}[\sigma,\delta]
    \big]
\end{array}
\end{equation}
with $1 \le i_{\ell'} \le r-1$ for $\ell'= 1 \ldots \ell$,
together with the nonlinear residual
\begin{equation}
\label{eq:rem:def:n:rem:i:b}
    N_{\rm rem;Ib}[\sigma,\delta] =\int_0^1 \cdots \int_0^1
    \mathcal{Q}_I( Y_{\rm tay}[\sigma,\delta];t_1, \ldots , t_{r-1},t_r )
    \, \mathrm dt_r \cdots \mathrm dt_1
\end{equation}
featuring the integrand
\begin{equation}
\mathcal{Q}_I ( Y_{\rm tay}; t_1, \ldots , t_{r-1}, t_r)
=
    D^r \mathcal{R}_I\big( t_1 \cdots t_r Y_{\rm tay}\big)\big[Y_{\rm tay}, t_1 Y_{\rm tay},  \ldots, t_1 \cdots t_{r-1} Y_{\rm tay}\big].
\end{equation}

\begin{lem}
\label{lem:rem:bnd:n:rem:i:a}
Suppose that \textnormal{(HNL)}, \textnormal{(HTw)}, \textnormal{(H$V_*$)} and \textnormal{(Hq)} hold.
Then there exists $K > 0$ so that
for all $t < t_{\rm tay}(1)$, all $\sigma \ge 0$ and all $\delta \ge 0$ we have the bound
\begin{equation}
\label{eq:tay:bnd:n:rem:i:a}
\begin{array}{lcl}
    \norm{N_{\rm rem;Ia}[\sigma,\delta](t)  }_{H^{k_*} }
     & \le & K \sum_{i=1}^{r-1} \norm{Y_i[\sigma,\delta](t)}_{i}^{r/i} .
\\[0.2cm]
\end{array}
\end{equation}
\end{lem}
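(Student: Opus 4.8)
The plan is to bound, one at a time, each of the finitely many multilinear terms in the definition \eqref{eq:rem:def:n:rem:i:a} of $N_{\rm rem;Ia}$ and then sum. Fix such a term, indexed by some $\ell \in \{2, \ldots, r-1\}$ and a tuple $(i_1, \ldots, i_\ell)$ with $1 \le i_{\ell'} \le r-1$ for each $\ell'$ and with $i_{\rm tot} := i_1 + \cdots + i_\ell \ge r$. The structural fact I will use repeatedly is that $i_{\ell'} \le r-1$ forces $k_{i_{\ell'}} = k_* + r + 1 - i_{\ell'} \ge k_* + 2$, so that each factor $Y_{i_{\ell'}}$ lies in a space embedding continuously into both $H^{k_*+1}$ and $H^{k_*}$, with $\norm{Y_{i_{\ell'}}}_{H^{k_*+1}} \le \norm{Y_{i_{\ell'}}}_{i_{\ell'}}$ and $\norm{Y_{i_{\ell'}}}_{H^{k_*}} \le \norm{Y_{i_{\ell'}}}_{i_{\ell'}}$.

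First I would invoke the derivative estimate for $\mathcal{R}_I$ at the base point, i.e.\ the bound \eqref{eq:nl:bnd:d:j:r:i} from Corollary \ref{cor:sm:r:i} with $k = k_*$, $j = \ell$ (admissible since $\ell \le r-1 \le r$) and $w = 0$. The two terms on the right-hand side of \eqref{eq:nl:bnd:d:j:r:i} that carry a factor $\norm{w}_{H^{k_*}}$ or $\norm{w}_{H^{k_*+1}}$ then vanish, leaving
\[
\norm{ D^\ell \widetilde{\mathcal{R}}_I(0)[Y_{i_1}, \ldots, Y_{i_\ell}] }_{H^{k_*}}
\le K\, \mathcal{P}^{(\ell)}_{k_*}[Y_{i_1}, \ldots, Y_{i_\ell}] + K\, \mathcal{P}^{(\ell)}_{k_*, k_*+1}[Y_{i_1}, \ldots, Y_{i_\ell}].
\]
Using the embeddings recorded above, each of the two products on the right is at most $\ell \prod_{\ell'=1}^{\ell} \norm{Y_{i_{\ell'}}}_{i_{\ell'}}$, and since $\ell \le r-1$ is bounded this gives a single-term estimate of the form $K \prod_{\ell'=1}^{\ell} \norm{Y_{i_{\ell'}}}_{i_{\ell'}}$.

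Next I would convert this product into the sum of powers appearing in the claim by means of the weighted arithmetic-geometric-mean inequality \eqref{eq:tay:prod:bnd:w}, which yields
\[
\prod_{\ell'=1}^{\ell} \norm{Y_{i_{\ell'}}}_{i_{\ell'}}
\le \frac{1}{i_{\rm tot}} \sum_{\ell'=1}^{\ell} i_{\ell'}\, \norm{Y_{i_{\ell'}}}_{i_{\ell'}}^{\, i_{\rm tot}/i_{\ell'}}
\le \sum_{\ell'=1}^{\ell} \norm{Y_{i_{\ell'}}}_{i_{\ell'}}^{\, i_{\rm tot}/i_{\ell'}}.
\]
Here the hypothesis $t < t_{\rm tay}(1)$ enters: it forces $\norm{Y_i[\sigma,\delta](t)}_i < 1$ for every $i$, so since $i_{\rm tot} \ge r$ the exponents may be decreased, $\norm{Y_{i_{\ell'}}}_{i_{\ell'}}^{\, i_{\rm tot}/i_{\ell'}} \le \norm{Y_{i_{\ell'}}}_{i_{\ell'}}^{\, r/i_{\ell'}} \le \sum_{i=1}^{r-1} \norm{Y_i[\sigma,\delta](t)}_i^{\, r/i}$. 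Summing over the finitely many pairs $(\ell,(i_1,\ldots,i_\ell))$ — their number depends only on $r$ and is absorbed into $K$ — then gives \eqref{eq:tay:bnd:n:rem:i:a}.

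The argument is essentially bookkeeping and I do not expect a genuine obstacle; the single point requiring care — and the reason the residual is split into $N_{\rm rem;Ia}$ and $N_{\rm rem;Ib}$ — is the exponent-lowering step, which uses simultaneously the constraint $i_{\rm tot} \ge r$ built into the summation range of \eqref{eq:rem:def:n:rem:i:a} and the sub-unit pathwise bound $\norm{Y_i}_i < 1$ valid on $[0, t_{\rm tay}(1))$.
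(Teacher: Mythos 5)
Your proof is correct and follows essentially the same route as the paper's: apply \eqref{eq:nl:bnd:d:j:r:i} at $w=0$, embed $H_{i_{\ell'}} \hookrightarrow H^{k_*+1}$ to reduce to a product of $\norm{Y_{i_{\ell'}}}_{i_{\ell'}}$, invoke the weighted AM-GM bound \eqref{eq:tay:prod:bnd:w}, and use $i_{\rm tot}\ge r$ together with the a-priori bound $\norm{Y_i}_i\le1$ on $[0,t_{\rm tay}(1))$ to lower the exponents to $r/i$. You spell out a couple of small points the paper leaves implicit (that the $\norm{w}$-carrying terms in \eqref{eq:nl:bnd:d:j:r:i} vanish at $w=0$, and that $k_{i_{\ell'}}\ge k_*+2$ justifies the embedding), but the content is the same.
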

\begin{proof}
Consider one of the terms in the sum
 \eqref{eq:rem:def:n:rem:i:a}
and write $i_{\rm tot} = i_1 + \ldots + i_{\ell}$.
We note that the term is well-defined in $H^{k_*}$
on account of Corollary \ref{cor:sm:r:i} and
\eqref{eq:tay:rem:bnds:k_j:big}.
Using \eqref{eq:nl:bnd:d:j:r:i} and \eqref{eq:tay:prod:bnd:w}
we obtain the bound
\begin{equation}
    \norm{D^{\ell} \mathcal{R}_{I}(0)[Y_{i_1}, \ldots , Y_{i_{\ell}}]}_{H^{k_*}}
    \le K \big[ \norm{Y_{i_1}}_{i_1}^{i_{\rm tot}/i_1}
    + \ldots + \norm{Y_{i_\ell}}_{i_\ell}^{i_{\rm tot}/i_{\ell}}
    \big].
\end{equation}
The desired estimate \eqref{eq:tay:bnd:n:rem:i:a} hence follows by noting that $i_{\rm tot} \ge r$
and using the a-priori bound $\norm{Y_{i_{\ell'}}}_{{i_{\ell'}}} \le 1$ that is available for $1 \le \ell' \le \ell$ as a consequence of the stopping time.
\end{proof}

\begin{lem}
\label{lem:rem:bnd:n:rem:i:b}
Suuppose that \textnormal{(HNL)}, \textnormal{(HTw)}, \textnormal{(H$V_*$)} and \textnormal{(Hq)} hold. Then there exists $K > 0$
so that for all $t \le t_{\rm tay}(1)$, all $\sigma \ge 0$ and all $\delta \ge 0$ we have the bound
\begin{equation}
\label{eq:tay:bnd:rem:i:b}
\begin{array}{lcl}
    \norm{N_{rem;Ib}[\sigma,\delta](t) }_{H^{k_*} }
     & \le & K \sum_{i=1}^{r-1} \norm{Y_i[\sigma,\delta](t)}_{i}^{r}.
\end{array}
\end{equation}
\end{lem}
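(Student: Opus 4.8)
The plan is to estimate the integrand of \eqref{eq:rem:def:n:rem:i:b} pointwise in the auxiliary parameters and then integrate: since the integration region $[0,1]^r$ has unit Lebesgue measure, the Bochner integral inequality reduces the lemma to a bound
\[
\sup_{(t_1,\ldots,t_r)\in[0,1]^r}\norm{\mathcal{Q}_I\big(Y_{\rm tay}[\sigma,\delta](t);t_1,\ldots,t_r\big)}_{H^{k_*}}\le K\sum_{i=1}^{r-1}\norm{Y_i[\sigma,\delta](t)}_i^{\,r}
\]
that is uniform in $(t_1,\ldots,t_r)$. Throughout, fix $t\le t_{\rm tay}(1)$ and abbreviate $Y_{\rm tay}=Y_{\rm tay}[\sigma,\delta](t)$.

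First I would observe that each prefactor $t_1\cdots t_m$ ($1\le m\le r$) appearing in $\mathcal{Q}_I=D^r\mathcal{R}_I(t_1\cdots t_r Y_{\rm tay})[Y_{\rm tay},t_1Y_{\rm tay},\ldots,t_1\cdots t_{r-1}Y_{\rm tay}]$ lies in $[0,1]$, so dropping all of them only increases every Sobolev norm that occurs. Applying the derivative estimate \eqref{eq:nl:bnd:d:j:r:i} of Corollary \ref{cor:sm:r:i} with $j=r$ and $k=k_*$ — admissible since $k_*\le k_r=k_*+1$, and legitimate because $Y_{\rm tay}\in H^{k_*+2}\subseteq H^{k_*+1}$ — then yields
\[
\norm{\mathcal{Q}_I}_{H^{k_*}}\le K\big(1+\norm{Y_{\rm tay}}_{H^{k_*}}^{k_*}+\norm{Y_{\rm tay}}_{H^{k_*+1}}\big)\norm{Y_{\rm tay}}_{H^{k_*}}^{r}+Kr\,\norm{Y_{\rm tay}}_{H^{k_*+1}}\norm{Y_{\rm tay}}_{H^{k_*}}^{r-1}.
\]

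Next I would convert the right-hand side into the target quantity. On $\{t\le t_{\rm tay}(1)\}$ the pathwise bound \eqref{eq:tay:bnd:w:unif} with $\eta=1$ reads $\norm{Y_{\rm tay}}_{H^{k_*+2}}\le K_{\rm tay}$, so, using the inclusions $H^{k_*+2}\hookrightarrow H^{k_*+1}\hookrightarrow H^{k_*}$, the leading factor $1+\norm{Y_{\rm tay}}_{H^{k_*}}^{k_*}+\norm{Y_{\rm tay}}_{H^{k_*+1}}$ is bounded by the constant $1+K_{\rm tay}^{k_*}+K_{\rm tay}$, and each of the remaining products (one of $r$ norms at levels $k_*$, and one of $r-1$ norms at level $k_*$ times a single norm at level $k_*+1$) is at most $\norm{Y_{\rm tay}}_{H^{k_*+2}}^{r}$. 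Hence $\norm{\mathcal{Q}_I}_{H^{k_*}}\le C\,\norm{Y_{\rm tay}}_{H^{k_*+2}}^{r}$ with $C$ independent of $(t_1,\ldots,t_r)$, and \eqref{eq:tay:bnd:w:tay} applied with $\ell=r$ turns this into $\norm{\mathcal{Q}_I}_{H^{k_*}}\le C(r-1)^{r-1}\sum_{i=1}^{r-1}\norm{Y_i}_i^{r}$. Integrating over $[0,1]^r$ gives \eqref{eq:tay:bnd:rem:i:b}.

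No step here is genuinely hard; the only point requiring a little care is the highest-regularity contribution in \eqref{eq:nl:bnd:d:j:r:i}, namely the $\mathcal{P}^{(r)}_{k_*,k_*+1}$-term, which a priori carries only $r-1$ factors at the $H^{k_*}$-scale together with one factor at the $H^{k_*+1}$-scale — one must ensure this still produces a full $r$-th power of the $Y_i$'s rather than an $(r-1)$-th power, which would be worse under the a priori bound $\norm{Y_i}_i\le 1$. This is exactly why I would absorb the $H^{k_*+1}$-factor into $\norm{Y_{\rm tay}}_{H^{k_*+2}}^{r}$ before applying \eqref{eq:tay:bnd:w:tay}, exploiting that $k_i\ge k_*+2$ for every $1\le i\le r-1$, cf.\ \eqref{eq:tay:rem:bnds:k_j:big}, so that every factor is counted at the $H^{k_*+2}$-scale and contributes a genuine power of $\norm{Y_i}_i$.
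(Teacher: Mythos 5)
Your proof is correct and follows the same route as the paper: estimate the integrand $\mathcal{Q}_I$ pointwise in the auxiliary variables via \eqref{eq:nl:bnd:d:j:r:i} with $j=r$, use the a-priori bound \eqref{eq:tay:bnd:w:unif} to absorb the non-homogeneous prefactors, and then apply \eqref{eq:tay:bnd:w:tay} with $\ell=r$ to convert the power of $\|Y_{\rm tay}\|$ into the desired sum. The only cosmetic difference is that the paper states the intermediate bound in the $H^{k_*+1}$-norm and leaves the lift to $H^{k_*+2}$ implicit, whereas you carry that step out explicitly.
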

\begin{proof}
Using
the bound \eqref{eq:nl:bnd:d:j:r:i}
together with \eqref{eq:tay:bnd:w:unif},
we see that there exists $C > 0$ so that
\begin{equation}
    \norm{\mathcal{Q}_I(Y_{\rm tay}[\sigma,\delta](t); t_1, \ldots , t_{r-1}}_{H^{k_*}}
    \le C \norm{Y_{\rm tay}[\sigma,\delta](t)}_{H^{k_* +1}}^r
\end{equation}
holds for $0 \le t \le t_{\rm tay}(1)$. The desired bound
\eqref{eq:tay:bnd:rem:i:b}
now follows from the representation
\eqref{eq:rem:def:n:rem:i:b}
and the estimate \eqref{eq:tay:bnd:w:tay}.
\end{proof}

\begin{cor}
\label{cor:tay:rem:full:n:i}
    Suppose that \textnormal{(HNL)}, \textnormal{(HTw)}, \textnormal{(H$V_*$)} and \textnormal{(Hq)} hold.
        Then there exists $K > 0$ so that for all (real) $p \ge 1$,
    all $\sigma  \ge 0$, all $\delta \ge 0$ and all $T \ge 2$
    we have the bound
    \begin{equation}
\begin{array}{lcl}
E \sup_{0 \le t \le T \wedge t_{\rm tay}(1) } \norm{N_{\rm rem;I}[\sigma,\delta](t)}_{H^{k_*}}^{2p} 
 & \le & K^{2 p} \big( [\sigma^2 p]^{  p r }
+ [\sigma^2 \ln T + \delta^2]^{  p r  }    \big) .
        \\[0.2cm]
\end{array}
\end{equation}
\end{cor}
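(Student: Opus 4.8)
The plan is to derive the corollary as a bookkeeping consequence of the pathwise estimates in Lemmas~\ref{lem:rem:bnd:n:rem:i:a} and~\ref{lem:rem:bnd:n:rem:i:b} together with the moment bounds for the expansion functions. Recall that $N_{\rm rem;I} = N_{\rm rem;Ia} + N_{\rm rem;Ib}$. The key preliminary observation I would use is that on the stopped interval $0 \le t \le T \wedge t_{\rm tay}(1)$ one has the a-priori bound $\norm{Y_i[\sigma,\delta](t)}_i \le 1$ for every $1 \le i \le r-1$, so that $\norm{Y_i}_i^r \le \norm{Y_i}_i^{r/i}$ there (using $i \ge 1$). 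Combining this with Lemmas~\ref{lem:rem:bnd:n:rem:i:a} and~\ref{lem:rem:bnd:n:rem:i:b}, and extending the bound of the former to the closed endpoint by the path continuity from Proposition~\ref{prop:mr:tay}(i), we obtain a single pathwise bound $\norm{N_{\rm rem;I}[\sigma,\delta](t)}_{H^{k_*}} \le K_0 \sum_{i=1}^{r-1} \norm{Y_i[\sigma,\delta](t)}_i^{r/i}$ on $[0, T \wedge t_{\rm tay}(1)]$ for a suitable $K_0 > 0$.

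From there the argument is routine: I would raise to the power $2p$, apply the elementary inequality $\big(\sum_{i=1}^{r-1} a_i\big)^{2p} \le (r-1)^{2p-1} \sum_{i=1}^{r-1} a_i^{2p}$, take the supremum over $t$ and then the expectation, reducing matters to estimating $\mathbb E \sup_{0 \le t \le T} \norm{Y_i[\sigma,\delta](t)}_i^{2pr/i}$ for each $1 \le i \le r-1$. Since Proposition~\ref{prop:tay:wh:holds} asserts that \textnormal{(WH)} holds with $j_* = r$, Lemma~\ref{lem:tay:first:est:w:ind:hyp} applies for all $1 \le i \le r-1$; invoking \eqref{eq:tay:bnd:w:j:s:ell:p:div:j} with $j = i$, $\ell = r$ and $\tilde p = p$ (the hypotheses $\ell \ge j$ and $1 \le j < j_*$ being satisfied) bounds each such term by $\big( [\sigma^2 p]^{pr} + [\sigma^2 \ln T + \delta^2]^{pr} \big) r^{pr} K^{2rp}$, independently of $i$. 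Summing over $i$ and absorbing the $r$-dependent factors $r-1$, $r^{pr} = (r^r)^p$, $K_0^{2p}$ and $K^{2rp} = (K^r)^{2p}$ into one constant raised to the power $2p$ then yields the claimed estimate.

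I do not expect any serious obstacle here. The one point requiring attention is the exponent matching in the last step: the target exponent $2pr/i$ on $\norm{Y_i}_i$ forces $\ell \tilde p = pr$ in Lemma~\ref{lem:tay:first:est:w:ind:hyp}, and one must take $\ell = r$ rather than a smaller value so that the constraint $\ell \ge j = i$ holds for every admissible $i$; with $\tilde p = p$ this reproduces exactly the right-hand side of the corollary. The only other minor subtlety, already mentioned, is the continuity argument used to upgrade Lemma~\ref{lem:rem:bnd:n:rem:i:a} from the open stopped interval to its closure.
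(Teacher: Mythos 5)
Your proposal is correct and takes essentially the same route as the paper's own proof: reduce $\norm{Y_i}_i^{r}$ to $\norm{Y_i}_i^{r/i}$ via the a-priori bound from the stopping time $t_{\rm tay}(1)$, combine the pathwise bounds of Lemmas~\ref{lem:rem:bnd:n:rem:i:a} and~\ref{lem:rem:bnd:n:rem:i:b}, and apply \eqref{eq:tay:bnd:w:j:s:ell:p:div:j} with $\ell = r$. You merely spell out the routine bookkeeping (the power-mean inequality, the explicit choice $j=i$, $\tilde p = p$, and the endpoint continuity) that the paper leaves implicit.
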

\begin{proof}
We first note that $\norm{Y_j[\sigma,\delta](t)}^r_{j} \le \norm{Y_j[\sigma,\delta](t)}^{r/j}_j$
for $0 \le t \le t_{\rm tay}(1)$ and $1 \le j \le r-1$.
 Using the estimates  in Lemmas \ref{lem:rem:bnd:n:rem:i:a}
 and \ref{lem:rem:bnd:n:rem:i:b},
the desired bound now follows by applying \eqref{eq:tay:bnd:w:j:s:ell:p:div:j}  with $\ell = r$.
\end{proof}

Turning to $\mathcal{R}_{II}$ and $\Upsilon$,
we recall that $r \ge 3$ and introduce the decomposition
\begin{equation}
    N_{\rm rem;II}[\sigma,\delta] = N_{\rm rem;IIa}[\sigma,\delta]
    + N_{\rm rem;IIb}[\sigma,\delta],
    \qquad
    N_{\rm rem;III}[\sigma,\delta] = N_{\rm rem;IIIa}[\sigma,\delta]
    + N_{\rm rem;IIIb}[\sigma,\delta],
\end{equation}
involving the multi-linear terms
\begin{equation}
\label{eq:rem:def:n:rem:ii:iii:a}
\begin{array}{lcl}
      N_{\rm rem;IIa}[\sigma,\delta] & = & \sigma^2  \sum_{\ell = 1}^{r-3} \sum_{i_1 + \ldots + i_{\ell} \ge r-2} \frac{1}{\ell !} D^{\ell} \mathcal{R}_{II}(0) \big[ Y_{i_1}[\sigma,\delta], \ldots , Y_{i_\ell}[\sigma,\delta]\big] ,
\\[0.2cm]
N_{\rm rem;IIIa}[\sigma,\delta] & = & \sigma^2  \sum_{\ell = 1}^{r-3} \sum_{i_1 + \ldots + i_{\ell} \ge r-2} \frac{1}{\ell !} D^{\ell} \Upsilon(0) \big[ Y_{i_1}[\sigma,\delta], \ldots , Y_{i_\ell}[\sigma,\delta] \big] ,
\end{array}
\end{equation}
together with the nonlinear residuals
\begin{equation}
\label{eq:rem:def:n:rem:ii:iii:b}
\begin{array}{lcl}
    N_{\rm rem;IIb}[\sigma,\delta]
    &=&\sigma^2 \int_0^1 \cdots \int_0^1
    \mathcal{Q}_{II}(Y_{\rm tay}[\sigma,\delta]; t_1, \ldots, t_{r-2} ) \, \mathrm d t_{r-2} \cdots \mathrm  dt_1 ,
\\[0.2cm]
N_{\rm rem;IIIb}[\sigma,\delta]
    &=&\sigma^2 \int_0^1 \cdots \int_0^1
    \mathcal{Q}_{III}(Y_{\rm tay}[\sigma,\delta]; t_1, \ldots, t_{r-2} ) \, \mathrm dt_{r-2} \cdots \mathrm  dt_1,
\end{array}
\end{equation}
featuring the integrands
\begin{equation}
\label{eq:rem:def:q:ii:iii}
\begin{array}{lcl}
    \mathcal{Q}_{II}(Y_{\rm tay}; t_1, \ldots, t_{r-2})
    &=& D^{r-2} \mathcal{R}_{II}( t_1 \cdots t_{r-2} Y_{\rm tay})\big[Y_{\rm tay}, t_1 Y_{\rm tay}, t_1 t_2 Y_{\rm tay}, \ldots, t_1 \cdots t_{r-3} Y_{\rm tay}\big] ,
\\[0.2cm]
\mathcal{Q}_{III}(Y_{\rm tay}; t_1, \ldots, t_{r-2})
    &=& D^{r-2} \Upsilon( t_1 \cdots t_{r-2} Y_{\rm tay})\big[Y_{\rm tay}, t_1 Y_{\rm tay}, t_1 t_2 Y_{\rm tay}, \ldots, t_1 \cdots t_{r-3} Y_{\rm tay}\big] .
\end{array}
\end{equation}

\begin{lem}
\label{lem:tay:bnd:n:rem:ii:iii:a}
Suppose that \textnormal{(HNL)}, \textnormal{(HTw)}, \textnormal{(H$V_*$)} and \textnormal{(Hq)} hold. Then there exists
$K > 0$ so that for all $t < t_{\rm tay}(1)$,
all $\sigma \ge 0$ and all $\delta \ge 0$ we have the bound
\begin{equation}
\label{eq:tay:bnd:n:rem:ii:iii:a}
\begin{array}{lcl}
\norm{N_{\rm rem;IIa}[\sigma,\delta](t)}_{H^{k_*} } +  \norm{N_{\rm rem;IIIa}[\sigma,\delta](t)}_{H^{k_*} }
 & \le & K \sigma^2 \sum_{i=1}^{r-2} \norm{Y_i[\sigma,\delta](t)}_{i}^{(r-2)/i}
\\[0.2cm]
& & \qquad \qquad
   + K \sigma^2 \norm{Y_{r-1}[\sigma,\delta](t)}_{r-1}.
\end{array}
\end{equation}
\end{lem}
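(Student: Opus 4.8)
The plan is to follow the template of Lemma~\ref{lem:rem:bnd:n:rem:i:a}, estimating each of the finitely many multi-linear contributions in \eqref{eq:rem:def:n:rem:ii:iii:a} one at a time and then summing. To this end I would fix a single summand, say $\sigma^2 \tfrac{1}{\ell!}\, D^{\ell}\mathcal{R}_{II}(0)[Y_{i_1},\ldots,Y_{i_\ell}]$ (and verbatim with $\Upsilon$ in place of $\mathcal{R}_{II}$ for the $N_{\rm rem;IIIa}$ part), where $1 \le \ell \le r-3$, each $1 \le i_{\ell'} \le r-1$, and $i_{\rm tot} := i_1 + \ldots + i_\ell \ge r-2$. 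First I would record the regularity bookkeeping: \eqref{eq:tay:rem:bnds:k_j:big} gives $k_{i_{\ell'}} \ge k_* + 2$ for every $\ell'$, so each $Y_{i_{\ell'}}$ lies in the spaces $H^{k_*+1}$ and $H^{k_*+2}$ appearing in Corollaries~\ref{cor:sm:r:ii} and \ref{cor:sm:ups}, the summand is well-defined in $H^{k_*}$, and moreover $\norm{Y_{i_{\ell'}}}_{H^{k_*+1}}$ and $\norm{Y_{i_{\ell'}}}_{H^{k_*+2}}$ are both bounded by $\norm{Y_{i_{\ell'}}}_{i_{\ell'}}$.

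Next I would invoke the derivative estimates \eqref{eq:nl:bnd:d:j:r:ii} and \eqref{eq:sm:bnd:derivs:ups} at the base point $w = 0$, where all the $\norm{w}$-dependent factors drop out and the bounds collapse to a finite combination of $\mathcal{P}^{(\ell)}_{k_*}$, $\mathcal{P}^{(\ell)}_{k_*,k_*+1}$ and $\mathcal{P}^{(\ell)}_{k_*,k_*+2}$ evaluated on $(Y_{i_1},\ldots,Y_{i_\ell})$. Using the previous paragraph to replace every higher-order norm by $\norm{Y_{i_{\ell'}}}_{i_{\ell'}}$, and absorbing the bounded factors $\ell \le r-3$ into the constant, each such summand is dominated by $K \prod_{\ell'=1}^{\ell} \norm{Y_{i_{\ell'}}}_{i_{\ell'}}$. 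An application of the weighted arithmetic--geometric mean inequality \eqref{eq:tay:prod:bnd:w}, discarding the weights $i_{\ell'}/i_{\rm tot} \le 1$, then gives the bound $K \sum_{\ell'=1}^{\ell} \norm{Y_{i_{\ell'}}}_{i_{\ell'}}^{\,i_{\rm tot}/i_{\ell'}}$.

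The last step is to distribute these exponents onto the right-hand side of \eqref{eq:tay:bnd:n:rem:ii:iii:a}, and this is the only point requiring care. For an index with $i_{\ell'} \le r-2$, the estimates $i_{\rm tot} \ge r-2$ and $\norm{Y_{i_{\ell'}}}_{i_{\ell'}} \le 1$ (valid on $[0,t_{\rm tay}(1))$) give $\norm{Y_{i_{\ell'}}}_{i_{\ell'}}^{\,i_{\rm tot}/i_{\ell'}} \le \norm{Y_{i_{\ell'}}}_{i_{\ell'}}^{\,(r-2)/i_{\ell'}}$, which is controlled by the first sum; for an index with $i_{\ell'} = r-1$ one instead uses $i_{\rm tot} \ge r-1$, so that the exponent $i_{\rm tot}/(r-1) \ge 1$ and hence $\norm{Y_{r-1}}_{r-1}^{\,i_{\rm tot}/(r-1)} \le \norm{Y_{r-1}}_{r-1}$, which is exactly why the extra term in \eqref{eq:tay:bnd:n:rem:ii:iii:a} carries exponent one. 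Summing over the finitely many summands and reinstating the $\sigma^2$ prefactor yields \eqref{eq:tay:bnd:n:rem:ii:iii:a}; the case $r = 3$ is immediate since then both sums in \eqref{eq:rem:def:n:rem:ii:iii:a} are empty. I do not foresee any genuine obstacle: the argument is routine once the regularity check via \eqref{eq:tay:rem:bnds:k_j:big} and the separation of the $i_{\ell'} = r-1$ case have been handled.
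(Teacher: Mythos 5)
Your proposal is correct and follows essentially the same approach as the paper: apply the derivative estimates \eqref{eq:nl:bnd:d:j:r:ii} and \eqref{eq:sm:bnd:derivs:ups} at $w=0$, use the weighted AM--GM bound \eqref{eq:tay:prod:bnd:w}, and then use $\|Y_{i_{\ell'}}\|_{i_{\ell'}} \le 1$ to shrink exponents. The only superficial difference is that the paper organizes the final step as a case split on $i_{\rm tot}$ (equal to $r-2$ versus $\ge r-1$), whereas you split per index $i_{\ell'}$ ($\le r-2$ versus $=r-1$); both routes yield the same estimate.
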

\begin{proof}
Consider one of the terms in the sum
 \eqref{eq:rem:def:n:rem:ii:iii:a}
and write $i_{\rm tot} = i_1 + \ldots + i_{\ell}$.
We note that the term is well-defined in $H^{k_*}$
on account of \eqref{eq:tay:rem:bnds:k_j:big} together with Corollaries \ref{cor:sm:r:ii} and \ref{cor:sm:ups}.
Using \eqref{eq:nl:bnd:d:j:r:ii} and \eqref{eq:tay:prod:bnd:w}
we obtain the bound
\begin{equation}
    \norm{D^{\ell} \mathcal{R}_{II}(0)[Y_{i_1}, \ldots , Y_{i_{\ell}}]}_{H^{k_*}}
    \le C \big[ \norm{Y_{i_1}}_{i_1}^{i_{\rm tot}/i_1}
    + \ldots + \norm{Y_{i_\ell}}_{i_\ell}^{i_{\rm tot}/i_{\ell}}
    \big].
\end{equation}
If $i_{\rm tot} = r-2$ holds, then we must have $1 \le i_{\ell'} \le r-2$ for all $1 \le \ell' \le \ell$.
Using the a-priori bound $\norm{Y_{i_{\ell'}}}_{i_{\ell'}} \le 1$, we hence find
\begin{equation}
    \norm{D^{\ell} \mathcal{R}_{II}(0)[Y_{i_1}, \ldots , Y_{i_{\ell}}]}_{H^{k_*}}
    \le C \ell \sum_{i=1}^{r-2} \norm{Y_i}_{i}^{(r-2)/i}.
\end{equation}
However, when $i_{\rm tot} \ge r-1$, we may conclude
\begin{equation}
    \norm{D^{\ell} \mathcal{R}_{II}(0)[Y_{i_1}, \ldots , Y_{i_{\ell}}]}_{H^{k_*}}
    \le C \ell \sum_{i=1}^{r-1} \norm{Y_i}_{i}^{(r-1)/i}.
\end{equation}
Both cases can be absorbed in the stated estimate
\eqref{eq:tay:bnd:n:rem:ii:iii:a} and $N_{\rm rem;IIIa}$ 
can be treated in the same fashion.
\end{proof}

\begin{lem}
\label{lem:rem:bnd:n:rem:ii:iii:b}
Suppose that \textnormal{(HNL)}, \textnormal{(HTw)}, \textnormal{(H$V_*$)} and \textnormal{(Hq)} hold. Then there exists $K > 0$
so that for all $t \le t_{\rm tay}(1)$,
all $\sigma \ge 0$ and all $\delta \ge 0$ we have the bound
\begin{equation}
\begin{array}{lcl}
\norm{N_{\rm rem;IIb}[\sigma,\delta](t)}_{H^{k_*} } +  \norm{N_{\rm rem;IIIb}[\sigma,\delta](t)}_{H^{k_*} }
 & \le & K \sigma^2 \sum_{i=1}^{r-1} \norm{Y_i[\sigma,\delta](t)}_{i}^{r-2}.
 \\[0.2cm]
\end{array}
\end{equation}
\end{lem}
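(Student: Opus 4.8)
The plan is to follow the proof of Lemma \ref{lem:rem:bnd:n:rem:i:b} essentially verbatim, with the bound \eqref{eq:nl:bnd:d:j:r:i} for $\mathcal{R}_I$ replaced by \eqref{eq:nl:bnd:d:j:r:ii} for $\mathcal{R}_{II}$ and by \eqref{eq:sm:bnd:derivs:ups} for $\Upsilon$. First I would fix $0 \le t \le t_{\rm tay}(1)$ and invoke \eqref{eq:tay:bnd:w:unif} with $\eta = 1$ to record the uniform pathwise bound $\norm{Y_{\rm tay}[\sigma,\delta](t)}_{H^{k_*+2}} \le K_{\rm tay}$. Since every $t_i$ in the representation \eqref{eq:rem:def:n:rem:ii:iii:b} ranges over $[0,1]$, each product $t_1 \cdots t_i$ lies in $[0,1]$, so the base point $t_1 \cdots t_{r-2} Y_{\rm tay}$ and each of the $r-2$ arguments $t_1 \cdots t_{i'} Y_{\rm tay}$ of the maps $\mathcal{Q}_{II}$ and $\mathcal{Q}_{III}$ in \eqref{eq:rem:def:q:ii:iii} have $H^{k_*+2}$-norm at most $\norm{Y_{\rm tay}(t)}_{H^{k_*+2}}$.

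Next, using $r \ge 3$ so that the Taylor remainders $\mathcal{Q}_{II}$, $\mathcal{Q}_{III}$ are genuine $D^{r-2}$ expressions evaluated on $r-2$ arguments, and recalling \eqref{eq:tay:rem:bnds:k_j:big}, I would apply \eqref{eq:nl:bnd:d:j:r:ii} with $j = r-2$ and $k = k_*$, together with the uniform control $\norm{w}_{H^{k_*}}, \norm{w}_{H^{k_*+1}} \le K_{\rm tay}$ on the base point $w = t_1 \cdots t_{r-2} Y_{\rm tay}$, to obtain a constant $C > 0$ with $\norm{\mathcal{Q}_{II}(Y_{\rm tay}(t); t_1, \ldots, t_{r-2})}_{H^{k_*}} \le C \norm{Y_{\rm tay}(t)}_{H^{k_*+1}}^{r-2}$; the analogous estimate for $\mathcal{Q}_{III}$ follows from \eqref{eq:sm:bnd:derivs:ups}, which requires control in $H^{k_*+2}$ (the extra spatial derivative stemming from $\partial_x^2[\Phi_0 + v]$ in $\Upsilon_I$), giving $\norm{\mathcal{Q}_{III}(Y_{\rm tay}(t); t_1, \ldots, t_{r-2})}_{H^{k_*}} \le C \norm{Y_{\rm tay}(t)}_{H^{k_*+2}}^{r-2}$, with $C$ absorbing the dependence on $K_{\rm tay}$ (in particular the cross term $\norm{w}_{L^2}\norm{w}_{H^{k_*+1}}$ appearing in \eqref{eq:sm:bnd:derivs:ups} is harmless on $[0, t_{\rm tay}(1)]$). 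Integrating over $(t_1, \ldots, t_{r-2}) \in [0,1]^{r-2}$ in \eqref{eq:rem:def:n:rem:ii:iii:b} and using $\norm{Y_{\rm tay}(t)}_{H^{k_*+1}} \le \norm{Y_{\rm tay}(t)}_{H^{k_*+2}}$ then yields $\norm{N_{\rm rem;IIb}[\sigma,\delta](t)}_{H^{k_*}} + \norm{N_{\rm rem;IIIb}[\sigma,\delta](t)}_{H^{k_*}} \le C \sigma^2 \norm{Y_{\rm tay}(t)}_{H^{k_*+2}}^{r-2}$.

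Finally I would apply \eqref{eq:tay:bnd:w:tay} with $\ell = r-2$ to split $\norm{Y_{\rm tay}(t)}_{H^{k_*+2}}^{r-2} \le (r-1)^{r-3} \sum_{i=1}^{r-1} \norm{Y_i[\sigma,\delta](t)}_i^{r-2}$, which is precisely the claimed estimate after absorbing the combinatorial factor $(r-1)^{r-3}$ into the constant $K$. I do not anticipate any genuine obstacle: the only point needing attention is that the $\Upsilon$-term is controlled in $H^{k_*+2}$ rather than $H^{k_*+1}$, which is exactly why the stopping time $t_{\rm tay}$ is formulated in terms of the stronger norms $\norm{\cdot}_j$ (dominating $H^{k_*+2}$ via \eqref{eq:tay:rem:bnds:k_j:big}), so the argument is a routine adaptation of Lemma \ref{lem:rem:bnd:n:rem:i:b}.
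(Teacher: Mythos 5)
Your proposal is correct and follows the paper's proof essentially verbatim: both invoke the uniform pathwise bound \eqref{eq:tay:bnd:w:unif} on $[0,t_{\rm tay}(1)]$, then apply \eqref{eq:nl:bnd:d:j:r:ii} and \eqref{eq:sm:bnd:derivs:ups} to estimate $\mathcal{Q}_{II}$ in $H^{k_*+1}$ and $\mathcal{Q}_{III}$ in $H^{k_*+2}$, integrate over $[0,1]^{r-2}$, and finish with \eqref{eq:tay:bnd:w:tay}. You have simply spelled out more explicitly the bookkeeping (the role of the $t_i \in [0,1]$ factors, the choice $j = r-2$, $k = k_*$, and the absorption of the combinatorial constant) that the paper leaves implicit.
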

\begin{proof}
Using
the bounds \eqref{eq:nl:bnd:d:j:r:ii}
and \eqref{eq:sm:bnd:derivs:ups}
together with \eqref{eq:tay:bnd:w:unif},
we see that there exists $C > 0$ so that
\begin{equation}
\begin{array}{lcl}
    \norm{\mathcal{Q}_{II}(Y_{\rm tay}[\sigma,\delta](t); t_1, \ldots , t_{r-2})}_{H^{k_*}}
    &\le& C \norm{Y_{\rm tay}[\sigma,\delta](t)}_{H^{k_* +1}}^{r-2}
\\[0.2cm]
    \norm{\mathcal{Q}_{III}(Y_{\rm tay}[\sigma,\delta](t); t_1, \ldots , t_{r-2})}_{H^{k_*}}
    &\le& C \norm{Y_{\rm tay}[\sigma,\delta](t)}_{H^{k_* +2}}^{r-2}
\end{array}
\end{equation}
both hold for $0 \le t \le t_{\rm tay}(1)$. The desired bound
\eqref{eq:tay:bnd:rem:i:b}
now follows from the representation
\eqref{eq:rem:def:n:rem:ii:iii:b}
and the estimate \eqref{eq:tay:bnd:w:tay}.
\end{proof}

\begin{cor}
\label{cor:tay:rem:full:n:ii:iii}
    Suppose that \textnormal{(HNL)}, \textnormal{(HTw)}, \textnormal{(H$V_*$)} and \textnormal{(Hq)} hold.
        Then there exists $K > 0$ so that for all (real) $p \ge 1$,
    all $0 \le \sigma  \le 1$, all $\delta \ge 0$ and  all $T \ge 2$
    we have the bound
    \begin{equation}
\label{eq:tay:bnd:rem:sup:n:ii:iii}
\begin{array}{lcl}
\mathbb E \sup_{0 \le t \le T \wedge t_{\rm tay}(1) }
\big[ \norm{N_{\rm rem;II}[\sigma,\delta](t)}_{H^{k_*}}^{2p}
+ \norm{N_{\rm rem;III}[\sigma,\delta](t)}_{H^{k_*}}^{2p}
\big]
 & \le & K^{2 p} \big( [\sigma^2 p]^{  p r }
+ [\sigma^2 \ln T + \delta^2]^{  p r  }    \big) .
\end{array}
\end{equation}
\end{cor}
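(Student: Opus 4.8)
The plan is to mirror the proof of Corollary~\ref{cor:tay:rem:full:n:i}, now using the splittings $N_{\rm rem;II} = N_{\rm rem;IIa} + N_{\rm rem;IIb}$ and $N_{\rm rem;III} = N_{\rm rem;IIIa} + N_{\rm rem;IIIb}$ recorded in \eqref{eq:rem:def:n:rem:ii:iii:a}--\eqref{eq:rem:def:n:rem:ii:iii:b}. First I would combine the pathwise estimates of Lemmas~\ref{lem:tay:bnd:n:rem:ii:iii:a} and~\ref{lem:rem:bnd:n:rem:ii:iii:b}. Since the supremum is taken over $[0, T \wedge t_{\rm tay}(1)]$, the a-priori bound $\norm{Y_i[\sigma,\delta](t)}_i \le 1$ is available there; in particular $\norm{Y_i}_i^{r-2} \le \norm{Y_i}_i^{(r-2)/i}$ for $1 \le i \le r-2$ and $\norm{Y_{r-1}}_{r-1}^{r-2} \le \norm{Y_{r-1}}_{r-1}$ (recall $r \ge 3$), so the $b$-contributions are dominated by the $a$-contributions. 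Collecting everything, and using that Lemmas~\ref{lem:tay:bnd:n:rem:ii:iii:a} and~\ref{lem:rem:bnd:n:rem:ii:iii:b} treat the indices $II$ and $III$ symmetrically, this yields a single bound of the form
\begin{equation}
  \norm{N_{\rm rem;II}(t)}_{H^{k_*}} + \norm{N_{\rm rem;III}(t)}_{H^{k_*}}
  \le C \sigma^2 \Big( \sum_{i=1}^{r-2} \norm{Y_i[\sigma,\delta](t)}_i^{(r-2)/i} + \norm{Y_{r-1}[\sigma,\delta](t)}_{r-1} \Big),
  \qquad 0 \le t \le t_{\rm tay}(1),
\end{equation}
with $C = C(r)$.

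Next I would raise this to the power $2p$, apply the elementary inequality $(a_1 + \cdots + a_m)^{2p} \le m^{2p-1}(a_1^{2p} + \cdots + a_m^{2p})$, and then take $\mathbb E \sup_{0 \le t \le T}$. For the fractional powers $\norm{Y_i}_i^{2p(r-2)/i}$ with $1 \le i \le r-2$ I would invoke \eqref{eq:tay:bnd:w:j:s:ell:p:div:j} with $\ell = r-2$; this is legitimate because Proposition~\ref{prop:tay:wh:holds} establishes \textnormal{(WH)} for $j_* = r$, so the bound applies to all $1 \le i \le r-2$. For the remaining summand I would use the $j = r-1$ instance of the moment bound \eqref{eq:tay:work:hyp}. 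After multiplying through by the factor $\sigma^{4p} = (\sigma^2)^{2p}$, the claim reduces to absorbing the four quantities $\sigma^{4p}[\sigma^2 p]^{p(r-2)}$, $\sigma^{4p}[\sigma^2\ln T + \delta^2]^{p(r-2)}$, $\sigma^{4p}[\sigma^2 p]^{p(r-1)}$ and $\sigma^{4p}[\sigma^2\ln T + \delta^2]^{p(r-1)}$ into $K^{2p}\big([\sigma^2 p]^{pr} + [\sigma^2\ln T + \delta^2]^{pr}\big)$, after which the various combinatorial constants ($m^{2p-1}$, $(r-2)^{p(r-2)}$, etc.) are absorbed by enlarging $K$.

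This last absorption step is routine. The $[\sigma^2 p]$-terms are immediate: $\sigma^{4p}[\sigma^2 p]^{p(r-2)} = \sigma^{2pr} p^{p(r-2)} \le [\sigma^2 p]^{pr}$, and $\sigma^{4p}[\sigma^2 p]^{p(r-1)} = (\sigma^2/p)^p [\sigma^2 p]^{pr} \le [\sigma^2 p]^{pr}$ since $\sigma \le 1 \le p$. For the $[\sigma^2\ln T + \delta^2]$-terms I would use a weighted arithmetic--geometric mean inequality in the spirit of \eqref{eq:tay:yng:ineq}: writing $A = \sigma^2\ln T + \delta^2$, one has $\sigma^4 A^{r-2} \le \tfrac{2}{r}\sigma^{2r} + \tfrac{r-2}{r}A^r$, while for the exponent $r-1$ one first peels off a factor $\sigma^2 \le 1$ and then applies $\sigma^2 A^{r-1} \le \tfrac{1}{r}\sigma^{2r} + \tfrac{r-1}{r}A^r$; raising to the $p$-th power and using $\sigma^{2rp} \le [\sigma^2 p]^{pr}$ finishes it. The estimates for $N_{\rm rem;III}$ are identical.

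The main (and really only) point requiring care is the extra summand $\sigma^2\norm{Y_{r-1}}_{r-1}$ appearing in Lemma~\ref{lem:tay:bnd:n:rem:ii:iii:a}, which carries the first power rather than a fractional power of $\norm{Y_{r-1}}_{r-1}$. It originates from the terms $D^\ell\mathcal{R}_{II}(0)[Y_{i_1},\ldots,Y_{i_\ell}]$ and $D^\ell\Upsilon(0)[Y_{i_1},\ldots,Y_{i_\ell}]$ in \eqref{eq:rem:def:n:rem:ii:iii:a} for which $i_1 + \cdots + i_\ell \ge r-1$ with a single index equal to $r-1$, so it is not covered by \eqref{eq:tay:bnd:w:j:s:ell:p:div:j} specialised to $\ell = r-2$. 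Handling it forces us to invoke the $j = r-1$ case of \eqref{eq:tay:work:hyp} separately --- hence to rely on Proposition~\ref{prop:tay:wh:holds} having already been established --- and to carry out the slightly different Young balancing sketched above; once this is done, the desired estimate \eqref{eq:tay:bnd:rem:sup:n:ii:iii} drops out.
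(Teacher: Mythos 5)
Your proof is correct and follows essentially the same route as the paper. Both begin from the pathwise bounds of Lemmas~\ref{lem:tay:bnd:n:rem:ii:iii:a} and~\ref{lem:rem:bnd:n:rem:ii:iii:b}, use the a-priori control $\norm{Y_i}_i \le 1$ on $[0, t_{\rm tay}(1)]$ to dominate the nonlinear ($b$-type) residuals by the multilinear ($a$-type) ones, invoke \eqref{eq:tay:bnd:w:j:s:ell:p:div:j} with $\ell = r-2$ together with the $j = r-1$ instance of \eqref{eq:tay:work:hyp}, and finish by Young-type absorption. The only cosmetic deviations are that you carry the extra $\sigma^{4p}$ into the $Y_{r-1}$ term where the paper first trades it for $\sigma^{2p}$ via $\sigma^2 \le \sigma$, and you apply the weighted AM--GM at the level of the base quantities and then raise to the $p$-th power (costing a benign $2^{p-1}$), whereas the paper applies it directly to the $p$-th powers. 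You also correctly spotlight the one delicate point, namely the linear $\norm{Y_{r-1}}_{r-1}$ term that falls outside the $\ell = r-2$ instance of Lemma~\ref{lem:tay:first:est:w:ind:hyp} and requires (WH) at order $j_* = r$.
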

\begin{proof}
We first note that for $0 \le t \le t_{\rm tay}(1)$
we have
 \begin{equation}
     \norm{Y_j}^{r-2}_{j} \le \norm{Y_j}^{(r-2)/j}_{j} 
 \end{equation}
 for all $1 \le j \le r-2$, together with
\begin{equation}
     \norm{Y_{r-1}}^{r-2}_{r-1} \le \norm{Y_{r-1}}_{r-1}.
 \end{equation}
  In particular, there exists a constant $C > 0$
  so that
\begin{equation}
\begin{array}{lcl}
    \norm{N_{\rm rem;II}[\sigma,\delta](t)}_{H^{k_*}}^{2p}
& \le & C^{2p} \sigma^{4p}
 \sum_{i=1}^{r-2} \norm{Y_i[\sigma,\delta](t)}_{i}^{2p (r-2)/i}
+ C^{2p} \sigma^{2p} \norm{Y_{r-1}[\sigma,\delta](t)}_{r-1}^{2p}
\end{array}
\end{equation}
for all $0 \le t \le t_{\rm tay}(1)$,
where we exploited the fact that $\sigma^2 \le \sigma$ to obtain the second term.

Applying \eqref{eq:tay:bnd:w:j:s:ell:p:div:j}  with $\ell = r-2$
together with \eqref{eq:tay:work:hyp},
we arrive at
\begin{equation}
\begin{array}{lcl}
\mathbb E \sup_{0 \le t \le T \wedge t_{\rm tay}(1) }
\norm{N_{\rm rem;II}[\sigma,\delta](t)}_{H^{k_*}}^{2p}
 & \le & K^{2 p} \sigma^{4p} \big( [\sigma^2 p]^{  p (r-2) }
+ [\sigma^2 \ln T + \delta^2]^{  p (r-2)  }    \big) ,
        \\[0.2cm]
&  & \qquad  + K^{2p} \sigma^{2p}
\big( [\sigma^2 p]^{  p (r-1) }
+ [\sigma^2 \ln T + \delta^2]^{  p (r-1)  }    \big).
\end{array}
\end{equation}
This can be absorbed in \eqref{eq:tay:bnd:rem:sup:n:ii:iii}
by applying Young's inequality as in \eqref{eq:tay:yng:ineq}.
We conclude by noting that $N_{\rm rem;III}$ satisfies the same estimates.
\end{proof}

Turning to $\mathcal{S}$, we introduce the decomposition
\begin{equation}
B_{\rm rem}[\sigma, \delta] = B_{\rm rem;a}[\sigma, \delta] + B_{\rm rem;b}[\sigma,\delta],
\end{equation}
now involving the multi-linear term
\begin{equation}
\label{eq:rem:def:b:rem:a}
      B_{\rm rem;a}[\sigma,\delta] =\sigma  \sum_{\ell = 1}^{r-2} \sum_{i_1 + \ldots + i_{\ell} \ge r-1} \frac{1}{\ell !} D^{\ell} \mathcal{S}(0) \big[ Y_{i_1}[\sigma,\delta], \ldots , Y_{i_\ell}[\sigma,\delta] \big],
\end{equation}
together with the nonlinear residual
\begin{equation}
\label{eq:rem:def:b:rem:b}
    B_{\rm rem;b}[\sigma,\delta] =\sigma \int_0^1 \cdots \int_0^1
    \mathcal{Q}_B( Y_{\rm tay}[\sigma,\delta]; t_1, \ldots, t_{r-1} )
    \, \mathrm dt_{r-1} \cdots  \mathrm dt_1,
\end{equation}
featuring the integrand
\begin{equation}
    \mathcal{Q}_B (Y_{\rm tay}; t_1, \ldots , t_{r-1})
    = D^{r-1} \mathcal{S}(t_1 \cdots t_{r-1} Y_{\rm tay})\big[Y_{\rm tay}, t_1 Y_{\rm tay}, t_1 t_2 Y_{\rm tay}, \ldots, t_1 \cdots t_{r-2} Y_{\rm tay}\big].
\end{equation}

\begin{lem}
\label{lem:tay:bnd:b:rem:a}
Suppose that \textnormal{(HNL)}, \textnormal{(HTw)}, \textnormal{(H$V_*$)} and \textnormal{(Hq)} hold. Then there exists
$K > 0$ so that for all $t < t_{\rm tay}(1)$,
all $\sigma \ge 0$ and all $\delta \ge 0$ we have the bound
\begin{equation}
\label{eq:tay:bnd:b:rem:a}
\begin{array}{lcl}
\norm{B_{\rm rem;a}[\sigma,\delta](t)}_{HS(L^2_Q;H^{k_*} )}
 & \le & K \sigma \sum_{i=1}^{r-1} \norm{Y_i[\sigma,\delta](t)}_{i}^{(r-1)/i}.
 \end{array}
\end{equation}
\end{lem}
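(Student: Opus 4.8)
The plan is to mirror the arguments already used for the analogous multilinear remainder terms $N_{\rm rem;Ia}$ in Lemma~\ref{lem:rem:bnd:n:rem:i:a} and $N_{\rm rem;IIa}$, $N_{\rm rem;IIIa}$ in Lemma~\ref{lem:tay:bnd:n:rem:ii:iii:a}, since $B_{\rm rem;a}$ in \eqref{eq:rem:def:b:rem:a} is a finite sum of expressions $D^{\ell}\mathcal{S}(0)[Y_{i_1},\ldots,Y_{i_\ell}]$ with $1\le\ell\le r-2$, with $1\le i_{\ell'}\le r-1$ for each $\ell'$, and with $i_{\rm tot}:=i_1+\cdots+i_\ell\ge r-1$. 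First I would fix one such term and check that it is well-defined in $HS(L^2_Q;H^{k_*})$: by \eqref{eq:tay:rem:bnds:k_j:big} we have $k_{i_{\ell'}}\ge k_*+2$ for every $\ell'$, so each $Y_{i_{\ell'}}(t)\in H_{i_{\ell'}}$ in particular lies in $H^{k_*+1}$, which is exactly the regularity required to invoke Lemma~\ref{lem:sm:s} with $k=k_*$.

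Next I would apply the estimate \eqref{eq:nl:bnd:d:j:mathcal:s} at the base point $w=0$, where the factor $\norm{w}_{H^{k_*+1}}$ multiplying the second term on the right-hand side vanishes; what remains is controlled, up to a constant, by the product $\norm{Y_{i_1}(t)}_{H^{k_*}}\cdots\norm{Y_{i_\ell}(t)}_{H^{k_*}}$ together with the sum of the $\ell$ products obtained by upgrading one of these $H^{k_*}$-norms to an $H^{k_*+1}$-norm. Since $k_{i_{\ell'}}\ge k_*+2>k_*+1$, each such norm is bounded by $\norm{Y_{i_{\ell'}}(t)}_{i_{\ell'}}$, and because $\ell\le r-2$ is bounded this yields
$$\norm{D^{\ell}\mathcal{S}(0)[Y_{i_1},\ldots,Y_{i_\ell}]}_{HS(L^2_Q;H^{k_*})}\le K\,\norm{Y_{i_1}(t)}_{i_1}\cdots\norm{Y_{i_\ell}(t)}_{i_\ell}.$$
Applying the weighted arithmetic--geometric-mean bound \eqref{eq:tay:prod:bnd:w} then gives
$$\norm{D^{\ell}\mathcal{S}(0)[Y_{i_1},\ldots,Y_{i_\ell}]}_{HS(L^2_Q;H^{k_*})}\le K\big[\norm{Y_{i_1}(t)}_{i_1}^{i_{\rm tot}/i_1}+\cdots+\norm{Y_{i_\ell}(t)}_{i_\ell}^{i_{\rm tot}/i_\ell}\big].$$

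Finally I would exploit the stopping time: for $t<t_{\rm tay}(1)$ each $\norm{Y_{i_{\ell'}}(t)}_{i_{\ell'}}\le 1$, so $i_{\rm tot}\ge r-1$ forces $\norm{Y_{i_{\ell'}}(t)}_{i_{\ell'}}^{i_{\rm tot}/i_{\ell'}}\le\norm{Y_{i_{\ell'}}(t)}_{i_{\ell'}}^{(r-1)/i_{\ell'}}\le\sum_{i=1}^{r-1}\norm{Y_i(t)}_{i}^{(r-1)/i}$. Summing over the finitely many admissible index tuples $(i_1,\ldots,i_\ell)$ and reinstating the prefactor $\sigma$ yields exactly the claimed bound \eqref{eq:tay:bnd:b:rem:a}. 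I do not expect a genuine obstacle: this lemma is the $\mathcal{S}$-analogue of Lemma~\ref{lem:rem:bnd:n:rem:i:a}, and the only point requiring care is the bookkeeping for the product that carries one extra spatial derivative on one of the $Y_{i_{\ell'}}$ (the $\mathcal{P}^{(\ell)}_{k_*,k_*+1}$-type term in \eqref{eq:nl:bnd:d:j:mathcal:s}), which is absorbed, as above, by the strict regularity surplus $k_{i_{\ell'}}\ge k_*+2$ guaranteed by \eqref{eq:tay:rem:bnds:k_j:big}.
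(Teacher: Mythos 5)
Your proposal is correct and follows essentially the same route as the paper's proof: both invoke Lemma~\ref{lem:sm:s} (via the bound \eqref{eq:nl:bnd:d:j:mathcal:s} at $w=0$) together with the regularity surplus \eqref{eq:tay:rem:bnds:k_j:big}, then apply \eqref{eq:tay:prod:bnd:w} and reduce the exponents using $i_{\rm tot}\ge r-1$ and the stopping-time bound $\norm{Y_{i_{\ell'}}}_{i_{\ell'}}\le 1$. Your version is a bit more explicit about how the $\mathcal{P}^{(j)}_{k_*,k_*+1}$-type term in \eqref{eq:nl:bnd:d:j:mathcal:s} is absorbed, but the argument is the same.
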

\begin{proof}
Consider one of the terms in the sum
 \eqref{eq:rem:def:b:rem:a}
and write $i_{\rm tot} = i_1 + \ldots + i_{\ell}$.
We note that the term is well-defined in $H^{k_*}$
on account of Lemma \ref{lem:sm:s} and
\eqref{eq:tay:rem:bnds:k_j:big}.
Using \eqref{eq:nl:bnd:d:j:mathcal:s} and \eqref{eq:tay:prod:bnd:w}
we obtain the bound
\begin{equation}
    \norm{D^{\ell} \mathcal{S}(0)[Y_{i_1}, \ldots , Y_{i_{\ell}}]}_{HS(L^2_Q;H^{k_*})}
    \le K \big[ \norm{Y_{i_1}}_{i_1}^{i_{\rm tot}/i_1}
    + \ldots + \norm{Y_{i_\ell}}_{i_\ell}^{i_{\rm tot}/i_{\ell}}
    \big].
\end{equation}
The desired estimate \eqref{eq:tay:bnd:b:rem:a} hence follows by noting that $i_{\rm tot} \ge r-1$
and using the a-priori bound $\norm{Y_{i_{\ell'}}}_{i_{\ell'}} \le 1$ that is available for $1 \le \ell' \le \ell$ as a consequence of the stopping time.
\end{proof}

\begin{lem}
\label{lem:rem:bnd:b:rem:b}
Suppose that \textnormal{(HNL)}, \textnormal{(HTw)}, \textnormal{(H$V_*$)} and \textnormal{(Hq)} hold. Then there exists $K > 0$
so that for all $0 \le t \le t_{\rm tay}(1)$,
all $\sigma \ge 0$ and all $\delta \ge 0$ we have the bound
\begin{equation}
\begin{array}{lcl}
\norm{B_{\rm rem;b}[\sigma,\delta](t)}_{HS(L^2_Q;H^{k_*}) }
 & \le & K \sigma \sum_{i=1}^{r-1} \norm{Y_i[\sigma,\delta](t)}_{i}^{r-1}.
 \\[0.2cm]
\end{array}
\end{equation}
\end{lem}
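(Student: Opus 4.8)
The plan is to mirror the proofs of Lemmas~\ref{lem:rem:bnd:n:rem:i:b} and~\ref{lem:rem:bnd:n:rem:ii:iii:b}, treating $\mathcal{S}$ in place of $\mathcal{R}_I$, $\mathcal{R}_{II}$ and $\Upsilon$. Starting from the representation~\eqref{eq:rem:def:b:rem:b}, which expresses $B_{\rm rem;b}[\sigma,\delta]$ as $\sigma$ times an $(r-1)$-fold Bochner integral over the unit cube of the $HS(L^2_Q;H^{k_*})$-valued integrand $\mathcal{Q}_B$, it suffices to bound $\mathcal{Q}_B(Y_{\rm tay}[\sigma,\delta](t);t_1,\ldots,t_{r-1})$ uniformly in $(t_1,\ldots,t_{r-1}) \in [0,1]^{r-1}$ and then integrate trivially. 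Note that $\mathcal{Q}_B$ is well-defined in $HS(L^2_Q;H^{k_*})$ by Lemma~\ref{lem:sm:s} applied with $j=r-1$ and $k=k_*$ (for which $k_* \le k_{r-1}=k_*+2$), together with the inclusion $H^{k_i} \hookrightarrow H^{k_*+2} \hookrightarrow H^{k_*+1}$ coming from~\eqref{eq:tay:rem:bnds:k_j:big}.

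First I would apply the derivative estimate~\eqref{eq:nl:bnd:d:j:mathcal:s} with $j=r-1$, $k=k_*$, taking $w = t_1\cdots t_{r-1} Y_{\rm tay}$ and the $r-1$ arguments $Y_{\rm tay}, t_1 Y_{\rm tay}, \ldots, t_1\cdots t_{r-2}Y_{\rm tay}$, all of which are scalar multiples of $Y_{\rm tay}$ with coefficient in $[0,1]$. On the event $t \le t_{\rm tay}(1)$, the a-priori bound~\eqref{eq:tay:bnd:w:unif} with $\eta = 1$ gives $\norm{Y_{\rm tay}[\sigma,\delta](t)}_{H^{k_*+2}} \le K_{\rm tay}$, so in particular $\norm{w}_{H^{k_*}}$ and $\norm{w}_{H^{k_*+1}}$ are bounded by a constant, absorbing the prefactors $(1+\norm{w}_{H^{k_*}}^{k_*})$ and $\norm{w}_{H^{k_*+1}}$. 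For these arguments one has $\mathcal{P}^{(r-1)}_{k_*}[v] \le \norm{Y_{\rm tay}}_{H^{k_*}}^{r-1}$ and $\mathcal{P}^{(r-1)}_{k_*,k_*+1}[v] \le (r-1)\,\norm{Y_{\rm tay}}_{H^{k_*+1}}\norm{Y_{\rm tay}}_{H^{k_*}}^{r-2}$, and since $\norm{Y_{\rm tay}}_{H^{k_*}} \le \norm{Y_{\rm tay}}_{H^{k_*+1}}$ this yields a constant $C>0$ with
\begin{equation*}
\norm{\mathcal{Q}_B(Y_{\rm tay}[\sigma,\delta](t);t_1,\ldots,t_{r-1})}_{HS(L^2_Q;H^{k_*})} \le C \norm{Y_{\rm tay}[\sigma,\delta](t)}_{H^{k_*+1}}^{r-1}
\end{equation*}
for all $0 \le t \le t_{\rm tay}(1)$ and all $(t_1,\ldots,t_{r-1}) \in [0,1]^{r-1}$.

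It then remains to integrate over the unit cube (which loses nothing, the bound being uniform in the $t_i$), reinstate the factor $\sigma$, and convert $\norm{Y_{\rm tay}}_{H^{k_*+1}}^{r-1}$ into $\sum_{i=1}^{r-1}\norm{Y_i[\sigma,\delta](t)}_i^{r-1}$. For the latter I would use $\norm{Y_i}_{H^{k_*+1}} \le \norm{Y_i}_{H^{k_*+2}} \le \norm{Y_i}_i$, which holds by~\eqref{eq:tay:rem:bnds:k_j:big}, followed by the combinatorial estimate~\eqref{eq:tay:bnd:w:tay} with $\ell = r-1$, giving $\norm{Y_{\rm tay}}_{H^{k_*+1}}^{r-1} \le \norm{Y_{\rm tay}}_{H^{k_*+2}}^{r-1} \le (r-1)^{r-2}\sum_{i=1}^{r-1}\norm{Y_i}_i^{r-1}$. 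After adjusting $K$ this is the claimed bound.

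The only delicate point — a mild one — is the bookkeeping of the three-term structure of~\eqref{eq:nl:bnd:d:j:mathcal:s}, which, unlike the $\mathcal{R}_I$ estimate, mixes $H^{k_*}$- and $H^{k_*+1}$-norms and isolates a single highest-order factor through $\mathcal{P}^{(j)}_{k_*,k_*+1}$. One has to verify that under~\eqref{eq:tay:bnd:w:unif} every one of these terms is dominated by $\norm{Y_{\rm tay}}_{H^{k_*+1}}^{r-1}$, rather than merely by a constant, so that the resulting estimate genuinely captures the $(r-1)$-st order vanishing of the residual that is needed downstream.
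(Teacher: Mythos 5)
Your proof is correct and takes essentially the same route as the paper, which simply refers back to the argument for Lemma~\ref{lem:rem:bnd:n:rem:i:b}: bound the integrand $\mathcal{Q}_B$ uniformly on the unit cube via the derivative estimate \eqref{eq:nl:bnd:d:j:mathcal:s} combined with the a-priori bound \eqref{eq:tay:bnd:w:unif}, integrate trivially, and convert $\norm{Y_{\rm tay}}_{H^{k_*+1}}^{r-1}$ to $\sum_i \norm{Y_i}_i^{r-1}$ via \eqref{eq:tay:bnd:w:tay}. Your accounting of the three-term structure of \eqref{eq:nl:bnd:d:j:mathcal:s} is the only point that warrants care, and you handle it correctly.
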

\begin{proof}
This follows as in the proof of Lemma \ref{lem:rem:bnd:n:rem:i:b}
by considering the representation \eqref{eq:rem:def:b:rem:b}
and applying the bound \eqref{eq:nl:bnd:d:j:mathcal:s}.
\end{proof}

\begin{cor}
\label{cor:tay:rem:full:b}
    Suppose that \textnormal{(HNL)}, \textnormal{(HTw)}, \textnormal{(H$V_*$)} and \textnormal{(Hq)} hold.
        Then there exists $K > 0$ so that for all (real) $p \ge 1$,
    all $\sigma  \ge 0$, all $\delta \ge 0$  and all $T \ge 2$
    we have the bound
    \begin{equation}
\begin{array}{lcl}
\mathbb E \sup_{0 \le t \le T \wedge t_{\rm tay}(1) } \norm{B_{\rm rem}[\sigma,\delta](t)}_{HS(L^2_Q;H^{k_*})}^{2p} 
 & \le & K^{2 p} \sigma^{2p} \big( [\sigma^2 p]^{  p (r-1) }
+ [\sigma^2 \ln T + \delta^2]^{  p (r-1)  }    \big) .
        \\[0.2cm]
\end{array}
\end{equation}
\end{cor}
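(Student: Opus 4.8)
The plan is to mirror the template already used for Corollaries \ref{cor:tay:rem:full:n:i} and \ref{cor:tay:rem:full:n:ii:iii}: combine the pathwise estimates for the two pieces of the decomposition $B_{\rm rem}[\sigma,\delta] = B_{\rm rem;a}[\sigma,\delta] + B_{\rm rem;b}[\sigma,\delta]$ supplied by Lemmas \ref{lem:tay:bnd:b:rem:a} and \ref{lem:rem:bnd:b:rem:b} with the moment bound \eqref{eq:tay:bnd:w:j:s:ell:p:div:j} for the expansion functions. First I would observe that on the interval $0 \le t \le t_{\rm tay}(1)$ the definition of the stopping time yields the a-priori bound $\norm{Y_j[\sigma,\delta](t)}_j \le 1$ for every $1 \le j \le r-1$, so that $\norm{Y_j[\sigma,\delta](t)}_j^{r-1} \le \norm{Y_j[\sigma,\delta](t)}_j^{(r-1)/j}$. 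Feeding this into the bound of Lemma \ref{lem:rem:bnd:b:rem:b} shows that $B_{\rm rem;b}$ is dominated pathwise, up to a constant, by the very same expression $K \sigma \sum_{i=1}^{r-1} \norm{Y_i[\sigma,\delta](t)}_i^{(r-1)/i}$ that controls $B_{\rm rem;a}$ in Lemma \ref{lem:tay:bnd:b:rem:a}. Hence there is a constant $C>0$ so that
\[
\norm{B_{\rm rem}[\sigma,\delta](t)}_{HS(L^2_Q;H^{k_*})} \le C \sigma \sum_{i=1}^{r-1} \norm{Y_i[\sigma,\delta](t)}_i^{(r-1)/i}, \qquad 0 \le t \le t_{\rm tay}(1).
\]

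Next I would raise this to the power $2p$, apply the elementary estimate $\big(\sum_{i=1}^{r-1} a_i\big)^{2p} \le (r-1)^{2p} \sum_{i=1}^{r-1} a_i^{2p}$, take the supremum over $0 \le t \le T \wedge t_{\rm tay}(1)$ — which is bounded by the supremum over $0 \le t \le T$ since the $Y_i[\sigma,\delta]$ are defined for all $t \ge 0$ by Proposition \ref{prop:mr:tay} — and pass to the expectation. Each resulting term $\mathbb E \sup_{0 \le t \le T} \norm{Y_i[\sigma,\delta](t)}_i^{2p(r-1)/i}$ is controlled by \eqref{eq:tay:bnd:w:j:s:ell:p:div:j} with $\ell = r-1$ and $\tilde p = p$, which is legitimate because (WH) holds for $j_* = r$ by Proposition \ref{prop:tay:wh:holds}. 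This produces
\[
\mathbb E \sup_{0 \le t \le T \wedge t_{\rm tay}(1)} \norm{B_{\rm rem}[\sigma,\delta](t)}_{HS(L^2_Q;H^{k_*})}^{2p} \le C^{2p} \sigma^{2p} (r-1)^{2p} \big( [\sigma^2 p]^{p(r-1)} + [\sigma^2 \ln T + \delta^2]^{p(r-1)} \big) (r-1)^{p(r-1)} K^{2(r-1)p},
\]
and absorbing all the $r$-dependent and $C$-dependent factors into a new constant $K$ gives exactly the stated bound. Combining this corollary with Corollaries \ref{cor:tay:rem:full:n:i} and \ref{cor:tay:rem:full:n:ii:iii} via the definitions \eqref{eq:tay:def:rem:fncs} then completes the proof of Proposition \ref{prop:tay:bnd:rem}.

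I do not expect a genuine obstacle here; the real content has been front-loaded into the pathwise Lemmas \ref{lem:tay:bnd:b:rem:a}--\ref{lem:rem:bnd:b:rem:b} (which isolate the highest Sobolev norms using the product-splitting technology of {\S}\ref{sec:sm}) and into establishing (WH) in Proposition \ref{prop:tay:wh:holds}. The only two points needing minor care are that, on $[0,t_{\rm tay}(1)]$, the plain $(r-1)$-st powers of $\norm{Y_j}_j$ can be traded for the $(r-1)/j$-th powers that match the exponents appearing in \eqref{eq:tay:bnd:w:j:s:ell:p:div:j}, and that the supremum over the random interval $[0, T \wedge t_{\rm tay}(1)]$ may be freely enlarged to $[0,T]$ before invoking that moment estimate.
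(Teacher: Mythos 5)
Your proposal is correct and follows essentially the same route as the paper: the paper's proof of this corollary simply says to follow the proof of Corollary \ref{cor:tay:rem:full:n:i} with $r$ replaced by $r-1$, and that proof does exactly what you describe — trade $\norm{Y_j}_j^{r-1}$ for $\norm{Y_j}_j^{(r-1)/j}$ on $[0,t_{\rm tay}(1)]$, combine Lemmas \ref{lem:tay:bnd:b:rem:a} and \ref{lem:rem:bnd:b:rem:b}, and apply \eqref{eq:tay:bnd:w:j:s:ell:p:div:j} with $\ell = r-1$. Your version just spells out the steps that the paper leaves implicit.
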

\begin{proof}
    This can be established by following the proof of Corollary \ref{cor:tay:rem:full:n:i} and replacing $r$ by $r-1$.
\end{proof}

\begin{proof}[Proof of Proposition \ref{prop:tay:bnd:rem}]
This follows directly from Corollaries \ref{cor:tay:rem:full:n:i},
\ref{cor:tay:rem:full:n:ii:iii} and \ref{cor:tay:rem:full:b}.
\end{proof}

\section{Limiting expectations}
\label{sec:lim}

We here study the limiting behaviour of the expectation of functionals that act on the expansion functions $Y_{j}$
defined in \eqref{eq:mr:def:w:j:all}. In particular,
in {\S}\ref{subsec:lim:ml} we consider multilinear operators
and establish Proposition \ref{prop:mr:multiliner:lim},
using an explicit procedure to compute the associated limits.
We then turn to general smooth functionals in {\S}\ref{subsec:lim:phi} and establish Proposition \ref{prop:mr:tay:lim:phi}.

\subsection{Multilinear forms}
\label{subsec:lim:ml}

Given a multi-linear expression of the form $\Lambda[Y_{i_1}, \ldots , Y_{i_{\ell}}]$ for some tuple $\{i_1, \ldots, i_{\ell}\} \in \{1, \ldots, r-1\}^{\ell}$, our strategy is to repeated apply the It{\^o} lemma
to eliminate the references to the functions $Y_j$ and construct a  representation that only involves the constant
expressions
\begin{equation}
    \varrho_N = \mathcal{R}_{II}(0) + \Upsilon(0) \in H_1 = H^{k_1},
    \qquad \qquad
    \varrho_B = \mathcal{S}(0) \in HS(L^2_Q;H_1).
\end{equation}
Indeed, inspecting
\eqref{eq:mr:def:n:j} and \eqref{eq:mr:def:b:j},
we observe that
\begin{equation}
N_{2;II}[\sigma,\delta] + N_{2;III}[\sigma,\delta] = \sigma^2 \varrho_N   ,
\qquad \qquad
B_{1}[\sigma,\delta] = \sigma \varrho_B,
\end{equation}
while all other expressions vanish upon taking $Y_1 = \ldots = Y_{r-1} =0$.

To achieve the above, we extend the class of multi-linear maps that we consider to also allow $\Lambda$ to depend
on the pair $(\varrho_N, \varrho_B)$. In particular, we impose the following structural condition.

\begin{itemize}
    \item[(h$\Lambda$)]{
    The map
    \begin{equation}
       \Lambda: [H_1]^{m_N} \times [HS(L^2_Q;H_1)]^{m_B} \times H_{i_1} \times \ldots \times H_{i_{\ell}} \to \mathcal{H}
    \end{equation}
    is a bounded $(m_N + m_B + \ell)$-linear map into $\mathcal{H}$. In addition, we have $m_N \ge 0$, $m_B \ge 0$ and $\ell \ge 0$, together with the ordering
    $r-1 \ge i_1 \ge \ldots \ge i_{\ell} \ge 1$.
    }
\end{itemize}

For convenience, we introduce the notation
\begin{equation}
S(t) = \mathrm{exp}[ (\mathcal{L}_{\rm tw} + \Delta_{x_\perp})t],
\end{equation}
which corresponds to the evolution family in {\S}\ref{sec:conv}
with $\nu = 1$ via the relation $E(t, s) = S(t-s)$.
For any pair
\begin{equation}
\label{eq:lim:def:theta}
\theta = (\theta_N, \theta_B) \in [H_1]^{m_N} \times [HS(L^2_Q;H_1)]^{m_B},
\end{equation}
we are interested in the
expression
\begin{equation}
    \mathcal{I}_\Lambda(t,s;\theta) = \Lambda[ \theta, S(t-s)Y_{i_{1}}(s) , \ldots,  S(t-s)Y_{i_\ell}(s)].
\end{equation}

\begin{lem}
\label{lem:tay:red:single:step}
Suppose that \textnormal{(HNL)}, \textnormal{(HTw)}, \textnormal{(H$V_*$)} and \textnormal{(Hq)} all hold. Pick a Hilbert space $\mathcal{H}$ together with a multi-linear map $\Lambda$ that satisfies \textnormal{(h$\Lambda$)}.
Then for any $t \ge s \ge 0$
and any pair \eqref{eq:lim:def:theta},
the difference
\begin{equation}
 \mathbb{E}  \Big[  \mathcal{I}_\Lambda(t,s;\theta) \Big]
 -
\mathcal{I}_\Lambda(t,0;\theta)
\end{equation}
can be written as a finite sum of terms of the form
\begin{equation}
   \sigma^{i_0^*} \mathbb {E} \,  \int_0^s \mathcal{I}_{\Lambda^*}(t, s'; \theta^*(s') ) \, \mathrm ds'
\end{equation}
in which $i^*_0 \ge 0$ and $\Lambda^*$ satisfies \textnormal{(h$\Lambda$)}
with an index set
$(i_1^*,\ldots, i^*_{\ell^*})$ that is strictly less than $(i_1,\ldots,i_\ell)$ in lexicographical order, with
\begin{equation}
\label{eq:tay:sum:i:1:through:i:ell}
   i_1+ \ldots + i_\ell  = i_0^* + i_1^* + \ldots i^*_{\ell^*},
\end{equation}
together with
\begin{equation}
\label{eq:lim:sum:m:n:b:i:0}
    m_N + m_B  + i_0^* = m_N^* + m_B^*.
\end{equation}
In addition, one of the following four options hold.
\begin{itemize}
\item[\textnormal{(a)}]{We have $m_N^* = m_N$ and $m_B^* = m_B$ together with $\theta^* = \theta$}; or
\item[\textnormal{(b)}]{
We have $m_N^* =m_N + 1$ and $m_B^* = m_B$,
together with $\theta_N^*(s') = \big( \theta_N, S(t-s') \varrho_N \big)$ and $\theta_B^* = \theta_B$; or
}
\item[\textnormal{(c)}]{
We have $m_N^* = m_N$ and $m_B^* = m_B + 1$, together with
$\theta_N^* = \theta_N$ and
\begin{equation}
    \theta_B^*(s') = \big( \theta_B, S(t-s') \varrho_B \big).
\end{equation}
}
\item[\textnormal{(d)}]{
We have $m_N^* = m_N$ and $m_B^* = m_B + 2$, together with
$\theta_N^* = \theta_N$ and
\begin{equation}
    \theta_B^*(s') = \big( \theta_B, S(t-s') \varrho_B, S(t-s') \varrho_B \big).
\end{equation}
}
\end{itemize}
\end{lem}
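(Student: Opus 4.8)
The plan is to fix $t\ge 0$, introduce the propagated processes $Z_k(s)=S(t-s)Y_{i_k}(s)$ for $s\in[0,t]$ and $k=1,\ldots,\ell$, and apply the It\^o formula to the $\mathcal H$-valued semimartingale $s\mapsto \Lambda[\theta,Z_1(s),\ldots,Z_\ell(s)]=\mathcal I_\Lambda(t,s;\theta)$. The first ingredient is the integrating-factor identity: by Proposition~\ref{prop:mr:tay}(ii) each $Y_{i_k}$ solves an affine equation with generator $\mathcal L_{\rm tw}+\Delta_{x_\perp}$, drift $N_{i_k}$ and diffusion $B_{i_k}$, and since $S(\cdot)$ commutes with its generator the unbounded drift cancels against $\tfrac{\mathrm d}{\mathrm d s}S(t-s)$, leaving
\begin{equation*}
   \mathrm d Z_k(s)=S(t-s)N_{i_k}(s)\,\mathrm d s+S(t-s)B_{i_k}(s)\,\mathrm d W^Q_s .
\end{equation*}
This is routine to justify across the scale of spaces $H_{i_k}$ (the parabolic smoothing of $S$ absorbs the derivative loss of the generator), or one simply differentiates the mild formula \eqref{eq:mr:def:w:j:all} in $s$.

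Since $\Lambda$ is multilinear and $\theta$ is independent of $s$, the It\^o expansion of $\Lambda[\theta,Z_1,\ldots,Z_\ell]$ truncates after second order: all slot-derivatives of order $\ge 3$ vanish, and the diagonal second-order terms vanish as each slot is of degree one. Taking expectations then annihilates the $\mathrm d W^Q$ martingale (this uses the moment bounds of Proposition~\ref{prop:mr:tay}(iii) together with the boundedness of $\Lambda$ and of $S(\cdot)$) and leaves
\begin{align*}
   \mathbb E\big[\mathcal I_\Lambda(t,s;\theta)\big]-\mathcal I_\Lambda(t,0;\theta)
   &=\sum_{k=1}^{\ell}\mathbb E\int_0^s\Lambda\big[\theta,\ldots,S(t-s')N_{i_k}(s'),\ldots\big]\,\mathrm d s' \\
   &\qquad+\sum_{1\le j<k\le\ell}\mathbb E\int_0^s\sum_{e}\Lambda\big[\theta,\ldots,S(t-s')B_{i_j}(s')e,\ldots,S(t-s')B_{i_k}(s')e,\ldots\big]\,\mathrm d s' ,
\end{align*}
where $e$ runs over an orthonormal basis of $L^2_Q$ and the displayed entries occupy the $k$-th, resp.\ the $j$-th and $k$-th, slots of $\Lambda$.

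Next I would substitute the explicit formulas \eqref{eq:mr:def:n:j}--\eqref{eq:mr:def:b:j} for $N_{i_k}$ and $B_{i_k}$ and sort the resulting finite sum term by term. A genuinely multilinear summand of $\widetilde N_{i_k;I}$, $\widetilde N_{i_k;II}$ or $\widetilde N_{i_k;III}$ replaces the single slot of index $i_k$ by $\ell'\ge 1$ slots of indices $p_1,\ldots,p_{\ell'}$, each at most $i_k-1$; once the propagator $S(t-s')$, the combinatorial factor, and any explicit $\sigma^2$ are absorbed into a fresh bounded multilinear map $\Lambda^*$ this yields a term of option~(a). The constant part $\widetilde N_{2;II}(\cdot)+\widetilde N_{2;III}(\cdot)=\sigma^2\varrho_N$ instead puts $S(t-s')\varrho_N$ into a new $\theta_N$-slot, giving option~(b). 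Each covariation contribution unfolds both colliding diffusions; according to how many of $i_j,i_k$ equal $1$ — so that the corresponding diffusion is the constant $\sigma\varrho_B$ — one obtains option~(a), (c) or~(d), with $S(t-s')\varrho_B$ placed in the appropriate number of new $\theta_B$-slots. In every case the index-sum identity \eqref{eq:tay:sum:i:1:through:i:ell} follows directly from the constraints $i_1+\ldots+i_{\ell'}\in\{i_k,\,i_k-1,\,i_k-2\}$ built into \eqref{eq:mr:def:n:j}--\eqref{eq:mr:def:b:j}, while the slot count \eqref{eq:lim:sum:m:n:b:i:0} is obtained by tracking the number of copies of $\varrho_N$ and $\varrho_B$ produced.

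The one point requiring a genuine combinatorial argument is the strict lexicographic decrease of the index set: every index produced is strictly below the removed index $i_k$, and removing a single entry from a weakly decreasing finite tuple while inserting finitely many entries each strictly below it always produces a strictly smaller tuple in lexicographic order — including the degenerate case, arising from the $\varrho$-producing terms, where nothing is reinserted and the new tuple is a proper prefix of the old one. Together with the finiteness of the sum this is exactly what makes the statement a single reduction step that the argument toward Proposition~\ref{prop:mr:multiliner:lim} can iterate down to the terminal case $\ell=0$. I expect the main difficulty to be organisational rather than conceptual: justifying the It\^o/integrating-factor computation rigorously across the various $H_k$-scales, handling the quadratic-covariation term correctly in the Hilbert--Schmidt/$Q$-Wiener formalism so that it is again a bounded multilinear functional of $\theta^*$ and the propagated $Y$'s, and carrying out the four-way case analysis so that each option, with its $\sigma$-power and $\theta$-slot tally, is matched precisely.
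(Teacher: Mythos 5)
Your proposal follows the same route as the paper: apply the mild It\^o formula (the integrating-factor cancellation of the generator against $\tfrac{\mathrm d}{\mathrm ds}S(t-s)$ is exactly what the cited result of Da~Prato et al.\ encapsulates), take expectations to kill the martingale term, then substitute the explicit formulas \eqref{eq:mr:def:n:j}--\eqref{eq:mr:def:b:j} and sort the drift and quadratic-covariation contributions into the four cases, with the lexicographic decrease coming from the fact that $\widetilde N_j$ and $\widetilde B_j$ reference only lower-index expansion functions and the constants $\varrho_N,\varrho_B$. This is the paper's argument, presented with a bit more explicit bookkeeping of the case analysis; both your write-up and the paper's proof leave the verification of the slot-count identity \eqref{eq:lim:sum:m:n:b:i:0} at the same level of terseness.
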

\begin{proof}
Applying the mild It{\^o} formula \cite{dapratomild},
we obtain
\begin{equation}
\begin{array}{lcl}
    \mathcal{I}_\Lambda(t,s;\theta)
    & = &
    \Lambda[ \theta, S(t) Y_{i_1}(0), ... , S(t) Y_{i_\ell}(0)]
      \\[0.2cm]
      & & \qquad
    + \sum_{1 \le \ell' \le \ell}  \int_0^{s} \Lambda [\theta, S(t-s') Y_{i_1}(s'), \ldots , S(t-s) 
    N_{i_{\ell'}}(s'),
    \ldots , S(t-s') Y_{i_\ell}(s')] \, \mathrm ds'
\\[0.2cm]
& & \qquad
    + \sum_{k \in \mathbb{Z}}
    \sum_{1 \le \ell'_1 < \ell'_2 \le \ell}
     \int_0^{s} \Lambda \Big[ \theta, S(t-s')Y_{i_1}(s'), \ldots,
    S(t-s') B_{i_{\ell'_1}}(s') \sqrt{Q} e_k, \ldots,
    \\[0.2cm]
    & & \qquad \qquad \qquad \qquad \qquad
    \qquad \qquad
    S(t-s')B_{i_{\ell'_2} }(s') \sqrt{Q} e_k, \ldots , S(t-s')Y_{i_\ell}(s') \Big] \,  \mathrm ds'
\\[0.2cm]
& & \qquad
+
\sum_{1 \le \ell' \le \ell}  \int_0^{s} \Lambda [ \theta, S(t-s') Y_{i_1}(s), \ldots , S(t-s') 
    B_{i_{\ell'}}(s')[\,\cdot\,],
    \ldots , S(t-s') Y_{i_\ell}(s')] \, \mathrm dW^Q_{s'}.
\end{array}
\end{equation}
The stochastic integral vanishes upon taking expectations.
We can now use the definitions
\eqref{eq:mr:def:n:j} and \eqref{eq:mr:def:b:j}
to obtain the stated representation, using the fact that
for any pair $b_i \in HS(L^2_Q; H_i)$ and $b_j \in HS(L^2_Q;H_j)$ we have
\begin{equation}
\begin{array}{lcl}
\sum_{k \in \Wholes} \norm{ b_i \sqrt{Q} e_k }_{H^{k_i}} \norm{ b_j \sqrt{Q} e_k }_{j}  &\le & \big[ \sum_{k \in \Wholes } \norm{ b_i \sqrt{Q} e_k }_{i}^2 \big]^{1/2} \big[ \sum_{k \in \Wholes } \norm{ b_j \sqrt{Q} e_k }_{j}^2 \big]^{1/2}
\\[0.2cm]
& = & \norm{b_i}_{HS(L^2_Q;H_i)} \norm{b_j}_{HS(L^2_Q;H_j)}
\end{array}
\end{equation}
to show that each $\Lambda^*$ is well-defined as a bounded multi-linear map.
The strictly decreasing lexicographic order follows from the ordering
on the indices $(i_{\ell'})_{\ell' = 1 \ldots \ell}$ and $(i^*_{\ell'})_{\ell' = 1 \ldots \ell^*}$ and the
fact that $N_j$ and $B_j$ only depend on $Y_{j'}$ with
$1 \le j < j'$, besides the constant terms $\varrho_N$ and $\varrho_B$.
\end{proof}

It is convenient to introduce the notation
\begin{equation}
    \begin{array}{lcl}
    \theta^*_N(t;s_1, \ldots ,s_n)  &= &
        \Big( [S(t-s_1)\varrho_N]^{m^*_{N;1}},
        \ldots [S(t-s_n)\varrho_N]^{m^*_{N;n}} \Big),
    \\[0.2cm]
    \theta^*_B(t;s_1, \ldots, s_n)
        & = &
        \Big( [S(t-s_1) \varrho_B]^{m^*_{B;1}},
\ldots , [S(t-s_n) \varrho_B]^{m^*_{B;n}} \Big)
    \end{array}
    \end{equation}
with integers $(m^*_{N;1}, \ldots , m^*_{N;n}) \in \{0, 1\}^n$
and $(m^*_{B;1}, \ldots , m^*_{B;n}) \in \{0, 1,2\}^n$
that satisfy
\begin{equation}
    m^*_{N} = m^*_{N;1} + \ldots + m^*_{N;n},
    \qquad \qquad
    m^*_{B} = m^*_{B;1} + \ldots + m^*_{B;n}.
\end{equation}
Here the notation $[\,\cdot\,]^m$  means that the argument
should be repeated $m$ times,
which ensures that
\begin{equation}
\theta^*(t;s_1 , \ldots, s_n) = \big(\theta^*_N(t;s_1, \ldots , s_n), \theta^*_B(t;s_1, \ldots, s_n)\big) \in [H^{k_1}]^{m^*_N} \times [HS(L^2_Q;H^{k_1})]^{m^*_B}.
\end{equation}
We recall that $Y_j(0) = 0$ for all $2 \le j < r$,
simplifying some of the expressions below.

\begin{cor}
\label{cor:tay:repr:e:gamma}
Suppose that \textnormal{(HNL)}, \textnormal{(HTw)}, \textnormal{(H$V_*$)} and \textnormal{(Hq)} all hold. Pick a Hilbert space $\mathcal{H}$ together with a multi-linear map $\Lambda$ that satisfies \textnormal{(h$\Lambda$)} with $m_N = m_B =0$.
Then for any
    $t \ge 0$ the difference
    \begin{equation}
    \label{eq:lim:defn:diff:to:be:written}
        \mathbb{E} \Lambda \big[ Y_{i_1}(t), \ldots, Y_{i_\ell}(t) \big]
        - \Lambda[S(t) Y_{i_1}(0), \ldots S(t) Y_{i_{\ell}(0)}]
    \end{equation}
    can be written as a finite sum of terms of the form
    \begin{equation}
    \label{eq:tay:decomp:e:star}
        \mathcal{E}^*(t) = \sigma^{i^*_0} \int_0^t \cdots \int_0^{s_{n-1}}
\mathcal{I}_{\Lambda^*}[ t, 0; \theta^*(t; s_1, \ldots , s_n)
]
 \, \mathrm ds_n \cdots \mathrm d s_1
    \end{equation}
    in which $n \ge 1$
    and the map $\Lambda^*$ with the corresponding starred integers satisfy \textnormal{(h$\Lambda$)}
     and the identity \eqref{eq:tay:sum:i:1:through:i:ell}.
    If $\ell^* = 0$, then we either have
    $m_{N;n}^* = 1$ or $m_{B;n}^* \ge 1$.
\end{cor}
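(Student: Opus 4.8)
The plan is to prove Corollary~\ref{cor:tay:repr:e:gamma} by iterating Lemma~\ref{lem:tay:red:single:step} in a disciplined way, using the strictly decreasing lexicographic order on the index tuples as a well-founded measure to guarantee termination. First I would set $s = t$ in Lemma~\ref{lem:tay:red:single:step} and observe that the difference \eqref{eq:lim:defn:diff:to:be:written} is exactly $\mathbb{E}[\mathcal{I}_\Lambda(t,t;\theta)] - \mathcal{I}_\Lambda(t,0;\theta)$ with empty $\theta$ (i.e.\ $m_N = m_B = 0$); here I use that $Y_j(0)=0$ for $2\le j<r$, so $\mathcal{I}_\Lambda(t,0;\theta)$ reduces to the stated term $\Lambda[S(t)Y_{i_1}(0),\dots,S(t)Y_{i_\ell}(0)]$ — in fact it vanishes unless all $i_{\ell'}=1$. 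Lemma~\ref{lem:tay:red:single:step} then writes this as a finite sum of terms $\sigma^{i_0^*}\,\mathbb{E}\int_0^t \mathcal{I}_{\Lambda^*}(t,s_1;\theta^*(s_1))\,\mathrm ds_1$, where each $\Lambda^*$ again satisfies (h$\Lambda$), the index tuple has strictly dropped in lexicographic order, the weight identity \eqref{eq:tay:sum:i:1:through:i:ell} holds, and $\theta^*$ is of one of the four types (a)--(d), hence of the prescribed product form with a single ``outermost'' insertion slot indexed by $s_1$.

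The core of the argument is then an induction on the lexicographic rank of $(i_1,\dots,i_\ell)$. For each term produced, I would split the inner integrand $\mathcal{I}_{\Lambda^*}(t,s_1;\theta^*(s_1))$ again as $\mathbb{E}[\mathcal{I}_{\Lambda^*}(t,s_1;\theta^*(s_1))] = \mathcal{I}_{\Lambda^*}(t,0;\theta^*(s_1)) + \big(\mathbb{E}[\mathcal{I}_{\Lambda^*}(t,s_1;\theta^*(s_1))] - \mathcal{I}_{\Lambda^*}(t,0;\theta^*(s_1))\big)$, apply Lemma~\ref{lem:tay:red:single:step} to the bracketed difference (now with general $\theta$, which is precisely why the lemma is stated for nonzero $m_N,m_B$), and iterate. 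At every application the accumulated ``frozen'' arguments $\theta$ collect new factors of the form $S(t-s_j)\varrho_N$ or $S(t-s_j)\varrho_B$ (possibly two of the latter, from option (d)), each attached to a fresh integration variable $s_j$ nested inside the previous one — this is exactly the ordered iterated-integral structure $\int_0^t\cdots\int_0^{s_{n-1}}$ appearing in \eqref{eq:tay:decomp:e:star}, with the $\theta^*(t;s_1,\dots,s_n)$ notation recording the multiplicities $m^*_{N;j}\in\{0,1\}$, $m^*_{B;j}\in\{0,1,2\}$. The powers of $\sigma$ multiply and the weight identity \eqref{eq:tay:sum:i:1:through:i:ell} is preserved additively at each step, so it holds for the final terms; I would check this bookkeeping explicitly in one line since \eqref{eq:lim:sum:m:n:b:i:0} from the lemma chains telescopically into the ``total insertions'' count.

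The recursion terminates because the lexicographic rank of the (sorted) index tuple strictly decreases with each invocation of Lemma~\ref{lem:tay:red:single:step}, and tuples over $\{1,\dots,r-1\}^{\le \ell}$ are well-ordered; so after finitely many steps every branch reaches a term with $\ell^* = 0$, i.e.\ the multi-linear map has no remaining $Y$-arguments and acts only on frozen $\theta$-data. It remains to verify the final structural claim: if $\ell^* = 0$ then either $m^*_{N;n}=1$ or $m^*_{B;n}\ge1$, i.e.\ the innermost (most recently added) insertion variable $s_n$ actually carries a $\varrho_N$ or $\varrho_B$ factor. This holds because a term reaches $\ell^* = 0$ only through the very last application of Lemma~\ref{lem:tay:red:single:step} that removed the last $Y$-argument: if that step fell under option (a) then no $Y$ could have been removed while $\theta$ stayed fixed unless an earlier integration already exists, and a careful inspection of the lemma's proof shows that whenever $\ell$ drops to $0$ the corresponding step must be of type (b), (c) or (d) (since reducing a $Y_{i_{\ell'}}$ to a constant necessarily goes through $N_{i_{\ell'}}(s')$ or a pair of $B$-terms, which at the bottom of the recursion are the constants $\varrho_N$, $\varrho_B$), so the newly-created variable $s_n$ carries at least one frozen factor. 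I expect this last step — tracking which case of (a)--(d) is ``forced'' at the terminal node, and in particular ruling out a vacuous type-(a) reduction producing $\ell^*=0$ — to be the main obstacle, since it requires looking inside the proof of Lemma~\ref{lem:tay:red:single:step} rather than just its statement; everything else is routine iteration and additive bookkeeping of the exponents $i_0^*$ and the counts $m_N,m_B$.
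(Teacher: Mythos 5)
Your approach matches the paper's: iterate Lemma~\ref{lem:tay:red:single:step} (well-founded by the strict lexicographic descent of the index tuples), keep the additive bookkeeping of $i_0^*$ and the $\theta$-slots, and deduce the terminal claim from the option (a)--(d) case distinction. The one step you flag as possibly requiring a look inside the lemma's proof --- ruling out a type-(a) terminal step producing $\ell^*=0$ --- in fact follows from the lemma's statement alone: option (a) forces $m_N^*=m_N$, $m_B^*=m_B$, hence $i_0^*=0$ by \eqref{eq:lim:sum:m:n:b:i:0}, so \eqref{eq:tay:sum:i:1:through:i:ell} gives $i_1+\cdots+i_\ell = i_1^*+\cdots+i_{\ell^*}^* = 0$, contradicting $i_{\ell'}\ge 1$.
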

\begin{proof}
    This follows from an iterative application
    of Lemma \ref{lem:tay:red:single:step}.
    The fact that $m_{N;n}^* = 1$ or $m_{B;n}^* \ge 1$
    holds when $\ell^* = 0$
    follows from the fact that in the final
    step either option (b), (c) or (d) must hold
    in order to eliminate the remaining reference(s)
    to the processes $Y_j$. 
\end{proof}

\begin{lem}
\label{lem:tay:lim:e:star:decay}
Consider the setting of Corollary
\ref{cor:tay:repr:e:gamma}. Then
there exists $K > 0$ so that
for any $t \ge 0$
 the expressions
\eqref{eq:tay:decomp:e:star} satisfy
the bound
\begin{equation}
    \norm{\mathcal{E}^*(t)}_{\mathcal{H}}
    \le K \sigma^{i_0^*} \delta^{\ell^*} e^{-\beta t} t^n
\end{equation}
if $\ell^* \ge 1$, with $i_0^* + \ell_* = i_1 + \ldots + i_{\ell}$.
\end{lem}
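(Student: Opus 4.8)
The substantive work is already contained in Lemma \ref{lem:tay:red:single:step} and Corollary \ref{cor:tay:repr:e:gamma}, so the plan is essentially to read off a pathwise bound from the explicit form \eqref{eq:tay:decomp:e:star}. The key structural observation is that each $\mathcal{E}^*(t)$ is in fact \emph{deterministic}: inspecting \eqref{eq:tay:decomp:e:star}, the arguments gathered in $\theta^*(t;s_1,\ldots,s_n)$ are obtained by applying the semigroup $S$ to the fixed elements $\varrho_N$ and $\varrho_B$, while the remaining slots of $\mathcal{I}_{\Lambda^*}[t,0;\,\cdot\,]$ carry $S(t)Y_{i^*_{\ell'}}(0)$, and $Y_j(0)=\delta V_*\mathbf{1}_{j=1}$ is deterministic. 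Hence no stochastic estimates are needed and it suffices to bound $\mathcal{E}^*(t)$ pathwise.

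First I would dispose of the degenerate configurations. If some $i^*_{\ell'}\ge 2$, then $Y_{i^*_{\ell'}}(0)=0$, multilinearity of $\Lambda^*$ forces $\mathcal{E}^*(t)=0$, and the claimed bound holds trivially. In the remaining case $i^*_1=\cdots=i^*_{\ell^*}=1$, so $i^*_1+\cdots+i^*_{\ell^*}=\ell^*$ and \eqref{eq:tay:sum:i:1:through:i:ell} collapses to $i_0^*+\ell^*=i_1+\cdots+i_\ell$, while each of the $\ell^*$ remaining slots now carries $S(t)Y_1(0)=\delta\,S(t)V_*$.

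Next I would record the decay of the relevant semigroup orbits. Using that $P^\perp V_*=V_*$ and $P^\perp\varrho_B=\varrho_B$ (both noted in \S\ref{sec:tay}), and likewise $P^\perp\varrho_N=\varrho_N$ by the construction of $\mathcal{R}_\sigma$ and $\mathcal{S}$, the representation \eqref{eq:tay:id:flow:E} together with the bounds \eqref{eq:conv:bnds:e} gives, for all $\tau\ge 0$, $\norm{S(\tau)V_*}_{H^{k_1}}\le M e^{-\beta\tau}$, $\norm{S(\tau)\varrho_N}_{H^{k_1}}\le M e^{-\beta\tau}\norm{\varrho_N}_{H^{k_1}}$ and $\norm{S(\tau)\varrho_B}_{HS(L^2_Q;H^{k_1})}\le M e^{-\beta\tau}\norm{\varrho_B}_{HS(L^2_Q;H^{k_1})}$, the last of these by composing the bounded operator $S(\tau)P^\perp$ with the Hilbert--Schmidt operator $\varrho_B$. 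Then, on the ordered simplex $0\le s_n\le\cdots\le s_1\le t$, every factor $e^{-\beta(t-s_i)}$ coming from a $\varrho_N$- or $\varrho_B$-argument is at most $1$, while the $\ell^*$ factors $S(t)\delta V_*$ contribute $(\delta M e^{-\beta t})^{\ell^*}$. Boundedness of $\Lambda^*$ therefore bounds the integrand of \eqref{eq:tay:decomp:e:star} by $C\,\delta^{\ell^*}e^{-\ell^*\beta t}\le C\,\delta^{\ell^*}e^{-\beta t}$, where the last inequality uses $\ell^*\ge 1$ and $C$ collects $\abs{\Lambda^*}$, a power of $M$, and powers of $\norm{\varrho_N}_{H^{k_1}}$ and $\norm{\varrho_B}_{HS(L^2_Q;H^{k_1})}$. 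Integrating over the simplex (volume $t^n/n!\le t^n$) and multiplying by $\sigma^{i_0^*}$ yields $\norm{\mathcal{E}^*(t)}_{\mathcal{H}}\le C\,\sigma^{i_0^*}\delta^{\ell^*}e^{-\beta t}t^n$; since the decomposition in Corollary \ref{cor:tay:repr:e:gamma} is a finite sum, taking $K$ to be the largest of the finitely many constants $C$ finishes the argument.

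I do not anticipate a serious obstacle. The only points requiring a little care are (i) recognizing that $\mathcal{E}^*(t)$ is deterministic, so the estimate is purely pathwise; (ii) using $\ell^*\ge 1$ to collapse the product of $e^{-\ell^*\beta t}$ with the remaining $e^{-\beta(t-s_i)}$-factors into the single clean factor $e^{-\beta t}$; and (iii) tracking the $n$-fold simplex volume to produce the $t^n$. The bookkeeping of which $\varrho$-factors occur, and that $P^\perp$ acts trivially on $\varrho_N$, $\varrho_B$ and $V_*$, is routine but should be stated explicitly.
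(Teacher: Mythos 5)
Your proof is correct and follows essentially the same route as the paper's: reduce to the nondegenerate case $i_1^*=\cdots=i_{\ell^*}^*=1$ (the others vanish since $Y_j(0)=0$ for $j\ge 2$), bound the $\theta^*$-factors using $e^{-\beta(t-s_i)}\le 1$ and boundedness of the semigroup, keep the exponential decay from the $\ell^*$ factors $S(t)\delta V_*$, and integrate over the $n$-dimensional simplex to produce $t^n$. The only difference is that you spell out the deterministic nature of $\mathcal{E}^*$, the $P^\perp$-invariance of $\varrho_N$, $\varrho_B$ and $V_*$, and the simplex volume explicitly, all of which the paper's argument takes for granted.
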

\begin{proof}
Since $\ell^* \ge 1$, the expression is non-zero only if $i^*_1 = \ldots = i^*_{\ell^*} = 1$, in which case we may use
$\norm{ Y_{1}(0) }_{H^{k_1}} = \delta$
to obtain the bound
\begin{equation}
    \norm{ \mathcal{I}_{\Gamma^*}[ t, 0; \theta^*(t; s_1, \ldots , s_n)  }_{\mathcal{H}}
    \le \norm{\Lambda} M^{m_N^* + m_B^*} \norm{\varrho_N}_{H^{k_1}}^{m_N^*}\norm{\varrho_B}_{HS(L^2_Q;H^{k_1})}^{m_B^*}
    M^{\ell_*} e^{-\beta \ell^* t} \delta^{\ell^*}.
\end{equation}
Repeatedly applying \eqref{eq:lim:sum:m:n:b:i:0} we see that
$i_0^* = m_N^* + m_B^*$. In addition, \eqref{eq:tay:sum:i:1:through:i:ell}
shows that $i_0^* + \ell^* = i_1 + \ldots + i_{\ell}$,
which allows us to write
\begin{equation}
    \norm{ \mathcal{I}_{\Gamma^*}[ t, 0; \theta^*(t; s_1, \ldots , s_n)  }_{\mathcal{H}}
    \le K e^{-\beta \ell^* t} \delta^{\ell^*}
\end{equation}
for some constant $K > 0$ that only depends on the multi-linear map $\Lambda$.
The desired bound now follows by performing the integration
\eqref{eq:tay:decomp:e:star}, which contributes the factor $\sigma^{i_0^*} t^n$.
\end{proof}

\begin{lem}
\label{lem:tay:lim:exists}
Consider the setting of Corollary \ref{cor:tay:repr:e:gamma}. Then
there exists $K > 0$ so that the following holds true.
For each of the expressions
\eqref{eq:tay:decomp:e:star} with $\ell^* = 0$ there exists $\mathcal{E}^*_\infty \in \mathcal{H}$
so that the bound
\begin{equation}
    \norm{\mathcal{E}^*(t) - \mathcal{E}^*_\infty}_{\mathcal{H}}
    \le K \sigma^{i_0^*}  e^{-\frac{1}{2} \beta t}
\end{equation}
holds for any $t \ge 0$.
\end{lem}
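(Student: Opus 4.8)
The plan is to strip the $t$-dependence out of the integrand in \eqref{eq:tay:decomp:e:star} by a change of integration variables, which reduces the statement to the elementary fact that an absolutely convergent iterated integral over an infinite simplex is approached by its truncations at an exponential rate. \textbf{Step 1 (freezing the integrand).} Since $\ell^* = 0$, the quantity $\mathcal{I}_{\Lambda^*}[t,0;\theta^*(t;s_1,\ldots,s_n)]$ carries no $Y$-factors: it is $\Lambda^*$ evaluated on a tuple whose entries are of the form $S(t-s_j)\varrho$ with $\varrho\in\{\varrho_N,\varrho_B\}$, while the coefficients $S(t-s_j)\circ D^{\widetilde\ell}\mathcal{R}(0)$ absorbed into $\Lambda^*$ during the reductions of Lemma \ref{lem:tay:red:single:step} (here $\mathcal{R}$ denotes one of $\mathcal{R}_I,\mathcal{R}_{II},\Upsilon,\mathcal{S}$) also depend on $t$ only through the differences $t-s_j$. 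Substituting $\tau_j=t-s_j$ therefore converts \eqref{eq:tay:decomp:e:star} into
\begin{equation}
    \mathcal{E}^*(t) = \sigma^{i_0^*}\int_{0<\tau_1<\cdots<\tau_n<t} F(\tau_1,\ldots,\tau_n)\,\mathrm d\tau_1\cdots\mathrm d\tau_n
\end{equation}
with a fixed, $t$-independent map $F$ on the open simplex $\{0<\tau_1<\cdots<\tau_n\}$. No initial terms $S(t)Y_j(0)$ survive here, since $Y_j(0)=0$ for $j\ge 2$ and any surviving factor $\delta S(t)V_*$ would land in the $\ell^*\ge 1$ case treated in Lemma \ref{lem:tay:lim:e:star:decay}.

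\textbf{Step 2 (pointwise bound on $F$).} I claim that $\norm{F(\tau_1,\ldots,\tau_n)}_{\mathcal{H}}\le C\,e^{-\beta\tau_n}$ on the whole infinite simplex, for a constant $C$ depending only on $\Lambda$, $M$, and the smoothness constants of {\S}\ref{sec:sm}. The decisive input is Corollary \ref{cor:tay:repr:e:gamma}: because $\ell^*=0$, the innermost block carries at least one factor $S(\tau_n)\varrho$ (as $m^*_{N;n}=1$ or $m^*_{B;n}\ge 1$), and since the constant vectors $\varrho_N=\mathcal{R}_{II}(0)+\Upsilon(0)$ and $\varrho_B=\mathcal{S}(0)$ lie in $H^{k_1}$ and in the range of $P^\perp$ (this orthogonality is part of the freezing construction, cf.\ $P^\perp\mathcal{S}(0)=\mathcal{S}(0)$), estimate \eqref{eq:conv:bnds:e} gives $\norm{S(\tau_n)\varrho}_{H^{k_1}}\le M e^{-\beta\tau_n}\norm{\varrho}_{H^{k_1}}$ with no loss of derivatives. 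All remaining factors $S(\tau_j)\varrho$ and all absorbed coefficients $S(\tau_j)\circ D^{\widetilde\ell}\mathcal{R}(0)$ are merely bounded uniformly in $\tau_j$, using $e^{-\beta\tau_j}\le 1$, the uniform boundedness of $S(\cdot)$, and the derivative bounds of {\S}\ref{sec:sm}, where the regularity hierarchy $k_j>k_i$ for $j<i$ ensures every input lives in a sufficiently regular space so that $S$ never needs to gain regularity. Multilinearity of $\Lambda^*$ then yields the claimed bound.

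\textbf{Step 3 (conclusion).} Since $\int_{0<\tau_1<\cdots<\tau_n}e^{-\beta\tau_n}\,\mathrm d\tau=\int_0^\infty e^{-\beta\tau}\frac{\tau^{n-1}}{(n-1)!}\,\mathrm d\tau=\beta^{-n}<\infty$, the integral
\begin{equation}
    \mathcal{E}^*_\infty := \sigma^{i_0^*}\int_{0<\tau_1<\cdots<\tau_n} F(\tau_1,\ldots,\tau_n)\,\mathrm d\tau_1\cdots\mathrm d\tau_n
\end{equation}
converges absolutely in $\mathcal{H}$, and
\begin{equation}
    \norm{\mathcal{E}^*(t)-\mathcal{E}^*_\infty}_{\mathcal{H}}
    \le \sigma^{i_0^*}\int_{\substack{0<\tau_1<\cdots<\tau_n\\ \tau_n\ge t}} \norm{F}_{\mathcal{H}}\,\mathrm d\tau
    \le C\sigma^{i_0^*}\int_t^\infty e^{-\beta\tau}\frac{\tau^{n-1}}{(n-1)!}\,\mathrm d\tau
    \le K\sigma^{i_0^*}e^{-\frac{1}{2}\beta t},
\end{equation}
where the last step uses $e^{-\beta\tau}\le e^{-\frac{1}{2}\beta t}e^{-\frac{1}{2}\beta\tau}$ for $\tau\ge t$ and $K:=C(2/\beta)^n$; taking the maximum over the finitely many expressions \eqref{eq:tay:decomp:e:star} gives a single constant $K$ valid for all of them and all $t\ge 0$.

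The main obstacle is not the convergence argument but the bookkeeping in Step 2: after the nested applications of Lemma \ref{lem:tay:red:single:step} one must be certain that only the innermost $\varrho$ has to supply decay, i.e.\ that every other surviving factor and every absorbed coefficient is genuinely uniformly bounded --- otherwise the crude estimate $\norm{\mathcal{I}_{\Lambda^*}}\le C$ would only give $\norm{\mathcal{E}^*(t)}\le Ct^n/n!$, which grows. This relies on the two structural facts recalled above (the nonlinearities, hence $\varrho_N,\varrho_B$, take values in the range of $P^\perp$; and the index hierarchy makes every reduction increase regularity rather than consume it). Should a smoothing estimate nonetheless be required somewhere, the argument is unaffected: one replaces $e^{-\beta\tau}$ by $\max\{1,\tau^{-1/2}\}e^{-\beta\tau}$, which is still integrable near $0$ and still has an $e^{-\frac{1}{2}\beta t}$ tail.
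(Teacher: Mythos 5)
Your proof is correct and follows essentially the same route as the paper: you perform the change of variables $\tau_j=t-s_j$ to make the integrand $t$-independent (the paper does this only for the outermost variable, reversing the integration order and writing $\tilde s_n=t-s_n$, but this is a cosmetic difference), you extract exponential decay from the last-appearing $S(\cdot)\varrho$ factor whose presence is guaranteed by $m^*_{N;n}=1$ or $m^*_{B;n}\ge 1$, and you bound all other factors uniformly. Your Step 3 makes the truncation-of-simplex argument explicit, which the paper states somewhat tersely; your remark that $\varrho_N$ and $\varrho_B$ must lie in the range of $P^\perp$ for the decay estimate \eqref{eq:conv:bnds:e} to apply is a correct structural observation that the paper leaves implicit at this point (it is visible in the discussion surrounding \eqref{eq:tay:def:w1:bis} and \eqref{eq:res:proj:n:m}).
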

\begin{proof}
  Reversing the integration order in \eqref{eq:tay:decomp:e:star} and writing $\tilde{s}_n = t - s_n$, we obtain
\begin{equation}
\label{eq:tay:limit:e:star}
\mathcal{E}^*(t)
=
\sigma^{i_0^*} \int_{0}^t \int_0^{\tilde{s}_n} \cdots \int_0^{\tilde{s}_{2}}
\Lambda^*[  \tilde{\theta}^*(\tilde{s}_1, \ldots, \tilde{s}_n) ]
  \, \mathrm d\tilde{s}_1 \cdots \mathrm d\tilde{s}_n ,
\end{equation}
in which $\tilde{\theta}^* = (\tilde{\theta}_N^*, \tilde{\theta}_B^*)$ is given by
\begin{equation}
    \begin{array}{lcl}
    \tilde{\theta}^*_N(\tilde{s}_1, \ldots ,\tilde{s}_n)  &= &
        \Big( [S(\tilde{s}_1)\varrho_N]^{m^*_{N;1}},
        \ldots, [S(\tilde{s}_n)\varrho_N]^{m^*_{N;n}} \Big),
    \\[0.2cm]
    \tilde{\theta}^*_B(\tilde{s}_1, \ldots ,\tilde{s}_n)
        & = &
        \Big( [S(\tilde{s}_1) \varrho_B]^{m^*_{B;1}},
\ldots , [S(\tilde{s}_n) \varrho_B]^{m^*_{B;n}} \Big).
    \end{array}
    \end{equation}
Since $m^*_{N:n} =1 $ or $m^*_{B;n} \ge 1$,
the inner integrals
in \eqref{eq:tay:limit:e:star} satisfy the bound
\begin{equation}
\begin{array}{lcl}
\norm{\int_0^{\tilde{s}_n} \cdots \int_0^{\tilde{s}_{2}}
\Gamma^*[ \tilde{\theta}^*(\tilde{s}_1, \ldots, \tilde{s}_n) ]
 \, \mathrm d\tilde{s}_1 \cdots \mathrm d\tilde{s}_{n-1}
 }_{\mathcal{H}}
  & \le &
  \norm{\Lambda} M^{m_N^* + m_B^*} \norm{\varrho_N}_{H^{k_1}}^{m_N^*}\norm{\varrho_B}_{HS(L^2_Q;H^{k_1})}^{m_B^*}
  e^{-\beta\tilde{s}_n} |\tilde{s}_n|^{n-1}
\\[0.2cm]
& \le &
K  e^{-\beta\tilde{s}_n} |\tilde{s}_n|^{n-1},
\end{array}
\end{equation}
where the second inequality follows by noting that $i_0^* = m_N^* + m_B^* = i_1 + \ldots + i_{\ell}$.
Since this is integrable with respect to $\tilde{s}_n$,
the representation \eqref{eq:tay:limit:e:star}
implies that indeed $\mathcal{E}^*(t)$ converges to a limit $\mathcal{E}^*_\infty$ at the specified
exponential rate.
\end{proof}

We have hence shown that the expectation of any multi-linear expression involving the expansion functions $Y_j$ 
converges
exponentially to a well-defined limit.
In addition, we have an
explicit reduction procedure that shows that this limit can be represented
as a sum of expressions of the form \eqref{eq:tay:limit:e:star} with $t = \infty$.

\begin{proof}[Proof of Proposition \ref{prop:mr:multiliner:lim}]
The result follows from
the representation of the difference \eqref{eq:lim:defn:diff:to:be:written}
provided in Corollary \ref{cor:tay:repr:e:gamma}
by applying
Lemmas \ref{lem:tay:lim:e:star:decay} and \ref{lem:tay:lim:exists}.
\end{proof}

\subsection{Smooth functionals}
\label{subsec:lim:phi}

We now turn our attention to the proof of Proposition \ref{prop:mr:tay:lim:phi}.
Given a functional $\phi$ that satisfies (H$\phi$), we write
\begin{equation}
    h_0= \phi(0)
\end{equation}
together with
\begin{equation}
\label{eq:lim:def:z:j}
    h_j[\sigma, \delta]
    = \sum_{\ell=1}^j \sum_{i_1 + \ldots + i_{\ell} = j} \frac{1}{\ell!} D^{\ell} \phi(0)\big[Y_{i_1}[\sigma,\delta], \ldots, Y_{i_{\ell}}[\sigma,\delta] \big]
\end{equation}
for $1 \le j \le r-1$, where in each term we take $1 \le i_{\ell'} \le r-1$ for each $1 \le \ell'\le \ell$. We also introduce the remainder function
\begin{equation}
    h_{\rm rem}[\sigma, \delta]
    = \phi(Y_{\rm tay}[\sigma, \delta]) - \sum_{j=0}^{r-1} h_{j}[\sigma, \delta],
\end{equation}
which can be decomposed as
\begin{equation}
    h_{\rm rem}[\sigma, \delta] = h_{\rm rem;a}[\sigma, \delta] + h_{\rm rem;b}[\sigma, \delta]
\end{equation}
by writing
\begin{equation}
\label{eq:tay:lim:def:z:rem:a}
    h_{\rm rem;a}[\sigma,\delta] = \sum_{\ell= 1}^{r-1} \sum_{i_1 + \ldots + i_{\ell} \ge r} \frac{1}{\ell!} D^\ell \phi(0) \big[Y_{i_1}[\sigma,\delta], \ldots, Y_{i_{\ell}}[\sigma,\delta]  \big],
\end{equation}
together with
\begin{equation}
\label{eq:lim:def:z:rem:b}
    h_{\rm rem;b} =\int_0^1 \cdots \int_0^1
    \mathcal{Q}_h\big(Y_{\rm tay}[\sigma,\delta]; t_1, \ldots, t_{r} \big) \, \mathrm dt_r \cdots \mathrm dt_1 ,
\end{equation}
in which the integrand is given by
\begin{equation}
\label{eq:lim:def:q}
\mathcal{Q}_h( Y_{\rm tay}; t_1, \ldots, t_r)
= D^r \phi( t_1 \cdots t_r Y_{\rm tay})\big[Y_{\rm tay}, t_1 W, t_1 t_2 Y_{\rm tay}, \ldots, t_1 \cdots t_{r-1} Y_{\rm tay}\big].
\end{equation}

Observe that the $h_j$ and $h_{\rm rem;a}$ terms are multi-linear, which allows us to apply the theory developed in {\S}\ref{subsec:lim:ml}.
In particular, the expectation of these terms all converge to a well-defined limit. 

\begin{lem}
\label{lem:tay:bnd:z:j}
Suppose that \textnormal{(HNL)}, \textnormal{(HTw)}, \textnormal{(H$V_*$)} and \textnormal{(Hq)} hold. Pick a Hilbert space $\mathcal{H}$
together with a functional $\phi$ that satisfies \textnormal{(H$\phi$)}.
Then there exist quantities
\begin{equation}
    \big( h_{\infty;1}, \ldots, h_{\infty;r-1} \big) \in \mathcal{H}^{r-1}
\end{equation}
and a constant $K > 0$ so that for all $1 \le j \le r-1$ we have the bound
\begin{equation}
    \norm{ \mathbb E h_j[\sigma,\delta](t) - \sigma^j h_{\infty;j} }_{\mathcal{H}}
    \le ( \sigma^j + \delta^j) K e^{-\frac{\beta}{2} t}
\end{equation}
for all $\sigma \ge 0$ and $\delta \ge 0$.
\end{lem}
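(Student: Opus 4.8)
The plan is to exploit the fact that each $h_j[\sigma,\delta]$, as defined in \eqref{eq:lim:def:z:j}, is a \emph{finite} linear combination of multi-linear expressions in the expansion functions $Y_{i_1},\ldots,Y_{i_\ell}$ with $i_1+\ldots+i_\ell=j$, so that Proposition \ref{prop:mr:multiliner:lim} can be applied term by term. First I would fix $1\le j\le r-1$ and isolate one summand $\frac{1}{\ell!}D^\ell\phi(0)[Y_{i_1},\ldots,Y_{i_\ell}]$ occurring in \eqref{eq:lim:def:z:j}. Since each $i_{\ell'}\ge 1$ and the indices sum to $j\le r-1$, we automatically have $\ell\le j\le r-1<r$ and $1\le i_{\ell'}\le r-1$; in particular $D^\ell\phi(0)\in\mathscr{L}^{(\ell)}(H^{k_*};\mathcal{H})$ is a bounded $\ell$-linear map on $(H^{k_*})^\ell$, because $\phi\in C^{r}(H^{k_*};\mathcal{H})$. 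As $k_{i_{\ell'}}=k_*+r+1-i_{\ell'}\ge k_*+2>k_*$, each inclusion $H_{i_{\ell'}}=H^{k_{i_{\ell'}}}\hookrightarrow H^{k_*}$ is continuous, so $\Lambda:=\frac{1}{\ell!}D^\ell\phi(0)$ restricts to a bounded $\ell$-linear map $H_{i_1}\times\cdots\times H_{i_\ell}\to\mathcal{H}$ — exactly the setting of Proposition \ref{prop:mr:multiliner:lim}, with $i_{\rm tot}=j$. (This Sobolev-embedding step is the same observation already used to establish the inclusions \eqref{eq:mr:inc}.)

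Next I would invoke Proposition \ref{prop:mr:multiliner:lim} for each such $\Lambda$: it furnishes a limit $h_\infty^{\Lambda}\in\mathcal{H}$ and a constant $K_\Lambda>0$ with
\[
   \big\|\,\mathbb{E}\,\Lambda\big[Y_{i_1}[\sigma,\delta](t),\ldots,Y_{i_\ell}[\sigma,\delta](t)\big]-\sigma^{j}h_\infty^{\Lambda}\,\big\|_{\mathcal{H}}\le\big(\sigma^{j}+\delta^{j}\big)K_\Lambda\, e^{-\frac{\beta}{2}t}
\]
for all $t\ge 0$, $\sigma\ge 0$ and $\delta\ge 0$. Summing this over the finitely many pairs $(\ell,(i_1,\ldots,i_\ell))$ that appear in \eqref{eq:lim:def:z:j}, I would define $h_{\infty;j}$ to be the corresponding sum of the individual limits $h_\infty^{\Lambda}$ and set $K=\max_{1\le j\le r-1}\sum_{\Lambda}K_\Lambda$, where for each $j$ the sum runs over the terms entering $h_j$. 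Linearity of the expectation together with the triangle inequality then yields the claimed bound $\|\mathbb{E}h_j[\sigma,\delta](t)-\sigma^jh_{\infty;j}\|_{\mathcal{H}}\le(\sigma^j+\delta^j)Ke^{-\frac{\beta}{2}t}$, simultaneously for all $1\le j\le r-1$.

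I do not anticipate a genuine obstacle here: the decay rate $e^{-\beta t/2}$, the leading power $\sigma^j$, and the $\delta^j$-dependence are all inherited verbatim from Proposition \ref{prop:mr:multiliner:lim}, and the only points worth a sentence of justification are the boundedness of $D^\ell\phi(0)$ between the finer Sobolev spaces $H_{i_{\ell'}}$ and the fact that $\ell$ never exceeds $r$ (so that the hypothesis $\phi\in C^{r}$ is precisely what is required). Everything else is routine bookkeeping over finitely many terms.
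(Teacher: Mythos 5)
Your proposal is correct and follows essentially the same route as the paper: apply Proposition \ref{prop:mr:multiliner:lim} term by term to the finite sum \eqref{eq:lim:def:z:j} and combine via the triangle inequality. The extra justifications you supply (the Sobolev embeddings $H_{i_{\ell'}}\hookrightarrow H^{k_*}$, the boundedness of $D^\ell\phi(0)$ from (H$\phi$), and the count $\ell\le j\le r-1$) are exactly the observations the paper's one-line proof takes for granted.
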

\begin{proof}
    This follows directly by applying Proposition \ref{prop:mr:multiliner:lim}
    to each of the terms in the definition \eqref{eq:lim:def:z:j}.
\end{proof}

\begin{lem}
\label{lem:tay:bnd:z:rem:a}
Suppose that \textnormal{(HNL)}, \textnormal{(HTw)}, \textnormal{(H$V_*$)} and \textnormal{(Hq)} hold. Pick a Hilbert space $\mathcal{H}$
together with a functional $\phi$ that satisfies \textnormal{(H$\phi$)}.
Then there exists a function
\begin{equation}
    h_{\rm rem;a;\infty}: [0, 1] \to \mathcal{H}
\end{equation}
together with a constant $K > 0$ so that
for all $0 \le \sigma \le 1$ and $0 \le \delta \le 1$ we have
\begin{equation}
    \norm{ \mathbb E h_{\rm rem;a}[\sigma,\delta](t) - h_{\rm rem;a;\infty}(\sigma)}_{\mathcal{H}}
    \le ( \sigma^r + \delta^r) K e^{-\frac{\beta}{2} t},
\end{equation}
together with
\begin{equation}
    \norm{ h_{\rm rem;a;\infty}(\sigma) } \le K \sigma^r.
\end{equation}
\end{lem}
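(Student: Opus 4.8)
The plan is to deduce this lemma directly from Proposition \ref{prop:mr:multiliner:lim} by a termwise argument. Recall that, by definition \eqref{eq:tay:lim:def:z:rem:a}, the quantity $h_{\rm rem;a}[\sigma,\delta]$ is a \emph{finite} sum of expressions of the form $\tfrac{1}{\ell!} D^{\ell}\phi(0)\big[Y_{i_1}[\sigma,\delta], \ldots, Y_{i_{\ell}}[\sigma,\delta]\big]$ with $1 \le \ell \le r-1$ and with index tuples satisfying $1 \le i_{\ell'} \le r-1$ for each $\ell'$ and $i_{\rm tot} := i_1 + \ldots + i_{\ell} \ge r$. First I would record that, by hypothesis (H$\phi$), each $D^{\ell}\phi(0)$ is a bounded $\ell$-linear map $(H^{k_*})^{\ell} \to \mathcal{H}$; since $H_{i_j} = H^{k_{i_j}}$ embeds continuously into $H^{k_*}$ for every $j$ (as $k_{i_j} \ge k_* + 2$ by \eqref{eq:tay:rem:bnds:k_j:big}), its restriction yields a bounded multi-linear map $\Lambda: H_{i_1} \times \cdots \times H_{i_{\ell}} \to \mathcal{H}$ satisfying (h$\Lambda$) with $m_N = m_B = 0$.

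Next I would apply Proposition \ref{prop:mr:multiliner:lim} to each such $\Lambda$, obtaining a constant $K_\Lambda > 0$ and an element $h_\infty^{\Lambda} \in \mathcal{H}$ with
\begin{equation*}
\norm{ \mathbb E \Lambda\big[Y_{i_1}[\sigma,\delta](t), \ldots, Y_{i_\ell}[\sigma,\delta](t)\big] - \sigma^{i_{\rm tot}} h_\infty^{\Lambda}}_{\mathcal{H}} \le \big(\sigma^{i_{\rm tot}} + \delta^{i_{\rm tot}}\big) K_\Lambda \, e^{-\frac{\beta}{2} t}.
\end{equation*}
I would then define $h_{\rm rem;a;\infty}(\sigma)$ to be the corresponding finite sum $\sum \tfrac{1}{\ell!}\,\sigma^{i_{\rm tot}} h_\infty^{\Lambda}$ over all index tuples appearing in \eqref{eq:tay:lim:def:z:rem:a}; this is a $\mathcal{H}$-valued polynomial in $\sigma$, every monomial of which has degree $i_{\rm tot} \ge r$. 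The first claimed estimate follows from the triangle inequality applied to the finitely many terms, together with the elementary observations that $\sigma^{i_{\rm tot}} \le \sigma^r$ and $\delta^{i_{\rm tot}} \le \delta^r$ whenever $0 \le \sigma \le 1$, $0 \le \delta \le 1$ and $i_{\rm tot} \ge r$; the constant $K$ is then $\sum \tfrac{1}{\ell!} K_\Lambda$, possibly enlarged. The bound $\norm{h_{\rm rem;a;\infty}(\sigma)}_{\mathcal{H}} \le K\sigma^r$ is obtained in exactly the same way, factoring $\sigma^r$ out of each monomial and bounding the remaining finite sum $\sum \tfrac{1}{\ell!} \norm{h_\infty^{\Lambda}}_{\mathcal{H}}$ by a constant.

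I do not anticipate a genuine obstacle here: the lemma is a bookkeeping corollary of Proposition \ref{prop:mr:multiliner:lim}. The only points needing attention are (i) checking that each $D^{\ell}\phi(0)$ restricts to a bounded multi-linear map between the spaces required by (h$\Lambda$), which is immediate from (H$\phi$) and the Sobolev embeddings, and (ii) using that the double sum in \eqref{eq:tay:lim:def:z:rem:a} has finitely many terms, so that all the partial limits and constants may be summed without trouble, with the condition $i_{\rm tot} \ge r$ and $\sigma, \delta \le 1$ doing all the work of producing the $\sigma^r, \delta^r$ factors.
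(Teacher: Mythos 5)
Your proposal is correct and follows the same route as the paper: the paper's own proof is the one-line observation that the result follows from a termwise application of Proposition \ref{prop:mr:multiliner:lim} to the finite sum \eqref{eq:tay:lim:def:z:rem:a}, using $i_{\rm tot}\ge r$ and $\sigma,\delta\le 1$ to produce the $\sigma^r,\delta^r$ factors. You have simply spelled out the bookkeeping that the paper leaves implicit.
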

\begin{proof}
    This follows by applying Proposition \ref{prop:mr:multiliner:lim}
    to each of the terms in the sum \eqref{eq:tay:lim:def:z:rem:a}.
\end{proof}

Our approach for the second remainder $h_{\rm rem;b}$ is rather crude in the sense that we make no attempt to identify the limiting behaviour. In particular,
we simply establish a global residual bound on the size of this expression.
As a preparation, we obtain moment bounds for the size of the
expansion functions $Y_j$ at individual times $t$,
which should be contrasted to the supremum bounds \eqref{eq:mr:mo:bnds:w:j}.

\begin{lem}
\label{lem:tay:bnd:on:w:j:norm}
Suppose that \textnormal{(HNL)}, \textnormal{(HTw)}, \textnormal{(H$V_*$)} and \textnormal{(Hq)} hold.
    For each integer $p \ge 1$, there exists a constant $K_{p}> 0$ so that
    for all $t \ge 0$ we have the bound
    \begin{equation}
        \mathbb E \norm{Y_j(t)}_{k_j}^{2 p} \le K_{p} \big[ \sigma^{2 p j}  +  \delta^{2 p j} e^{-\frac{\beta}{4} t } \big].
    \end{equation}
\end{lem}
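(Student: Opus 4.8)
The plan is to argue by induction on $j$, establishing the bound in the slightly stronger form where $p\ge 1$ is allowed to be an arbitrary real number; the statement for integer $p$ is then a special case. Working with real exponents is essential rather than cosmetic here: in the induction step the pathwise bounds of Lemmas \ref{lem:tay:bnd:n:j:i}, \ref{lem:tay:bnd:n:j:ii:ii} and \ref{lem:tay:bnd:b:j} express $N_j(s)$ and $B_j(s)$ through the \emph{single} powers $\norm{Y_i(s)}_i^{j/i}$ and $\norm{Y_i(s)}_i^{(j-1)/i}$ with $i<j$, and these must be inserted into the induction hypothesis at the matching, generally non-integer, exponent --- this is exactly what stops the decay rate of the $\delta$-term from degrading as $j$ grows. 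Throughout we use, exactly as in the proof of Proposition \ref{prop:tay:wh:holds}, that the defining formula \eqref{eq:mr:def:w:j:all} can be rewritten as
\[
Y_j(t) = \delta\,\mathbf{1}_{j=1}\,S(t)V_* + \mathcal{E}^d_{N_j}(t) + \mathcal{E}^s_{B_j}(t),
\]
with the convolutions of Section \ref{sec:conv} taken with $\nu=1$, so that the projected semigroup bounds \eqref{eq:conv:bnds:e} and the weighted decay estimate \eqref{eq:conv:wt:decay} are available (the latter, like the estimates of Propositions \ref{prp:cnv:hn}--\ref{prp:cnv:hb}, extends from integer to real $p\ge 1$ by the usual root-taking argument).

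For the base case $j=1$ we treat the two nontrivial terms separately. Since $\norm{V_*}_{H^{k_1}}=1$ and $P^\perp V_*=V_*$ by \textnormal{(H$V_*$)}, estimate \eqref{eq:conv:bnds:e} gives the pathwise bound $\norm{\delta\,S(t)V_*}_{k_1}^{2p}\le M^{2p}\delta^{2p}e^{-2p\beta t}\le M^{2p}\delta^{2p}e^{-\frac{\beta}{4}t}$, using $2p\ge 1$. For the stochastic term $\sigma\mathcal{E}^s_{\mathcal{S}(0)}(t)$ the integrand is the constant $\mathcal{S}(0)\in HS(L^2_Q;H^{k_1})$, so \eqref{eq:conv:wt:decay} combined with $\int_0^t e^{-\beta(t-s)}\norm{\mathcal{S}(0)}_{HS(L^2_Q;H^{k_1})}^2\,\mathrm{d}s\le \beta^{-1}\norm{\mathcal{S}(0)}_{HS(L^2_Q;H^{k_1})}^2$ yields $\mathbb{E}\,\norm{\sigma\mathcal{E}^s_{\mathcal{S}(0)}(t)}_{k_1}^{2p}\le C_p\,\sigma^{2p}$, with no decay, consistently with the $\sigma$-term in the claim. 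Adding the two estimates settles $j=1$.

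For the induction step, fix $2\le j\le r-1$ and assume the bound for all $1\le i<j$ and all real $p\ge 1$. By Lemmas \ref{lem:tay:bnd:n:j:i} and \ref{lem:tay:bnd:n:j:ii:ii} there is a constant $C_p>0$ with
\[
\mathbb{E}\,\norm{N_j(s)}_{k_j}^{2p}\le C_p\Big(\sum_{i=1}^{j-1}\mathbb{E}\,\norm{Y_i(s)}_i^{2pj/i}+\sigma^{4p}\mathbf{1}_{j=2}+\sigma^{4p}\sum_{i=1}^{j-2}\mathbb{E}\,\norm{Y_i(s)}_i^{2p(j-2)/i}\Big),
\]
and since $pj/i\ge p\ge 1$ and $p(j-2)/i\ge p\ge 1$ on the relevant ranges, the induction hypothesis applied at these exponents --- together with the weighted arithmetic--geometric mean inequality $\sigma^{4p}\delta^{2p(j-2)}\le\sigma^{2pj}+\delta^{2pj}$ in the variables $\sigma^2,\delta^2$ --- gives $\mathbb{E}\,\norm{N_j(s)}_{k_j}^{2p}\le C_p'(\sigma^{2pj}+\delta^{2pj}e^{-\frac{\beta}{4}s})$. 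The same scheme, using Lemma \ref{lem:tay:bnd:b:j}, gives $\mathbb{E}\,\norm{B_j(s)}_{HS(L^2_Q;H_j)}^{2p}\le C_p'(\sigma^{2pj}+\delta^{2pj}e^{-\frac{\beta}{4}s})$.

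It remains to integrate. Estimate \eqref{eq:conv:bnds:e} yields the pathwise bound $\norm{\mathcal{E}^d_{N_j}(t)}_{k_j}\le M\int_0^t e^{-\beta(t-s)}\norm{N_j(s)}_{k_j}\,\mathrm{d}s$; Jensen's inequality against the sub-probability measure $\beta e^{-\beta(t-s)}\mathbf{1}_{[0,t]}(s)\,\mathrm{d}s$ then gives $\mathbb{E}\,\norm{\mathcal{E}^d_{N_j}(t)}_{k_j}^{2p}\le C\int_0^t e^{-\beta(t-s)}\mathbb{E}\,\norm{N_j(s)}_{k_j}^{2p}\,\mathrm{d}s$, and \eqref{eq:conv:wt:decay} (with the $p$-th power moved inside by the same Jensen step) gives $\mathbb{E}\,\norm{\mathcal{E}^s_{B_j}(t)}_{k_j}^{2p}\le C_p\int_0^t e^{-\beta(t-s)}\mathbb{E}\,\norm{B_j(s)}_{HS(L^2_Q;H_j)}^{2p}\,\mathrm{d}s$. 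Inserting the bounds just obtained and using $\int_0^t e^{-\beta(t-s)}\,\mathrm{d}s\le\beta^{-1}$ for the $\sigma^{2pj}$ part and $\int_0^t e^{-\beta(t-s)}e^{-\frac{\beta}{4}s}\,\mathrm{d}s=\tfrac{4}{3\beta}(e^{-\frac{\beta}{4}t}-e^{-\beta t})\le\tfrac{4}{3\beta}e^{-\frac{\beta}{4}t}$ for the $\delta^{2pj}$ part, and combining the deterministic and stochastic contributions (and, when $j=1$, the initial-data term), closes the induction. The only delicate point is this rate bookkeeping: one must feed the single-power bounds of Lemmas \ref{lem:tay:bnd:n:j:i}, \ref{lem:tay:bnd:n:j:ii:ii}, \ref{lem:tay:bnd:b:j} straight into the induction hypothesis at their natural exponents; bounding $N_j$ and $B_j$ instead by products of lower-order norms and applying H\"older would split the exponential weight and leave a decay rate that deteriorates with $r$. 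Everything else is a routine application of the convolution estimates of Section \ref{sec:conv} and the smoothness bounds of Section \ref{sec:sm}.
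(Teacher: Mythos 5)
Your proof takes a genuinely different route from the paper's. The paper disposes of the lemma in one line: it observes that $\norm{Y_j(t)}_{k_j}^{2p}=\Lambda[Y_j,\ldots,Y_j]$ for the bounded $2p$-linear form $\Lambda[y_1,\ldots,y_{2p}]=\langle y_1,y_2\rangle_{H^{k_j}}\cdots\langle y_{2p-1},y_{2p}\rangle_{H^{k_j}}$ and applies Proposition \ref{prop:mr:multiliner:lim} directly (which has already packaged the iterated It\^o-lemma argument), obtaining rate $\beta/2$ for integer $p$ and then degrading to $\beta/4$ for the non-integer extension. You instead re-derive the bound from scratch by induction on $j$, using the mild representation and the convolution estimates of {\S}\ref{sec:conv}. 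Your observation that the single-power bounds of Lemmas \ref{lem:tay:bnd:n:j:i}, \ref{lem:tay:bnd:n:j:ii:ii} and \ref{lem:tay:bnd:b:j} must be fed into the induction hypothesis at the matching (generally non-integer) exponent --- rather than unraveled by H\"older, which would split the exponential weight --- is exactly the right bookkeeping, and the convolution identity $\int_0^t e^{-\beta(t-s)}e^{-\frac{\beta}{4}s}\,\mathrm{d}s\le \frac{4}{3\beta}e^{-\frac{\beta}{4}t}$ does preserve the rate because $\beta/4<\beta$.

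There is, however, a genuine gap in the step where you extend \eqref{eq:conv:wt:decay} from integer to real $p\ge 1$. The ``usual root-taking argument'' works for Propositions \ref{prp:cnv:hn}--\ref{prp:cnv:hb} because there the right-hand side is a deterministic function of $p$, $\Theta_1$, $\Theta_2$ and $\ln T$, so $[\,\cdot\,]^{1/q}$ passes through harmlessly. But the right-hand side of \eqref{eq:conv:wt:decay} is the $p$-th moment of a random variable: picking $q>1$ with $pq\in\mathbb{Z}$ and raising to the $1/q$ gives $(pq)^pK_{\rm dc}^{2p}\big[\mathbb{E}(\int_0^t e^{-\beta(t-s)}\norm{B(s)}^2\,\mathrm{d}s)^{pq}\big]^{1/q}$, and by Jensen $[\mathbb{E} X^{pq}]^{1/q}\ge\mathbb{E} X^p$ rather than $\le$, so the chain does not close to give \eqref{eq:conv:wt:decay} at exponent $p$. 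If one instead works at integer exponent $\lceil p\rceil$ inside the convolution and root-takes only at the end, the exponential rate degrades by the factor $p/\lceil p\rceil<1$, and that degradation compounds across the induction steps $j\mapsto j+1$, so you cannot recover a $j$-uniform rate $\beta/4$. The estimate \eqref{eq:conv:wt:decay} very likely does hold verbatim for real $p\ge 1$ --- it rests on BDG-type inequalities which are valid for all real $p$ --- but establishing that requires reopening the proof of \cite[Prop.\ 3.10]{bosch2024multidimensional}, not citing the standard root-taking lemma. Either patch that extension explicitly or, more economically, just invoke Proposition \ref{prop:mr:multiliner:lim} as the paper does.
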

\begin{proof}
We first consider the case that $p \ge 1$ is an integer.
We note that the functional
\begin{equation}
\Lambda: (H^{k_j})^{2p} \ni \big( y_1, \ldots , y_{2p} \big) \mapsto \langle y_1 , y_2\rangle_{H^{k_j}} \cdots  \langle y_{2p-1} , y_{2p}\rangle_{H^{k_j}}
\end{equation}
is a bounded $2p$-linear map and that
\begin{equation}
\norm{Y_j(t)}_{k_j}^{2p} = \Lambda\big[ Y_j, \ldots, Y_j ].
\end{equation}
In particular, using Proposition \ref{prop:mr:multiliner:lim} we obtain
\begin{equation}
        \mathbb E \norm{Y_j(t)}_{k_j}^{2 p} \le K_{p} \big[ \sigma^{2 p j}  +  \delta^{2 p j} e^{-\frac{\beta}{2} t } \big].
    \end{equation}
When $p \ge 1$ is not an integer, we
pick $ 1< q < 2$ in such a way that $p q$  is an integer
and use the estimate
$\sqrt[q]{a + b} \le \sqrt[q]{a} + \sqrt[q]{b}$ for $a \ge 0$ and $b \ge 0$
to compute
\begin{equation}
    \mathbb E  \norm{Y_j(t)}_{k_j}^{2 p}
    \le \big[  \mathbb E \norm{Y_j(t)}_{k_j}^{2 p q} \big]^{1/q}
    \le \big[ K_{pq}( \sigma^{2pq j}  + \delta^{2pqj}e^{-\frac{\beta}{2} t} ) \big]^{1/q}
    \le  K_{pq}^{1/q}( \sigma^{2 pj} + \delta^{2 p j} e^{-\frac{\beta}{2q} t} ),
\end{equation}
which satisfies the stated bound since $2q < 4$.
\end{proof}

\begin{lem}
\label{lem:tay:bnd:z:rem:b}
Suppose that \textnormal{(HNL)}, \textnormal{(HTw)}, \textnormal{(H$V_*$)} and \textnormal{(Hq)} hold. Pick a Hilbert space $\mathcal{H}$
together with a functional $\phi$ that satisfies \textnormal{(H$\phi$)}.
Then there exists a constant $K > 0$ so that for all $0 \le \sigma \le 1$, all $0 \le \delta \le 1$ and all $t \ge 0$ we have the bound
    \begin{equation}
        \mathbb E \norm{h_{\rm rem;b}(t)}_{\mathcal{H}} \le
        K \big[ \sigma^r  + \delta^r e^{-\frac{\beta}{4} t} \big].
    \end{equation}
\end{lem}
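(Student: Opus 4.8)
The plan is to bound the integrand $\mathcal{Q}_h$ pathwise by a power of $\norm{Y_{\rm tay}}_{H^{k_*}}$ using the polynomial growth assumption (H$\phi$), and then to average using the individual-time moment estimates of Lemma~\ref{lem:tay:bnd:on:w:j:norm}. First I would observe that, for every $(t_1,\ldots,t_r)\in[0,1]^r$, all the scaling factors $t_1\cdots t_i$ appearing in \eqref{eq:lim:def:q} lie in $[0,1]$, so that $\norm{t_1\cdots t_r Y_{\rm tay}}_{H^{k_*}}\le\norm{Y_{\rm tay}}_{H^{k_*}}$ and hence, since $s\mapsto 1+s^N$ is non-decreasing, the bound \eqref{eq:mr:h:phi:bnd:deriv:phi} gives
\begin{equation}
    \norm{\mathcal{Q}_h\big(Y_{\rm tay};t_1,\ldots,t_r\big)}_{\mathcal{H}}
    \le \norm{D^r\phi\big(t_1\cdots t_r Y_{\rm tay}\big)}_{\mathscr{L}^{(r)}(H^{k_*};\mathcal{H})}\,\norm{Y_{\rm tay}}_{H^{k_*}}^r
    \le K\big[\norm{Y_{\rm tay}}_{H^{k_*}}^r+\norm{Y_{\rm tay}}_{H^{k_*}}^{N+r}\big].
\end{equation}
Since this bound does not depend on $(t_1,\ldots,t_r)$, integrating the representation \eqref{eq:lim:def:z:rem:b} over the unit cube preserves it, yielding the pathwise estimate $\norm{h_{\rm rem;b}(t)}_{\mathcal{H}}\le K\big[\norm{Y_{\rm tay}(t)}_{H^{k_*}}^r+\norm{Y_{\rm tay}(t)}_{H^{k_*}}^{N+r}\big]$.

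Next I would reduce to the individual expansion functions. Because $k_*\le k_j$ for $1\le j\le r-1$ we have $\norm{Y_j}_{H^{k_*}}\le\norm{Y_j}_{j}$, so the power-mean inequality gives $\norm{Y_{\rm tay}}_{H^{k_*}}^m\le (r-1)^{m-1}\sum_{j=1}^{r-1}\norm{Y_j}_{j}^m$ for any real $m\ge1$. Taking expectations and applying Lemma~\ref{lem:tay:bnd:on:w:j:norm} (with $p=m/2\ge1$; the half-integer values are covered verbatim by the proof of that lemma, which treats all real $p\ge1$) with $m=r$ and $m=N+r$---both of which are $\ge r\ge3$---we obtain, for each $1\le j\le r-1$,
\begin{equation}
    \mathbb{E}\,\norm{Y_j(t)}_{j}^{r}\le K\big[\sigma^{rj}+\delta^{rj}e^{-\frac{\beta}{4}t}\big],
    \qquad
    \mathbb{E}\,\norm{Y_j(t)}_{j}^{N+r}\le K\big[\sigma^{(N+r)j}+\delta^{(N+r)j}e^{-\frac{\beta}{4}t}\big].
\end{equation}

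To conclude, I would use $0\le\sigma\le1$, $0\le\delta\le1$ and $j\ge1$ to note that every exponent above is at least $r$, so $\sigma^{rj}\le\sigma^r$, $\sigma^{(N+r)j}\le\sigma^r$, $\delta^{rj}\le\delta^r$ and $\delta^{(N+r)j}\le\delta^r$; summing over $j$ and collecting constants then gives $\mathbb{E}\,\norm{h_{\rm rem;b}(t)}_{\mathcal{H}}\le K\big[\sigma^r+\delta^r e^{-\frac{\beta}{4}t}\big]$, which is the claimed bound. There is no genuine obstacle in this argument; the only points requiring a little care are checking that the exponents $r$ and $N+r$ are admissible in Lemma~\ref{lem:tay:bnd:on:w:j:norm} (which holds since $r\ge3$) and recognising that the polynomial weight $1+\norm{Y_{\rm tay}}_{H^{k_*}}^N$ in (H$\phi$) merely produces the extra term $\norm{Y_{\rm tay}}_{H^{k_*}}^{N+r}$, whose moments decay at exactly the same $\sigma^r+\delta^r e^{-\beta t/4}$ rate once $\sigma,\delta\le1$ is used to absorb the surplus powers.
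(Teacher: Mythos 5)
Your proof is correct and follows essentially the same route as the paper: bound the integrand $\mathcal{Q}_h$ using (H$\phi$), reduce $\norm{Y_{\rm tay}}_{H^{k_*}}$ to the individual $\norm{Y_j}_j$ via \eqref{eq:tay:bnd:w:tay}, and invoke Lemma~\ref{lem:tay:bnd:on:w:j:norm} together with $\sigma,\delta\le 1$ to collapse all exponents $\ge r$ down to $r$. The one small stylistic improvement is that you expand $\big(1+\norm{Y_{\rm tay}}^N\big)\norm{Y_{\rm tay}}^r$ into $\norm{Y_{\rm tay}}^r+\norm{Y_{\rm tay}}^{N+r}$ \emph{before} decomposing into the $Y_j$; the paper instead decomposes both factors separately and so produces cross-terms of the form $\mathbb{E}\,\norm{Y_i}_i^N\norm{Y_j}_j^r$ which are not literally covered by Lemma~\ref{lem:tay:bnd:on:w:j:norm} and require an additional Cauchy--Schwarz or Young step to handle. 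Your variant avoids that issue entirely, and your remark that Lemma~\ref{lem:tay:bnd:on:w:j:norm} (although phrased for integer $p$) in fact treats all real $p\ge 1$, which is needed for the exponents $r/2$ and $(N+r)/2$, is exactly right.
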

\begin{proof}
Using (H$\phi$), the expression \eqref{eq:lim:def:q}
can be bounded by
 \begin{equation}
     \mathbb{E} \norm{\mathcal{Q}_h\big(Y_{\rm tay}; t_1, \ldots , t_r\big)}_{\mathcal{H}}
     \le K
     \mathbb E \big[ 1 + \norm{Y_{\rm tay}}_{H^{k_*}}^{N}\big]
        \norm{Y_{\rm tay}}_{H^{k_*}}^r .
 \end{equation}
 Applying \eqref{eq:tay:bnd:w:tay}, we see that
\begin{equation}
     \mathbb{E} \norm{\mathcal{Q}_h\big(Y_{\rm tay}; t_1, \ldots , t_r\big)}_{\mathcal{H}}
    \le K  r^{N + r}
     \mathbb E \big[ 1 + \norm{Y_1}_1^{N} + \ldots + \norm{Y_{r-1}}_{r-1}^{N}]
        \big[ \norm{Y_1}_1^r + \ldots + \norm{Y_{r-1}}_{r-1}^r \big].
 \end{equation}
Inspecting the definition \eqref{eq:lim:def:z:rem:b},
the desired estimate now follows by applying
Lemma \ref{lem:tay:bnd:on:w:j:norm}.
\end{proof}

\begin{proof}[Proof of Proposition \ref{prop:mr:tay:lim:phi}]
Setting $h_{0;\infty} = \phi(0)$,
the result follows by combining
Lemmas \ref{lem:tay:bnd:z:j}, \ref{lem:tay:bnd:z:rem:a}
and \ref{lem:tay:bnd:z:rem:b}.
\end{proof}

\section{Residual}
\label{sec:res}
In this section we study the residual $Z = V - Y_{\rm tay}$ featuring in the full expansion \eqref{eq:mr:decomp:v}. In particular, we derive the relevant evolution system, obtain bounds for the associated nonlinearities in Proposition \ref{prop:res:bnds} and use a time transformation to establish a mild representation for $Z$ in Proposition \ref{prop:res:mild}.

We first introduce the difference expressions
\begin{equation}
\begin{array}{lcl}
\mathcal{R}^\odot_{I}(z;y) &=&   \mathcal{R}_I( y + z) - \mathcal{R}_I( y),
\\[0.2cm]
\mathcal{R}^\odot_{II}(z;y) &=&   \mathcal{R}_{II}( y+z) - \mathcal{R}_{II}( y) ,
\\[0.2cm]
\mathcal{S}^\odot(z;y) & = & \mathcal{S}(y + z) -\mathcal{S}(y) ,
\end{array}
\end{equation}
together with
\begin{equation}
    \Upsilon^\odot(z;y) =  \Upsilon(y + z) - \Upsilon(y),
    \qquad \qquad
    \Upsilon^\odot_{II}(z;y) =
    \Upsilon_{II}(y+z) - \Upsilon_{II}(y) .
\end{equation}
Appealing to the results in {\S}\ref{sec:sm}, we obtain the following bounds.
\begin{lem}
Suppose that \textnormal{(HNL)}, \textnormal{(HTw)} and \textnormal{(Hq)} are satisfied.
Then there exists $K> 0$ so that for any $y \in H^{k_* + 1}$
and $z \in H^{k_* + 1}$ we have the bounds
\begin{equation}
\label{eq:res:est:r:odot:ups:ii:odot}
\begin{array}{lcl}
    \norm{\mathcal{R}^\odot_I(z;y)}_{H^{k_*}}
    & \le & K (1 + \norm{y}_{H^{k_*}}^{k_*} + \norm{z}_{H^{k_*}}^{k_*} )\Big(  (\norm{y}_{H^{k_*}} + \norm{z}_{H^{k_*}} ) \norm{z}_{H^{k_* + 1}}
\\[0.2cm]
& & \qquad \qquad \qquad
     + (\norm{y}_{H^{k_* +1}} +\norm{z}_{H^{k_*+1}}   )\norm{z}_{H^{k_*}} \Big),
\\[0.2cm]
\norm{\mathcal{R}^\odot_{II}(z;y)}_{H^{k_*}}
    & \le & K (1 + \norm{y}_{H^{k_*}}^{k_*} + \norm{z}_{H^{k_*}}^{k_*} )
    \big( 1 + \norm{y}_{H^{k_*+1}} + \norm{z}_{H^{k_*+1}} \big)
    \norm{z}_{H^{k_*}}
\\[0.2cm]
& & \qquad + K (1 + \norm{y}_{H^{k_*}}^{k_*} + \norm{z}_{H^{k_*}}^{k_*} )
    \norm{z}_{H^{k_* + 1}},
\\[0.2cm]
\norm{\Upsilon^\odot_{II}(z;y)}_{H^{k_*}}
& \le &
K (1 + \norm{y}_{L^2} + \norm{z}_{L^2} )
    \big( 1 + \norm{y}_{H^{k_*+1}} + \norm{z}_{H^{k_*+1}} \big)
    \norm{z}_{H^{k_*}}
\\[0.2cm]
& & \qquad + K (1 + \norm{y}_{L^2} + \norm{z}_{L^2} )
    \norm{z}_{H^{k_* + 1}},
\\[0.2cm]
    \norm{\mathcal{S}^\odot(z;y)}_{HS(L^2_Q;H^{k_*})}
    & \le & K (1 + \norm{y}_{H^{k_*}}^{k_*} +
    \norm{z}_{H^{k_*}}^{k_*}
    +
    \norm{y}_{H^{k+1}} +  \norm{z}_{H^{k+1}})\norm{z}_{H^{k_*}}
     + K \norm{z}_{H^{k_* + 1}} \big).
\end{array}
\end{equation}
\end{lem}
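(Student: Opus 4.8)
The plan is to write each of the four differences as an integral of a first derivative and then invoke the corresponding derivative estimates from Section~\ref{sec:sm}. Writing a generic nonlinearity as $F$, the fundamental theorem of calculus gives
\[
    F(y+z)-F(y)=\int_0^1 DF(\Phi_0+y+tz)[z]\,\mathrm dt
\]
(with the obvious bookkeeping change according to whether $F$ is parametrised by the perturbation or by the full state), so that the norm of the left-hand side is bounded by $\sup_{0\le t\le 1}\norm{DF(\Phi_0+y+tz)[z]}$. Since $\norm{y+tz}_{H^k}\le\norm{y}_{H^k}+\norm{z}_{H^k}$ for every $k$ and every $t\in[0,1]$, every occurrence of $\norm{w}_{L^2}$, $\norm{w}_{H^{k_*}}$ or $\norm{w}_{H^{k_*+1}}$ in the derivative bounds is replaced by the corresponding sum of the $y$- and $z$-norms; one then uses $(a+b)^{k_*}\le 2^{k_*-1}(a^{k_*}+b^{k_*})$ and absorbs constants.

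Concretely: for $\mathcal{S}^\odot$ I would apply the $j=1$, $k=k_*$, $v_1=z$ case of~\eqref{eq:nl:bnd:d:j:mathcal:s}, for $\mathcal{R}^\odot_{II}$ the same case of~\eqref{eq:nl:bnd:d:j:r:ii}, and for $\Upsilon^\odot_{II}$ the same case of~\eqref{eq:nl:bnd:dj:ups:ii}, in each case noting $\mathcal{P}^{(1)}_{k_*}[z]=\norm{z}_{H^{k_*}}$ and $\mathcal{P}^{(1)}_{k_*,k_*+1}[z]=\norm{z}_{H^{k_*+1}}$. For $\mathcal{R}^\odot_I$ the point is to keep the quadratic gain coming from $\mathcal{R}_I(0)=D\mathcal{R}_I(0)=0$: this is delivered by the compact bound~\eqref{eq:nl:bnd:d:r:i:compact}, every term of which carries a factor $\norm{w}_{H^{k_*}}$ or $\norm{w}_{H^{k_*+1}}$, so that after substituting $w=\Phi_0+y+tz$ one obtains exactly the two bracketed products $(\norm{y}_{H^{k_*}}+\norm{z}_{H^{k_*}})\norm{z}_{H^{k_*+1}}$ and $(\norm{y}_{H^{k_*+1}}+\norm{z}_{H^{k_*+1}})\norm{z}_{H^{k_*}}$ appearing in the statement. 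The existence of the first derivatives and of the integral representations is guaranteed by Corollaries~\ref{cor:sm:r:i}, \ref{cor:sm:r:ii}, \ref{cor:sm:bnd:on:ups:ii} and Lemma~\ref{lem:sm:s} applied with $j=1$ and $k=k_*$ (admissible since $r\ge 3$, so that $k_*\le k_1=k_*+r$), which is why $y,z\in H^{k_*+1}$ suffices throughout.

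The remainder is bookkeeping: after the substitution $w=\Phi_0+y+tz$ one checks in each case that the right-hand side of the stated estimate dominates what the derivative bound yields — for instance the prefactor $(1+\norm{y}_{H^{k_*}}^{k_*}+\norm{z}_{H^{k_*}}^{k_*})(1+\norm{y}_{H^{k_*+1}}+\norm{z}_{H^{k_*+1}})$ in the $\mathcal{R}^\odot_{II}$ and $\Upsilon^\odot_{II}$ bounds simultaneously absorbs the $\norm{w}_{H^{k_*}}^{k_*}\norm{w}_{H^{k_*+1}}$, the $\norm{w}_{H^{k_*+1}}$ and the $\norm{w}_{H^{k_*}}^{k_*}$ contributions of the corresponding derivative estimates. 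I do not foresee a genuine obstacle here; the only point one must be careful about is to invoke~\eqref{eq:nl:bnd:d:r:i:compact} rather than the generic estimate~\eqref{eq:nl:bnd:d:j:r:i} for $\mathcal{R}^\odot_I$, since the latter would not exhibit the quadratic decay in $(y,z)$ that the statement requires.
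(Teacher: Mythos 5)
Your proof is correct and follows essentially the same route as the paper: each difference is written as $\int_0^1 DF(\,\cdot\,)[z]\,\mathrm dt$, the first-derivative bounds \eqref{eq:nl:bnd:d:r:i:compact}, \eqref{eq:nl:bnd:d:j:r:ii}, \eqref{eq:nl:bnd:dj:ups:ii} and \eqref{eq:nl:bnd:d:j:mathcal:s} are applied with $j=1$, $v=z$, and every $\norm{w}$ is replaced by $\norm{y}+\norm{z}$. In particular you correctly identify that for $\mathcal{R}^\odot_I$ one must use the compact estimate \eqref{eq:nl:bnd:d:r:i:compact} to retain the quadratic gain in $(y,z)$, which is exactly the point the paper's proof hinges on.
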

\begin{proof}
Upon using the integral representation
\begin{equation}
  \mathcal{R}^\odot_{I}(z;y) =   \mathcal{R}_I( y + z) - \mathcal{R}_I( y) = \int_0^1 D \mathcal{R}_I( y + tz)[z] \, \mathrm dt,
\end{equation}
the desired bound for $\mathcal{R}^\odot_{I}$ follows from
\eqref{eq:nl:bnd:d:r:i:compact} by taking $v = z$ and replacing each occurrence of $\norm{w}$ by $\norm{y} + \norm{z}$.
The remaining bounds can be obtained in the same fashion,
now using \eqref{eq:nl:bnd:d:j:r:ii}, \eqref{eq:nl:bnd:dj:ups:ii}
    and \eqref{eq:nl:bnd:d:j:mathcal:s}.
\end{proof}

Inspecting the remainder definitions \eqref{eq:tay:def:rem:fncs}
and the evolution \eqref{eq:mr:weak:form:evol:v} for $V$
and dropping the $(\sigma,\delta)$ dependencies of $Y_{\rm tay}$, $N_{\rm rem}$ and $B_{\rm rem}$,
we see that the $H^{k_*}$-valued identity
\begin{equation}
\label{eq:res:evo:y:first}
\begin{array}{lcl}
    Z(t) &= & \int_0^t \big[ \Delta_{x_\perp}  +  \mathcal{L}_{\rm tw}  ] Z(s) \, \mathrm ds
\\[0.2cm]
& & \qquad
    + \int_0^t [\mathcal{R}^\odot_I\big(Z(s);Y_{\rm tay}(s)\big)  + \sigma^2 \mathcal{R}^\odot_{II}\big(Z(s);Y_{\rm tay}(s)\big) + \sigma^2 \Upsilon^\odot\big(Z(s);Y_{\rm tay}(s)\big) + N_{\rm rem}(s) ] \, \mathrm ds
\\[0.2cm]
& & \qquad
    + \int_0^t [ \sigma \mathcal{S}^\odot\big(Z(s);Y_{\rm tay}(s)\big) + B_{\rm rem}(s)] \, \mathrm d W^Q_s
\end{array}
\end{equation}
holds $\mathbb P$-a.s. for all $0 \le t < \tau_\infty$.
We now need to isolate the second derivative of $Z$ that is contained in $\Upsilon^\odot$. To this end, we introduce the expression
\begin{equation}
\label{eq:nl:def:e:upsilon}
\begin{array}{lcl}
    \mathcal{E}_{\Upsilon}(z;y)
    & = &
    [\tilde{\nu}(\Phi_0 + y + z,0) - \tilde{\nu}(\Phi_0 + y,0)] \partial_{xx} [ \Phi_0 + y]
    + \Upsilon_{II}^\odot(z;y)
\end{array}
\end{equation}
and inspect the definitions \eqref{eq:list:def:ups:i:ii} to see that
\begin{equation}
    \Upsilon^\odot(z;y) = \tilde{\nu}(\Phi_0 + y + z, 0) \partial_{xx} z + \mathcal{E}_\Upsilon(z;y) .
\end{equation}
We now write
\begin{equation}
    \kappa_\sigma(z;y) = 1 + \sigma^2 \tilde{\nu}(\Phi_0 +y  +z,0 )
\end{equation}
to represent the full coefficient in front of $Z_{xx}$ in \eqref{eq:res:evo:y:first}.
In addition, we introduce the expression
\begin{equation}
\label{eq:res:def:e:l}
    \mathcal{E}_{\mathcal{L}}(z;y)
    = -  \tilde{\nu}(\Phi_0 + y + z,0)
    [ \mathcal{L}_{\rm tw} - \partial_{xx} ] z
\end{equation}
and record the following useful estimates.
\begin{lem}
Suppose that \textnormal{(HNL)}, \textnormal{(HTw)} and \textnormal{(Hq)} are satisfied. Then there exists $K > 0$
so that for any $y \in H^{k_* + 2}$ and $z \in H^{k_*+1}$ we have the bounds
\begin{equation}
\label{eq:res:bnd:e:ups:l}
\begin{array}{lcl}
    \norm{\mathcal{E}_\Upsilon(z;y)}_{H^{k_*}}
    & \le & K (1 + \norm{y}_{H^{k_*+2}}   ) \norm{z}_{H^{k_*}}
    + \norm{\Upsilon^\odot_{II}(z;y)}_{H^{k_*}}
\\[0.2cm]
\norm{\mathcal{E}_{\mathcal{L}}(z;y}_{H^{k_*}}
& \le & K \norm{z}_{H^{k_* + 1}}.
\end{array}
\end{equation}
\end{lem}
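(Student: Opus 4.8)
The plan is to treat the two estimates separately, in each case extracting the crucial observation that $\tilde\nu(\Phi_0 + \,\cdot\,, 0)$ is a \emph{scalar-valued} functional that is globally bounded and globally Lipschitz as a map $H^{k_*} \to \mathbb R$. Both facts follow immediately from Lemma~\ref{lem:sm:b:nu:kct}: the $j=0$ instance of \eqref{eq:nl:est:deriv:b:nut:wtkc} gives $|\tilde\nu(\Phi_0 + w,0)| \le K$ uniformly in $w \in H^{k_*}$ (the empty product $\mathcal{P}^{(0)}_{k_*}$ equals $1$), while the $j=1$ instance gives $|D\tilde\nu(\Phi_0 + w,0)[v]| \le K\|v\|_{H^{k_*}}$, so that the fundamental theorem of calculus yields
\[
    |\tilde\nu(\Phi_0 + y + z,0) - \tilde\nu(\Phi_0 + y, 0)| \le \int_0^1 |D\tilde\nu(\Phi_0 + y + tz, 0)[z]| \, \mathrm dt \le K\|z\|_{H^{k_*}}.
\]

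For the bound on $\mathcal{E}_\Upsilon$, I would recall from \eqref{eq:nl:def:e:upsilon} that the expression splits as the scalar multiple $[\tilde\nu(\Phi_0 + y + z,0) - \tilde\nu(\Phi_0 + y, 0)]\,\partial_{xx}[\Phi_0 + y]$ plus $\Upsilon_{II}^\odot(z;y)$, the latter being carried unchanged to the right-hand side of the claimed bound. For the first (scalar) term I would use the Lipschitz estimate above together with $\|\partial_{xx}[\Phi_0 + y]\|_{H^{k_*}} \le \|\Phi_0''\|_{H^{k_*}} + \|y\|_{H^{k_* + 2}} \le K(1 + \|y\|_{H^{k_*+2}})$, where the bound on $\Phi_0''$ follows from the inclusion $\Phi_0' \in H^{k_* + r + 2}$ recorded after (HTw) together with $r \ge 3$. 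Multiplying these two scalar factors produces $K(1 + \|y\|_{H^{k_*+2}})\|z\|_{H^{k_*}}$, which is exactly the first term in the claim.

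For the bound on $\mathcal{E}_{\mathcal{L}}$ defined in \eqref{eq:res:def:e:l}, since $\tilde\nu(\Phi_0 + y + z,0)$ is a uniformly bounded scalar it suffices to show that $\mathcal{L}_{\rm tw} - \partial_{xx}$ maps $H^{k_* + 1}$ into $H^{k_*}$ boundedly. But $[\mathcal{L}_{\rm tw} - \partial_{xx}]z = c_0 z' + Df(\Phi_0) z$ is a first-order differential operator with constant coefficient $c_0$ and the multiplier $Df(\Phi_0)$; since $\Phi_0 \in C^{k_* + r + 3}$ with all derivatives bounded (and those of order $\ge 1$ exponentially decaying, by (HNL) and (HTw)) and $Df$ is globally Lipschitz, the Leibniz rule shows $Df(\Phi_0)$ is a bounded multiplier on $H^{k_*}$. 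Hence $\|[\mathcal{L}_{\rm tw} - \partial_{xx}]z\|_{H^{k_*}} \le K\|z\|_{H^{k_* + 1}}$, and multiplying by the bounded scalar $\tilde\nu$ completes the argument.

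This proof is essentially routine bookkeeping rather than a genuine obstacle; the two points that require a little care are recognising that the $j=0$ instance of \eqref{eq:nl:est:deriv:b:nut:wtkc} provides a \emph{uniform} (non-growing) bound on $\tilde\nu$, so that no powers of $\|y\|$ or $\|z\|$ are introduced by the $\tilde\nu$-coefficients, and checking the standard multiplier estimate for $Df(\Phi_0)$ on $H^{k_*}$.
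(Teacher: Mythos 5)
Your proof is correct and follows essentially the same approach as the paper, which is extremely terse: it merely says the bounds ``follow from \eqref{eq:nl:est:deriv:b:nut:wtkc}.'' You have correctly identified and spelled out the two key ingredients — the uniform boundedness and Lipschitz continuity of $\tilde\nu$ coming from the $j=0$ and $j=1$ cases of \eqref{eq:nl:est:deriv:b:nut:wtkc}, together with the observation that $\mathcal{L}_{\rm tw} - \partial_{xx} = c_0\partial_x + Df(\Phi_0)$ is first order with a bounded multiplier — which is precisely the bookkeeping the paper leaves implicit.
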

\begin{proof}
Inspecting the definition  \eqref{eq:nl:def:e:upsilon}, the bound
for $\mathcal{E}_\Upsilon(z;w)$
follows from \eqref{eq:nl:est:deriv:b:nut:wtkc}.
    The bound for $\mathcal{E}_{\mathcal{L}}$
    follows from \eqref{eq:res:def:e:l}
    by applying the uniform bound \eqref{eq:nl:est:deriv:b:nut:wtkc}
    for $\widetilde{\nu}$.
\end{proof}

Inspecting the definitions above, one may readily verify the useful identity
\begin{equation}
\label{eq:res:id:for:e:ups:plus:l}
    \mathcal{E}_{\Upsilon}(z;y) + \mathcal{E}_{\mathcal{L}}(z;y)
    = \Upsilon^\odot(z;y)- \tilde{\nu}(\Phi_0 + y + z,0)\mathcal{L}_{\mathrm{tw}} z .
\end{equation}
This allows us to reformulate \eqref{eq:res:evo:y:first} as
\begin{equation}
\begin{array}{lcl}
    Z(t) & = & \int_0^t \big[ \Delta_{x_\perp}  +  \kappa_\sigma\big(Z(s);Y_{\rm tay}(s)\big) \mathcal{L}_{\rm tw}
        \big] Z(s) \, \mathrm ds
\\[0.2cm]
& & \qquad
    + \int_0^t [\mathcal{R}^\odot_I\big(Z(s);Y_{\rm tay}(s)\big)  + \sigma^2 \mathcal{R}^\odot_{II}\big(Z(s);Y_{\rm tay}(s)\big)  + N_{\rm rem}(s) ] \, \mathrm ds
\\[0.2cm]
& & \qquad
  + \sigma^2 \int_0^t [ \mathcal{E}_{\Upsilon}\big(Z(s);Y_{\rm tay}(s)\big) + \mathcal{E}_{\mathcal{L}}\big(Z(s);Y_{\rm tay}(s)\big) ] \, \mathrm ds
\\[0.2cm]
& & \qquad
    + \int_0^t [ \sigma \mathcal{S}^\odot\big(Z(s);Y_{\rm tay}(s)\big) + B_{\rm rem}(s)] \, \mathrm  d W^Q_s ,
\end{array}
\end{equation}
where all second derivatives of $Z$ are now contained in the first line.

We now proceed with a time transformation to set the coefficient in front of $\mathcal{L}_{\rm tw}$ to unity.
In particular, we write
\begin{equation}
    \tau(t) = \int_0^t \kappa_\sigma\big(Z(s);Y_{\rm tay}(s) \big) \, \mathrm  ds
\end{equation}
and introduce the time-transformed functions
$(\bar{Z}, \bar{Y}_{\rm tay}, \bar{N}_{\rm rem}, \bar{B}_{\rm rem})$ that satisfy
\begin{equation}
    \bar{Z}\big(\tau(t) \big) = Z(t ), \qquad
    \bar{Y}_{\rm tay}\big(\tau(t)\big) = Y_{\rm tay}(t),
    \qquad \bar{N}_{\rm rem}\big(\tau(t)\big)
    = N_{\rm rem}(t),
    \qquad
    \bar{B}_{\rm rem}\big(\tau(t)\big)
    = B_{\rm rem}(t).
\end{equation}
In addition, we write $\bar{\tau}_{\rm tay}(\eta) = \tau( t_{\rm tay}(\eta) )$.

Applying standard time transformation rules \cite[Lem. 6.2]{hamster2019stability}
now leads to
the system
\begin{equation}
\label{eq:res:evo:bar:y:pre:mild}
\begin{array}{lcl}
    \bar{Z}(\tau) & = & \int_0^t \big[ \kappa_\sigma\big(\bar{Z}(\tau');\bar{Y}_{\rm tay}(\tau')\big)^{-1} \Delta_{x_\perp}  +   \mathcal{L}_{\rm tw}
        \big] \bar{Z}(\tau') \, \mathrm d \tau'
\\[0.2cm]
& & \qquad
    + \int_0^\tau [\mathcal{N}(\bar{Z}(\tau');\tau',\sigma,\delta)  + \bar{N}_{\rm rem}(\tau') ] \, \mathrm d \tau'
\\[0.2cm]
& & \qquad
    + \int_0^\tau
    \big[ \mathcal{M}\big(\bar{Z}(\tau');\tau',\sigma,\delta)
    + \bar{B}_{\rm rem}(\tau') \big]
    \, \mathrm d \bar{W}^Q_{\tau'} ,
\end{array}
\end{equation}
in which we have introduced the expressions
\begin{equation}
\label{eq:res:def:n:m}
\begin{array}{lcl}
\mathcal{N}\big(z;\tau,\sigma,\delta \big) & = &
\kappa_\sigma^{-1}\big(z; \bar{Y}_{\rm tay}(\tau)\big)\big[\mathcal{R}_I^\odot\big(z;\bar{Y}_{\rm tay}(\tau) \big) + \sigma^2 \mathcal{R}^\odot_{II}\big(z;\bar{Y}_{\rm tay}(\tau) \big) 
\\[0.2cm]
& & \qquad \qquad
+ \sigma^2 \mathcal{E}_\Upsilon\big(z;\bar{Y}_{\rm tay}(\tau) \big) + \sigma^2  \mathcal{E}_{\mathcal{L}}\big(z;\bar{Y}_{\rm tay}(\tau) \big) \big]
\\[0.2cm]
\mathcal{M}\big(z;\tau,\sigma,\delta \big) & = &
\kappa_\sigma^{-1/2 }\big(z; \bar{Y}_{\rm tay}(\tau)\big)
\big[ \sigma \mathcal{S}^\odot\big(z;\bar{Y}_{\rm tay}(\tau) \big) ]. 
\end{array}
\end{equation}
We note that the time-transformed process
$\bar W_\tau^Q$ is again a $Q$-cylindrical Wiener process,
but now adapted to the filtration
$\bar{\mathbb F}=(\bar{\mathcal F}_\tau)_{\tau\geq 0}$ given by
\begin{equation}\bar{\mathcal F}_\tau=\{A\in\mathcal F:A\cap \{\tau\leq \tau(t)\}\in\mathcal F_t\,  \text{ for all }t\geq 0\}.\label{eq:filtration}
\end{equation}
It has the same statistical properties as $W^Q_t$.
The full details can be found in \cite[{\S}6.2]{bosch2024multidimensional}
and \cite{hamster2019stability}.

\begin{prop}
\label{prop:res:bnds}
Suppose that \textnormal{(HNL)}, \textnormal{(HTw)}, \textnormal{(H$V_*$)} and \textnormal{(Hq)} all hold. Then there exists $K >0$
so that for all $z \in H^{k_* + 1}$, all $\tau \ge 0$, all $\sigma \ge 0$ and all $\delta \ge 0$ we have the bounds
\begin{equation}
\label{eq:res:bnd:n:m:full}
\begin{array}{lcl}
\norm{\mathcal{N}(z;\tau,\sigma,\delta)}_{H^{k_*}}
& \le & K\big[ 1 + \norm{\bar{Y}_{\rm tay}(\tau)}_{H^{k_* + 2}}^{k_*+1}
+ \norm{z}_{H^{k_*}}^{k_*} \big]
\big[\sigma^2 +  \norm{\bar{Y}_{\rm tay}(\tau)}_{H^{k_*}} +  \norm{z}_{H^{k_*}}    \big] \norm{z}_{H^{k_* + 1}} ,
\\[0.2cm]
\norm{\mathcal{M}(z;\tau,\sigma,\delta)}_{HS(L^2_Q;H^{k_*})}
& \le & K \sigma \big[1 +  \norm{\bar{Y}_{\rm tay}(\tau)}_{H^{k_* + 1}}^{k_*+1}
+ \norm{z}_{H^{k_*}}^{k_*} \big] \norm{z}_{H^{k_* + 1}} .
\end{array}
\end{equation}
In particular, there exists $K > 0$ and $\eta_0$
so that for any $0 < \eta \le \eta_0$,
any $0\le \tau \le \bar{\tau}_{\rm tay}(\eta)$,
any $\sigma \ge 0$, any $\delta \ge 0$
and any $z \in H^{k_*+1}$ with $\norm{z}_{H^{k_*}}^2  \le \eta$
we have
\begin{equation}
\label{eq:res:bnd:n:m:after:stop}
\begin{array}{lcl}
\norm{\mathcal{N}(z;\tau,\sigma,\delta)}_{H^{k_*}}
& \le & K
\big[\sigma^2 +  \sqrt{\eta}     \big] \norm{z}_{H^{k_* + 1}} ,
\\[0.2cm]
\norm{\mathcal{M}(z;\tau,\sigma,\delta)}_{HS(L^2_Q;H^{k_*})}
& \le & K \sigma \norm{z}_{H^{k_* + 1}} ,
\end{array}
\end{equation}
together with
\begin{equation}
\label{eq:res:proj:n:m}
    P^\perp \mathcal{N}(z;\tau,\sigma,\delta) =
    \mathcal{N}(z;\tau,\sigma,\delta),
    \qquad \qquad
    P^\perp \mathcal{M}(z;\tau,\sigma,\delta) =
    \mathcal{M}(z;\tau,\sigma,\delta).
\end{equation}
\end{prop}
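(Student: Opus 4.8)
The plan is to handle the three assertions in turn, all resting on one structural observation about the extra diffusion coefficient. First I would record that $\kappa_\sigma \ge 1$: by the representation \eqref{eq:sm:repr:nu:t} we have $\tilde{\nu}(\Phi_0 + y + z, 0) = \tfrac12 \chi_h^4 \chi_l^2 \, \Gamma_{\tilde{\nu}}[g^T(\Phi_0 + y + z), g^T(\Phi_0 + y + z)]$, and $\Gamma_{\tilde{\nu}}[v,v] = \langle Q v \psi_{\rm tw}, v \psi_{\rm tw}\rangle_{L^2} \ge 0$ since $Q \ge 0$ under (Hq), so $\tilde{\nu} \ge 0$ and hence $\kappa_\sigma(z;y) = 1 + \sigma^2 \tilde{\nu}(\Phi_0 + y + z, 0) \ge 1$. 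Consequently $0 < \kappa_\sigma^{-1}(z;y) \le 1$ and $0 < \kappa_\sigma^{-1/2}(z;y) \le 1$; moreover $\kappa_\sigma$ is scalar-valued, so these prefactors commute with $P^\perp$ and can be pulled out of $L^2$-inner products.

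For the global bounds \eqref{eq:res:bnd:n:m:full} I would expand $\mathcal{N}$ into its four constituents as in \eqref{eq:res:def:n:m}, estimate $\mathcal{R}^\odot_I$ and $\mathcal{R}^\odot_{II}$ directly by \eqref{eq:res:est:r:odot:ups:ii:odot}, estimate $\mathcal{E}_{\Upsilon}$ via \eqref{eq:res:bnd:e:ups:l} together with the $\Upsilon^\odot_{II}$-bound in \eqref{eq:res:est:r:odot:ups:ii:odot}, estimate $\mathcal{E}_{\mathcal{L}}$ by \eqref{eq:res:bnd:e:ups:l}, and use $\kappa_\sigma^{-1} \le 1$ to discard the prefactor. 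The key point is that each contribution carries either the overall factor $\sigma^2$ (for the $\mathcal{R}^\odot_{II}$, $\mathcal{E}_{\Upsilon}$, $\mathcal{E}_{\mathcal{L}}$ terms) or a bilinear smallness factor $\norm{y}_{H^{k_*}} + \norm{z}_{H^{k_*}}$ (for $\mathcal{R}^\odot_I$, which is quadratic in the combined perturbation), together with exactly one factor that is costly in regularity; using the embeddings $H^{k_*+2} \hookrightarrow H^{k_*+1} \hookrightarrow H^{k_*}$ and elementary inequalities such as $a \le 1 + a^{k_*+1}$, that costly factor can be reduced to a single $\norm{z}_{H^{k_*+1}}$ while the surviving $\bar{Y}_{\rm tay}$-norms are absorbed into the displayed polynomial prefactor. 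The bound for $\mathcal{M}$ is obtained in the same fashion from the $\mathcal{S}^\odot$-estimate in \eqref{eq:res:est:r:odot:ups:ii:odot}, the overall factor $\sigma$ playing the role of the smallness.

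The estimates \eqref{eq:res:bnd:n:m:after:stop} then follow immediately: for $0 \le \tau \le \bar{\tau}_{\rm tay}(\eta)$ the a-priori bound \eqref{eq:tay:bnd:w:unif} gives $\norm{\bar{Y}_{\rm tay}(\tau)}_{H^{k_*+2}} \le K_{\rm tay}\sqrt{\eta}$, which combined with $\norm{z}_{H^{k_*}} \le \sqrt{\eta}$ renders the displayed prefactors in \eqref{eq:res:bnd:n:m:full} bounded once $\eta \le \eta_0$ is small, and collapses the smallness factor to $\sigma^2 + \sqrt{\eta}$ (resp.\ $\sigma$). For the projection identities \eqref{eq:res:proj:n:m} I would argue that, after possibly shrinking $\eta_0$, on the stated event the arguments $\bar{Y}_{\rm tay}(\tau)$ and $\bar{Y}_{\rm tay}(\tau) + z$ are small enough in $L^2$ (and have $L^2$-pairing against $\psi_{\rm tw}'$ close enough to its reference value) that the cut-offs $\chi_h$, $\chi_l$ in the definitions of $\mathcal{R}_I$, $\mathcal{R}_{II}$, $\Upsilon$, $\mathcal{S}$ act trivially, exactly as in \cite{bosch2024multidimensional}. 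Since these nonlinearities are constructed so that $\mathcal{R}_I(v)$, $\mathcal{R}_{II}(v)$, $\Upsilon(v)$ and $\mathcal{S}(v)[\xi]$ are $L^2$-orthogonal to $\psi_{\rm tw}$ in that regime, the difference expressions $\mathcal{R}^\odot_I$, $\mathcal{R}^\odot_{II}$, $\Upsilon^\odot$, $\mathcal{S}^\odot$ inherit this property; for the remaining combination I would invoke \eqref{eq:res:id:for:e:ups:plus:l}, writing $\mathcal{E}_{\Upsilon}(z;y) + \mathcal{E}_{\mathcal{L}}(z;y) = \Upsilon^\odot(z;y) - \tilde{\nu}(\Phi_0 + y + z, 0)\mathcal{L}_{\rm tw} z$ and using $\langle \mathcal{L}_{\rm tw} z, \psi_{\rm tw}\rangle_{L^2} = \langle z, \mathcal{L}_{\rm tw}^{\rm adj}\psi_{\rm tw}\rangle_{L^2} = 0$ together with the scalar nature of $\tilde{\nu}$. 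Dividing by the positive scalar $\kappa_\sigma$ then yields $\langle \mathcal{N}(z;\tau,\sigma,\delta), \psi_{\rm tw}\rangle_{L^2} = 0$ and likewise for $\mathcal{M}$, i.e.\ $P^\perp \mathcal{N} = \mathcal{N}$ and $P^\perp \mathcal{M} = \mathcal{M}$.

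The main obstacle I expect is the bookkeeping in the second step: one must track precisely \emph{which} single factor is permitted to carry the norm $\norm{z}_{H^{k_*+1}}$ — the one over which only integrated control is available later — while ensuring that every high-regularity norm of $\bar{Y}_{\rm tay}$ fits the stated prefactor and that a smallness factor ($\sigma^2$, $\sigma$, or the quadratic $\norm{y}_{H^{k_*}} + \norm{z}_{H^{k_*}}$) is always present. A secondary subtlety is confirming that the cut-offs are genuinely inactive on the stopping-time event, which relies on the precise definitions imported from \cite{bosch2024multidimensional}.
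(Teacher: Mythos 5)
Your proposal is correct and follows essentially the same route as the paper: record $\kappa_\sigma\ge 1$, discard the $\kappa_\sigma^{-1}$ prefactor, invoke the difference bounds \eqref{eq:res:est:r:odot:ups:ii:odot} and \eqref{eq:res:bnd:e:ups:l} for the four constituents of $\mathcal{N}$ and the $\mathcal{S}^\odot$-bound for $\mathcal{M}$, specialize via the pathwise control \eqref{eq:tay:bnd:w:unif} on the stopping-time event, and deduce the projections from the construction of the nonlinearities together with \eqref{eq:res:id:for:e:ups:plus:l} and $\mathcal{L}_{\rm tw}^{\rm adj}\psi_{\rm tw}=0$. The paper's proof is a four-sentence sketch and you have simply written it out; the bookkeeping concern you flag (a cross term of the shape $\norm{\bar Y_{\rm tay}}_{H^{k_*+1}}\norm{z}_{H^{k_*}}$ from \eqref{eq:res:est:r:odot:ups:ii:odot} does not visibly fit the unconditional smallness factor $\sigma^2+\norm{\bar Y_{\rm tay}}_{H^{k_*}}+\norm{z}_{H^{k_*}}$ of \eqref{eq:res:bnd:n:m:full}) is already present in the paper's stated bound, but is immaterial since only the restricted bound \eqref{eq:res:bnd:n:m:after:stop} — for which your absorption via $\norm{\bar Y_{\rm tay}}_{H^{k_*+2}}\le K_{\rm tay}\sqrt{\eta}$ works cleanly — is used downstream.
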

\begin{proof} 
Note first that $\kappa_{\sigma} \ge 1$.
The bounds \eqref{eq:res:bnd:n:m:full} follow by inspecting
the definitions \eqref{eq:res:def:n:m}
and applying \eqref{eq:res:est:r:odot:ups:ii:odot}
and \eqref{eq:res:bnd:e:ups:l}.
The bounds \eqref{eq:res:bnd:n:m:after:stop}
follow by using the pathwise bound \eqref{eq:tay:bnd:w:unif}.
The identities \eqref{eq:res:proj:n:m} follow
from the properties of the cut-off $\chi_l$ and the representations
\eqref{eq:sm:reminder:r:i},
\eqref{eq:sm:repr:r:ii}, \eqref{eq:sm:def:S}
and \eqref{eq:res:id:for:e:ups:plus:l},
recalling for the latter that $\mathcal{L}_{\rm tw}^* \psi_{\rm tw} = 0$.
\end{proof}

Write $E(t,s)$ for the evolution family associated to
\eqref{eq:conv:def:evol:probl} with $\nu = \kappa_\sigma^{-1}(\bar{Z};\bar{Y}_{\rm tay})$.
We note that when $\sigma \ge 0$ is sufficiently small condition
(hE) in {\S}\ref{sec:conv} is satisfied on account of the bound \eqref{eq:nl:est:deriv:b:nut:wtkc},
which allows us to write
\begin{equation}
\label{eq:res:bnd:kappa:sigma:2}
    1 \le \kappa_\sigma(\bar Z; \bar Y_{\rm tay}) \le 1 + \sigma^2 K \le 2.
\end{equation}
This evolution family can now be used to
transform \eqref{eq:res:evo:bar:y:pre:mild}
into a mild representation
for the residual $\bar{Z}$.

\begin{prop} 
\label{prop:res:mild}
Suppose that \textnormal{(HNL)}, \textnormal{(HTw)}, \textnormal{(H$V_*$)} and \textnormal{(Hq)} hold. Fix $T > 0$ together with
a sufficiently small $\sigma \ge 0$. 
Then for any $\delta \ge 0$
there exists an increasing sequence of stopping times $(\bar{\tau}_\ell)_{\ell\geq 0}$ and a stopping time $\bar{\tau}_\infty $, with  $\bar{\tau}_\ell\to \bar{\tau}_\infty$ and $0<\bar{\tau}_\infty\leq T$ $\mathbb P$-a.s., together with a  map
\begin{equation}
    \bar{Z}:[0,T]\times \Omega\to H^{k_*}
\end{equation}
that is progressively measurable with respect to the  filtration $\bar{ \mathbb F}=(\bar{\mathcal F}_\tau)_{\tau\geq 0}$
and satisfies the following properties:
    \begin{itemize}
          \item[(i)] For almost every $\omega\in\Omega$, the map
    $
        \tau\mapsto \bar{Z}(\tau,\omega)
    $
    is of class $C([0,\bar{\tau}_\infty(\omega));H^{k_*})$;
    
    \item[(ii)] For any $\ell \ge 0$ we have the integrability condition $\bar{Z}\in L^2\big(\Omega;L^2([0,\bar{\tau}_\ell];H^{k_*+1})\big)$;
   \item[(iii)] For any $\ell \ge 0$
   we have $
            \mathcal M(\bar{Z}(\cdot);\cdot,\sigma,\delta)\in L^2\big(\Omega; L^2([0,\bar{\tau}_\ell];HS( L^2_Q,H^{k_*})\big)$,
            together with \\$\mathcal N(\bar{Z}(\cdot,\omega);\cdot,\sigma,\delta)\in L^1([0,\bar{\tau}_\ell(\omega)];H^{k_*})$ for almost every $\omega\in\Omega$;
    \item[(iv)] The $H^{k_*}$-valued identity
        \begin{equation}
        \begin{array}{lcl}
        \bar Z(\tau) & = &\int_0^\tau E(\tau,\tau')
        \big[ \mathcal N(\bar Z(\tau');\tau', \sigma,\delta)
        + \bar{N}_{\rm rem}[\sigma,\delta](\tau') \big]
        \mathrm d \tau'
        \\[0.2cm]
        & & \qquad + \int_0^\tau E(\tau,\tau')
        \big[ \mathcal M(\bar Z(\tau');\tau',\sigma,\delta)
          + \bar{B}_{\rm rem}[\sigma,\delta](\tau')
        \big]
        \mathrm d \bar{W}_{\tau'}^{Q;-}
        \label{eq:res:mild:form:Y}
        \end{array}
    \end{equation}
    holds $\mathbb P$-a.s. for all $0\leq t< \bar{\tau}_\infty$;
    \item[(v)]
       Upon writing
       \begin{equation}
           \mathcal{Z}_{\ell} = \sup_{0 \le \tau' \le \tau_{\ell}} \norm{\bar{Z}(\tau')}_{H^{k_*}}^2
           + \int_0^{\tau_{\ell}} \norm{\bar{Z}(\tau')}^2_{H^{k_*+1}} \, \mathrm  d \tau',
       \end{equation}
       we have $\mathcal{Z}_{\ell} \le \ell$ for every $\ell \ge 0$, together with the localization identity
       \begin{equation}
       \label{eq:res:prob:reverse:zero}
        \mathbb{P} \Big( \bar{\tau}_\infty < T \hbox{ and } \sup_{\ell \ge 0} \mathcal{Z}_{\ell} < \infty \Big) = 0.
       \end{equation}
    \end{itemize}
  \end{prop}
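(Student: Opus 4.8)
The plan is to adapt the construction in \cite[{\S}6]{bosch2024multidimensional} to the present setting, where the new feature is the presence of the expansion functions $\bar{Y}_{\rm tay}$ and the remainder inputs $\bar{N}_{\rm rem}$, $\bar{B}_{\rm rem}$. Since $Z = V - Y_{\rm tay}$ is already well-defined on $[0,\tau_\infty(T))$ by \eqref{eq:mr:weak:form:evol:v} and Proposition \ref{prop:mr:tay}, and since the weak identity \eqref{eq:res:evo:bar:y:pre:mild} has been established via the time transformation $\tau(t) = \int_0^t \kappa_\sigma(Z(s);Y_{\rm tay}(s))\,\mathrm ds$, the task is to convert \eqref{eq:res:evo:bar:y:pre:mild} into the mild representation \eqref{eq:res:mild:form:Y} and to exhibit the stopping-time structure. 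First I would note that, by \eqref{eq:res:bnd:kappa:sigma:2} (which rests on the uniform bound for $\widetilde{\nu}$ in Lemma \ref{lem:sm:b:nu:kct}), the random coefficient $\nu = \kappa_\sigma^{-1}(\bar Z;\bar Y_{\rm tay})$ satisfies $\tfrac12 \le \nu \le 2$ once $\sigma$ is small, so that hypothesis (hE) of {\S}\ref{sec:conv} is in force and the associated random evolution family $E(\tau,\tau')$ together with the decay estimates \eqref{eq:conv:bnds:e} are available. Passing from the weak form \eqref{eq:res:evo:bar:y:pre:mild} to the mild form then follows the variation-of-constants argument of \cite[{\S}6.2]{bosch2024multidimensional}, which applies once one checks, via Propositions \ref{prop:res:bnds} and \ref{prop:tay:bnd:rem}, that all inhomogeneities take values in the required spaces.

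Next I would set up the localized fixed-point scheme. For $\ell \ge 1$ let $\bar\tau_\ell$ be the first time $\tau$ at which $\sup_{0\le\tau'\le\tau}\norm{\bar Z(\tau')}_{H^{k_*}}^2 + \int_0^\tau\norm{\bar Z(\tau')}_{H^{k_*+1}}^2\,\mathrm d\tau'$ reaches $\ell$, capped at both $T$ and $\bar\tau_{\rm tay}(\eta_0)$ with $\eta_0$ as in Proposition \ref{prop:res:bnds}, and set $\bar\tau_\infty = \lim_\ell \bar\tau_\ell$. On the stochastic interval $[0,\bar\tau_\ell]$ the a priori bounds \eqref{eq:res:bnd:n:m:after:stop} show that $\mathcal N(\cdot;\tau,\sigma,\delta)$ and $\mathcal M(\cdot;\tau,\sigma,\delta)$ are globally Lipschitz in $\bar Z$ as maps into $H^{k_*}$ respectively $HS(L^2_Q;H^{k_*})$, with Lipschitz constant that can be made small by shrinking $\sigma$ and $\eta_0$; combined with the moment bounds for $\bar N_{\rm rem}$ and $\bar B_{\rm rem}$ obtained from Proposition \ref{prop:tay:bnd:rem} (transported through the time change as in \cite{hamster2019stability}), a contraction argument in the norm underlying $\mathcal N_{\rm res}$ — using the deterministic convolution bound of Proposition \ref{prp:cnv:hn} and the stochastic and maximal-regularity bounds of Proposition \ref{prp:cnv:hb} — produces a unique mild solution on $[0,\bar\tau_\ell]$ satisfying the regularity and measurability assertions (i)--(iv) as well as $\mathcal Z_\ell \le \ell$, which in turn yields (ii). Patching these solutions over $\ell$ gives the map $\bar Z$ on $[0,\bar\tau_\infty)$; a short-time existence argument shows $\bar\tau_\infty > 0$ and the cap gives $\bar\tau_\infty \le T$ a.s.

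Finally I would establish the localization identity \eqref{eq:res:prob:reverse:zero}. On the event $\{\bar\tau_\infty < T\}$ every $\bar\tau_\ell$ satisfies $\bar\tau_\ell < T$; since the cap at $T$ is then never active (and $\bar\tau_{\rm tay}(\eta_0)$ is handled separately on its own small-probability event), $\bar\tau_\ell$ must have been triggered by the running quantity reaching level $\ell$, forcing $\mathcal Z_\ell = \ell$. Hence on $\{\bar\tau_\infty < T\}$ we have $\sup_\ell \mathcal Z_\ell = +\infty$, so the event in \eqref{eq:res:prob:reverse:zero} is empty and in particular has probability zero. \emph{The main obstacle} is that the evolution family $E(\tau,\tau')$ entering the mild formulation itself depends on the unknown $\bar Z$ through $\nu = \kappa_\sigma^{-1}$, so \eqref{eq:res:mild:form:Y} is a quasilinear rather than semilinear equation; the resolution is exactly the uniform two-sided control \eqref{eq:res:bnd:kappa:sigma:2} on $\kappa_\sigma$, which keeps $\nu$ within an $O(\sigma^2)$-neighbourhood of $1$ and thereby makes the localized map a contraction, combined with careful bookkeeping of the time change and the filtration \eqref{eq:filtration} that guarantees $\bar W^Q$ remains a genuine $\bar{\mathbb F}$-cylindrical Wiener process, so that the forward integral in \eqref{eq:res:mild:form:Y} is the relevant object. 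Both points are handled in \cite[{\S}6.2]{bosch2024multidimensional} and \cite{hamster2019stability}, and only minor modifications are needed to carry the extra inputs $\bar N_{\rm rem}$, $\bar B_{\rm rem}$ and $\bar Y_{\rm tay}$ along.
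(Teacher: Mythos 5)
Your high-level strategy — transport the construction from \cite[{\S}6]{bosch2024multidimensional} through the time change, invoke the (hE) bound to make the random evolution family usable, and localize via the running $H^{k_*}/H^{k_*+1}$ quantity — is the same route the paper takes; the paper's own proof is just a citation of Propositions~6.1, 6.3 and~6.4 of \cite{bosch2024multidimensional}, which rest on the framework of \cite{agresti2024criticalNEW}.

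There is, however, a genuine gap in the way you set up the localizing sequence. You define $\bar\tau_\ell$ as the first time the running quantity reaches $\ell$, \emph{capped at both $T$ and $\bar\tau_{\rm tay}(\eta_0)$}, in order to be able to invoke the simplified bounds \eqref{eq:res:bnd:n:m:after:stop}. With that definition the localization identity in item~(v) can fail: on the event $\{\bar\tau_{\rm tay}(\eta_0) < T\}$, the cap makes $\bar\tau_\ell = \bar\tau_{\rm tay}(\eta_0)$ for all large $\ell$, so $\bar\tau_\infty = \bar\tau_{\rm tay}(\eta_0) < T$ while $\sup_\ell \mathcal{Z}_\ell$ stays finite, and this event has positive probability. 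Your remark that $\bar\tau_{\rm tay}(\eta_0)$ is ``handled separately on its own small-probability event'' does not repair this: \eqref{eq:res:prob:reverse:zero} is an exact identity, not an estimate up to a small-probability remainder, and it is precisely this identity that is needed later (in Lemma \ref{lem:stb:a:vs:a1}) to conclude $\mathbb{P}(\bar{\mathcal{A}}_2)=0$. The fix is to drop the cap at $\bar\tau_{\rm tay}(\eta_0)$ entirely: Proposition \ref{prop:res:bnds} also provides the unconditional polynomial bounds \eqref{eq:res:bnd:n:m:full}, which show that $\mathcal{N}$ and $\mathcal{M}$ are locally Lipschitz in $z$ with polynomial growth driven by the a.s.\ continuous process $\bar{Y}_{\rm tay}$. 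That is exactly the hypothesis structure of the critical-spaces framework of \cite{agresti2024criticalNEW}, which then delivers a maximal local solution together with the blow-up alternative~(v); the smallness coming from \eqref{eq:res:bnd:n:m:after:stop} is only used afterwards, in {\S}\ref{sec:stb}, for the nonlinear stability estimate and plays no role in establishing the mild representation. Relatedly, describing the argument as a plain contraction in the $\mathcal{N}_{\rm res}$-norm undersells the quasilinear difficulty: the $\Delta_{x_\perp}$ coefficient $\kappa_\sigma^{-1}(\bar{Z};\bar{Y}_{\rm tay})$ remains state-dependent even after the time change, and the uniform two-sided bound \eqref{eq:res:bnd:kappa:sigma:2} guarantees uniform parabolicity but not a contraction in a single pathwise norm. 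The machinery of \cite{agresti2024criticalNEW} (maximal $L^p$-regularity in time-weighted spaces together with a local linearization-and-fixed-point scheme) is what actually carries the construction, as the paper indicates.
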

\begin{proof}
    This can be established by following the proofs of \cite[Prop 6.1, Prop 6.3, Prop 6.4]{bosch2024multidimensional}, which rely on the framework developed in \cite{agresti2024criticalNEW}.
\end{proof}

\section{Nonlinear stability}
\label{sec:stb}

In this section we set out to establish Theorem \ref{thm:main},
establishing that our decomposition of the perturbation $V$ can be maintained over timescales that are exponentially long with respect to $\sigma^{-1}$.
Our main objective is to control the size of
\begin{equation}
\label{eq:stb:def:n:eps:res}
\bar{\mathcal N}_{\rm res}(\tau)=\norm{\bar{Z}(\tau)}_{H^{k_*}}^2+\int_0^\tau e^{-\beta (\tau-\tau')} \norm{\bar{Z}(\tau')}_{H^{k_*+1}}^2\mathrm d \tau',
\end{equation}
using the time-transformed mild  representation in \eqref{eq:res:mild:form:Y}.

 In particular, we recall the shorthand \eqref{eq:mr:def:alpha}
 and introduce the stopping time
\begin{equation}
\label{eq:res:def:tau:res}
    \bar{\tau}_{\rm res}(\eta; \sigma,\delta, T)=\inf\{0 \le \tau < \min \{ \bar{\tau}_\infty, \bar{\tau}_{\rm tay}(\eta) \}:
  \bar{\mathcal N}_{\rm res}(\tau) >\eta \min\{1, \alpha^{2(r-1)}\} \},
\end{equation}
writing
\begin{equation}
\bar{\tau}_{\rm res}(\eta;\sigma,\delta,T) = \min \{ \bar{\tau}_\infty, \bar{\tau}_{\rm tay}(\eta) \}
\end{equation}
if the set is empty. We emphasize that when $\eta$ is sufficiently small,
the bounds \eqref{eq:res:bnd:n:m:after:stop} and identities
\eqref{eq:res:proj:n:m} hold for $z = \bar{Z}(\tau)$
whenever $0 \le \tau < \bar{\tau}_{\rm res}(\eta;\sigma,\delta,T)$.

Our main result here provides logarithmic growth bounds
for the expectation of the maximal value that $\bar{\mathcal N}_{\rm res}$ attains as we increase $T$. We establish this result in {\S}\ref{subsec:st:mom:bnd}
and use it to prove Theorem \ref{thm:main} in {\S}\ref{subsec:st:prf:mr}.

\begin{prop}\label{prop:E_stab}
Suppose that \textnormal{(HNL)}, \textnormal{(HTw)}, \textnormal{(H$V_*$)} and \textnormal{(Hq)} are satisfied.
Pick two sufficiently small constants $\delta_\eta>0$ and $\delta_\sigma>0$. Then there exists a constant $K>0$ so that for any $0<\eta<\delta_\eta,$ any $0\leq \sigma\leq \delta_\sigma$, any $\delta \ge 0$, any integer $T\geq 2$  and any integer $p\geq 1$, we have the bound
 \begin{equation}
 \label{eq:prp:stb:bnd:sup:res:p}
     \mathbb E\left[\sup_{0\leq \tau < \bar{\tau}_{\rm res}(\eta;\sigma,\delta, T)} \bar{\mathcal N}_{\rm res}(\tau)^p\right]\leq K^p
     \big( (\sigma^2 p)^{pr} + [\sigma^2 \ln T + \delta^2]^{pr} \big).
 \end{equation}
\end{prop}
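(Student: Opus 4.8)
The plan is to prove Proposition~\ref{prop:E_stab} by applying the mild representation \eqref{eq:res:mild:form:Y} together with the convolution bounds of {\S}\ref{sec:conv}, using the stopping-time truncation to convert the pathwise a-priori bounds into growth estimates. First I would fix an integer $p \ge 1$ and work on the stochastic interval $[0, \bar{\tau}_{\rm res}(\eta;\sigma,\delta,T))$, on which the bounds \eqref{eq:res:bnd:n:m:after:stop} and the projection identities \eqref{eq:res:proj:n:m} are available for $z = \bar Z(\tau)$ provided $\delta_\eta$ is small enough. The key is that $\bar{\mathcal N}_{\rm res}(\tau) = \norm{\bar Z(\tau)}_{H^{k_*}}^2 + \int_0^\tau e^{-\beta(\tau-\tau')}\norm{\bar Z(\tau')}_{H^{k_*+1}}^2 \, \mathrm d\tau'$ decomposes, via \eqref{eq:res:mild:form:Y} and linearity of the convolution, into a deterministic-convolution part driven by $\mathcal{N}(\bar Z;\cdot) + \bar N_{\rm rem}$ and a stochastic-convolution part driven by $\mathcal{M}(\bar Z;\cdot) + \bar B_{\rm rem}$; both the squared supremum and the weighted integral of each part are exactly the quantities $\norm{\mathcal{E}^d_N}^2 + \mathcal{I}^d_N$ and $\norm{\mathcal{E}^s_B}^2 + \mathcal{I}^s_B$ controlled by Propositions~\ref{prp:cnv:hn} and \ref{prp:cnv:hb}.

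The main difficulty, and the reason this needs care, is that the integrands $\mathcal{N}(\bar Z;\cdot)$ and $\mathcal{M}(\bar Z;\cdot)$ themselves involve the higher-order norm $\norm{\bar Z}_{H^{k_*+1}}$, not just $\norm{\bar Z}_{H^{k_*}}$, so one cannot directly invoke (hN)/(hB) for them — the right-hand side of \eqref{eq:res:bnd:n:m:after:stop} is not bounded pathwise by a constant. The standard device, following \cite{bosch2024multidimensional}, is a self-improving/absorption argument: introduce the truncated process $\bar Z^{(\eta)}(\tau) = \bar Z(\tau \wedge \bar\tau_{\rm res})$, write $\Xi(\tau) = \sup_{0 \le \tau' \le \tau} \bar{\mathcal N}_{\rm res}(\tau' \wedge \bar\tau_{\rm res})$, apply the maximal inequality and weighted-decay/maximal-regularity estimates \eqref{eq:conv:max:ineq}--\eqref{eq:conv:max:reg:bnd} to the stochastic convolution of $\mathbf 1_{[0,\bar\tau_{\rm res})}\mathcal{M}(\bar Z;\cdot)$, and use \eqref{eq:res:bnd:n:m:after:stop} to bound $\int_0^{\tau\wedge\bar\tau_{\rm res}} e^{-\beta(\tau-\tau')} \norm{\mathcal{M}}_{HS}^2 \mathrm d\tau' \le K^2 \sigma^2 \int_0^{\tau\wedge\bar\tau_{\rm res}} e^{-\beta(\tau-\tau')}\norm{\bar Z(\tau')}_{H^{k_*+1}}^2 \mathrm d\tau' \le K^2\sigma^2 \Xi(\tau)$. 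The $\sigma^2$ prefactor is crucial: for $\sigma \le \delta_\sigma$ small, the resulting term $C^p (p K_{\rm mr} K_{\rm cnv}^2 \sigma^2)^p \mathbb E[\Xi(\tau)^p]$ carries a factor that can be absorbed into the left-hand side after one takes $\delta_\sigma$ small relative to $p$ — but since $p$ is arbitrary this absorption must be done at fixed $p$, or alternatively one keeps the $p$-dependence explicit and accepts it in the $(\sigma^2 p)^{pr}$ on the right. The deterministic part is handled similarly but more easily using Proposition~\ref{prp:cnv:hn}, and is also multiplied by $\sigma^2 + \sqrt\eta$, so choosing both $\delta_\sigma$ and $\delta_\eta$ small lets it be absorbed.

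Concretely, I would proceed as follows. (1) Split $\bar Z = \bar Z_{\rm rem} + \bar Z_{\rm nl}$ where $\bar Z_{\rm rem}$ is the mild convolution of $\bar N_{\rm rem} + \bar B_{\rm rem}$ and $\bar Z_{\rm nl}$ that of $\mathcal{N} + \mathcal{M}$. For $\bar Z_{\rm rem}$, apply Propositions~\ref{prp:cnv:hn} and \ref{prp:cnv:hb} directly: Proposition~\ref{prop:tay:bnd:rem} shows $N_{\rm rem}$ satisfies (hN) with $n = r$, $\Theta_1 = \sigma^r K$ (up to $\sigma$-powers absorbed as in the (WH) induction) and $\Theta_2 = \sigma^{-2}[\sigma^2\ln T + \delta^2]$, while $B_{\rm rem}/\sigma$ satisfies (hB) with $n = r$, $\Theta_1 = \sigma^{r-1}K$, giving exactly the bound $K^p((\sigma^2 p)^{pr} + [\sigma^2\ln T + \delta^2]^{pr})$ after the rescaling computation mirroring the end of the proof of Proposition~\ref{prop:tay:wh:holds}. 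Here one must pass through the time-change: the convolution against $E(\tau,\tau')$ in $\tau$-time with $\bar W^{Q;-}$ is precisely the forward-integral setting of {\S}\ref{sec:conv}, and the growth bounds there are $T$-independent so they transfer after noting $\bar\tau_\infty \le T$ and the time-change is $1$-Lipschitz by \eqref{eq:res:bnd:kappa:sigma:2}. (2) For $\bar Z_{\rm nl}$, run the absorption argument: using \eqref{eq:res:bnd:n:m:after:stop} bound $\mathbb E \Xi_{\rm nl}(\tau)^p$ by $C^p(\sigma^2 + \sqrt\eta + p\sigma^2)^p \mathbb E[\Xi(\tau)^p]$ plus a cross-term with $\bar Z_{\rm rem}$; combine with step (1) via $(a+b)^{2p} \le 2^{2p-1}(a^{2p}+b^{2p})$, close the Gronwall-type loop at fixed $p$ after choosing $\delta_\sigma^2 \cdot (\text{const}) < 1/(4C)$ and $\delta_\eta$ likewise, and conclude. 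The main obstacle is genuinely the circular dependence of the $H^{k_*+1}$-norm in the nonlinearity on the quantity being estimated; the $\sigma^2$-smallness in \eqref{eq:res:bnd:n:m:after:stop} and the maximal regularity estimate \eqref{eq:conv:max:reg:bnd} are exactly what make the loop closable, and keeping the $p$-dependence of the constants transparent (so that the final bound reads $(\sigma^2 p)^{pr}$) is the bookkeeping one has to be careful about.
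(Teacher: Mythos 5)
Your overall plan is structurally on the right track: you correctly split the mild representation \eqref{eq:res:mild:form:Y} into remainder-driven and nonlinear-driven pieces, you identify Proposition~\ref{prop:tay:bnd:rem} as the source for the moment bounds on $\bar N_{\rm rem}$ and $\bar B_{\rm rem}$ (which then feed into Propositions~\ref{prp:cnv:hn} and \ref{prp:cnv:hb} with $n=r$, $\Theta_1 = \sigma^r K$ resp.\ $\sigma^{r-1}K$, $\Theta_2 = \sigma^{-2}[\sigma^2\ln T + \delta^2]$), and the absorption of the deterministic nonlinear convolution $\mathcal E^d_{\mathcal N}$ via a prefactor $(\sigma^2 + \sqrt\eta)^{2p}$ is exactly what the paper does (Lemma~\ref{lem:stb:n:path}, followed by the final absorption step).

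Where your route runs into a genuine gap is the stochastic nonlinear term $\mathcal{M}(\bar Z;\cdot)$. You propose to bound the weighted integral $\int_0^\tau e^{-\beta(\tau-\tau')}\norm{\mathcal M}^2_{HS}\,\mathrm d\tau'$ by the \emph{random} quantity $K^2\sigma^2\,\Xi(\tau)$ and then absorb. But the maximal inequality / weighted-decay estimates \eqref{eq:conv:max:ineq}--\eqref{eq:conv:max:reg:bnd} contribute a factor $p^p$, so the self-referential term becomes $(Cp\sigma^2)^p\,\mathbb E[\Xi^p]$; this cannot be absorbed into the left side with a $\delta_\sigma$ independent of $p$, and the alternative you mention (``keeps the $p$-dependence explicit and accepts it on the right'') does not by itself help, because $\mathbb E[\Xi^p]$ is the unknown. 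You would need to case-split on whether $p\sigma^2$ is small (absorb) or of order one (use the crude a-priori bound $\mathbb E[\Xi^p]\le\eta^p$ and compare to $(\sigma^2 p)^{pr}$), and you do not carry this out. The paper sidesteps this entirely: before the stopping time $\bar\tau_{\rm res}$, the definition \eqref{eq:res:def:tau:res} makes $\int_0^\tau e^{-\beta(\tau-\tau')}\norm{\bar Z(\tau')}^2_{H^{k_*+1}}\,\mathrm d\tau'\le\eta\alpha^{2(r-1)}$ a \emph{deterministic} uniform pathwise bound, so $\mathbf 1_{\tau'<\bar\tau_{\rm res}}\mathcal M$ satisfies the pathwise assumption $\Theta_* = C\sigma\sqrt\eta\,\alpha^{r-1}$ of {\normalfont [Prop.\ 3.18]} in \cite{bosch2024multidimensional}, and that result directly yields the $(p^p+[\ln T]^p)\sigma^{2p}\eta^p\alpha^{2(r-1)p}$ bound (Lemma~\ref{lem:EB}), which rearranges to the desired $(\sigma^2 p + \sigma^2\ln T + \delta^2)^{pr}$. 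No absorption is needed for $\mathcal M$; the only absorption is for the deterministic nonlinearity and it carries no $p^p$ factor.

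A smaller point: you cannot literally invoke Propositions~\ref{prp:cnv:hn}/\ref{prp:cnv:hb} for $\mathcal N(\bar Z;\cdot)$ and $\mathcal M(\bar Z;\cdot)$, since (hN)/(hB) ask for moment bounds on the integrand of the specific shape $(p^{np}+\Theta_2^{np})\Theta_1^{2p}$, and these are not available for the $\bar Z$-dependent nonlinearities; you need the pathwise routes described above (Lemma~\ref{lem:stb:n:path} for $\mathcal N$, the $\Theta_*$-based bound of \cite{bosch2024multidimensional} for $\mathcal M$).
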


\subsection{Proof of Proposition \ref{prop:E_stab}}
\label{subsec:st:mom:bnd}

Following the approach in \cite{hamster2019stability},
we proceed by providing separate estimates for the integrals in
\eqref{eq:res:mild:form:Y}.
To this end, we introduce the integrals
\begin{equation}
\begin{aligned}
 \mathcal{E}^d_{\rm rem}(\tau)&=\int_0^{\tau} E(\tau,\tau') P^\perp \bar{N}_{\rm rem}[\sigma,\delta](\tau') 
 \mathbf{1}_{\tau'<\bar{\tau}_{\rm res}(\eta;\sigma,\delta, T)}
 \, \mathrm d \tau', \\
 \mathcal{E}^d_{\mathcal N}(\tau)&=\int_0^{\tau} E(\tau,\tau') P^\perp \mathcal{N}(\bar{Z}(\tau');\tau', \sigma,\delta ) \mathbf{1}_{\tau'<\bar{\tau}_{\rm res}(\eta;\sigma,\delta, T)}\, \mathrm d \tau',
\\
 \mathcal{E}^s_{\rm rem}(\tau)&=\int_0^{\tau} E(\tau,\tau') P^\perp \bar{B}_{\rm rem}[\sigma,\delta](\tau') 
 \mathbf{1}_{\tau'<\bar{\tau}_{\rm res}(\eta;\sigma,\delta, T)}
 \, \mathrm d \bar{W}_{\tau'}^{Q;-}
\\
\mathcal{E}^s_{\mathcal M}(\tau)&=\int_0^{\tau} E(\tau,\tau') P^\perp \mathcal{M}(\bar{Z}(\tau');\tau', \sigma,\delta) \mathbf{1}_{\tau'<\bar{\tau}_{\rm res}(\eta;\sigma,\delta, T)} \,\mathrm d \bar{W}_{\tau'}^{Q;-}.
\end{aligned}
\end{equation}
The presence of the projection $P^\perp$ in the above is simply to emphasise \eqref{eq:res:proj:n:m}. Using these expressions, we obtain the estimate
     \begin{equation}\begin{aligned}
    \label{eq:st:a:priori:bnd:e}
        &\mathbb E\sup_{0\leq \tau < \bar{\tau}_{\rm res}(\eta;\sigma,\delta,T)} \norm{\bar Z(\tau)}_{H^{k_*}}^{2p}\\&\qquad\quad\leq 4^{2p}\mathbb E\sup_{0\leq \tau\leq T}\left[\|\mathcal E^d_{\rm rem}(\tau)\|_{H^{k_*}}^{2p}+\|\mathcal E^d_{\mathcal N}(\tau)\|_{H^{k_*}}^{2p}+\|\mathcal E^s_{ \rm rem}(\tau)\|_{H^{k_*}}^{2p}
        + \|\mathcal E^s_{\mathcal M}(\tau)\|_{H^{k_*}}^{2p}
        \right].
        \end{aligned}
    \end{equation}
Turning to the integrated $H^{k_*+1}$-bound, we introduce the integrals
\begin{align}
\label{eq:st:a:priori:bnd:i}
 \mathcal{I}^d_{ \rm rem}(\tau)&=\int_0^\tau e^{-\beta (\tau-\tau')}\| \mathcal{E}^d_{\rm rem}(\tau')\|_{H^{k_*+1}}^2 \mathrm d \tau', \\
 \mathcal{I}^d_{ \mathcal N}(\tau)&=\int_0^\tau e^{-\beta(\tau- \tau')}\|\mathcal{E}^d_{\mathcal N}(\tau')\|_{H^{k_*+1}}^2 \mathrm d  \tau', \\
   \mathcal{I}^s_{ \rm rem}(\tau)&=\int_0^\tau e^{-\beta(\tau- \tau')}\| \mathcal{E}^s_{\rm rem}(\tau')\|_{H^{k_*+1}}^2 \mathrm d  \tau', \\
   \mathcal{I}^s_{\mathcal M}(\tau)&=\int_0^\tau e^{-\beta(\tau- \tau')}\| \mathcal{E}^s_{\mathcal M}(\tau')\|_{H^{k_*+1}}^2 \mathrm d \tau'.
\end{align}
This leads directly to the estimate
  \begin{equation}
        \begin{aligned}
            &\mathbb E \sup_{0\leq \tau < \bar{\tau}_{\rm res}(\eta;\sigma,\delta,T)}\left[\int_0^t e^{-\beta(\tau-\tau')}\norm{\bar{Z}(\tau')}_{H^{k+1}}^2\mathrm d \tau'\right]^p \\&\qquad\quad\leq 4^{2p}\mathbb E \sup_{0\leq \tau\leq T}\Big[\mathcal I^d_{\rm rem}(\tau)^p
            + \mathcal I^d_{\mathcal N}(\tau)^p
+ \mathcal I^s_{\rm rem}(\tau)^p
+\mathcal I^s_{\mathcal M}(\tau)^p\Big].\\
        \end{aligned}
        \end{equation}

\begin{lem}\label{lem:stb:n:path}
Suppose that \textnormal{(HNL)}, \textnormal{(HTw)}, \textnormal{(H$V_*$)} and \textnormal{(Hq)} are satisfied, recall the constant
$\eta_0 >0$ defined in Proposition \ref{prop:res:bnds} and pick a sufficiently small $\delta_\sigma > 0$.
Then there exists a constant $K> 0$
so that for any $0<\eta<\eta_0$,
any $0 \le \sigma < \delta_\sigma$, any $\delta \ge 0$,
any $T>0$
and any $p \ge 1$,
we have the pathwise bound
\begin{equation}\begin{aligned}
             \sup_{0\leq t\leq T}\|\mathcal E^d_{\mathcal N}(t)\|_{H^k}^{2p}
             +
             \sup_{0\leq t\leq T} \mathcal I^d_{\mathcal N}(t)^p
             &\leq K^{2p} (\sigma^2 + \sqrt{\eta})^{2p} \sup_{0\leq \tau < \bar{\tau}_{\rm res}(\eta;\sigma,\delta,T)} \bar{N}_{\rm res}(\tau)^p.
    \end{aligned}\end{equation}
\end{lem}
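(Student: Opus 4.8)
The statement is a purely pathwise (deterministic-per-$\omega$) estimate for the deterministic convolution $\mathcal{E}^d_{\mathcal N}$ of the evolution family $E(\tau,\tau')$ against the integrand $P^\perp \mathcal N(\bar Z(\tau');\tau',\sigma,\delta)\mathbf{1}_{\tau'<\bar\tau_{\rm res}}$, so no stochastic machinery is needed and the argument should parallel the proof of the deterministic bound \eqref{eq:conv:bnd:e:n:prlm} in {\S}\ref{subsec:conv:det}. The plan is to first fix $\omega$ and note that on the relevant time window the indicator $\mathbf 1_{\tau'<\bar\tau_{\rm res}(\eta;\sigma,\delta,T)}$ ensures $\tau' \le \bar\tau_{\rm tay}(\eta)$ and $\norm{\bar Z(\tau')}_{H^{k_*}}^2 \le \bar{\mathcal N}_{\rm res}(\tau') \le \eta$, so that the key nonlinearity bound \eqref{eq:res:bnd:n:m:after:stop} from Proposition \ref{prop:res:bnds} applies and yields the pointwise-in-time estimate
\begin{equation}
    \norm{\mathcal N(\bar Z(\tau');\tau',\sigma,\delta)}_{H^{k_*}} \le K(\sigma^2 + \sqrt\eta)\norm{\bar Z(\tau')}_{H^{k_*+1}}.
\end{equation}
Since $\delta_\sigma$ is taken small, \eqref{eq:res:bnd:kappa:sigma:2} guarantees that the coefficient $\nu=\kappa_\sigma^{-1}(\bar Z;\bar Y_{\rm tay})$ satisfies (hE), so the decay bounds \eqref{eq:conv:bnds:e} for $E(\tau,\tau')P^\perp$ are available with $k=k_*$.

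\textbf{Key steps.} First I would bound $\norm{\mathcal E^d_{\mathcal N}(t)}_{H^{k_*}}$: inserting \eqref{eq:conv:bnds:e} and the pointwise nonlinearity bound, then applying Cauchy--Schwarz to the convolution $\int_0^t Me^{-\beta(t-\tau')}\norm{\bar Z(\tau')}_{H^{k_*+1}}\,\mathrm d\tau'$ against the weight $e^{-\beta(t-\tau')}$, gives
\begin{equation}
    \norm{\mathcal E^d_{\mathcal N}(t)}_{H^{k_*}}^2 \le M^2 K^2 (\sigma^2+\sqrt\eta)^2 \beta^{-1}\int_0^t e^{-\beta(t-\tau')}\norm{\bar Z(\tau')}_{H^{k_*+1}}^2\,\mathrm d\tau' \le M^2K^2(\sigma^2+\sqrt\eta)^2\beta^{-1}\,\bar{\mathcal N}_{\rm res}(t),
\end{equation}
where the last step uses the definition \eqref{eq:stb:def:n:eps:res} of $\bar{\mathcal N}_{\rm res}$ and the indicator truncation at $\bar\tau_{\rm res}$. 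Taking $\sup_{0\le t\le T}$ and raising to the power $p$ absorbs everything into the stated constant. For $\mathcal I^d_{\mathcal N}$ I would use the second bound in \eqref{eq:conv:bnds:e}, namely $\norm{E(\tau,\tau')P^\perp}_{\mathscr L(H^{k_*};H^{k_*+1})}\le M\max\{1,(\tau-\tau')^{-1/2}\}e^{-\beta(\tau-\tau')}$, together with the same nonlinearity bound; after Cauchy--Schwarz with the (integrable) weight $\max\{1,(\tau-\tau')^{-1/2}\}e^{-\beta(\tau-\tau')}$ one obtains $\norm{\mathcal E^d_{\mathcal N}(\tau)}_{H^{k_*+1}}^2 \le C M^2K^2(\sigma^2+\sqrt\eta)^2\int_0^\tau \max\{1,(\tau-\tau')^{-1/2}\}e^{-\beta(\tau-\tau')}\norm{\bar Z(\tau')}_{H^{k_*+1}}^2\,\mathrm d\tau'$, and then the extra $e^{-\beta(\tau-\sigma')}$ weight in the definition of $\mathcal I^d_{\mathcal N}$ combined with Fubini/Tonelli and the bound on $\int_0^\infty \max\{1,s^{-1/2}\}e^{-\beta s}\,\mathrm ds$ reduces everything to a constant multiple of $\bar{\mathcal N}_{\rm res}(\tau)$ again. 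Raising to the $p$-th power and taking the supremum finishes the claim.

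\textbf{Main obstacle.} The argument is essentially routine once the structure is set up; the only mild care needed is in the $\mathcal I^d_{\mathcal N}$ estimate, where one has the singular kernel $(\tau-\tau')^{-1/2}$ convolved against $\norm{\bar Z}_{H^{k_*+1}}^2$ and must then integrate again against $e^{-\beta(\tau-\tau')}$ — one should apply Young's convolution inequality (or a direct Fubini interchange) carefully so that the $H^{k_*+1}$-norm of $\bar Z$ appears to the first power inside a single time integral, matching $\bar{\mathcal N}_{\rm res}$ exactly rather than producing an iterated integral. The factor $(\sigma^2+\sqrt\eta)$ comes out with exponent $2$ inside the norm-squared, hence exponent $2p$ after the $p$-th power, as claimed. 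No new difficulty beyond bookkeeping of constants arises, and all inputs (Proposition \ref{prop:res:bnds}, the bounds \eqref{eq:conv:bnds:e}, and \eqref{eq:res:bnd:kappa:sigma:2}) are already established.
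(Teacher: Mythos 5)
Your overall structure is the right one and matches the intent of the paper's one-line reference to \cite[Lem.~5.3]{hamster2020expstability}: use the indicator to activate the nonlinearity bound \eqref{eq:res:bnd:n:m:after:stop} from Proposition \ref{prop:res:bnds}, invoke \eqref{eq:res:bnd:kappa:sigma:2} to verify (hE), and then run Cauchy--Schwarz plus Fubini against the semigroup decay bounds \eqref{eq:conv:bnds:e}. The $\mathcal E^d_{\mathcal N}$ bound is fine as you wrote it. However, your $\mathcal I^d_{\mathcal N}$ estimate has a genuine gap that is not just constant bookkeeping.

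After your symmetric Cauchy--Schwarz, you arrive (correctly) at
\begin{equation}
    \norm{\mathcal E^d_{\mathcal N}(\tau')}_{H^{k_*+1}}^2 \le C(\sigma^2+\sqrt\eta)^2\int_0^{\tau'}\max\{1,(\tau'-s)^{-1/2}\}e^{-\beta(\tau'-s)}\norm{\bar Z(s)}_{H^{k_*+1}}^2\mathbf 1_{s<\bar\tau_{\rm res}}\,\mathrm ds.
\end{equation}
When you then feed this into $\mathcal I^d_{\mathcal N}(\tau)=\int_0^\tau e^{-\beta(\tau-\tau')}\norm{\mathcal E^d_{\mathcal N}(\tau')}_{H^{k_*+1}}^2\,\mathrm d\tau'$ and apply Fubini, the $\tau'$--integral of the kernel is
\begin{equation}
    \int_s^\tau e^{-\beta(\tau-\tau')}\max\{1,(\tau'-s)^{-1/2}\}e^{-\beta(\tau'-s)}\,\mathrm d\tau'
    = e^{-\beta(\tau-s)}\int_0^{\tau-s}\max\{1,r^{-1/2}\}\,\mathrm dr,
\end{equation}
and the last integral grows linearly in $\tau-s$. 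Hence after Fubini you obtain $\int_0^\tau (\tau-s)e^{-\beta(\tau-s)}\norm{\bar Z(s)}^2_{H^{k_*+1}}\,\mathrm ds$ rather than $\int_0^\tau e^{-\beta(\tau-s)}\norm{\bar Z(s)}^2_{H^{k_*+1}}\,\mathrm ds$ — the exponential rate has degraded (to effectively $\beta/2$ after absorbing the polynomial), and this is \emph{not} directly a constant multiple of $\bar{\mathcal N}_{\rm res}$, contrary to what you assert. Note that the crude route used in the paper's own Proposition \ref{prp:cnv:hn} — bounding $\mathcal I$ by $\beta^{-1}\sup_{\tau'}\norm{\mathcal E^d_{\mathcal N}(\tau')}^2_{H^{k_*+1}}$ — is also unavailable here, since $\norm{\bar Z(s)}_{H^{k_*+1}}^2$ is only $L^1$-in-time controlled and the singular kernel prevents a pointwise bound on $\norm{\mathcal E^d_{\mathcal N}(\tau')}_{H^{k_*+1}}$.

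Two standard fixes exist. One is an \emph{asymmetric} Cauchy--Schwarz: write $\kappa(r)=M\max\{1,r^{-1/2}\}e^{-\beta r}$ as $\sqrt{\kappa(r)e^{\beta r/2}}\cdot\sqrt{\kappa(r)e^{-\beta r/2}}$ before splitting, so that the weight carrying $\norm{\bar Z}^2$ decays like $e^{-3\beta r/2}$; then the $\tau'$--integral in the Fubini step produces exactly $e^{-\beta(\tau-s)}$ times a finite constant, and the result matches $\bar{\mathcal N}_{\rm res}$ directly. The other is to keep your version and insert a slicing argument: split the final integral into unit intervals $[\tau-j-1,\tau-j]$ and bound $\int_0^\tau e^{-\beta(\tau-s)/2}\norm{\bar Z(s)}^2\mathbf 1_{s<\bar\tau_{\rm res}}\,\mathrm ds \le \tfrac{e^\beta}{1-e^{-\beta/2}}\sup_{\tau'<\bar\tau_{\rm res}}\bar{\mathcal N}_{\rm res}(\tau')$. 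Either fix closes the gap, but as written your step "reduces everything to a constant multiple of $\bar{\mathcal N}_{\rm res}(\tau)$ again" does not follow.
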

\begin{proof}
This bound follows readily from straightforward integral estimates;
see \cite[Lem. 5.3]{hamster2020expstability}.
\end{proof}

\begin{lem}\label{lem:E0ElinEnl}
Suppose that \textnormal{(HNL)}, \textnormal{(HTw)}, \textnormal{(H$V_*$)} and \textnormal{(Hq)} are satisfied, recall the constant
$\eta_0 >0$ defined in Proposition \ref{prop:res:bnds} and pick a sufficiently small $\delta_\sigma > 0$.
Then there exists a constant $K> 0$
so that for any $0<\eta<\eta_0$,
any $0 \le \sigma <\delta_\sigma$, any $\delta \ge 0$,
any $T\ge 2$ and
any real $p \ge 1$,
we have the bound
    \begin{equation}\begin{aligned}
             \sup_{0\leq \tau \leq T}\|\mathcal E^d_{\rm rem}(\tau)\|_{H^{k_*}}^{2p}
             + \sup_{0\leq \tau\leq T} \mathcal I^d_{\rm rem}(\tau)^p
             &\leq K^{2p} \big[ (\sigma^2 p)^{pr} + [\sigma^2 \ln T + \delta^2]^{rp}].
    \end{aligned}\end{equation}
\end{lem}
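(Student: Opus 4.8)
The statement concerns the deterministic convolution $\mathcal{E}^d_{\rm rem}$ built from the remainder term $\bar{N}_{\rm rem}[\sigma,\delta]$ against the evolution family $E(\tau,\tau')$ with time-dependent coefficient $\nu = \kappa_\sigma^{-1}(\bar Z; \bar Y_{\rm tay})$, localised by $\mathbf{1}_{\tau' < \bar{\tau}_{\rm res}}$. The strategy is to verify that this localised integrand satisfies hypothesis (hN) from {\S}\ref{sec:conv} with the right parameters and then quote Proposition \ref{prp:cnv:hn}. First I would observe that, by \eqref{eq:res:bnd:kappa:sigma:2}, the coefficient $\nu = \kappa_\sigma^{-1}(\bar Z;\bar Y_{\rm tay})$ satisfies $\frac{1}{2} \le \nu \le 2$ once $\sigma$ is small, so (hE) holds and the bounds \eqref{eq:conv:bnds:e} for $E(\tau,s)P^\perp$ are available on $H^{k_*}$ (here $k=k_*$). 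Next I need to relate $\bar N_{\rm rem}$ before and after the time transformation. Since $\tau(t) = \int_0^t \kappa_\sigma\,ds$ with $1 \le \kappa_\sigma \le 2$, the transformation is a bi-Lipschitz time change with $t \le \tau(t) \le 2t$, and $\bar{N}_{\rm rem}(\tau(t)) = N_{\rm rem}(t)$; in particular $\sup_{0\le \tau \le T} \norm{\bar N_{\rm rem}(\tau)}_{H^{k_*}} \le \sup_{0 \le t \le T} \norm{N_{\rm rem}(t)}_{H^{k_*}}$ and the stopping time $\bar\tau_{\rm res}$ only shrinks the relevant range further. Crucially, the localisation $\mathbf{1}_{\tau' < \bar\tau_{\rm res}(\eta;\sigma,\delta,T)}$ forces $\tau' < \bar\tau_{\rm tay}(\eta) \le \bar\tau_{\rm tay}(1)$, so on the support of the integrand the pullback time $t$ satisfies $t < t_{\rm tay}(1)$, which is exactly the range where Proposition \ref{prop:tay:bnd:rem} gives a bound on $N_{\rm rem}$.

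With these observations in place, the key step is: the localised process $\tilde N(\tau) := \mathbf{1}_{\tau < \bar\tau_{\rm res}(\eta;\sigma,\delta,T)}\, \bar N_{\rm rem}[\sigma,\delta](\tau)$ is progressively measurable (w.r.t.\ $\bar{\mathbb F}$), lies in $L^{2p}(\Omega; C([0,T];H^{k_*}))$ for every $p$, and satisfies
\begin{equation}
\mathbb E \sup_{0 \le \tau \le T} \norm{\tilde N(\tau)}_{H^{k_*}}^{2p}
\le \mathbb E \sup_{0 \le t \le T \wedge t_{\rm tay}(1)} \norm{N_{\rm rem}[\sigma,\delta](t)}_{H^{k_*}}^{2p}
\le K^{2p} \big( [\sigma^2 p]^{pr} + [\sigma^2 \ln T + \delta^2]^{pr} \big)
\end{equation}
by \eqref{eq:tay:bnds:n:b:rem}. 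This is precisely hypothesis (hN) with $n = r$, $\Theta_1 = \sigma^r K$, and $\Theta_2 = \sigma^{-2}[\sigma^2 \ln T + \delta^2]$ (assuming $\sigma > 0$; the case $\sigma = 0$ forces $\delta = 0$ since $\delta \le \sigma^{1-\theta/r}$ in the relevant corollary, but more robustly one handles it by a continuity/limiting argument or by noting $N_{\rm rem} \equiv 0$ there). Since $\mathcal{E}^d_{\rm rem}(\tau) = \int_0^\tau E(\tau,\tau')P^\perp \tilde N(\tau')\,d\tau' = \mathcal{E}^d_{\tilde N}(\tau)$ in the notation of \eqref{eq:conv:def:e:n:b}, and similarly $\mathcal{I}^d_{\rm rem}(\tau) = \mathcal{I}^d_{\tilde N}(\tau)$, Proposition \ref{prp:cnv:hn} yields directly
\begin{equation}
\mathbb E \sup_{0 \le \tau \le T} \norm{\mathcal{E}^d_{\rm rem}(\tau)}_{H^{k_*}}^{2p}
+ \mathbb E \sup_{0 \le \tau \le T} \mathcal{I}^d_{\rm rem}(\tau)^p
\le K_{\rm gr}^{2p} (p^{rp} + \Theta_2^{rp}) \Theta_1^{2p}
= K_{\rm gr}^{2p} \sigma^{2rp}(p^{rp} + \sigma^{-2rp}[\sigma^2 \ln T + \delta^2]^{rp}) K^{2p},
\end{equation}
which rearranges to $\tilde{K}^{2p}\big( (\sigma^2 p)^{rp} + [\sigma^2 \ln T + \delta^2]^{rp} \big)$, the claimed bound. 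The lemma as stated in the excerpt only asserts the pathwise-looking $\sup_{0\le\tau\le T}$ bound inside the expectation, so I would finish by noting that the displayed inequality is stated for the supremum before taking expectations in the paper's convention — but in fact the cleanest route is to keep everything under the expectation as above, which is what Proposition \ref{prp:cnv:hn} provides and what is actually used downstream.

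\textbf{Expected main obstacle.} The only genuinely delicate point is the bookkeeping around the time transformation and the two stopping times: one must check that the $\bar{\mathbb F}$-progressive measurability of $\tilde N$ survives the time change (this is where \eqref{eq:filtration} and the results of \cite{bosch2024multidimensional,hamster2019stability} are needed), and that the pullback of $\{\tau < \bar\tau_{\rm res}(\eta;\sigma,\delta,T)\}$ is contained in $\{t < t_{\rm tay}(1)\}$ so that \eqref{eq:tay:bnds:n:b:rem} applies — i.e.\ that $\bar\tau_{\rm res}(\eta;\sigma,\delta,T) \le \bar\tau_{\rm tay}(\eta) = \tau(t_{\rm tay}(\eta))$ and $t_{\rm tay}(\eta) \le t_{\rm tay}(1)$ for $\eta \le 1$. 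Both are immediate from the definitions, but they are the substantive content; everything else is a direct citation of Propositions \ref{prp:cnv:hn} and \ref{prop:tay:bnd:rem} plus the bound \eqref{eq:res:bnd:kappa:sigma:2} to verify (hE). A secondary, purely cosmetic subtlety is passing from integer $p$ (where \eqref{eq:tay:bnds:n:b:rem} and the convolution estimates are phrased most naturally) to real $p \ge 1$, handled exactly as in the proof of Proposition \ref{prp:cnv:hn} via Jensen with a rational exponent.
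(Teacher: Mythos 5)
Your proposal is correct and takes essentially the same route as the paper: verify (hN) for the localised remainder with $n=r$, $\Theta_1 = K\sigma^r$, $\Theta_2 = \sigma^{-2}[\sigma^2\ln T + \delta^2]$ via Proposition~\ref{prop:tay:bnd:rem}, then invoke Proposition~\ref{prp:cnv:hn}. The paper's proof is the same chain compressed to two sentences; your extra bookkeeping on (hE), the time change, and the stopping-time comparison $\bar\tau_{\rm res}\le\bar\tau_{\rm tay}(\eta)\le\bar\tau_{\rm tay}(1)$ spells out steps the paper leaves implicit (and you correctly flag the missing $\mathbb E$ in the displayed inequality as a notational artefact).
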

\begin{proof}
Assuming without loss that $\sigma > 0$,
the bound \eqref{eq:tay:bnds:n:b:rem}
implies that (hN) is satisfied with
\begin{equation}
\Theta_1 = \sigma^{r} K,
\qquad \Theta_2 = \sigma^{-2}[ \sigma^2 \ln T + \delta^2],
\qquad n = r.
\end{equation}
The desired estimate now follows from an application of
Proposition \ref{prp:cnv:hn}.
\end{proof}

\begin{lem}\label{lem:EB}
Suppose that \textnormal{(HNL)}, \textnormal{(HTw)}, \textnormal{(H$V_*$)} and \textnormal{(Hq)} are satisfied, recall the constant
$\eta_0 >0$ defined in Proposition \ref{prop:res:bnds} and pick a sufficiently small $\delta_\sigma > 0$.
Then there exists a constant $K > 0$ so that
for any $0<\eta<\eta_0$, any $0 \le \sigma < \delta_\sigma$,
any $\delta \ge 0$, any integer $T \ge 2$ and any integer $p \ge 1$, we have the bound
    \begin{equation}
        \mathbb E\sup_{0\leq \tau \leq T}\|\mathcal E^s_{\mathcal M}(\tau)\|_{H^{k_*}}^{2p}
        + \mathbb E \sup_{0\leq \tau\leq T}\mathcal I^s_{\mathcal M}(\tau)^p
        \leq
        K^{2p} \big[ (\sigma^2p)^{pr} + [ \sigma^2 \ln T + \delta^2]^{rp} \big].
    \end{equation}
\end{lem}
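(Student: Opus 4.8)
The plan is to estimate the forward stochastic convolution $\mathcal E^s_{\mathcal M}$ together with its weighted integral $\mathcal I^s_{\mathcal M}$ directly, by mimicking the arguments behind Corollaries \ref{cor:conv:bnd:sup:e:s:b} and \ref{cor:conv:sup:i:s:0:T} but starting from \emph{integrated} control on the integrand rather than a pathwise supremum bound. Indeed, in contrast to the inhomogeneous remainders $\bar N_{\rm rem},\bar B_{\rm rem}$ --- which carry pathwise supremum bounds by Proposition \ref{prop:tay:bnd:rem} and are handled via Propositions \ref{prp:cnv:hn} and \ref{prp:cnv:hb} as in Lemma \ref{lem:E0ElinEnl} --- the term $\mathcal M$ is controlled only through the residual $\bar Z$, for which we possess no pathwise bound on $\norm{\bar Z}_{H^{k_*+1}}$: only the $e^{-\beta(\cdot)}$-weighted integral encoded in $\bar{\mathcal N}_{\rm res}$, see \eqref{eq:stb:def:n:eps:res}. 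Hence Proposition \ref{prp:cnv:hb} does not apply and one must argue directly from the maximal inequality \eqref{eq:conv:max:ineq}, the weighted decay estimate \eqref{eq:conv:wt:decay} and the maximal regularity bound \eqref{eq:conv:max:reg:bnd}, which only involve integrals of $\norm{B}^2$. Here $B(\tau')=P^\perp\mathcal M\big(\bar Z(\tau');\tau',\sigma,\delta\big)\mathbf 1_{\tau'<\bar\tau_{\rm res}(\eta;\sigma,\delta,T)}$; after extending $\nu=\kappa_\sigma^{-1}(\bar Z;\bar Y_{\rm tay})$ by $1$ beyond $\bar\tau_{\rm res}$, the evolution family $E$ satisfies \textnormal{(hE)} for $\sigma\le\delta_\sigma$ small by \eqref{eq:res:bnd:kappa:sigma:2}, the stopping-time indicator together with \eqref{eq:res:bnd:n:m:after:stop} makes $B$ pathwise bounded, hence a member of $\mathcal N^{2p}(T)$ for every $p$, and $P^\perp B=B$ by \eqref{eq:res:proj:n:m}.

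The decisive integrand estimate is that for $\tau'<\bar\tau_{\rm res}(\eta;\sigma,\delta,T)$ with $\eta<\eta_0$ one has $\tau'\le\bar\tau_{\rm tay}(\eta)$ and $\norm{\bar Z(\tau')}_{H^{k_*}}^2\le\bar{\mathcal N}_{\rm res}(\tau')\le\eta\min\{1,\alpha^{2(r-1)}\}\le\eta$, so \eqref{eq:res:bnd:n:m:after:stop} gives $\norm{\mathcal M(\bar Z(\tau');\tau',\sigma,\delta)}_{HS(L^2_Q;H^{k_*})}\le K\sigma\norm{\bar Z(\tau')}_{H^{k_*+1}}$. Since $e^{-\beta(t-s)}\le e^{-\beta((t\wedge\bar\tau_{\rm res})-s)}$ for $s\le t$, this yields the \emph{pathwise} bound
\[
\int_0^{t}e^{-\beta(t-s)}\norm{B(s)}_{HS(L^2_Q;H^{k_*})}^2\,\mathrm ds\le K^2\sigma^2\,\bar{\mathcal N}_{\rm res}(t\wedge\bar\tau_{\rm res})\le K^2\sigma^2\eta\,\alpha^{2(r-1)}
\]
for every $t\le T$. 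Plugging this into \eqref{eq:conv:wt:decay} produces the single-time estimate $\mathbb E\norm{\mathcal E^s_{\mathcal M}(t)}_{H^{k_*}}^{2p}\le p^pK_{\rm dc}^{2p}(K^2\sigma^2\eta\alpha^{2(r-1)})^p$; writing $p^p\sigma^{2p}\alpha^{2(r-1)p}=(\sigma^2p)^p(\delta^2+\sigma^2\ln T)^{(r-1)p}$ and applying Young's inequality with exponents $r$ and $r/(r-1)$ turns the right-hand side into $C^{2p}\eta^p\big[(\sigma^2p)^{rp}+(\sigma^2\ln T+\delta^2)^{rp}\big]$.

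To reach the supremum I would split $[0,T]$ into the unit intervals $[i,i+1]$ and, on each, write $\mathcal E^s_{\mathcal M}(t)=E(t,i)\mathcal E^s_{\mathcal M}(i)+\int_i^tE(t,s)B(s)\,\mathrm d\bar{W}^{Q;-}_{s}$: the first term is controlled by $M^{2p}\mathbb E\norm{\mathcal E^s_{\mathcal M}(i)}_{H^{k_*}}^{2p}$ via \eqref{eq:conv:bnds:e} and $P^\perp B=B$, the second by \eqref{eq:conv:max:ineq} over $[i,i+1]$ after bounding the unweighted integral $\int_i^{i+1}\norm{B}^2$ by $e^\beta$ times the weighted one. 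This delivers a per-interval estimate of the form $C^{2p}(p^{rp}+\Theta_2^{rp})\Theta_1^{2p}$ with $n=r$, $\Theta_1=\sqrt\eta\,\sigma^r$ and $\Theta_2=\ln T+\sigma^{-2}\delta^2$, whence Corollary \ref{cor:prlm:exp:bnd:to:exp:bnd:with:n} applies precisely as in the proof of Corollary \ref{cor:conv:bnd:sup:e:s:b}, and after absorbing the resulting $(16er)^{rp}$ constants and using $\eta<1$ one obtains $\mathbb E\sup_{0\le\tau\le T}\norm{\mathcal E^s_{\mathcal M}(\tau)}_{H^{k_*}}^{2p}\le K^{2p}\big[(\sigma^2p)^{rp}+(\sigma^2\ln T+\delta^2)^{rp}\big]$. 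For $\mathcal I^s_{\mathcal M}$ I would apply \eqref{eq:conv:max:reg:bnd} at each time $t$ --- its first term being the supremum bound just obtained, its second the integrand integral above --- and then combine $\sup_{i\le\tau\le i+1}\mathcal I^s_{\mathcal M}(\tau)\le e^\beta\mathcal I^s_{\mathcal M}(i+1)$ with Corollary \ref{cor:prlm:exp:bnd:to:exp:bnd:with:n}, exactly as in Corollary \ref{cor:conv:sup:i:s:0:T}. Adding the two supremum bounds gives the claimed estimate.

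The main obstacle is precisely the absence of a pathwise bound on $\norm{\bar Z}_{H^{k_*+1}}$: every estimate has to be routed through the $e^{-\beta(\cdot)}$-weighted integral that the stopping time $\bar\tau_{\rm res}$ regulates, which forces the unit-interval decomposition and the comparison of weighted versus unweighted integrals on each interval. Note also that, because the single-time bound from \eqref{eq:conv:wt:decay} carries a factor $p^p$ that cannot be absorbed into $\mathbb E\sup\bar{\mathcal N}_{\rm res}^p$ for large $p$, it is essential to exploit the deterministic bound $\bar{\mathcal N}_{\rm res}(t\wedge\bar\tau_{\rm res})\le\eta\alpha^{2(r-1)}$ coming from the stopping time and to absorb the $p^p$ into $(\sigma^2p)^{rp}$ via Young's inequality instead; this is why the lemma can be stated as a direct bound rather than in the self-improving form of Lemma \ref{lem:stb:n:path}. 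The remaining bookkeeping --- carrying the single factor of $\sigma$ produced by $\mathcal M$ and keeping track of the exponents --- is routine.
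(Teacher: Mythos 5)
Your proof is correct and arrives at the same exponents via the same key observation --- the pathwise bound
\[
\int_0^\tau e^{-\beta(\tau-\tau')}\norm{\mathcal M(\bar Z(\tau');\tau',\sigma,\delta)}_{HS(L^2_Q;H^{k_*})}^2\mathbf 1_{\tau'<\bar\tau_{\rm res}}\,\mathrm d\tau'\le K^2\sigma^2\eta\,\alpha^{2(r-1)}
\]
coming from \eqref{eq:res:bnd:n:m:after:stop} and the stopping time --- but it takes a genuinely different route after that. The paper's proof verifies condition (HB) of \cite{bosch2024multidimensional}, which additionally requires a pathwise bound on $\norm{E_{\rm tw}(1)\mathcal M(\bar Z(\tau),\tau,\sigma,\delta)}_{HS(L^2_Q;H^{k_*})}$; this is obtained by exploiting the parabolic smoothing of $E_{\rm tw}(1)$ together with the uniform $H^{k_*}$-bound $\norm{\bar Z(\tau)}_{H^{k_*}}^2\le\eta\alpha^{2(r-1)}$, and the desired supremum estimate is then pulled in from \cite[Prop.\ 3.18]{bosch2024multidimensional}. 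You instead re-derive that result from scratch out of the building blocks \eqref{eq:conv:max:ineq}, \eqref{eq:conv:wt:decay}, \eqref{eq:conv:max:reg:bnd} and the unit-interval splitting of Corollaries \ref{cor:conv:bnd:sup:e:s:b}--\ref{cor:conv:sup:i:s:0:T}, feeding Corollary \ref{cor:prlm:exp:bnd:to:exp:bnd:with:n} with $n=r$, $\Theta_1=\sqrt\eta\,\sigma^r$, $\Theta_2=\ln T+\sigma^{-2}\delta^2$. This entirely bypasses the smoothening step, because your per-interval bound needs only the pathwise $e^{-\beta}$-weighted integral and its unweighted comparison $\int_i^{i+1}\norm{B}^2\le e^\beta\int_0^{i+1}e^{-\beta(i+1-s)}\norm{B(s)}^2\,\mathrm ds$. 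The trade-off: the paper's citation is shorter, but its hypothesis (HB) is stronger than what the problem actually gives you and forces the smoothing interpolation; your direct argument is longer but more transparent and works under the weaker integrated control alone, which is precisely the natural information here. One small inaccuracy to flag: the stopping-time indicator does \emph{not} make $B$ pathwise bounded (you have no pathwise bound on $\norm{\bar Z}_{H^{k_*+1}}$, only on the weighted integral), but it does make $B$ pathwise square-integrable over $[0,T]$, which is all that is needed for $B\in\mathcal N^{2p}(T)$.
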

\begin{proof}
We first note that \eqref{eq:res:bnd:n:m:after:stop}
and the stopping time definition \eqref{eq:res:def:tau:res}
imply the pathwise bound
\begin{equation}
    \int_0^\tau e^{-\beta(\tau-\tau')}\norm{\mathcal{M}(\bar{Z}(\tau'), \tau', \sigma, \delta)}_{HS(L^2_Q;H^{k_*})}^2 \textbf{1}_{\tau' < \bar{\tau}_{\rm res}(\eta;\sigma,\delta,T)} \, \mathrm d \tau' \leq K^2 \sigma^2  \eta \alpha^{2(r-1)}
    \end{equation}
for all $0 \le \tau \le T$. In addition, we may exploit the smoothening
properties of the semigroup $E_{\rm tw}(t) = e^{\frac{1}{4} t \Delta_{x_\perp}} e^{\mathcal{L}_{\rm tw} t} $
to show that there exists $C_1 > 0$ so that
\begin{equation}
   \norm{E_{\rm tw}(1) \mathcal{M}(\bar{Z}(\tau), \tau, \sigma, \delta)}_{HS(L^2_Q;H^{k_*})}^2
    \leq \sigma^2 C_1 \eta \alpha^{2(r-1)}
\end{equation}
for all $0 \le \tau \le \bar{\tau}_{\rm res}(\eta;\sigma,\delta,T)$,
since $\norm{\bar{Z}(\tau)}_{H^{k_*}}^2 \le \eta \alpha^{2(r-1)}$ on this interval.
We may hence conclude that condition (HB) in \cite{bosch2024multidimensional} is satisfied with $\Theta_* = C_2 \sigma \sqrt{\eta} \alpha^{r-1}$.
Applying \cite[Prop 3.18]{bosch2024multidimensional}
we find that there exist $C_3 > 0$ so that
\begin{equation}
\begin{array}{lcl}
        \mathbb E\sup_{0\leq \tau \leq T}\|\mathcal E^s_{\mathcal M}(\tau)\|_{H^{k_*}}^{2p}
        + \mathbb E \sup_{0\leq \tau\leq T}\mathcal I^s_{\mathcal M}(\tau)^p
        & \leq &
        C_3^{2p} \sigma^{2p} \eta^p
        (p^p+[\ln T]^p) \big( \sigma^2 \ln T + \delta^2 \big)^{p(r-1)}
        \\[0.2cm]
        & \leq &
        C_3^{2p} \eta^p
        \big( \sigma^2 p+[\sigma^2 \ln T] + \delta^2  \big)^{pr},
    \\[0.2cm]
\end{array}
    \end{equation}
which provides the desired bound.
\end{proof}

\begin{lem}\label{lem:stb:b:rem}
Suppose that \textnormal{(HNL)}, \textnormal{(HTw)}, \textnormal{(H$V_*$)} and \textnormal{(Hq)} are satisfied, recall the constant
$\eta_0 >0$ defined in Proposition \ref{prop:res:bnds} and pick a sufficiently small $\delta_\sigma > 0$.
Then there exists a constant $K> 0$
so that for any $0<\eta<\eta_0$,
any $0 \le \sigma < \delta_\sigma$,
any $\delta \ge 0$, any integer $T \ge 3$
and any real $p \ge 1$,
we have the bound
   \begin{equation}
        \mathbb E\sup_{0\leq \tau\leq T}\|\mathcal E_{B;\rm c}(\tau)\|_{H^{k_*}}^{2p}
        + \mathbb E \sup_{0\leq \tau\leq T}\mathcal I_{B;\rm c}(\tau)^p
        \leq
        K^{2p} \big[ (\sigma^2 p)^{pr} + [\sigma^2 \ln T + \delta^2]^{rp}].
    \end{equation}
\end{lem}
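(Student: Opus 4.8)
The plan is to recognise the remainder stochastic convolution $\mathcal{E}^s_{\rm rem}$ appearing in \eqref{eq:st:a:priori:bnd:i} (and its integrated companion $\mathcal{I}^s_{\rm rem}$), i.e. the object written $\mathcal{E}_{B;\mathrm c}$, $\mathcal{I}_{B;\mathrm c}$ in the statement of Lemma~\ref{lem:stb:b:rem}, as an instance of the abstract objects $\mathcal{E}^s_B$, $\mathcal{I}^s_B$ controlled by Proposition~\ref{prp:cnv:hb}. Concretely, one takes the evolution family $E(\tau,\tau')$ generated by $\mathcal{L}_\nu$ with $\nu=\kappa_\sigma^{-1}(\bar Z;\bar Y_{\rm tay})$ and the integrand $B(\tau')=\bar B_{\rm rem}[\sigma,\delta](\tau')\,\mathbf 1_{\tau'<\bar\tau_{\rm res}(\eta;\sigma,\delta,T)}$. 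First I would check that hypothesis (hE) is in force after shrinking $\delta_\sigma$: the two-sided estimate \eqref{eq:res:bnd:kappa:sigma:2} gives $1\le\kappa_\sigma\le 1+\sigma^2K\le 2$, hence $\tfrac12\le\nu\le 1$, and $\nu$ is progressively measurable and pathwise continuous by construction, while $\bar W^Q$ is again a cylindrical $Q$-Wiener process by the time-transformation theory of \cite[{\S}6.2]{bosch2024multidimensional} and \cite{hamster2019stability}. Thus Proposition~\ref{prp:cnv:hb} applies to any integrand satisfying (hB).

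Next I would verify that $B$ satisfies (hB). Multiplying by the indicator only decreases the $HS(L^2_Q;H^{k_*})$-norm pointwise, so it suffices to bound $\bar B_{\rm rem}$ itself; and since $\eta<\eta_0\le 1$ we have $\bar\tau_{\rm res}(\eta;\sigma,\delta,T)\le\bar\tau_{\rm tay}(\eta)\le\bar\tau_{\rm tay}(1)$, so the identity $\bar B_{\rm rem}(\tau(t))=B_{\rm rem}(t)$ together with $\tau(t)\ge t$ lets me transfer the supremum estimate \eqref{eq:tay:bnds:n:b:rem} for $B_{\rm rem}$ on $[0,T\wedge t_{\rm tay}(1)]$ into a supremum estimate for $B$ on $[0,T]$. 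Rewriting
\[
\sigma^{2p}K^{2p}\bigl([\sigma^2p]^{p(r-1)}+[\sigma^2\ln T+\delta^2]^{p(r-1)}\bigr)
=(\sigma^rK)^{2p}\bigl(p^{(r-1)p}+\Theta_2^{(r-1)p}\bigr),
\qquad
\Theta_2=\sigma^{-2}[\sigma^2\ln T+\delta^2]=\ln T+\sigma^{-2}\delta^2,
\]
I read off that (hB) holds with $n=r$, $\Theta_1=\sigma^rK$ and this $\Theta_2$.

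Applying Proposition~\ref{prp:cnv:hb} then yields directly
\[
\mathbb E\sup_{0\le\tau\le T}\norm{\mathcal{E}^s_{\rm rem}(\tau)}_{H^{k_*}}^{2p}
+\mathbb E\sup_{0\le\tau\le T}\mathcal{I}^s_{\rm rem}(\tau)^p
\le K_{\rm gr}^{2p}(32er)^{2rp}\bigl(p^{rp}+[2\ln T+\sigma^{-2}\delta^2]^{rp}\bigr)(\sigma^rK)^{2p},
\]
after which I would simplify using $2\ln T+\sigma^{-2}\delta^2\le 2(\ln T+\sigma^{-2}\delta^2)$, $\sigma^{2rp}[\ln T+\sigma^{-2}\delta^2]^{rp}=[\sigma^2\ln T+\delta^2]^{rp}$ and $\sigma^{2rp}p^{rp}=(\sigma^2p)^{rp}$, absorbing $K_{\rm gr}$, $(32er)^{2r}$, $2^r$ and $K$ into a single constant $K$; real $p\ge 1$ is already covered by Proposition~\ref{prp:cnv:hb}. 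The only genuinely delicate point — and the one I would flag as the main obstacle — is the transfer of the moment bound across the time substitution and the truncation at $\bar\tau_{\rm res}$, namely checking that $B$ lies in $\mathcal N^{2p}(T)$ and inherits the bound \eqref{eq:tay:bnds:n:b:rem}; everything else is a mechanical substitution into Proposition~\ref{prp:cnv:hb} and routine exponent bookkeeping.
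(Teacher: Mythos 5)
Your proposal is correct and takes essentially the same route as the paper: identify the remainder stochastic convolution as an instance of Proposition~\ref{prp:cnv:hb}, check (hB) via the moment bound \eqref{eq:tay:bnds:n:b:rem} with $n=r$ and $\Theta_2=\sigma^{-2}[\sigma^2\ln T+\delta^2]$, and tidy up the exponents. The only (immaterial) difference is that the paper checks (hB) for $\bar B_{\rm rem}/\sigma$ with $\Theta_1=\sigma^{r-1}K$ rather than for $\bar B_{\rm rem}$ with $\Theta_1=\sigma^r K$ as you do; you also spell out the verification of (hE) via \eqref{eq:res:bnd:kappa:sigma:2} and the transfer of \eqref{eq:tay:bnds:n:b:rem} across the time change and truncation, steps the paper's proof leaves implicit but which you correctly identify as the delicate ingredients.
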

\begin{proof}
Assuming without loss that $\sigma > 0$, the bound
\eqref{eq:tay:bnds:n:b:rem}
implies that $\bar{B}_{\rm rem}/\sigma$
satisfies (hB) with
\begin{equation}
        \Theta_1 = \sigma^{r-1} K,
         \qquad
         \Theta_2 = \sigma^{-2}[ \sigma^2 \ln T + \delta^2],
         \qquad n= r.
    \end{equation}
An application of Proposition \ref{prp:cnv:hb}
now yields the desired estimate.
\end{proof}

\begin{proof}[Proof of Proposition \ref{prop:E_stab}]
Collecting the results in Lemmas \ref{lem:stb:n:path}--\ref{lem:stb:b:rem}, the estimates \eqref{eq:st:a:priori:bnd:e}
and \eqref{eq:st:a:priori:bnd:i} can be combined to yield
\begin{equation}\begin{aligned}
            &\mathbb E\left[\sup_{0\leq \tau < \bar{\tau}_{\rm res}(\eta;\sigma,\delta,T)} \bar{\mathcal N}_{\rm res}(\tau)^p\right]\\&\qquad\quad\leq K^p\Bigg(
            (\sigma^{2}p)^{pr}+[\sigma^2 \ln T + \delta^2]^{pr}
            +(\sigma^{2}+\sqrt{\eta})^{2p}\mathbb E
            \left[\sup_{0\leq \tau < \bar{\tau}_{\rm res}(\eta;\sigma,\delta,T)} \bar{\mathcal N}_{\rm res}(\tau)^p\right]
            \Bigg).
            \end{aligned}
        \end{equation}
        The result hence readily follows by
        restricting the size of $\sigma^2+\sqrt{\eta}$ .
\end{proof}

\subsection{Proof of Theorem \ref{thm:main}}
\label{subsec:st:prf:mr}

In order to establish our main result, we need to
include the growth of the expansion functions $Y_j$
and reverse the effects of the time transformation.
Starting with the former, we write
\begin{equation}
    \bar{\mathcal N}_{\rm full}(\tau;\sigma,\delta,T) =
    \alpha^{2(r-1)} \norm{\bar{Y}_1(\tau)}_{1}^2 + \ldots + \alpha^2 \norm{\bar{Y}_{r-1}(\tau)}_{r-1}^2     + \bar{\mathcal{N}}_{\rm res}(\tau)
\end{equation}
and introduce the stopping time
\begin{equation}
\label{eq:res:def:tau:full}
    \bar{\tau}_{\rm full}(\eta; \sigma,\delta, T)=\inf\{0 \le \tau <  \bar{\tau}_\infty :
   \bar{\mathcal N}_{\rm full}(\tau;\sigma,\delta,T) >\eta \min\{1,  \alpha^{2(r-1)}\} \},
\end{equation}
writing
\begin{equation}
    \bar{\tau}_{\rm full}(\eta; \sigma,\delta, T) = \bar{\tau}_\infty
\end{equation}
if the set is empty.
By construction we have
$\bar{\tau}_{\rm full}(\eta;\sigma,\delta,T)\le \bar{\tau}_{\rm tay}(\eta)$,
which implies the ordering
\begin{equation}
\label{eq:stb:ord:stopping:time}
 \bar{\tau}_{\rm full}(\eta;\sigma,\delta,T)\le
 \bar{\tau}_{\rm res}(\eta;\sigma,\delta,T) \le
 \min \{ \bar{\tau}_\infty, \bar{\tau}_{\rm tay}(\eta) \}.
\end{equation}

\begin{cor}
Suppose that \textnormal{(HNL)}, \textnormal{(HTw)}, \textnormal{(H$V_*$)} and \textnormal{(Hq)} are satisfied.
Pick two sufficiently small constants $\delta_\eta>0$ and $\delta_\sigma>0$. Then there exists a constant $K>0$ so that for any  any $0<\eta<\delta_\eta,$ any $0\leq \sigma\leq \delta_\sigma$, any integer $T\geq 2$ and any integer $p\geq 1$, we have the bound
 \begin{equation}
 \label{eq:stb:cor:stb:bnd:sup:res:p}
     \mathbb E\left[\sup_{0\leq \tau < \bar{\tau}_{\rm full}(\eta;\sigma,\delta, T)} \bar{N}_{\rm full}(\tau;\alpha)^p\right]\leq K^p
     \big( (\sigma^2 p)^{pr} + [\sigma^2 \ln T + \delta^2]^{pr} \big).
 \end{equation}
\end{cor}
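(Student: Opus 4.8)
The plan is to reduce the statement to two facts already available: the residual estimate of Proposition \ref{prop:E_stab} and the moment bounds \eqref{eq:mr:mo:bnds:w:j} for the expansion functions from Proposition \ref{prop:mr:tay}. First I would invoke the ordering \eqref{eq:stb:ord:stopping:time}, which gives $\bar{\tau}_{\rm full}(\eta;\sigma,\delta,T) \le \bar{\tau}_{\rm res}(\eta;\sigma,\delta,T)$, so that the supremum in \eqref{eq:stb:cor:stb:bnd:sup:res:p} is dominated by the supremum of the same (nonnegative) quantity over the larger interval $[0,\bar{\tau}_{\rm res})$. Next, recalling that $\bar{\mathcal N}_{\rm full}(\tau;\sigma,\delta,T) = \sum_{j=1}^{r-1} \alpha^{2(r-j)} \norm{\bar{Y}_j(\tau)}_{j}^2 + \bar{\mathcal N}_{\rm res}(\tau)$, the elementary bound $(a_1 + \cdots + a_r)^p \le r^{p-1}(a_1^p + \cdots + a_r^p)$ splits the task into estimating $\mathbb E \sup_{0 \le \tau < \bar{\tau}_{\rm res}} \bar{\mathcal N}_{\rm res}(\tau)^p$, which is precisely the content of Proposition \ref{prop:E_stab}, and the individual contributions $\alpha^{2p(r-j)}\, \mathbb E \sup_{0 \le \tau < \bar{\tau}_{\rm res}} \norm{\bar{Y}_j(\tau)}_{j}^{2p}$ for $1 \le j \le r-1$.

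For the $\bar{Y}_j$ terms the key point is to pass from the $\tau$-clock back to the original $t$-clock. Since $\tau(t) = \int_0^t \kappa_\sigma(\bar Z;\bar Y_{\rm tay})\,\mathrm ds$ is strictly increasing with $t \le \tau(t) \le 2t$ by \eqref{eq:res:bnd:kappa:sigma:2}, and $\bar{Y}_j(\tau(t)) = Y_j(t)$, one has $\sup_{0 \le \tau < \bar{\tau}_{\rm res}} \norm{\bar{Y}_j(\tau)}_{j} = \sup_{0 \le t < \tau^{-1}(\bar{\tau}_{\rm res})} \norm{Y_j(t)}_{j}$; as $\bar{\tau}_{\rm res} \le \bar{\tau}_\infty$ and $\tau^{-1}(\bar{\tau}_\infty) \le T$, this is at most $\sup_{0 \le t \le T} \norm{Y_j(t)}_{j}$. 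Applying \eqref{eq:mr:mo:bnds:w:j} then gives $\mathbb E \sup_{0\le\tau<\bar\tau_{\rm res}} \norm{\bar{Y}_j(\tau)}_{j}^{2p} \le K^{2p}\big[(\sigma^2 p)^{jp} + (\sigma^2 \ln T + \delta^2)^{jp}\big]$. Multiplying by $\alpha^{2p(r-j)} = (\sigma^2 \ln T + \delta^2)^{p(r-j)}$ produces the term $(\sigma^2 \ln T + \delta^2)^{pr}$ together with the cross term $(\sigma^2 \ln T + \delta^2)^{p(r-j)}(\sigma^2 p)^{pj}$, and the latter is absorbed into $K^p\big[(\sigma^2 p)^{pr} + (\sigma^2 \ln T + \delta^2)^{pr}\big]$ via the weighted arithmetic–geometric mean inequality $a^{r-j}b^j \le a^r + b^r$ with $a = (\sigma^2 \ln T + \delta^2)^p$, $b = (\sigma^2 p)^p$.

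Summing the $r-1$ expansion-function contributions and the residual contribution, and enlarging $K$ accordingly, yields \eqref{eq:stb:cor:stb:bnd:sup:res:p}. The argument is essentially bookkeeping, so there is no genuine obstacle; the only point that warrants a line of care is the reduction from the $\tau$-variable to the $t$-variable in the $\bar{Y}_j$ supremum, together with the observation that the constants in both \eqref{eq:mr:mo:bnds:w:j} and Proposition \ref{prop:E_stab} are $T$-independent, so that no spurious growth in $T$ is introduced. The substantive work has already been carried out in Propositions \ref{prop:mr:tay} and \ref{prop:E_stab}.
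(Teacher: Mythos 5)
Your proposal is correct and follows the same route as the paper's proof: reduce the $\bar{Y}_j$ suprema on the $\tau$-clock to the $t$-clock suprema via $\tau(t)\ge t$, invoke the moment bound \eqref{eq:mr:mo:bnds:w:j}, absorb the cross terms $\alpha^{2(r-j)p}(\sigma^2 p)^{jp}$ by Young's (weighted AM--GM) inequality, and handle $\bar{\mathcal N}_{\rm res}$ by dominating over the larger interval $[0,\bar\tau_{\rm res})$ and citing Proposition \ref{prop:E_stab}. The paper's proof is slightly more terse (it doesn't spell out the $(a_1+\cdots+a_r)^p$ splitting or the $\tau^{-1}$ details explicitly), but the ideas and the order in which they are applied are identical.
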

\begin{proof}
Using the bounds \eqref{eq:mr:mo:bnds:w:j}
together with $\tau(t) \ge t$,
we may apply Young's inequality to compute
\begin{equation}
\label{eq:mr:bnd:alpha:times:w}
\begin{array}{lcl}
    \alpha^{2(r-j)p}  \mathbb E \sup_{0 \le \tau \le T} \norm{\bar{Y}_j(\tau)}_{j}^{2p}
    & \le &
    \alpha^{2(r-j)p}  \mathbb E \sup_{0 \le t \le T} \norm{Y_j(t)}_{j}^{2p}
    \\[0.2cm]
    & \le & K^{2p}  \alpha^{2(r-\ell)p}  \big[ (\sigma^2 p)^{p \ell}] + (\alpha^2)^{p \ell} \big]
    \\[0.2cm]
    & \le &
    K^{2p}   \big[   (\sigma^2 p)^{p r} \frac{\ell}{r} +
    (\alpha^2)^{pr} \frac{r-\ell}{r}
    +
    (\alpha^2)^{p r} \big]
\\[0.2cm]
& \le &
(2K)^{2p}   \big[   (\sigma^2 p)^{p r}     +
    (\alpha^2)^{p r} \big]
\end{array}
\end{equation}
for any $1 \le j \le r-1$. In addition, the ordering \eqref{eq:stb:ord:stopping:time} yields
\begin{equation}
    \sup_{0 \le \tau < \bar{\tau}_{\rm full}(\eta,\sigma,\delta,T) }
    \bar{\mathcal N}_{\rm res}(\tau)
    \le \sup_{0 \le \tau < \bar{\tau}_{\rm res}(\eta,\sigma,\delta,T) }
    \bar{\mathcal N}_{\rm res}(\tau).
\end{equation}
In particular, the desired bound follows from \eqref{eq:prp:stb:bnd:sup:res:p}.
\end{proof}

\begin{lem}
\label{lem:stb:a:vs:a1}
Suppose that \textnormal{(HNL)}, \textnormal{(HTw)}, \textnormal{(H$V_*$)} and \textnormal{(Hq)} are satisfied.
Pick two sufficiently small constants $\delta_\eta>0$ and $\delta_\sigma>0$.
Then for any $0 < \eta < \delta_\eta$,
any $0 \le \sigma < \delta_\sigma$ and any $0 \le \delta < \sqrt{\eta}$,
we have the bound
\begin{equation}
\label{eq:stb:bnd:prop:a:vs:a1}
    \mathbb{P} \big( \bar{\tau}_{\rm full}(\eta,\sigma,\delta, T) < T \big) \le
    \mathbb{P}
    \left( \sup_{0 \le \tau < \bar{\tau}_{\rm full}(\eta,\sigma,\delta,T) }
    \bar{\mathcal N}_{\rm full}(\tau;\alpha) \ge \eta \alpha^{2(r-1)}  \right).
\end{equation}
\end{lem}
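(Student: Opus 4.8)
The plan is to decompose the event $\{\bar\tau_{\rm full}(\eta,\sigma,\delta,T)<T\}$ according to whether the stopping time is triggered by the $\bar{\mathcal N}_{\rm full}$-threshold being crossed or merely coincides with the terminal time $\bar\tau_\infty$, and to dispose of the latter contingency via the localization identity~\eqref{eq:res:prob:reverse:zero}. Since Proposition~\ref{prop:res:mild} provides $0<\bar\tau_\infty\le T$ almost surely and the defining set in~\eqref{eq:res:def:tau:full} is contained in $[0,\bar\tau_\infty)$, a nonempty defining set already forces $\bar\tau_{\rm full}<\bar\tau_\infty$; hence $\bar\tau_{\rm full}=\bar\tau_\infty$ holds exactly when that set is empty. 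Writing $c:=\eta\min\{1,\alpha^{2(r-1)}\}$, which equals $\eta\,\alpha^{2(r-1)}$ in the parameter range of interest (where $\alpha\le1$, as in Theorem~\ref{thm:main}), I would use the inclusion
\[
\{\bar\tau_{\rm full}<T\}\ \subseteq\ \{\bar\tau_{\rm full}<\bar\tau_\infty\}\ \cup\ \{\bar\tau_{\rm full}=\bar\tau_\infty<T\}
\]
and handle the two pieces in turn.

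On the second piece the defining set is empty, so $\bar{\mathcal N}_{\rm full}(\tau;\alpha)\le c$ for all $0\le\tau<\bar\tau_\infty$, and in particular $\bar{\mathcal N}_{\rm res}(\tau)\le c$ there. Using $e^{-\beta(\tau-\tau')}\ge e^{-\beta T}$ for $0\le\tau'\le\tau\le T$, the weighted integral in~\eqref{eq:stb:def:n:eps:res} dominates the unweighted one up to the deterministic factor $e^{\beta T}$, so $\sup_{\tau<\bar\tau_\infty}\|\bar Z(\tau)\|_{H^{k_*}}^2$ and $\int_0^{\bar\tau_\infty}\|\bar Z(\tau')\|_{H^{k_*+1}}^2\,\mathrm d\tau'$ are both finite. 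Letting $\bar\tau_\ell\uparrow\bar\tau_\infty$ this yields $\sup_{\ell\ge0}\mathcal{Z}_\ell<\infty$, so the second piece is contained in $\{\bar\tau_\infty<T\text{ and }\sup_\ell\mathcal{Z}_\ell<\infty\}$, a $\mathbb P$-null set by~\eqref{eq:res:prob:reverse:zero}; it therefore contributes nothing.

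On the first piece the defining set is nonempty, so there are $\tau_n\downarrow\bar\tau_{\rm full}$ with $\tau_n<\bar\tau_\infty$ and $\bar{\mathcal N}_{\rm full}(\tau_n;\alpha)>c$. Next I would argue that $\tau\mapsto\bar{\mathcal N}_{\rm full}(\tau;\alpha)$ is continuous on $[0,\bar\tau_\infty)$: the term $\|\bar Z(\tau)\|_{H^{k_*}}^2$ is continuous by Proposition~\ref{prop:res:mild}(i), the weighted convolution of $\|\bar Z(\cdot)\|_{H^{k_*+1}}^2$ is continuous since that integrand is locally integrable by Proposition~\ref{prop:res:mild}(ii), and each $\bar Y_j=Y_j\circ(\text{time change})$ is continuous because $Y_j\in C([0,T];H_j)$ almost surely by Proposition~\ref{prop:mr:tay}(i) and the time change $t\mapsto\tau(t)$ is continuous and strictly increasing ($\kappa_\sigma\ge1$). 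Passing to the limit along $\tau_n$ gives $\bar{\mathcal N}_{\rm full}(\bar\tau_{\rm full};\alpha)\ge c$, and left-continuity then gives $\sup_{0\le\tau<\bar\tau_{\rm full}}\bar{\mathcal N}_{\rm full}(\tau;\alpha)\ge\bar{\mathcal N}_{\rm full}(\bar\tau_{\rm full};\alpha)\ge c=\eta\,\alpha^{2(r-1)}$; the hypothesis $\delta<\sqrt\eta$ enters here to guarantee $\bar{\mathcal N}_{\rm full}(0;\alpha)=\delta^2\alpha^{2(r-1)}<c$, so that $\bar\tau_{\rm full}>0$ and this supremum runs over a nonempty interval. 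Combining the two pieces yields~\eqref{eq:stb:bnd:prop:a:vs:a1}. I expect the only real subtlety to be the bookkeeping around $\bar\tau_\infty$ and the clean invocation of~\eqref{eq:res:prob:reverse:zero}; the continuity claim and the elementary integral estimate are routine.
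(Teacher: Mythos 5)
Your proof is correct and follows essentially the same route as the paper: split on whether the defining set in \eqref{eq:res:def:tau:full} is empty, discard the empty case via the localization identity \eqref{eq:res:prob:reverse:zero}, invoke continuity of $\tau\mapsto\bar{\mathcal N}_{\rm full}(\tau)$ on $[0,\bar\tau_\infty)$, and use $\delta<\sqrt\eta$ to force $\bar\tau_{\rm full}>0$. Your write-up is slightly more explicit than the paper's (it spells out why $\{\bar\tau_{\rm full}=\bar\tau_\infty<T\}$ lies inside the null event of \eqref{eq:res:prob:reverse:zero} and why the $\bar Y_j$ contributions are continuous, which the paper leaves implicit), but the decomposition and the key ingredients are identical.
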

\begin{proof}
For convenience, we define the events
\begin{equation}
\begin{array}{lcl}
    \bar{\mathcal{A}}_{1}
    & = & \{
    0< \bar{\tau}_{\rm full}(\eta,\sigma,\delta,T) < \bar{\tau}_\infty \hbox{ and }
    \sup_{0 \le \tau < \bar{\tau}_{\rm full}(\eta,\sigma,\delta,T) }
    \bar{\mathcal N}_{\rm full}(\tau) \ge \eta \alpha^{2(r-1)} \} ,
\\[0.2cm]
    \bar{\mathcal{A}}_{2}
    & = & \{  0<\bar{\tau}_\infty < T \hbox{ and } \sup_{0 \le \tau < \bar{\tau}_\infty(T) }
    \bar{\mathcal N}_{\rm full}(\tau) \le \eta \alpha^{2(r-1)} \},
\\[0.2cm]
    \bar{\mathcal{A}}_{3} & = & \{ \bar{\tau}_{\rm full}(\eta; \sigma,\delta, T) = 0 \}.
\end{array}
\end{equation}
Items (i) and (ii) of Proposition \ref{prop:res:mild} imply that
$\tau \mapsto \bar{\mathcal N}_{\rm full}(\tau)$ is continuous on $[0, \bar{\tau}_\infty)$ for almost every $\omega \in \Omega$.
This shows that we may write
\begin{equation}
\{ \bar{\tau}_{\rm full}(\eta,\sigma,\delta, T) < T \}
\subset \bar{\mathcal{A}}_0 \cup \bar{\mathcal{A}}_1 \cup \bar{\mathcal{A}}_2 \cup \bar{\mathcal{A}}_3,
\end{equation}
for some set $\bar{\mathcal{A}}_0$ with zero measure.
Note furthermore that we have $\mathbb P (\bar{ \mathcal{A}}_2) =0 $ by \eqref{eq:res:prob:reverse:zero}. Since $\bar{\mathcal N}_{\rm full}(0) = \delta^2 \alpha^{2(r-1)}$, the demand $\delta^2 < \eta$
ensures that  $\mathcal{A}_3$ is empty.  In particular, we see that
\begin{equation}
    \mathbb{P}(\bar{\tau}_{\rm full}(\eta,\sigma,\delta, T) < T ) \le \mathbb{P}( \bar{\mathcal{A}}_1 ),
\end{equation}
which implies the desired bound.
\end{proof}

\begin{prop}\label{prop:res:prob:bnd:bar}
Suppose that \textnormal{(HNL)}, \textnormal{(HTw)}, \textnormal{(H$V_*$)} and \textnormal{(Hq)} hold.
Then  there exist  constants $0<\mu<1$, $\delta_\eta>0$, and $\delta_\sigma>0$ such that, for  any $0<\eta\leq \delta_\eta$, any $0 < \sigma < \delta_\sigma$,
any $\delta^2 < \mu \eta$ and any integer $T\geq 3,$ we have
    \begin{equation}
        \mathbb P(\bar{\tau}_{\rm full}(\eta;\sigma,\delta, T)<T)\leq 3T^{1/(2e)} \exp\left(-\mu \frac{\eta^{1/r}}{\sigma^{2/r}}\right) .
        \label{eq:res:bnd:stop:time}
    \end{equation}
\end{prop}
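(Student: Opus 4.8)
The plan is to combine the moment bound \eqref{eq:stb:cor:stb:bnd:sup:res:p} for $\bar{\mathcal{N}}_{\rm full}$ (the corollary to Proposition~\ref{prop:E_stab}) with the reduction in Lemma~\ref{lem:stb:a:vs:a1} and a Chebyshev-type estimate, optimising over the integer moment order $p$. Since $\delta^2 < \mu\eta < \eta$ we have $\delta < \sqrt{\eta}$, so Lemma~\ref{lem:stb:a:vs:a1} applies and gives
\[
  \mathbb{P}\big(\bar{\tau}_{\rm full}(\eta;\sigma,\delta,T) < T\big)
  \le \mathbb{P}\Big( \sup_{0 \le \tau < \bar{\tau}_{\rm full}} \bar{\mathcal{N}}_{\rm full}(\tau;\sigma,\delta,T) \ge \eta\,\alpha^{2(r-1)} \Big),
\]
with $\alpha^2 = \delta^2 + \sigma^2\ln T$ as in \eqref{eq:mr:def:alpha}. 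Applying Markov's inequality with the $p$-th power, the bound \eqref{eq:stb:cor:stb:bnd:sup:res:p}, and the identity $[\sigma^2\ln T+\delta^2]^{pr} = \alpha^{2pr}$ then yields, for every integer $p \ge 1$,
\[
  \mathbb{P}\big(\bar{\tau}_{\rm full} < T\big)
  \le \frac{K^p\big((\sigma^2 p)^{pr} + \alpha^{2pr}\big)}{\eta^p\,\alpha^{2(r-1)p}}
  = \Big(\tfrac{K(\sigma^2 p)^r}{\eta\,\alpha^{2(r-1)}}\Big)^p + \Big(\tfrac{K\alpha^2}{\eta}\Big)^p .
\]

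For the first term I would use $\alpha^2 \ge \sigma^2\ln T$, which bounds it by $\big(K\sigma^2 p^r/(\eta (\ln T)^{r-1})\big)^p$, and then take $p = \lfloor p_* \rfloor$ with $p_* = e^{-1}\big(\eta(\ln T)^{r-1}/(K\sigma^2)\big)^{1/r}$; this forces the base to be at most $e^{-r} \le e^{-1}$, so the first term is at most $e^{-p}$. If $p_* < 2$, then $\eta^{1/r}/\sigma^{2/r}$ is bounded by a constant depending only on $r$ and $K$, and once $\mu$ is fixed small enough the right-hand side $3T^{1/(2e)}\exp(-\mu\eta^{1/r}/\sigma^{2/r})$ already exceeds $1$, so the estimate is trivial; otherwise $p \ge p_*/2$, and since $\ln T \ge \ln 3 > 1$ the first term is at most $\exp(-\mu_1\eta^{1/r}/\sigma^{2/r})$ with $\mu_1 = (\ln 3)^{(r-1)/r}/(2eK^{1/r})$, discarding the favourable extra power of $\ln T$.

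The second term $(K\alpha^2/\eta)^p$ I would control by splitting on $\ln T$. If $\ln T \le \eta/(4eK\sigma^2)$, then $\alpha^2 \le \mu\eta + \eta/(4eK) \le \eta/(2eK)$ for $\mu \le 1/(4eK)$, so $K\alpha^2/\eta \le 1/(2e)$ and the second term is again at most $e^{-p} \le \exp(-\mu_1\eta^{1/r}/\sigma^{2/r})$. If instead $\ln T > \eta/(4eK\sigma^2)$, then $T^{1/(2e)} = \exp(\ln T/(2e)) > \exp(\eta/(8e^2K\sigma^2))$; writing $\eta/\sigma^2 = (\eta/\sigma^2)^{(r-1)/r}\cdot\eta^{1/r}/\sigma^{2/r}$, a short computation shows $\eta/(8e^2K\sigma^2) - \mu\eta^{1/r}/\sigma^{2/r} \ge -\ln 3$ once $\mu$ is small enough — distinguishing whether $(\eta/\sigma^2)^{(r-1)/r}$ dominates $8e^2K\mu$, the opposite case again forcing $\eta^{1/r}/\sigma^{2/r}$ to be bounded — so that $3T^{1/(2e)}\exp(-\mu\eta^{1/r}/\sigma^{2/r}) \ge 1$ and the asserted bound holds trivially.

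Putting the pieces together gives $\mathbb{P}(\bar{\tau}_{\rm full} < T) \le 2\exp(-\mu_1\eta^{1/r}/\sigma^{2/r}) \le 3T^{1/(2e)}\exp(-\mu\eta^{1/r}/\sigma^{2/r})$ once $\mu \le \mu_1$ is chosen small enough to meet all the triviality thresholds above, with $\delta_\eta,\delta_\sigma$ shrunk if necessary so that Proposition~\ref{prop:E_stab} and Lemma~\ref{lem:stb:a:vs:a1} apply. The analytic content is entirely contained in the already-established bound \eqref{eq:stb:cor:stb:bnd:sup:res:p}; the main obstacle is purely the bookkeeping needed to fix a single constant $\mu$ (and $\delta_\eta,\delta_\sigma$) compatible with the integer rounding of $p$, the degenerate regime $p_* < 2$, and both regimes for $\ln T$.
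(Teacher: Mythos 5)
Your proposal is correct and rests on the same two ingredients the paper uses: the moment bound \eqref{eq:stb:cor:stb:bnd:sup:res:p} for $\bar{\mathcal N}_{\rm full}$ and the reduction in Lemma~\ref{lem:stb:a:vs:a1}. The difference is in how the moment bound is converted into a tail probability. The paper feeds \eqref{eq:stb:cor:stb:bnd:sup:res:p} directly into the prepackaged exponential Markov inequality (Corollary~\ref{cor:prlm:gen:bnd:for:tail:distr}, applied with $\nu=r$, $\Theta_1 = K^{1/2}\sigma^r$, $\Theta_2 = \alpha^2/\sigma^2$), which already contains the optimisation over $p$, and then handles the $\delta$-dependence in one stroke via $\alpha^{2(1-1/r)} \ge \tfrac12(\sigma^2\ln T)^{1-1/r} + \tfrac12\delta^{2(1-1/r)}$ and the choice $\mu=\min\{(2^rK)^{-1},(4eK^{1/r})^{-1}\}$. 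You instead re-derive the essential content of that corollary by hand --- Markov at integer order $p$, explicit choice $p=\lfloor p_*\rfloor$, and a case split on $p_*<2$ and on $\ln T \lessgtr \eta/(4eK\sigma^2)$ --- which is logically sound (I checked that each triviality regime does force $3T^{1/(2e)}\exp(-\mu\eta^{1/r}/\sigma^{2/r}) \ge 1$ once $\mu$ is small) but requires managing several degenerate branches that the corollary-based route folds into a single clean estimate. So: same idea, same lemmas, but the paper's factorisation through Corollary~\ref{cor:prlm:gen:bnd:for:tail:distr} buys a shorter and less case-heavy argument, and also reuses that corollary elsewhere, which is why it was worth isolating.
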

 \begin{proof}

    Using the bounds \eqref{eq:stb:cor:stb:bnd:sup:res:p}
    together with the identification  \eqref{eq:stb:bnd:prop:a:vs:a1},
    we recall the shorthand \eqref{eq:mr:def:alpha}
    and apply the exponential Markov-type inequality in
    Corollary \ref{cor:prlm:gen:bnd:for:tail:distr}
    with
\begin{equation}
    \nu = r, \qquad \Theta_1 = K^{1/2} \sigma^{r} , \qquad \Theta_2 =  [ \alpha^{2 } / \sigma^{2}],
    \qquad \vartheta = \eta \alpha^{2(r-1)}
\end{equation}
to obtain
\begin{equation}
\begin{array}{lcl}
    P (\bar{\tau}_{\rm full}(\eta;\sigma,\delta, T)<T)
    & \le &
    3 \mathrm{exp}\left( \dfrac{ \alpha^2 }{2e \sigma^2} \right)
    \mathrm{exp}\left( \dfrac{- \eta^{1/r} \alpha^{2(1 - 1/r)} }{ 2e K^{1/r} \sigma^2} \right)
    \\[0.3cm]
    & = &
    3 T^{1/(2e)}
    \mathrm{exp}\left( \dfrac{K^{1/r} \delta^2 - \eta^{1/r} \alpha^{2(1 - 1/r)} }{ 2e K^{1/r} \sigma^2} \right).
\end{array}
\end{equation}
We now use the bound
\begin{equation}
    \alpha^{2(1-1/r)} \ge \frac{1}{2} (\sigma^2 \ln T)^{1-1/r} + \frac{1}{2} \delta^{2(1 - 1/r)}
\end{equation}
to find
\begin{equation}
\begin{array}{lcl}
    P (\bar{\tau}_{\rm full}(\eta;\sigma,\delta, T)<T)
    & \le &
    3 T^{1/(2e)}
    \mathrm{exp}\left( \dfrac{K^{1/r} \delta^2 - \frac{1}{2}\eta^{1/r} \delta^{2(1-1/r)}}{ 2e K^{1/r} \sigma^2} \right) \mathrm{exp}\left(    -\dfrac{ \eta^{1/r} [\ln T]^{1-1/r} }{ 4e K^{1/r} \sigma^{2/r}} \right).
\end{array}
\end{equation}
Upon choosing
\begin{equation}
    \mu = \min\{ (2^r K)^{-1},  (4eK^{1/r})^{-1} \},
\end{equation}
we see that
\begin{equation}
\frac{1}{2} \eta^{1/r} \delta^{2(1-1/r)}
\ge \frac{1}{2} \eta^{1/r} \delta^2 [ \mu \eta]^{-1/r} = \frac{1}{2}\delta^2 \mu^{-1/r} \ge K^{1/r} \delta^2
\end{equation}
and hence
\begin{equation}
\begin{array}{lcl}
    P (\bar{\tau}_{\rm full}(\eta;\sigma,\delta, T)<T)
    & \le &
    3 T^{1/(2e)} \mathrm{exp}\left(    - \mu \dfrac{ \eta^{1/r} [\ln T]^{1-1/r} }{ \sigma^{2/r}} \right).
\end{array}
\end{equation}
The desired bound now follows by noting that $\ln T \ge 1$ holds for $T \ge 3$.
\end{proof}

We are now ready to provide the proof of our main result. The key ingredient
that allows the time transform to be removed is that the exponential weight
in \eqref{eq:stb:def:n:eps:res} decays slower than its counterpart in
\eqref{eq:mr:def:n:res}. This enables us to show that stability loss in the original problem implies stability loss in a suitably chosen transformed problem, providing an ordering on the associated probabilities.

\begin{proof}
[Proof of Theorem \ref{thm:main}]
Note first that the bound
\eqref{eq:res:bnd:kappa:sigma:2} implies that $\tau(t) - \tau(t') \le 2 (t - t')$ for any pair $0 \le t'\le t$. This allows us to compute
\begin{equation}
\begin{array}{lcl}
    \int_0^{\tau(t)}  e^{-\beta(\tau(t) - \tau' )}\norm{\bar{Z}(\tau')}_{H^{k_* + 1}}^2 \, \mathrm d \tau'
    & = & \int_0^t  e^{- \beta(\tau(t) - \tau(t')) } \norm{\bar{Z}(\tau(t'))}_{H^{k_* + 1}}^2 \, \partial_t \tau(t') \mathrm d t'
\\[0.2cm]
    & \ge &
    \int_0^t  e^{- 2 \beta(t - t')  } \norm{Z(t')}_{H^{k_* + 1}}^2 \, \mathrm dt',
\end{array}
\end{equation}
which using $\tau(T) \le 2 T$ shows that
\begin{equation}
 \mathcal{N}_{\rm res}(t) \le \bar{\mathcal N}_{\rm res}\big(\tau(t) \big)   ,
 \qquad \qquad
 \mathcal{N}_{\rm full}(t;\sigma,\delta,T) \le \bar{\mathcal N}_{\rm full}\big(\tau(t) ; \sigma, \delta, 2T\big)
\end{equation}
for $0 \le t < \tau_\infty$. In particular,
if
\begin{equation}
    \sup_{0 \le t < t_{\rm st}(\eta;\sigma,\delta, T)} \mathcal{N}_{\rm full}(t;\sigma,\delta,T) \ge
    \eta [\delta^2 + \sigma^2 \ln T]^{r-1}
\end{equation}
holds, then we may use $T \ge 3$ to conclude
\begin{equation}
    \sup_{0 \le \tau < \tau( t_{\rm st}(\eta;\sigma,\delta, T) )} \bar{\mathcal N}_{\rm full}(\tau;\sigma,\delta, 2T) \ge
    \eta [\delta^2 + \sigma^2 \ln T]^{r-1}
    > \frac{\eta}{4^{r-1}}  [\delta^2 + \sigma^2 \ln (2T)]^{r-1},
\end{equation}
which implies
\begin{equation}
    \bar{\tau}_{\rm full}(4^{1-r}\eta; \sigma, \delta, 2T) <
    2T.
\end{equation}

Arguing as in Lemma \ref{lem:stb:a:vs:a1}
we obtain
\begin{equation}
\label{eq:stb:bnd:prop:a:vs:a1:org}
    \mathbb{P} \big( t_{\rm st}(\eta;\sigma,\delta, T) < T \big) \le
    \mathbb{P}
    \left( \sup_{0 \le t < t_{\rm st}(\eta;\sigma,\delta, T) }
    \mathcal{N}_{\rm full}(t;\alpha) \ge \eta \alpha^{2(r-1)}  \right)
\end{equation}
which in view of the discussion above
and the bound \eqref{eq:res:bnd:stop:time}
implies
\begin{equation}
\begin{array}{lcl}
    \mathbb{P} ( t_{\mathrm{st}}(\eta; \sigma, \delta, T) < T)
    & \le & \mathbb{P} \big( \bar{\tau}_{\rm full}(4^{1-r}\eta; \sigma, \delta, 2T) < 2 T \big)
    \\[0.2cm]
    & \le &
    3(2T)^{1/(2e)}\exp\left(-\mu 4^{1/r -1} \dfrac{ \eta^{1/r}}{\sigma^{2/r}}\right).
\end{array}
\end{equation}
The desired bound follows by adjusting the constant $\mu$ and noting that $3(2T)^{1/2e}\leq 2T$ for $T \ge 3$.
\end{proof}

\appendix

\section{List of main functions}
\label{app:def}

In this appendix we provide an overview of the main functions used in this paper,
building upon the presentation in \cite[App. A]{bosch2024multidimensional} and the naming conventions used in \cite{hamster2019stability,hamster2020transinv}.
Throughout this section, we assume that (HNL), (HTw), (H$V_*$) and (Hq) hold.
In addition, we take $\gamma \in \mathbb R$ together with $\xi \in L^2_Q$ and consider functions $u$ for which $u - \Phi_0 \in L^2(\mathcal D;\mathbb R^n)$.
For the purposes of clarity, we will be fully explicit with respect to the domains and codomains of our function spaces throughout this appendix.

We start by
choosing  a smooth non-decreasing cut-off function
\begin{equation}
    \chi_{\rm low}:\mathbb R\to [\tfrac14,\infty)
\end{equation}
that satisfies the properties
\begin{equation}
    \chi_{\rm low}(\theta)=\frac14 |\mathbb T|^{d-1},\quad \theta\leq \frac14 |\mathbb T|^{d-1},\quad \chi_{\rm low}(\theta)=\theta,\quad \theta\geq \frac12 |\mathbb T|^{d-1},
\end{equation}
together with a smooth non-increasing cut-off function
\begin{equation}
    \chi_{\rm high}:\mathbb R_+
    \to [0,1]
\end{equation}
that satisfies the properties
\begin{equation}
    \chi_{\rm high}(\theta)=1,\quad \theta\leq 2  ,\quad  \chi_{\rm high}(\theta)=0,\quad \theta\geq 3.
\end{equation}

These cut-offs can be used to define
\begin{equation}\label{eq:list:def:chi:h:l}
\chi_h(u,\gamma)=\chi_{\rm high}(\|u-T_\gamma\Phi_{0}\|_{L^2(\mathcal D;\mathbb R^n)})\quad\text{and}\quad
    \chi_l(u,\gamma)=\big[ \chi_{\rm low}\big(-\langle  u,T_\gamma\psi_{\rm tw}'\rangle_{L^2(\mathcal D;\mathbb R^n)} \big) \big]^{-1}.
\end{equation}
Upon taking
$u = T_{\gamma}[\Phi_0 + v]$ with
\begin{equation}
\label{eq:app:bnd:on:v:l2}
    \|v\|_{L^2(\mathcal D;\mathbb R^n)}
    \le \min \{ 1,  |\mathbb T|^{\frac{d-1}{2}} [4 \|\psi_{\rm tw}\|_{H^1(\mathbb R;\mathbb R^n)}]^{-1} \},
\end{equation}
we note that
\begin{equation}
    \chi_h(u, \gamma) = 1
    \quad\text{and}\quad
    \chi_l(u, \gamma) = - \big[\langle u,  T_{\gamma} \psi_{\rm tw}' \rangle_{L^2(\mathcal D;\mathbb R^n)} \big]^{-1}.
\end{equation}

We now introduce the scalar expressions
\begin{equation}
\label{eq:list:def:b:nu:tilde}
\begin{array}{lcl}
    b(u,\gamma)[\xi]
    & = &-\chi_h(u,\gamma)^2\chi_l(u,\gamma)\langle g(u)[\xi],T_{\gamma}\psi_{\rm tw}\rangle_{L^2(\mathcal D;\mathbb R^n)},
\\[0.2cm]
    \widetilde{\nu}(u,\gamma)
    & = &
    \frac{1}{2} \chi_h(u,\gamma)^4\chi_l(u,\gamma)^2\langle Qg^{T}(u) T_\gamma\psi_{\rm tw}, g^T(u) T_\gamma\psi_{\rm tw}\rangle_{L^2(\mathcal D;\mathbb R^n)},
\\[0.2cm]
\end{array}
\end{equation}
noting that $\widetilde{\nu} = \frac{1}{2}\|b(u,\gamma)\|^2_{HS( L^2_Q;\mathbb R)} $.
In addition, we introduce the functions
\begin{equation}
\label{eq:list:def:wt:k:c}
\begin{array}{lclcl}
    \widetilde{\mathcal{K}}_C(u, \gamma) &=&
       \chi_l(u,\gamma)
         \chi_h(u,\gamma) Q g(u)^\top T_{\gamma} \psi_{\rm tw}
         & \in & L^2_Q,
\\[0.2cm]
\mathcal{K}_C(u,\gamma)
&=& - \chi_h(u,\gamma) g(u) \widetilde{\mathcal{K}}_C(u, \gamma)
& \in & L^2(\mathcal D;\mathbb R^n),
\end{array}
\end{equation}
together with the scalar expression
\begin{equation}
\label{eq:list:def:a}
\begin{array}{lcl}
    a_\sigma(u,\gamma) &=&-\chi_l(u,\gamma)
    \Big[
      \langle f(u) , T_{\gamma} \psi_{\rm tw} \rangle_{L^2(\mathcal D;\mathbb R^n)}
      - \langle c_0 u + \sigma^2 \mathcal{K}_C(u,\gamma), \partial_x \psi_{\rm tw} \rangle_{L^2(\mathcal D;\mathbb R^n)}
\\[0.2cm]
& & \qquad \qquad\qquad
      + (1 + \sigma^2 \widetilde{\nu}(u,\gamma) ) \langle u, T_{\gamma} \psi''_{\rm tw} \rangle_{L^2(\mathcal D;\mathbb R^n)}
    \Big].
\end{array}
\end{equation}
We note that $b$ and $a_\sigma$ determine the evolution of the phase $\Gamma$ via
\eqref{eq:mr:def:Gamma}.

Exploiting the translational invariance of our nonlinearities and the noise,
we obtain the commutation relations
\begin{equation}
\label{eq:app:list:comm:rels:a:b}
     a_\sigma(u,\gamma)=a_\sigma(T_{-\gamma}u,0) ,
     \qquad
     b(u,\gamma)[\xi]=b(T_{-\gamma}u,0)[T_{-\gamma}\xi]
\end{equation}
and note that similar identities hold for $\widetilde{\nu}_\sigma$
and the cut-off functions
\eqref{eq:list:def:chi:h:l}. This allows
the dependence on the phase $\Gamma$ to be eliminated
from the evolution \eqref{eq:mr:weak:form:evol:v} for the frozen perturbation $V$.
This evolution is defined in terms of the nonlinearities
\begin{equation}
\label{eq:app:def:R:sigma:S}
\begin{array}{lcl}
    \mathcal R_\sigma(v) &= &
    \Delta_{x_\perp} v + (1 + \sigma^2\tilde{\nu}(\Phi_0 + v, 0) ) \partial_x^2 (\Phi_0 + v)
    + f(\Phi_0 + v) + c_0 \partial_x (\Phi_0 + v)
\\[0.2cm]
& & \qquad
    + \sigma^2 \partial_x \mathcal{K}_C(\Phi_0 + v, 0)
    +a_\sigma(\Phi_0 +v, 0)\partial_x(\Phi_0+v),
\\[0.2cm]
\mathcal S(v)[\xi]
&=&g(\Phi_0+v)[\xi]+\partial_x(\Phi_0+v)  b(\Phi_0+v,0)[\xi],
\end{array}
\end{equation}
where we take $v \in H^{2}(\mathcal D;\mathbb R^n)$; see \cite[Eq. (A.27)]{bosch2024multidimensional}.

Introducing the notation
\begin{equation}
    \mathcal{N}_f(v) = f(\Phi_0 + v) - f(\Phi_0) - Df(\Phi_0)v
\end{equation}
and noting that $-c_0 \Phi_0' = \Phi_0''  + f( \Phi_0)$
together with
$\mathcal{L}^*_{\rm tw} \psi_{\rm tw} = 0$,
we may write
\begin{equation}
\label{eq:list:expr:a:smooth}
\begin{array}{lcl}
    a_\sigma(\Phi_0 + v, 0) & = & -\chi_l(\Phi_0 + v,0)
    \Big[ \langle
    \mathcal{N}_f(v) , \psi_{\rm tw} \rangle_{L^2(\mathcal D;\mathbb R^n)}
    - \sigma^2 \langle  \mathcal{K}_C(\Phi_0 + v, 0)
    ,  \psi_{\rm tw}' \rangle_{L^2(\mathcal D;\mathbb R^n)}
    \\[0.2cm]
    & & \qquad \qquad \qquad
    + \sigma^2 \tilde{\nu}(\Phi_0 + v, 0)\langle  \Phi_0 + v,
    \psi_{\rm tw}'' \rangle_{L^2(\mathcal D;\mathbb R^n)}
    \Big] .
 \end{array}
\end{equation}
In a similar fashion, we may write
\begin{equation}
\label{eq:list:r:sigma:rewrite:ii}
\begin{array}{lcl}
\mathcal{R}_\sigma(v)
& = & \Delta_{x_\perp} v
+ \mathcal{L}_{\rm tw} v
+ \sigma^2 \widetilde{\nu}(\Phi_0 + v, 0) \partial_{xx} (\Phi_0 + v)
+ \mathcal{N}_f(v)
+ \sigma^2 \partial_x \mathcal{K}_C(\Phi_0 + v)
\\[0.2cm]
& & \qquad
+ a_\sigma(\Phi_0 + v;0, c_0) \partial_x(\Phi_0 + v).
\end{array}
\end{equation}
Upon introducing the notation
\begin{equation}
\label{eq:list:def:ups:i:ii}
\begin{array}{lcl}
        \Upsilon_I(v)
        &=& \widetilde{\nu}(\Phi_0 + v, 0) \partial^2_x [\Phi_0 +v],
        \\[0.2cm]
        \Upsilon_{II}(v)
        &=& - \chi_l(\Phi_0 + v,0)
        \widetilde{\nu}(\Phi_0 + v, 0)
        \langle \Phi_0 + v, \psi_{\rm tw}'' \rangle
        \partial_x[\Phi_0 + v],
\end{array}
\end{equation}
together with the combination
\begin{equation}
\label{eq:list:def:ups}
\Upsilon(v)
 =  \Upsilon_I(v) + \Upsilon_{II}(v)
\end{equation}
and the expressions
\begin{equation}
\label{eq:list:def:r:i:i:ups}
\begin{array}{lcl}
    \mathcal{R}_{I}(v)
    &=&
     \mathcal{N}_f(v)
    - \chi_l (\Phi_0 + v) \langle \mathcal{N}_f(v), \psi_{\mathrm{tw}} \rangle \partial_x [\Phi_0 + v],
\\[0.2cm]
    \mathcal{R}_{II}(v)
    & = & \partial_x \mathcal{K}_C(\Phi_0 +v , 0)
    - \chi_l (\Phi_0 + v) \langle \partial_x \mathcal{K}_C(\Phi_0 +v , 0), \psi_{\mathrm{tw}} \rangle \partial_x [\Phi_0 + v] ,
\\[0.2cm]
\end{array}
\end{equation}
we may recast \eqref{eq:list:r:sigma:rewrite:ii} in the convenient form
\begin{equation}
\label{eq:list:r:sigma:final}
\begin{array}{lcl}
\mathcal{R}_\sigma(v)
& = & \Delta_{x_\perp} v
+ \mathcal{L}_{\rm tw} v
+ \mathcal{R}_I(v) + \sigma^2 \mathcal{R}_{II}(v) + \sigma^2 \Upsilon(v)
\end{array}
\end{equation}
that appears in \eqref{eq:mr:decomp:r:sigma}.

\section{Moment bounds and tail probabilities}
Here we formulate several useful results concerning tail bounds
and maximal expectations. We note that related estimates can be found
in \cite{talagrand2005generic,veraar2011note}. The novel feature compared to \cite{bosch2024multidimensional} is that we allow general exponents $\nu \ge 1$
rather than fixing $\nu = 1$.

\begin{lem}\label{lem:prob:moment:to:tail}
   Consider a nonnegative random variable $X$. Suppose that  there exists two constants $\Theta_1>0$ and $\Theta_2>0$ so that the moment bound
\begin{equation}
\mathbb E X^{p} \leq \big[ p^p + \Theta_2^{p}] \Theta_1^{ 2p}
\end{equation}
holds for all integers $p \geq 1$. Then for every $\vartheta>0$ we have the estimate
\begin{equation}
\mathbb P(X\geq\vartheta) \leq 3\exp[\Theta_2/(2e)] \mathrm{exp} \left[-\frac{\vartheta}{2e \Theta_1^2} \right].
\end{equation}
\end{lem}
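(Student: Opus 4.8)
The plan is to convert the moment bound into a tail bound via a Markov-type inequality applied to a well-chosen function of $X$, namely an exponential. The key tension is that $\mathbb{E} X^p$ only grows like $p^p$ (times constants), which corresponds to $X$ having roughly sub-exponential tails in $\sqrt{X}$; the factor $\Theta_1^2$ suggests the natural scale is $X/\Theta_1^2$. So the first step would be to normalize by setting $Y = X/\Theta_1^2$, so that $\mathbb{E} Y^p \le p^p + \Theta_2^p$ for all integers $p \ge 1$, and reformulate the goal as $\mathbb{P}(Y \ge \vartheta/\Theta_1^2) \le 3 e^{\Theta_2/(2e)} e^{-\vartheta/(2e\Theta_1^2)}$, i.e. it suffices to show $\mathbb{P}(Y \ge s) \le 3 e^{\Theta_2/(2e)} e^{-s/(2e)}$ for all $s > 0$.

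The main step is then the standard Chernoff/optimization argument adapted to the $p^p$ growth. First I would bound $\mathbb{E} Y^p \le p^p + \Theta_2^p \le 2\max\{p^p, \Theta_2^p\}$, or more simply keep the two terms separate. Then for any real $s > 0$ and any integer $p \ge 1$, Markov gives $\mathbb{P}(Y \ge s) \le s^{-p} \mathbb{E} Y^p \le (p/s)^p + (\Theta_2/s)^p$. The first term $(p/s)^p$ is minimized (over positive reals) at $p = s/e$, giving $e^{-s/e}$; one has to be slightly careful since $p$ must be a positive integer, so I would choose $p = \lfloor s/e \rfloor$ when $s \ge e$ and handle $s < e$ trivially using that probabilities are at most $1$ (here the constant $3$ and the factor $e^{\Theta_2/(2e)} \ge 1$ give plenty of room, and $e^{-s/(2e)} \ge e^{-1/2}$ for $s < e$). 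Taking the integer floor loses at worst a bounded multiplicative factor: $(p/s)^p$ with $p = \lfloor s/e\rfloor$ is bounded by something like $e^{-s/(2e)}$ after absorbing constants, which is exactly why the exponent in the conclusion is $s/(2e)$ rather than $s/e$ — the factor of $2$ is slack deliberately left to absorb the rounding and the split of the sum.

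For the second term, I would sum the geometric-type series or simply note that for the range of $p$ we use, $(\Theta_2/s)^p$ must be controlled. The cleanest route: use the inequality $\mathbb{P}(Y \ge s) \le \inf_{p \in \mathbb{N}} s^{-p}(p^p + \Theta_2^p)$ and split into cases. When $s \ge 2e\Theta_2$, choosing $p \approx s/(2e)$ makes both $(p/s)^p \le (1/(2e))^p$ and $(\Theta_2/s)^p \le (1/(2e))^p$ small, yielding a bound like $2 \cdot e^{-s/(2e)} \cdot (\text{const})$. When $s < 2e\Theta_2$, we have $e^{-s/(2e)} > e^{-\Theta_2}$... actually it is cleaner to write $e^{-s/(2e) + \Theta_2/(2e)} \ge e^{-s/(2e)+ s/(4e^2)}\cdot(\ldots)$ — here the extra factor $e^{\Theta_2/(2e)}$ in the statement is precisely what rescues the small-$s$ regime, since $3e^{\Theta_2/(2e)}e^{-s/(2e)} \ge 3 e^{(\Theta_2 - s)/(2e)} \ge 3$ whenever $s \le \Theta_2$, making the bound trivial there, and for $\Theta_2 < s < 2e\Theta_2$ one still has enough exponential room.

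\textbf{Anticipated main obstacle.} The only real subtlety is the integer constraint on $p$: the clean optimum $p = s/e$ is generally not an integer, and one needs to verify that rounding to $\lfloor s/e \rfloor$ (or a nearby integer) only costs a factor that is comfortably absorbed by weakening the exponent from $s/e$ to $s/(2e)$ and by the constant $3$. I would handle this by a direct case analysis on the size of $s$ relative to $e$ and relative to $\Theta_2$, using $\mathbb{P}(\cdot) \le 1$ in the small-$s$ cases; the bookkeeping of constants is routine but is where all the ``choices'' in the statement (the $3$, the $1/(2e)$, the $e^{\Theta_2/(2e)}$) get used up, so care is needed to make sure nothing is off by a factor.
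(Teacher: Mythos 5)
The paper's ``proof'' of Lemma \ref{lem:prob:moment:to:tail} is a single citation to \cite[Lem.~B.1]{bosch2024multidimensional}, so there is no written argument in this paper to compare against directly. Your proposal is a genuinely different route from the one that almost certainly underlies the cited result: the form of the bound (in particular the factor $\exp[\Theta_2/(2e)]$ and the constant $3$) strongly suggests an exponential-Chernoff argument at a fixed rate $\lambda=1/(2e)$, i.e.\ bound $\mathbb{E}\, e^{Y/(2e)}\le 1+\sum_{p\ge 1}\frac{p^p}{(2e)^p p!}+\sum_{p\ge 1}\frac{\Theta_2^p}{(2e)^p p!}$, where the last sum is exactly $e^{\Theta_2/(2e)}-1$ and the middle sum is $\le 1/\sqrt{2\pi}$ by Stirling, giving $\mathbb{E}\, e^{Y/(2e)}\le \tfrac{1}{\sqrt{2\pi}}+e^{\Theta_2/(2e)}\le 3e^{\Theta_2/(2e)}$ in one stroke; a single Markov then finishes. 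Your approach instead optimizes over a single integer moment $p$ in $\mathbb{P}(Y\ge s)\le (p/s)^p+(\Theta_2/s)^p$, which also works, but the bookkeeping is more delicate than you let on. Two concrete adjustments you would need: (i) the scale of $p$ should be $s/(2e)$, not $s/e$ — taking $p=\lceil s/(2e)\rceil$ (for $s\ge 2e$; trivial otherwise since the target exceeds $1$) gives $p/s\le 1/e$ and hence $(p/s)^p\le e^{-p}\le e^{-s/(2e)}$ cleanly, whereas $p=\lfloor s/e\rfloor$ forces you to pay an avoidable factor close to $e+1>3$; and (ii) the regime $\Theta_2<s<2e\Theta_2$ which you wave through as ``enough exponential room'' actually rests on the elementary inequality $u\ln u\ge u-1$ for $u=s/\Theta_2\ge 1$, which converts $(\Theta_2/s)^p\le e^{\frac{s}{2e}\ln(\Theta_2/s)}$ into the needed $e^{(\Theta_2-s)/(2e)}$. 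With those two fixes your case analysis closes (one even gets the better constant $2$), but the MGF route buys you a proof with no case split at all and no integer-rounding headache, which is presumably why the cited reference uses it.
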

\begin{proof}
This is a slight restatement of \cite[Lem. B.1]{bosch2024multidimensional}.
\end{proof}

\begin{cor}
\label{cor:prlm:gen:bnd:for:tail:distr}
   Consider a nonnegative random variable $X$ and pick $\nu \ge 1$. Suppose that  there exist two constants $\Theta_1>0$ and $\Theta_2>0$ so that the moment bound
\begin{equation}
    \mathbb E X^{ p} \le \big[ p^{ \nu p }  + \Theta_2^{\nu p} \big] \Theta_1^{2p}
\end{equation}
holds for all integers $p \ge 1$.
Then for every $\vartheta > 0$ we have the bound
\begin{equation}
\mathbb P(X > \vartheta) \leq 3\exp[\Theta_2/(2e)] \left[-\frac{\vartheta^{1/\nu}}{2e \Theta_1^{2/\nu}} \right].
\end{equation}
\end{cor}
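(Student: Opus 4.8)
\textbf{Proof plan for Corollary \ref{cor:prlm:gen:bnd:for:tail:distr}.}
The strategy is to reduce the general exponent $\nu \ge 1$ to the case $\nu = 1$ that is already handled in Lemma \ref{lem:prob:moment:to:tail}. The key observation is that the hypothesis controls the moments of $X$ itself, so we should instead look at the random variable $X^{1/\nu}$ and verify that it satisfies the hypotheses of Lemma \ref{lem:prob:moment:to:tail} with suitable constants.

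First I would note that for any integer $p \ge 1$ we have, by Jensen's inequality (or simply $\mathbb E[X^{1/\nu}]^p \le \mathbb E[X^{p/\nu}]$ when $p/\nu$ is an integer, with a standard interpolation argument otherwise), the estimate
\begin{equation}
   \mathbb E (X^{1/\nu})^p = \mathbb E X^{p/\nu} \le \big( \mathbb E X^{p} \big)^{1/\nu}
   \le \big[ p^{\nu p} + \Theta_2^{\nu p} \big]^{1/\nu} \Theta_1^{2p/\nu}.
\end{equation}
Using the elementary inequality $(a+b)^{1/\nu} \le a^{1/\nu} + b^{1/\nu}$ for $a,b \ge 0$ and $\nu \ge 1$, the bracketed term is bounded by $p^p + \Theta_2^p$, so that
\begin{equation}
   \mathbb E (X^{1/\nu})^p \le \big[ p^p + \Theta_2^p \big] \big(\Theta_1^{2/\nu}\big)^p
   = \big[ p^p + \Theta_2^p \big] \big(\Theta_1^{1/\nu}\big)^{2p}.
\end{equation}
Hence $X^{1/\nu}$ meets the hypothesis of Lemma \ref{lem:prob:moment:to:tail} with $\Theta_1$ replaced by $\Theta_1^{1/\nu}$ and $\Theta_2$ unchanged.

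Applying that lemma to $X^{1/\nu}$ with threshold $\vartheta^{1/\nu}$, and using the equivalence of events $\{X > \vartheta\} = \{X^{1/\nu} > \vartheta^{1/\nu}\}$, yields
\begin{equation}
   \mathbb P(X > \vartheta) = \mathbb P\big(X^{1/\nu} > \vartheta^{1/\nu}\big)
   \le 3 \exp[\Theta_2/(2e)] \exp\left[ - \frac{\vartheta^{1/\nu}}{2e\,\Theta_1^{2/\nu}} \right],
\end{equation}
which is the stated bound (the displayed inequality in the corollary statement is evidently missing the $\exp$ on the final factor). The only mild subtlety — and the one place a careful argument is needed — is the passage from integer $p$ to the fractional exponent $p/\nu$ when verifying the moment bound for $X^{1/\nu}$; this is handled exactly as in the non-integer-$p$ arguments already used repeatedly in the paper (e.g. in the proof of Proposition \ref{prp:cnv:hn}), by choosing $q>1$ with $pq/\nu$ an integer and invoking $\sqrt[q]{a+b}\le\sqrt[q]{a}+\sqrt[q]{b}$; alternatively one simply notes that the hypothesis is assumed for all integers $p$, which already suffices since Lemma \ref{lem:prob:moment:to:tail} only needs integer moments. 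No genuine obstacle arises.
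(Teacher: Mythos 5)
Your proof is correct and follows essentially the same route as the paper: pass to $Y = X^{1/\nu}$, use Jensen/H\"older to get $\mathbb{E}X^{p/\nu} \le (\mathbb{E}X^p)^{1/\nu}$, apply $(a+b)^{1/\nu}\le a^{1/\nu}+b^{1/\nu}$ to reduce to the hypothesis of Lemma~\ref{lem:prob:moment:to:tail} with $\Theta_1$ replaced by $\Theta_1^{1/\nu}$, then use the threshold $\vartheta^{1/\nu}$. Your observation that the displayed bound in the corollary is missing an $\exp$ before the final bracket is also correct; it is a typo in the statement.
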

\begin{proof}
Write $Y = \sqrt[\nu]{X}$
and pick an integer $p' \ge 1$.
Applying H{\"o}lder's inequality
together with
the estimate
$\sqrt[\nu]{a + b} \le \sqrt[\nu]{a} + \sqrt[\nu]{b}$ for $a \ge 0$ and $b \ge 0$,
we obtain
\begin{equation}
 \mathbb E Y^{p'} =
 \mathbb E X^{p'/\nu}   \le
 \big[\mathbb E X^{p'} \big]^{1/\nu}
 \le \big[ (p')^{\nu p'}  + \Theta_2^{\nu p'} ]^{1/\nu} \Theta_1^{2 p' / \nu}
 \le \big[ (p')^{p'} + \Theta_2^{p'} \big] \Theta_1^{2 p' / \nu} .
\end{equation}
In view of Lemma \ref{lem:prob:moment:to:tail} this yields
\begin{equation}
   \mathbb  P\big(\sqrt[\nu]{X} > \sqrt[\nu]{\vartheta}\,\big) \le
    3\exp[ \Theta_2/(2e)] \mathrm{exp}\left[-\frac{\vartheta^{1/\nu}}{2e \Theta_1^{2/\nu}} \right],
\end{equation}
which leads to the stated bound.
\end{proof}

\begin{lem}[Lemma 2.3 of {\cite{hamster2020expstability}}]
\label{lem:tail:prob:to:mom}
Fix two constants $A \ge 2$ and $B > 0$ and consider a random variable $Z \ge 0$ that satisfies the bound
\begin{equation}
    P(Z > \vartheta) \le
    2 A \mathrm{exp}\left[ - \frac{\vartheta^2}{2e \Theta_1^2} \right]
\end{equation}
for all $\vartheta > 0$. Then for all
(reals) $p \ge 1$ we have
\begin{equation}
    \mathbb E Z^{2p}
    \le (p^p + \ln^p A) (8e\Theta_1^{2})^p .
\end{equation}
\end{lem}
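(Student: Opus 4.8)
The plan is to use the layer-cake representation $\mathbb{E}\, Z^{2p} = \int_0^\infty 2p\,\vartheta^{2p-1}\,\mathbb{P}(Z > \vartheta)\,\mathrm d\vartheta$ together with a threshold-splitting argument. First I would fix the threshold $\vartheta_* = \sqrt{2e\Theta_1^2 \ln(2A)}$, chosen precisely so that $2A\exp\!\big(-\vartheta_*^2/(2e\Theta_1^2)\big) = 1$, and bound $\mathbb{P}(Z > \vartheta) \le 1$ on $[0,\vartheta_*]$ and by the hypothesised tail estimate on $[\vartheta_*,\infty)$. The sub-threshold part then contributes exactly $\vartheta_*^{2p} = (2e\Theta_1^2)^p (\ln(2A))^p$.

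For the tail part I would substitute $w = \vartheta^2/(2e\Theta_1^2)$, which turns $2p\,\vartheta^{2p-1}\,\mathrm d\vartheta$ into $p\,(2e\Theta_1^2)^p\, w^{p-1}\,\mathrm dw$ with lower endpoint $w = \ln(2A)$. The prefactor $2A$ is then exactly absorbed by the shift $e^{-w} = e^{-\ln(2A)}e^{-(w-\ln(2A))}$, leaving (after the further change of variables $u = w - \ln(2A)$) the clean expression $p\,(2e\Theta_1^2)^p\int_0^\infty (u + c)^{p-1}e^{-u}\,\mathrm du$ with $c := \ln(2A)$. Adding back the sub-threshold term and integrating by parts via $p(u+c)^{p-1} = \tfrac{\mathrm d}{\mathrm du}(u+c)^p$ collapses $c^p + p\int_0^\infty(u+c)^{p-1}e^{-u}\,\mathrm du$ into the single integral $\int_0^\infty (u+c)^p e^{-u}\,\mathrm du$.

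It then remains to estimate this last integral. Using convexity of $x\mapsto x^p$ to write $(u+c)^p \le 2^{p-1}(u^p + c^p)$ together with the standard bound $\Gamma(p+1) \le p^p$ yields $\int_0^\infty(u+c)^p e^{-u}\,\mathrm du \le 2^{p-1}(p^p + c^p)$, and the elementary inequality $\ln(2A) \le 2\ln A$ (valid since $A \ge 2$) turns $c^p$ into $\le 2^p(\ln A)^p$. Collecting everything gives $\mathbb{E}\, Z^{2p} \le (2e\Theta_1^2)^p\,2^{2p-1}\big(p^p + (\ln A)^p\big) = \tfrac12(8e\Theta_1^2)^p\big(p^p + (\ln A)^p\big)$, which is even slightly stronger than the claimed bound.

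The argument is essentially routine; the only delicate point — and the one I would watch most carefully — is the bookkeeping of the multiplicative constants, since we must land at exactly $8e\Theta_1^2$. The two steps that make this possible are the specific choice of $\vartheta_*$ (so that the large factor $2A$ cancels cleanly rather than being crudely absorbed) and the integration by parts (which prevents a stray factor $p$ from multiplying $c^{p-1}$ and thereby costing an additional $2^p$). All estimates hold for real $p \ge 1$ directly, so no integer-then-interpolate reduction is needed, and the constant $B$ appearing in the hypotheses plays no role in the proof.
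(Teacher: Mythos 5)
Your proof is correct and in fact gives a slightly stronger bound, with an extra factor of $\tfrac12$ in front. Note that the paper itself does not supply a proof of this lemma; it simply cites it as Lemma 2.3 of \cite{hamster2020expstability}, so there is no internal argument here to compare against. Your layer-cake representation with the threshold $\vartheta_*=\sqrt{2e\Theta_1^2\ln(2A)}$, the substitution absorbing the prefactor $2A$, and the integration-by-parts collapse into $\int_0^\infty (u+c)^p e^{-u}\,\mathrm du$ is a clean and natural route, and the constant bookkeeping lands on $8e\Theta_1^2$ as claimed. The only step that deserves a word more than you gave it is the inequality $\Gamma(p+1)\le p^p$ for real $p\ge 1$: it is true, but it is not quite a textbook one-liner, and a short justification would tighten the write-up (for instance, $h(p)=p\ln p-\ln\Gamma(p+1)$ vanishes at $p=1$ and has $h'(p)=\ln p+1-\psi(p+1)>0$ for $p\ge 1$, since $\psi(p+1)<\ln(p+1)$ and $\ln(1+1/p)<1$). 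You are also right that the constant $B$ in the hypotheses is spurious — an artifact carried over from the cited source that plays no role.
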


\begin{cor}
\label{cor:prlm:tail:bnd:to:exp:bnd:gen}
Fix $\nu \ge 1$ together with two constants $A \ge 2$ and $B > 0$ and consider a random variable $Z \ge 0$ that satisfies the bound
\begin{equation}
    P(Z > \vartheta) \le
    2 A \mathrm{exp}\left[ -  \frac{\vartheta^{2/\nu}}{2e \Theta_1^{2/\nu}} \right]
\end{equation}
for all $\vartheta > 0$. Then  for all (real)
$p \ge 1$ we have
\begin{equation}
    \mathbb E Z^{2p} \le \big(p^{\nu p} + [\ln A]^{\nu p}  \big)
    \big( (8e \nu)^\nu \Theta_1^{2} \big)^p.
\end{equation}
\end{cor}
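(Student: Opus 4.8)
\textbf{Proof plan for Corollary \ref{cor:prlm:tail:bnd:to:exp:bnd:gen}.}
The plan is to reduce the statement with general exponent $\nu$ to the case $\nu = 1$, which is exactly Lemma \ref{lem:tail:prob:to:mom}. First I would introduce the auxiliary random variable $Y = Z^{1/\nu}$, which is again nonnegative. The hypothesis on $Z$ rewrites as a tail bound for $Y$: since $P(Z > \vartheta) = P(Y > \vartheta^{1/\nu})$, substituting $\vartheta = s^{\nu}$ for $s > 0$ gives
\begin{equation}
    P(Y > s) = P(Z > s^{\nu}) \le 2 A \exp\left[ - \frac{(s^{\nu})^{2/\nu}}{2e \Theta_1^{2/\nu}}\right] = 2 A \exp\left[ - \frac{s^2}{2e (\Theta_1^{1/\nu})^2}\right].
\end{equation}
Thus $Y$ satisfies the hypothesis of Lemma \ref{lem:tail:prob:to:mom} with the constant $\Theta_1$ there replaced by $\Theta_1^{1/\nu}$ (and the same $A$, $B$).

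Applying Lemma \ref{lem:tail:prob:to:mom} to $Y$, for every real $q \ge 1$ we obtain
\begin{equation}
    \mathbb E Y^{2q} \le \big( q^q + [\ln A]^q \big) \big( 8e \Theta_1^{2/\nu}\big)^q.
\end{equation}
Now I would set $q = \nu p$ for a given real $p \ge 1$; since $\nu \ge 1$ we have $q \ge 1$, so this is admissible. Unwinding $Y^{2q} = Z^{2q/\nu} = Z^{2p}$ yields
\begin{equation}
    \mathbb E Z^{2p} \le \big( (\nu p)^{\nu p} + [\ln A]^{\nu p} \big) \big( 8e \Theta_1^{2/\nu}\big)^{\nu p}.
\end{equation}
Finally I would clean up the constants: $(8e \Theta_1^{2/\nu})^{\nu p} = (8e)^{\nu p} \Theta_1^{2p}$, and $(\nu p)^{\nu p} = \nu^{\nu p} p^{\nu p}$, so the right-hand side equals $\big( \nu^{\nu p} p^{\nu p} + [\ln A]^{\nu p}\big)(8e)^{\nu p}\Theta_1^{2p}$. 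Bounding $\nu^{\nu p} \le (\nu^{\nu})^p$ on the first term and $1 \le (\nu^{\nu})^p$ on the second, this is at most $\big( p^{\nu p} + [\ln A]^{\nu p}\big) \big( (8e\nu)^{\nu} \Theta_1^{2}\big)^p$, which is the claimed bound.

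This argument is entirely routine; there is essentially no obstacle. The only point requiring a little care is the rescaling bookkeeping — making sure the substitution $q = \nu p$ stays in the valid range $q \ge 1$ (guaranteed by $\nu \ge 1$, $p \ge 1$) and that the power-of-$\nu$ factors are absorbed correctly into the stated constant $(8e\nu)^{\nu}$. One could alternatively mimic the proof of Corollary \ref{cor:prlm:gen:bnd:for:tail:distr} directly, first deducing a moment bound for $Z^{1/\nu}$ from Lemma \ref{lem:tail:prob:to:mom} and then using $\mathbb E Z^{p} \le \big[\mathbb E (Z^{1/\nu})^{p}\big]$-type Hölder manipulations, but the $Y = Z^{1/\nu}$ substitution above is the cleanest route.
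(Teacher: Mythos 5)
Your proposal is correct and follows essentially the same route as the paper: both introduce $Y = Z^{1/\nu}$, rewrite the tail bound for $Y$ with $\Theta_1$ replaced by $\Theta_1^{1/\nu}$, apply Lemma \ref{lem:tail:prob:to:mom} at exponent $p' = \nu p$, and absorb the factor $\nu^{\nu p}$ into the constant using $\nu \ge 1$. Your write-up is merely a bit more explicit about the substitution in the tail bound and the final constant bookkeeping.
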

\begin{proof}
Upon writing $Y = \sqrt[\nu]{Z}$,
we observe that
\begin{equation}
  \mathbb  P(Y > \vartheta) \le 2 A \exp[-\vartheta^2/(2e \Theta_1^{2/\nu})].
\end{equation}
Applying Lemma \ref{lem:tail:prob:to:mom}
with $p' = \nu p \ge 1$, we obtain
\begin{equation}
\mathbb E Z^{2p} = \mathbb E Y^{2p'}   \le
( p'^{p'} +  \ln[A]^{p'} ) (8e \Theta_1^{2/\nu})^{p'}
= \big( ( \nu p )^{\nu p} +    [\ln A]^{\nu p}  \big) (8e)^{\nu p} \Theta_1^{2p},
\end{equation}
which can be absorbed by the stated bound since $\nu \ge 1$.
\end{proof}

\begin{cor}
\label{cor:prlm:exp:bnd:to:exp:bnd:with:n}
Pick $\nu \ge 1$ and consider
a collection of non-negative random variables
$\{Z_i\}_{i=1}^{N}$ for some integer $N \ge 2$.
Suppose that there exist two constants $\Theta_1 > 0$
and $\Theta_2 > 0$ so that the moment bound
\begin{equation}
 \mathbb   E Z_i^{2 p} \le \big[ p^{ \nu p } + \Theta_2^{\nu p} \big] \Theta_1^{2p}
\end{equation}
holds for all integers $p \ge 1$ and all $i \in \{1, \ldots, N \}$.
Then for all (real) $p \ge 1$  we have
\begin{equation}
\mathbb  E \max_{i \in \{1, \ldots, N\} }
    Z_i^{2p} \le
     \big(p^{\nu p} +  [\ln N +   \Theta_2]^{\nu p}\big) ( (16e\nu)^\nu \Theta_1^2)^{p}.
\end{equation}
\end{cor}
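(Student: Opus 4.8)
The plan is to route everything through the tail estimates already assembled in this appendix, rather than trying to estimate $\mathbb{E}\max_i Z_i^{2p}$ directly. First I would turn each moment bound into a tail bound. Applying Corollary~\ref{cor:prlm:gen:bnd:for:tail:distr} to the nonnegative random variable $Z_i^2$ — whose $p$-th moment equals $\mathbb{E}Z_i^{2p}$ and therefore satisfies the hypothesis of that corollary with the given $\Theta_1$, $\Theta_2$ and $\nu$ — yields
\[
\mathbb{P}(Z_i > s) = \mathbb{P}(Z_i^2 > s^2) \le 3 e^{\Theta_2/(2e)} \exp\!\left[-\frac{s^{2/\nu}}{2e\,\Theta_1^{2/\nu}}\right]
\]
for every $s>0$ and every $i \in \{1,\dots,N\}$.

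Next I would apply a union bound over $i$, which gives
\[
\mathbb{P}\!\left(\max_{1 \le i \le N} Z_i > s\right) \le 3N e^{\Theta_2/(2e)} \exp\!\left[-\frac{s^{2/\nu}}{2e\,\Theta_1^{2/\nu}}\right] = 2A \exp\!\left[-\frac{s^{2/\nu}}{2e\,\Theta_1^{2/\nu}}\right], \qquad A := \tfrac{3}{2} N e^{\Theta_2/(2e)} .
\]
Since $N \ge 2$ and $\Theta_2 > 0$ we have $A \ge 3 \ge 2$, so the hypothesis of Corollary~\ref{cor:prlm:tail:bnd:to:exp:bnd:gen} is met with the same $\Theta_1$ and $\nu$. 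Applying that corollary to $Z = \max_i Z_i$, and using that $\max_i Z_i^{2p} = (\max_i Z_i)^{2p}$ because $x \mapsto x^{2p}$ is increasing on $[0,\infty)$, produces
\[
\mathbb{E} \max_{1 \le i \le N} Z_i^{2p} \le \big(p^{\nu p} + [\ln A]^{\nu p}\big)\big((8e\nu)^{\nu}\Theta_1^2\big)^p .
\]

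The remaining work is purely bookkeeping of constants, and this is the only place where any care is needed. One has $\ln A = \ln N + \ln\tfrac{3}{2} + \tfrac{\Theta_2}{2e}$; using $\ln\tfrac{3}{2} < \ln 2 \le \ln N$ and $\tfrac{1}{2e} < 1$ gives $\ln A \le 2(\ln N + \Theta_2)$, hence $[\ln A]^{\nu p} \le 2^{\nu p}[\ln N + \Theta_2]^{\nu p}$, and trivially $p^{\nu p} \le 2^{\nu p} p^{\nu p}$. Pulling the factor $2^{\nu p}$ out and using the identity $2^{\nu}(8e\nu)^{\nu} = (16e\nu)^{\nu}$ then yields exactly the claimed estimate. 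I do not anticipate any genuine obstacle here: the content is a direct chaining of the preceding corollaries (an alternative via Hölder and a high integer moment, absorbing $N^{1/q}$ into the logarithm, would also work but is messier because of the integer-rounding of the exponent), so the proof is essentially mechanical once one checks that the union-bound prefactor fits cleanly inside the logarithm and that no stray power of $\nu$ escapes.
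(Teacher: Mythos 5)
Your proof is correct and follows essentially the same chain as the paper's: apply Corollary~\ref{cor:prlm:gen:bnd:for:tail:distr} to $Z_i^2$ to get a tail bound, union-bound over $i$, feed the resulting tail bound into Corollary~\ref{cor:prlm:tail:bnd:to:exp:bnd:gen}, and then absorb the leftover prefactor $\ln(3/2)+\Theta_2/(2e)$ into the logarithm at the cost of a factor $2^{\nu p}$ (hence the $16e\nu$ in place of $8e\nu$). The only difference is that you spell out the final absorption of constants explicitly, whereas the paper leaves it with the remark that the intermediate estimate ``can be absorbed in the stated estimate.''
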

\begin{proof}
We first use Corollary \ref{cor:prlm:gen:bnd:for:tail:distr} to observe that
\begin{equation}
\mathbb  P \big(    \max_{i \in \{1, \ldots, N\} }
    Z_i > \vartheta)
    \le \mathbb P( Z_1 > \vartheta)
    + \ldots + \mathbb P( Z_N > \vartheta)
    \le 3 N  \mathrm{exp}[ \Theta_2 / (2e)] \mathrm{exp} \left[ - \frac{\vartheta^{2/\nu}}{2e \Theta_1^{2/\nu}}
    \right].
\end{equation}
Applying Corollary \ref{cor:prlm:tail:bnd:to:exp:bnd:gen}
we hence obtain
\begin{equation}
  \mathbb E \max_{i \in \{1, \ldots,  N\} }
    Z_i^{2p} \le
     \big(p^{\nu p} +  [\ln N + \ln[3/2] + \Theta_2/(2e)]^{\nu p}\big) ( (8e\nu)^\nu \Theta_1^2)^{p},
\end{equation}
which can be absorbed in the stated estimate.
\end{proof}

\bibliographystyle{klunumHJ}
\bibliography{ref}

\end{document}